\newtheorem{thm}{Theorem}[section]
\newtheorem{cor}[thm]{Corollary}
\newtheorem{lem}[thm]{Lemma}
\theoremstyle{definition}
\theoremstyle{remark}
\newtheorem{remark}[thm]{Remark}
\newtheorem{remarks}[thm]{Remarks}
\newtheorem{example}[thm]{Example}
\newtheorem{examples}[thm]{Examples}
\numberwithin{equation}{section}
 \newcommand{\N}{{\mathbb N}}
\newcommand{\Z}{{\mathbb Z}} \newcommand{\R}{{\mathbb R}}
\newcommand{\Q}{{\mathbb Q}} \newcommand{\C}{{\mathbb C}}
\newcommand{\psd}{{\mathcal P}}
\newcommand{\gtp}{{\mathfrak p}} \newcommand{\gtq}{{\mathfrak q}}
\newcommand{\gtm}{{\mathfrak m}} \newcommand{\gtn}{{\mathfrak n}}
\newcommand{\gta}{{\mathfrak a}} \newcommand{\gtb}{{\mathfrak b}}
\newcommand{\Qq}{{\EuScript Q}}
\newcommand{\qf}{\operatorname{qf}}
\newcommand{\supp}{\operatorname{supp}}
\newcommand{\Sper}{\operatorname{Sper}}
\newcommand{\ini}{\operatorname{In}}
\newcommand{\Ass}{\operatorname{Ass}}
\newcommand{\x}{{\tt x}} \newcommand{\y}{{\tt y}} \renewcommand{\v}{{\tt v}}
\newcommand{\z}{{\tt z}} \renewcommand{\t}{{\tt t}} 
\newcommand{\s}{{\tt s}} \renewcommand{\u}{{\tt u}}
\newcommand{\Sos}[1]{{\Sigma{#1}^2}} 
\newcommand{\Sosp}[1]{{\Sigma_p{#1}^2}} 
\newcommand{\Sosq}[1]{{\Sigma_{p_0}{#1}^2}} 
\newcommand{\Sosd}[1]{{\Sigma_2{#1}^2}} 
\newcommand{\veps}{\varepsilon}
\newcommand{\ol}{\overline}
\newcommand{\simr}{\sim_{\text{\tiny$\displaystyle R$}}}
\newcommand{\simc}{\sim_{\text{\tiny$\displaystyle C$}}}
\begin{document}

\title[Representation of positive semidefinite elements as sum of squares]{Representation of positive semidefinite elements\\
as sum of squares in $2$-dimensional local rings}

\author{Jos\'e F. Fernando}
\address{Departamento de \'Algebra, Geometr\'ia y Topolog\'ia, Facultad de Ciencias Matem\'aticas, Universidad Complutense de Madrid, 28040 MADRID (SPAIN)}
\email{josefer@mat.ucm.es}
\thanks{Author is supported by Spanish STRANO MTM2017-82105-P and Grupos UCM 910444.}

\subjclass[2010]{Primary: 14P99, 11E25, 32S05; Secondary: 13F25, 13F40, 12D15}
\keywords{Real spectrum, positive semidefinite elements, sums of squares, singularities, excellent henselian ring, dimension $2$, completion}
\date{27/12/2021}

\dedicatory{Dedicated to Prof. J.M. Ruiz on occasion of his 65th birthday}

\begin{abstract}
A classical problem in real geometry concerns the representation of positive semidefinite elements of a ring $A$ as sums of squares of elements of $A$. If $A$ is an excellent ring of dimension $\geq3$, it is already known that it contains positive semidefinite elements that cannot be represented as sums of squares in $A$. The one dimensional local case has been afforded by Scheiderer (mainly when its residue field is real closed). In this work we focus on the $2$-dimensional case and determine (under some mild conditions) which local excellent henselian rings $A$ of embedding dimension $3$ have the property that every positive semidefinite element of $A$ is a sum of squares of elements of $A$.
\end{abstract}

\maketitle

\setcounter{tocdepth}{1}
\tableofcontents

\section{Introduction}\label{s1}

In the study of positive semidefinite elements and sums of squares of a ring $A$ one main problem is to determine whether every positive semidefinite element is a sum of squares (qualitative problem). The positive semidefinite elements of an arbitrary commutative ring are defined by means of the theory of the real spectrum $\Sper(A)$ of the ring $A$, as follows: an element $f\in A$ is \em positive semidefinite \em if $f\geq_\alpha0$ for every prime cone $\alpha\in\Sper(A)$. Recall that a \em prime cone \em $\alpha$ can be understood as a pair $\alpha:=(\gtp_\alpha,\leq_\alpha)$ where $\gtp_\alpha$ is a prime ideal of $A$ (called the \em support \em $\supp(\alpha)$ of $\alpha$) and $\leq_\alpha$ is an ordering of the quotient field $\qf(A/\gtp_\alpha)$. Alternatively, $\alpha$ is the preimage under the canonical homomorphism $A\to\qf(A/\gtp_\alpha)$ of the set of non-negative elements of the ordered field $(\qf(A/\gtp_\alpha),\leq_\alpha)$. We denote the set of positive semidefinite elements of $A$ with $\psd(A)$ and the set of all (finite) sums of squares of $A$ with $\Sos{A}$. The problem stated above consists of determining under which conditions the equality $\psd(A)=\Sos{A}$ holds.

By \cite[Thm.4.3.7]{bcr} $\Sper(A)=\varnothing$ if and only if $-1$ is a finite sum of squares in $A$. If this is the case, we assume that all the elements of $A$ are positive semidefinite. If in addition the characteristic of $A$ is different from $2$ and $\frac{1}{2}\in A$, then each $a\in A$ is a sum of squares in view of the well-known relation
$$
a=\Big(\frac{a+1}{2}\Big)^2+(-1)\Big(\frac{a-1}{2}\Big)^2.
$$

An ideal $\gta\subset A$ is \em real \em if for each sequence $a_1,\ldots,a_r\in A$ such that $a_1^2+\cdots+a_r^2\in\gta$, we have $a_i\in\gta$ for $i=1,\ldots,r$. The \em real-radical \em of an ideal $\gta\subset A$ is the smallest real ideal $\sqrt[r]{\gta}$ of $A$ that contains $\gta$. By \cite[Prop.4.1.7]{bcr} 
$$
\sqrt[r]{\gta}=\{a\in A:\ \exists\ a_1,\ldots,a_r\in A, m\geq1\text{ such that }
a^{2m}+a_1^2+\cdots+a_r^2\in\gta\}.
$$ 
A ring $A$ is \em real \em (reduced) if the zero ideal is real. The \em real reduction \em of $A$ is the quotient $A/\sqrt[r]{(0)}$. In case $A$ is a field, it is \em (formally) real \em if and only if $-1$ is not a sum of squares in $A$ (that is, if its real spectrum is non-empty). Thus, a prime ideal $\gtp$ of $A$ is the support of a prime cone $\alpha$ if and only if it is real. 

\subsection*{Background} 
The property $\psd(A)=\Sos{A}$ is true for the total quotient ring of fractions of a real (reduced) ring $A$ by the general theory of Artin-Schreier for (formally) real fields \cite[\S1]{bcr}, but in the general case the situation is more complicated. In \cite[Lem.6.3]{sch1} it is proved that if a noetherian ring $A$ has a non-empty real spectrum (that is, not all the elements of $A$ are positive semidefinite) and the property $\psd(A)=\Sos{A}$ holds, then $A$ is a real (reduced) ring. In fact, strong dimensional restrictions appear even under mild hypotheses \cite[Cor.1.3]{sch1}. In case we focus on excellent rings, we have the following result \cite[Main Thm.1.1]{frs1} (see also \cite[Thm.1.2]{f3}).

\begin{thm}[{\cite[Main Thm.1.1]{frs1}}]\label{mfrs}
Let $A$ be an excellent ring of real dimension $\geq3$. Then $\psd(A)\neq\Sos{A}$.
\end{thm}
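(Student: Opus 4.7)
The plan is to produce a positive semidefinite element of $A$ that is not a sum of squares by reducing to a local complete three-dimensional situation where classical obstructions to representability are available. First, using the hypothesis that the real dimension of $A$ is at least three, I would select a real maximal ideal $\gtm\subset A$ such that $A_\gtm$ has Krull dimension at least three and a chain of real prime cones of length three specializes to a real prime cone supported at $\gtm$. The excellence hypothesis guarantees that the completion map $A_\gtm\hookrightarrow\widehat{A_\gtm}$ is regular, hence faithfully flat with geometrically regular fibres; in particular, real prime cones of $A_\gtm$ extend to real prime cones of $\widehat{A_\gtm}$, and positive semidefiniteness is preserved under localization and completion.

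Next, by the Cohen structure theorem, I would identify a power series subring $R[[x_1,x_2,x_3]]\subset\widehat{A_\gtm}$, where $R$ is a real closed subfield of the residue field (whose existence follows from the real prime cone at $\gtm$). The core step is then to exhibit an element $h\in R[[x_1,x_2,x_3]]$ that is positive semidefinite in the completion but not a sum of squares there, and which admits a pre-image $f\in A$. A prototypical candidate is a Robinson-type ternary sextic
$$
h=x_1^4x_2^2+x_2^4x_3^2+x_3^4x_1^2-3x_1^2x_2^2x_3^2
$$
(possibly augmented by a higher-order correction guaranteeing positivity on the full completion). Non-representability as a sum of squares survives the passage from polynomials to power series precisely because the embedding dimension is at least three, and the obstruction is detected by quadratic forms and signatures at suitable real valuations centered at $\gtm$, using the machinery developed by Scheiderer in \cite{sch1}.

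The main obstacle is the descent step: one must argue that if $f=\sum g_i^2$ with $g_i\in A$ held, the induced identity in $\widehat{A_\gtm}$ would contradict the chosen obstruction for $h$. Faithful flatness of the completion makes this reduction rigorous, and the quadratic-form analysis then rules out the representation. Two delicate points, where most of the technical work concentrates, are: (i) ensuring that $\gtm$ admits a power series subring aligned with the real structure of $A$, which is why the full strength of excellence (not merely henselianity) is used to guarantee that orderings and real prime cones transfer faithfully into $\widehat{A_\gtm}$; and (ii) producing a pre-image $f\in A$ of $h$, which requires a careful choice of regular parameters of $A_\gtm$ mapping to $x_1,x_2,x_3$ and possibly an Artin-type approximation to realize the counterexample inside $A$ itself.
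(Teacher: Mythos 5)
Your plan has a fatal structural flaw, plus two further gaps that are not resolvable by straightforward patching.

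The central problem is step (i), the claim that the residue field $\kappa:=A/\gtm$ contains a real closed subfield $R$, so that $R[[\x_1,\x_2,\x_3]]\subset\widehat{A_\gtm}$. This is false in general. The real prime cone supported at $\gtm$ gives an \emph{ordering} of $\kappa$, hence an embedding of $\kappa$ into its real closure $R(\alpha)$; it does \emph{not} give a real closed subfield of $\kappa$. For instance, if $A=\Q[\x,\y,\z]_{(\x,\y,\z)}$, then $\kappa=\Q$ has no real closed subfield whatsoever, and the Cohen structure theorem identifies $\widehat{A_\gtm}$ with $\Q[[\x,\y,\z]]$, not with any $R[[\x_1,\x_2,\x_3]]$. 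The ternary sextic obstruction you propose can in fact be made to work \emph{over} $\kappa[[\x_1,\x_2,\x_3]]$ itself (via initial forms: a sum of squares of series has an initial form that is a sum of squares of homogeneous polynomials, and non-SOS of a homogeneous form over an ordered field follows from the $\R$ case), so the appeal to a real closed coefficient field is not merely unjustified, it is also unnecessary at that spot; but the way you set it up does not compile.

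Second, the descent step is circular. You propose to produce ``a pre-image $f\in A$'' of the series $h\in\widehat{A_\gtm}$. The map $A\to\widehat{A_\gtm}$ is far from surjective, and Artin/Rotthaus approximation does not produce pre-images of prescribed elements; it approximates \emph{solutions of polynomial systems} by elements of the henselization. Worse, even if a pre-image existed, positive semidefiniteness of $h$ in $\widehat{A_\gtm}$ does not force positive semidefiniteness of $f$ in $A$: the real spectrum of $A$ contains prime cones that are not supported near $\gtm$ and are therefore invisible in $\Sper(\widehat{A_\gtm})$, so $f$ could be negative there. The implication you rely on (``PSD is preserved under localization and completion'') goes the opposite way to what you need: it moves $\psd(A)$ forward to $\psd(\widehat{A_\gtm})$, not back. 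The workable reduction (used in \cite{frs1} and visible in miniature in the way this paper applies Rotthaus and Corollary~\ref{scht}) is to show that $\psd(A)=\Sos A$ would \emph{imply} $\psd(\widehat{A_\gtm})=\Sos{\widehat{A_\gtm}}$, and this step genuinely requires controlling how many squares are used, i.e.\ a Pythagoras-number argument intertwined with Strong Artin approximation, none of which appears in your sketch.

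Third, you have implicitly assumed $A_\gtm$ is regular. If it is not, $\widehat{A_\gtm}\cong\kappa[[\x_1,\ldots,\x_n]]/\gta$ with $\gta\neq0$, and the sextic in three of the $\x_i$ could easily become a sum of squares modulo $\gta$. A purely Motzkin-type obstruction is not robust under passage to quotients. For exactly this reason the arguments in this circle of ideas use much lower-degree obstructions (in the present paper, compare Lemma~\ref{genlist}, where a quadratic form $q$ does the work) together with careful real-spectral and valuation-theoretic reasoning along the given specialization chain $\beta\to\alpha$, rather than a fixed explicit sextic.
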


The previous result involves the concept of real dimension. Given two prime cones $\alpha,\beta\in\Sper(A)$, we say that $\alpha$ is a \emph{specialization} of $\beta$ (written $\beta\to \alpha$) if $f>_\alpha0$ implies $f>_\beta0$ for each $f\in A$. This implies $\gtq:=\supp(\beta)\subset\supp(\alpha)=:\gtp$. We set $\dim(\beta\to\alpha):=\dim(A_{\gtp}/\gtq A_{\gtp})$, and define the \emph{real dimension} of $A$ as
\begin{center}
$\dim_r(A):=\sup\{\dim(\beta\to\alpha)$: $\alpha$, $\beta\in
\Sper(A)$, $\beta\to\alpha\}$.
\end{center}
Therefore, $\dim_r(A)\le\dim(A)$. Let us show that this inequality is an equality in case $(A,\gtm)$ is a local henselian noetherian ring such that $\psd(A)=\Sos{A}$. By \cite[Prop.II.2.4]{abr} each non-refinable specialization chain finishes on a prime cone $\alpha$ whose support is $\gtm$. If $A$ is in addition noetherian and has the property $\psd(A)=\Sos{A}$, then $A$ is real (reduced), so all its minimal prime ideals are real \cite[Lem.4.1.5]{bcr}. Thus, $\dim_r(A)=\sup\{\dim(A/\supp(\beta)):\ \beta\in\Sper(A)\}=\dim(A)$. Consequently, if $A$ is a local excellent henselian ring with the property $\psd(A)=\Sos{A}$, then $\dim(A)\leq2$.

The one dimensional case was entirely solved by Scheiderer in \cite[\S3]{sch2}. The most conclusive case concerns the one when the residue field $\kappa:=A/\gtm$ is real closed. Recall that a prime ideal $\gtp$ of a ring $A$ is \em associated to $A$ \em if there exists a non-zero element $x\in A$ such that $\gtp=\{a\in A:\ ax=0\}$. The set of all associated prime ideals of $A$ is denoted with $\Ass(A)$.

\begin{thm}[{\cite[Thm.3.9]{sch2}}]
Let $(A,\gtm)$ be a one-dimensional local Nagata ring with (formally) real residue field $\kappa$ and completion $\widehat{A}$, and assume $\gtm\not\in\Ass(A)$. Consider the following conditions:
\begin{enumerate}
\item[(i)] $\psd(A)=\Sos{A}$.
\item[(ii)] $\psd(\widehat{A})=\Sos{\widehat{A}}$.
\item[(iii)] There is $n\geq 1$ such that $\widehat{A}\cong\kappa[[\x_1,\ldots,\x_n]]/(\x_i\x_j:\ i<j)$.
\end{enumerate}
Then the following assertions hold:
\begin{enumerate}
\item[(1)] Each of these conditions implies that $A$ is reduced.
\item[(2)] Conditions \em (i) \em and \em (ii) \em are equivalent, and both are implied by \em (iii)\em.
\item[(3)] If $\kappa$ is real closed, then all three conditions are equivalent.
\end{enumerate}
\end{thm}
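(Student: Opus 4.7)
My first step is part (1). For (i), the real hypothesis on $\kappa$ lifts to a non-empty real spectrum of $A$: any ordering of $\kappa$ pulls back through $A\to\kappa$ to a prime cone supported at $\gtm$, so $\Sper(A)\neq\varnothing$, and the criterion \cite[Lem.6.3]{sch1} recalled in the excerpt then forces $A$ to be real reduced. The same argument applied to $\widehat{A}$ gives reducedness of $\widehat{A}$ from (ii), and reducedness descends to $A$ via the faithfully flat inclusion $A\hookrightarrow\widehat{A}$. For (iii) the explicit ring $\kappa[[\x_1,\ldots,\x_n]]/(\x_i\x_j:i<j)$ is a finite union of coordinate lines and is manifestly reduced because its minimal primes $(\x_j:j\neq i)$ have zero intersection; again reducedness transfers to $A$ through the faithfully flat completion.

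For part (2), I would prove (iii)$\Rightarrow$(ii) by exploiting the canonical normal form in $\widehat{A}$: every element writes uniquely as $c+g_1(\x_1)+\cdots+g_n(\x_n)$ with $c\in\kappa$ and $g_i\in\x_i\kappa[[\x_i]]$. Restricting along each coordinate axis shows that a psd element of $\widehat{A}$ forces each $c+g_i$ to be psd in $\kappa[[\x_i]]$, and the classical structure of psd elements in a one-variable power series ring over a real field provides axial sums-of-squares representations; glueing these using $\x_i\x_j=0$ for $i\neq j$ reassembles a decomposition of $f$ in $\widehat{A}$. The equivalence (i)$\Leftrightarrow$(ii) is a transfer principle between $A$ and $\widehat{A}$: the Nagata property guarantees that the formal fibers behave well, so the correspondence between (real) prime cones of $A$ and those of $\widehat{A}$ is controlled, and the hypothesis $\gtm\not\in\Ass(A)$ prevents embedded components from obstructing lifts; combining this with faithful flatness and the one-dimensional setting allows a sums-of-squares representation in either ring to be transferred to the other.

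The heart of the theorem is (3), specifically the converse (ii)$\Rightarrow$(iii) when $\kappa$ is real closed. By Cohen's structure theorem I would write $\widehat{A}=\kappa[[\x_1,\ldots,\x_n]]/I$ with $n$ the embedding dimension of $\widehat{A}$, and then aim to identify $I=(\x_i\x_j:i<j)$. The strategy is contrapositive: assuming $\widehat{A}$ is not of this coordinate-axes form, I would produce a psd element that fails to be a sum of squares. Since $\widehat{A}$ is one-dimensional and reduced by part (1), its minimal primes define a finite set of analytic branches; using that $\kappa$ is real closed (so every positive element is a square and Hensel's lemma lifts square roots along each branch), I would analyze the tangent cones of these branches. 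If some branch is singular, or if two branches share a tangent direction, or if the branches fail to span the cotangent space, local obstructions of the type developed in \cite{sch2} yield an explicit psd element of $\widehat{A}$ that is not a sum of squares, contradicting (ii). The principal difficulty is this contrapositive case analysis together with the bookkeeping that matches the number and shape of the branches with the embedding dimension.
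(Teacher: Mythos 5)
The statement you are proving is quoted in this paper as a known result of Scheiderer (cited as \cite[Thm.3.9]{sch2}); the paper supplies no proof of its own, so there is nothing to compare your argument against directly. Judged on its own terms, your proposal is a reasonable high-level sketch, but it leaves the two hardest steps unsubstantiated.

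Parts (1) and the implication (iii)\,$\Rightarrow$\,(ii) of part (2) are essentially on track. The normal form $c+g_1(\x_1)+\cdots+g_n(\x_n)$ does hold, restriction along each branch forces $c+g_i\in\psd(\kappa[[\x_i]])=\Sos{\kappa[[\x_i]]}$, and the cross terms $a_{ij}a_{i'j}$ for $i\neq i'$ vanish in $\widehat{A}$ once the squared elements have zero constant term, so the representations glue. You should, however, split into the unit and non-unit cases as the paper does in the proof of Theorem~\ref{nonprincipal}: when $c\neq 0$ you cannot ``add up'' the branchwise representations (the constant term would be overcounted), and the correct move is to factor $f$ as $c$ times a unit with residue $1$, a square by Hensel, with $c\in\psd(\kappa)=\Sos{\kappa}$. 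That is a small but necessary repair.

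The genuine gaps are elsewhere. First, the equivalence (i)\,$\Leftrightarrow$\,(ii). You invoke ``Nagata,'' ``$\gtm\notin\Ass(A)$'' and ``faithful flatness'' as if they automatically transfer sums-of-squares representations between $A$ and $\widehat{A}$. They do not. Faithful flatness sends $\Sos{A}$ into $\Sos{\widehat{A}}$, not the other way, and $A$ is not assumed henselian or excellent, so you cannot appeal to the Artin-approximation mechanism the paper sets up for its own (henselian excellent) rings. Both implications require a concrete descent argument from $\widehat{A}$ to $A$; this is exactly where the Nagata hypothesis (finiteness of normalization in dimension one) and $\gtm\notin\Ass(A)$ are actually put to work, and none of that appears in your sketch. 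As written, neither direction is proved. Second, the converse (ii)\,$\Rightarrow$\,(iii) for $\kappa$ real closed. Saying that a contrapositive ``case analysis'' will ``yield an explicit psd element that is not a sum of squares'' is a placeholder for the content, not the content. The mechanism that actually drives this step is an initial-form obstruction: if $f\in\Sos{\widehat{A}}$ with $\widehat{A}=\kappa[[\x]]/I$, the leading form of $f$ must be a sum of squares of forms modulo the initial ideal of $I$, and when two branches share a tangent, or a branch is singular, or the branches fail to span the cotangent space, one exhibits an element that is psd branch by branch while its initial form is visibly not a sum of squares. This is exactly the style of argument the paper executes in dimension two (Lemma~\ref{genlist}, Lemma~\ref{ord2}, Lemma~\ref{ord3}, Example~\ref{dim1}). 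Your plan names the right geometric conditions but supplies no such construction; the ``bookkeeping'' you defer is, in fact, the entire proof of part (3).
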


Thus, we will focus on determining all local excellent henselian rings $(A,\gtm)$ of dimension $\leq2$ and embedding dimension $\leq3$ with the property $\psd(A)=\Sos{A}$. In the $2$-dimensional regular case, the most general result is the following:

\begin{thm}[{\cite[Thms.4.1 \& 4.8]{sch2}}]
If $A$ is a $2$-dimensional regular semilocal ring, then $\psd(A)=\Sos{A}$. In particular, for every field $\kappa$ we have that $\psd(\kappa[[\x,\y]])=\Sos{\kappa[[\x,\y]]}$.
\end{thm}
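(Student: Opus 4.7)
The strategy is to reduce from the semilocal case to the local case, and then to exploit the UFD structure of $2$-dimensional regular local rings (Auslander--Buchsbaum) together with the $1$-dimensional theorem of Scheiderer cited in the excerpt.

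\emph{Reduction to the local case.} Let $A$ be a $2$-dimensional regular semilocal ring with maximal ideals $\gtm_{1},\ldots,\gtm_{s}$. Each localization $A_{\gtm_{i}}$ is a $2$-dimensional regular local ring. Given $f\in\psd(A)$, one first produces local representations $s_{i}^{2}f=\sum_{j}g_{ij}^{2}$ in $A$ with $s_{i}\notin\gtm_{i}$ (using the local case, treated below). Since $\operatorname{Max}(A)$ is finite, one can arrange the $s_{i}^{2}$ to generate the unit ideal, and a Pfister-type patching (using the multiplicative closure of sums of $2^{n}$-squares and the identity for $a$ as a difference of two squares twisted by the $s_i^2$) then assembles these local identities into a single global sum-of-squares expression for $f$.

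\emph{The local case.} Let $(A,\gtm)$ be a $2$-dimensional regular local ring, so by Auslander--Buchsbaum $A$ is a UFD. Given $f\in\psd(A)$ I would factor $f=u\cdot s^{2}\cdot \pi_{1}\cdots \pi_{r}$ with $u\in A^{\times}$, $s\in A$ and pairwise non-associated primes $\pi_{i}$ of squarefree part. For each $\pi_{i}$ the fraction field $\qf(A/(\pi_{i}))$ must fail to be formally real: otherwise the valuation-theoretic specialization attached to the height-$1$ prime $(\pi_{i})$ produces a prime cone of $A$ along which $\pi_{i}$ (and hence $f$) changes sign, contradicting $f\in\psd(A)$. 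Non-realness of $\qf(A/(\pi_{i}))$ translates into an identity $b_{i}^{2}+\sum_{j}a_{ij}^{2}=\pi_{i}q_{i}$ in $A$, with $b_{i}\notin(\pi_{i})$. The representation of $f$ as a sum of squares is then built in two steps: (a) write the psd unit $u$ as a sum of squares via Hensel's lemma (noting that $u(\gtm)$ is positive in the residue field $\kappa$ and that the $1$-dim theorem of Scheiderer controls the residue-field arithmetic); (b) represent the psd squarefree product $\pi_{1}\cdots\pi_{r}$ as a sum of squares. For (b) I would induct on $r$: the relation $\pi_{i}q_{i}\in\Sos{A}$ combined with Pfister multiplicativity allows one to ``trade'' $\pi_{i}$ with $q_{i}$ modulo sums of squares, and since $A/(\pi_{i})$ is $1$-dimensional, Scheiderer's $1$-dim theorem applied there governs the sum-of-squares arithmetic of $q_{i}\bmod \pi_{i}$ and closes the induction.

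\emph{Main obstacle.} The delicate point is part (b) of the reconstruction: ensuring that the auxiliary cofactors $q_{i}$ can be controlled \emph{within} $A$ rather than leaking into $\qf(A)$. This is exactly where both hypotheses are crucial: regularity provides the UFD structure (needed for the factorization) and the Hensel-based treatment of the unit, while the dimension bound $\dim(A)=2$ forces each $A/(\pi_{i})$ to be $1$-dimensional, bringing the already-established Scheiderer theorem into play. A further subtlety is the case when the residue field $\kappa$ is formally real but not real closed, which requires care in part (a) when extracting square roots of positive elements of $\kappa$; this is handled by lifting from the completion $\widehat{A}$ and applying the $1$-dim theorem to suitable $1$-dimensional quotients of $\widehat{A}$.
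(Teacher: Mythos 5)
The paper does not prove the full semilocal statement --- it cites Scheiderer --- but it does give its own proof of the $\kappa[[\x,\y]]$ case in Theorem \ref{liso}, and that proof is entirely different from yours. The paper reduces via Artin approximation (Lemma \ref{density} and Corollary \ref{polred}) to $f\in\psd(\kappa[\x,\y])$, passes to the (formally real) field $\kappa((\x))(\y)$ where Artin--Schreier gives $\psd=\Sos$, invokes Cassels' theorem to land the representation in $\kappa((\x))[\y]$, and finally clears powers of $\x$ by the observation that a sum of squares in $\kappa[[\x]][\y]$ vanishing at $\x=0$ has all its summands divisible by $\x$ (formal reality of $\kappa$). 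No UFD factorization of $f$, no prime-by-prime analysis, no Hensel step. So the two approaches are genuinely disjoint, and the paper's route is both shorter and handles the residue-field arithmetic automatically via Cassels.

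There are also concrete gaps in your sketch. (1) Hensel's lemma is invoked to write the psd unit $u$ as a sum of squares, but a $2$-dimensional regular local (or semilocal) ring need not be henselian --- the theorem is stated for arbitrary regular semilocal rings, and only the special case $\kappa[[\x,\y]]$ is complete. Without henselianness, $u/(\sum c_j^2)\in 1+\gtm$ is not automatically a square, and you give no other argument. (2) The semilocal-to-local patching via a ``Pfister-type'' difference-of-squares identity is not a standard multiplicativity statement; the semilocal case is exactly where Scheiderer's Theorem 4.8 has to work hard, and the one-sentence sketch does not produce a global identity from the local ones. (3) The inductive step (b) invokes ``Scheiderer's $1$-dim theorem'' for $A/(\pi_i)$, but that theorem (Theorem 3.9 of \cite{sch2}) is an \emph{equivalence} involving the structure of the completion, not an unconditional $\psd=\Sos$ statement for $1$-dimensional local rings; and since you have already shown $\qf(A/(\pi_i))$ is non-real, the real spectrum of $A/(\pi_i)$ is degenerate and it is unclear what ``psd arithmetic of $q_i\bmod\pi_i$'' should even mean, let alone how Pfister multiplicativity lets you ``trade'' $\pi_i$ for $q_i$ inside a fixed sum-of-squares cone in $A$. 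As written, the induction does not close.
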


The $2$-dimensional case, when $A$ is an analytic ring of dimension $2$ and embedding dimension $\leq3$ over the real numbers $\R$, has been studied in \cite{f2,f4,f5,f7,fr1,rz2}. The results presented in that articles can be adapted straightforwardly to the local henselian excellent case with real closed residue field \cite{f6}. We can summarize the main results as follows:

\begin{thm}[{\cite[Thm.1.3]{f2}}, \cite{f4,f5}]\label{list1}
Let $(A,\gtm)$ be a local henselian excellent ring of dimension $2$ and embedding dimension $3$. Assume that the residue field $\kappa:=A/\gtm$ is real closed. Then $\psd(A)=\Sos{A}$ if and only if the completion $\widehat{A}$ is isomorphic to
\begin{itemize}
\item[(1)] $\kappa[[\x,\y]]$ or
\item[(2)] $\kappa[[\x,\y,\z]]/(\z\x,\z\y)$ or 
\item[(3)] $\kappa[[\x,\y,\z]]/(\z^2-F(\x,\y))$ 
\end{itemize}
where $F\in\kappa[[\x,\y]]$ is one of the series in the following list:
\begin{itemize}
\item [(i)] $\x^2+\y^k$ where $k\geq 2$,
\item [(ii)] $\x^2$,
\item [(iii)] $\x^2\y+(-1)^k\y^k$ where $k\geq 3$,
\item [(iv)] $\x^2\y$,
\item [(v)] $\x^3+\x\y^3$, 
\item [(vi)] $\x^3+\y^4$,
\item [(vii)] $\x^3+\y^5$.
\end{itemize} 
\end{thm}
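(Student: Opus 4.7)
First I would establish the equivalence $\psd(A)=\Sos{A}$ if and only if $\psd(\widehat{A})=\Sos{\widehat{A}}$, thereby reducing the problem to a classification statement on the completion. One direction uses faithful flatness of $A\to\widehat{A}$; the other relies on an Artin-style approximation argument valid because $A$ is excellent henselian, together with the Nagata property. This reduction is carried out in \cite{f6} and allows us to work entirely with complete local rings $\widehat{A}=\kappa[[\x,\y,\z]]/I$ via the Cohen structure theorem.

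Next I would classify the candidate completions $\widehat{A}$, using that $\widehat{A}$ must be real (reduced)---a necessary condition for $\psd(\widehat{A})=\Sos{\widehat{A}}$ whenever $\Sper(\widehat{A})\neq\varnothing$. A case split based on whether $\widehat{A}$ is equidimensional distinguishes the hypersurface case from the mixed dimensional case. In the hypersurface case $\widehat{A}\cong\kappa[[\x,\y,\z]]/(G)$: Weierstrass preparation in the $\z$ variable together with a real-closed version of Arnold's classification of simple (ADE) surface singularities reduces $G$, up to isomorphism, to $\z^2-F(\x,\y)$ with $F$ in a finite candidate list; the items (i)--(vii) are precisely those that survive the sufficiency analysis below. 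In the mixed dimensional case, an analysis of the minimal primes of $\widehat{A}$ and their intersections yields the candidate $\kappa[[\x,\y,\z]]/(\z\x,\z\y)$ of case~(2).

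For sufficiency, case~(1) is Scheiderer's theorem on $2$-dimensional regular semilocal rings; case~(2) is handled by decomposing $\widehat{A}$ along its two regular components (one $2$-dimensional, one $1$-dimensional) and recombining squares across their intersection at the maximal ideal; case~(3) is verified singularity-by-singularity in \cite{f2,f4,f5}, the typical argument exploiting that each element of $\widehat{A}=\kappa[[\x,\y,\z]]/(\z^2-F)$ admits the decomposition $f=a(\x,\y)+\z b(\x,\y)$, and then applying the regular $2$-dimensional result to $a,b\in\kappa[[\x,\y]]$ together with identities tailored to the ADE type of $F$ (these are what constrain $F$ to the list (i)--(vii)).

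The main obstacle is the necessity direction: for every completion \emph{not} in the list one must exhibit an explicit psd element with no SOS representation. The excluded singularities form an infinite family parametrized by multiplicity and Newton polygon, and each requires its own witness, typically constructed via passage to the strict transform after an appropriate blow-up followed by a Pythagoras-number obstruction, or by specialization along a prime cone $\alpha$ whose residue field realizes a known obstruction to the sum-of-squares property in lower dimension. The borderline cases sitting just outside the ADE list (non-simple singularities of small multiplicity, or ADE-like series with the wrong sign pattern over $\kappa$) are the most delicate and account for the bulk of the work in \cite{f2,f4,f5,f7,fr1,rz2}.
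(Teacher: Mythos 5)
The statement you're proving is quoted in the paper as a citation to \cite{f2,f4,f5}; the paper itself does not reprove it, but Theorems \ref{gp=s} and \ref{list2} (together with Corollary \ref{unique}) give a new proof by specializing to $\kappa$ real closed, so it is fair to measure your sketch against that route.

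The broad skeleton of your proposal is sound: reduce to $\widehat A$ via Rotthaus/Artin approximation, split the completion into the hypersurface and mixed-dimensional cases, verify the affirmative cases against Scheiderer's regular result, and hunt for psd-not-sos witnesses in the excluded cases. However, there is a substantive gap in the middle step. Appealing to a ``real-closed version of Arnold's ADE classification'' to pin down $F$ in the hypersurface case $\z^2-F(\x,\y)$ does not reach the target list: items (ii) $\x^2$ and (iv) $\x^2\y$ are \emph{non-isolated} singularities with infinite modality and are simply not in the simple-singularity hierarchy, so a classification of simple singularities—real or complex—cannot produce them. The paper avoids this entirely by proving sharp \emph{order bounds} from the psd-equals-sos hypothesis: Lemma \ref{genlist} forces $\omega(\gta)=2$ (so $\widehat A$ is either smooth, a hypersurface $\z^2-F$, or the two-component ring), and Lemma \ref{genord} forces $\omega(F)\leq 3$, both by constructing a single explicit psd-not-sos witness ($q=(N^2r^2+1)^2\|\x'\|^2-\x_n^2$, respectively $M(\x^2+\y^2)+\z$) rather than quoting any classification theorem. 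Only after these bounds does one invoke Weierstrass preparation, the Tschirnhaus trick, and \emph{finite determinacy} (Theorem \ref{fdqd}, the Examples \ref{examples}) to normalize $F$ to a short list of polynomial representatives, and the sign constraints in item (iii) ($\x^2\y+(-1)^k\y^k$, not $\x^2\y-(-1)^k\y^k$) are extracted by direct witness arguments (Lemma \ref{ord3}), not by a classification of real forms of $D_k$.

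A second, smaller mismatch: you place blow-ups and Pythagoras-number obstructions on the \emph{necessity} side. In the paper (and in the cited $\R$-analytic references) the quadratic substitution/blow-up machinery (Lemma \ref{blowup}, the erasure of denominators Lemma \ref{grd}, the Elephant's Theorem \ref{et}) is the engine of the \emph{sufficiency} direction—reducing $\psd(A)=\Sos{A}$ to the known regular two-dimensional case, and the finiteness of $p(A)$ (hence of $\tau(\kappa)$) is what licenses Strong Artin approximation to close the argument. The necessity direction is the easier one: once the order bounds kill all sufficiently degenerate $F$ at a stroke, only finitely many normal forms with a handful of sign/coefficient parameters remain, and each is dispatched with a short ad-hoc witness, not ``its own witness'' drawn from an infinite Newton-polygon-parametrized family as you describe. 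So your proposal is not so much wrong as inverted: the classification step you treat as routine is where the real bespoke work happens, and the witness-hunting you treat as the main obstacle is actually a small finite check after the order bounds are in place.
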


\subsection*{Main results}
In this article we prove that if we do not impose that $\kappa$ is a real closed field, the amount of possible candidates increases because $\kappa$ may have more than one ordering and $k$th roots of positive elements of $\kappa$ need not to belong to $\kappa$. In order to apply freely Rotthaus results \cite{rt} on Artin's approximation Theorem, one needs that the involved ring $A$ we are working with contains a copy of $\Q$. As we deal with local henselian rings $(A,\gtm)$ with non-empty real spectrum, it is enough to ask that $\frac{1}{2}\in A$ (or equivalently that $2\not\in\gtm$). If such is the case, non-zero integers are units of $A$ (or equivalently, non-zero integers do not belong to $\gtm$) and consequently $A$ contains a copy of $\Q$. Otherwise there exists an integer $n\geq3$ such that $n\in\gtm$ and the roots of the polynomial equation $\t^2+n-1=0$ modulo $\gtm$ are $\pm1$. As $2\not\in\gtm$, both roots are simple. As $A$ is a henselian ring, there exists $a\in A$ such that $a+\gtm=1+\gtm$ and $a^2+1^2+\overset{(n-2)}{\cdots}+1^2=a^2+n-2=-1$ in $A$, against the fact that $A$ has non-empty real spectrum.

\begin{thm}[List of candidates]\label{gp=s}
Let $(A,\gtm)$ be a local henselian excellent ring of dimension $2$ and embedding dimension $3$. Assume that $A$ has non-empty real spectrum and $\frac{1}{2}\in A$. If $\psd(A)=\Sos{A}$, then the completion $\widehat{A}$ is isomorphic to
\begin{itemize}
\item[(1)] $\kappa[[\x,\y]]$ or
\item[(2)] $\kappa[[\x,\y,\z]]/(\z\x,\z\y)$ or 
\item[(3)] $\kappa[[\x,\y,\z]]/(\z^2-F(\x,\y))$ 
\end{itemize}
where $F\in\kappa[[\x,\y]]$ is one of the series in the following list:
\begin{itemize}
\item[(i)] $a\x^2+b\y^{2k}$ where $a\not\in-\Sos{\kappa}$, $b\neq0$ and $k\geq 1$,
\item[(ii)] $a\x^2+\y^{2k+1}$ where $a\not\in-\Sos{\kappa}$ and $k\geq 1$,
\item[(iii)] $a\x^2$ where $a\not\in-\Sos{\kappa}$, 
\item[(iv)] $\x^2\y+(-1)^ka\y^k$ where $a\not\in-\Sos{\kappa}$ and $k\geq 3$,
\item[(v)] $\x^2\y$,
\item[(vi)] $\x^3+a\x\y^2+b\y^3$ irreducible,
\item[(vii)] $\x^3+a\y^4$ where $a\not\in-\Sos{\kappa}$, 
\item[(viii)] $\x^3+\x\y^3$,
\item[(ix)] $\x^3+\y^5$. 
\end{itemize}
\end{thm}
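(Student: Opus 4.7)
The plan is organized in three stages. \emph{Stage I: descent to the completion.} The hypotheses $\frac{1}{2}\in A$ and $\Sper(A)\ne\varnothing$ together force $\Q\subset A$, as explained in the paragraph preceding the theorem. This activates Rotthaus' version of Artin approximation \cite{rt}, and a standard approximation argument yields
$$
\psd(A)=\Sos{A}\ \Longleftrightarrow\ \psd(\widehat{A})=\Sos{\widehat{A}}.
$$
A sum-of-squares decomposition in $\widehat{A}$ can be $\gtm$-adically approximated inside $A$, and faithful flatness of $A\to\widehat{A}$ converts the approximation into a genuine identity in $A$. From here on the problem is about the completion.

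\emph{Stage II: the possible shapes of $\widehat{A}$.} Cohen's structure theorem gives $\widehat{A}\cong\kappa[[\x,\y,\z]]/I$ with $I\subset(\x,\y,\z)^{2}$ when the embedding dimension is $3$; the embedding-dimension-$2$ case immediately yields (1). The hypothesis $\psd(\widehat{A})=\Sos{\widehat{A}}$ forces $\widehat{A}$ to be real reduced by \cite[Lem.6.3]{sch1}, so $I$ is a radical ideal whose minimal primes are all real. Since $\kappa[[\x,\y,\z]]$ is a regular UFD, one enumerates the radical ideals $I\subset\gtm^{2}$ with $\dim\kappa[[\x,\y,\z]]/I=2$. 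If $I$ is principal, Weierstrass preparation in $\z$ combined with completing the square (valid because $2\in A^{\times}$) puts $I$ in the form $(\z^{2}-F(\x,\y))$, yielding (3). If $I$ is non-principal, the only real minimal-prime configuration compatible with embedding dimension $3$, dimension $2$ and $\psd=\Sos$ is $I=(\z)\cap(\x,\y)=(\x\z,\y\z)$, yielding (2); any other mixed-height or multi-component configuration is ruled out by exhibiting an explicit positive semidefinite element that is not a sum of squares, following the gluing obstructions used in \cite{f2,f5}.

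\emph{Stage III: classify the admissible $F$ in case (3).} For each ordering $\leq$ of $\kappa$ with real closure $\kappa^{r}$, the base change $\widehat{A}\otimes_{\kappa}\kappa^{r}\cong\kappa^{r}[[\x,\y,\z]]/(\z^{2}-F)$ inherits $\psd=\Sos$, so by Theorem \ref{list1} it is isomorphic to one of the normal forms (i)--(vii) listed there. The descent back to $\kappa$ consists in identifying which changes of variable over $\kappa^{r}$ are defined over $\kappa$: a scalar $a$ can be absorbed via $\x\mapsto\sqrt{a}\,\x$ only when $a$ is a square in $\kappa$, and the weakest surviving sign restriction compatible with $\psd=\Sos$ is $a\not\in-\Sos\kappa$, equivalent to $a$ being positive at some ordering of $\kappa$. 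This descent is the principal obstacle of the proof: one must coordinate the geometric normal forms of Theorem \ref{list1}, valid at each ordering separately, with the arithmetic structure of $\kappa$, while verifying that the surviving scalar parameters $a$, $b$ and the irreducibility hypothesis of the binary cubic in (vi) encode genuine obstructions to merging several orderings of $\kappa$ into the single ordering available over $\kappa^{r}$, and that no further reduction is possible without breaking $\psd=\Sos$. The hypothesis $\frac{1}{2}\in A$ plays a second role here, simultaneously permitting Weierstrass completion of the square in $\z$ and underwriting the binary-form reductions over $\kappa$.
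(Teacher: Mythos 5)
Your Stage III contains the fatal flaw. You claim that if $\psd(\widehat{A})=\Sos{\widehat{A}}$, then for any ordering of $\kappa$ with real closure $\kappa^{r}$, the base change $\kappa^{r}[[\x,\y,\z]]/(\z^{2}-F)$ \emph{inherits} $\psd=\Sos$, and then you apply Theorem~\ref{list1}. This implication is false, and the paper explicitly flags this as the surprising subtlety of case (3.vi): take $\kappa=\Q$ and $F=\x^{3}+2\y^{3}$. Then $\psd(A)=\Sos{A}$ for $A=\Q[[\x,\y,\z]]/(\z^{2}-F)$, but over the real closure $R$ of $\Q$ one has $F=(\x+\sqrt[3]{2}\y)\bigl((\x-\tfrac{1}{2}\sqrt[3]{2}\y)^{2}+\tfrac{3}{4}\sqrt[3]{4}\y^{2}\bigr)$ and $\x+\sqrt[3]{2}\y\in\psd(B)\setminus\Sos{B}$ for $B=R[[\x,\y,\z]]/(\z^{2}-F)$. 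So $\psd(B)\neq\Sos{B}$: the property does not persist under base change to a real closure. (Your isomorphism $\widehat{A}\otimes_{\kappa}\kappa^{r}\cong\kappa^{r}[[\x,\y,\z]]/(\z^{2}-F)$ is also wrong when $\kappa^{r}/\kappa$ is infinite, but even replacing the tensor product by $\kappa^{r}[[\x,\y,\z]]/(\z^{2}-F)$ does not save the argument.) The direction you need runs the wrong way: $\Sos{A}\subset\Sos{B}$ and $\psd(A)=\psd(B)\cap A$, but $\psd(B)$ contains elements outside $A$ that need not be sums of squares in $B$. Your plan therefore eliminates case (vi) incorrectly whenever $F(\x,1)$ has a single real root, which is precisely the interesting situation; the irreducibility of the binary cubic over $\kappa$ is an arithmetic condition that vanishes over $\kappa^{r}$ and cannot be recovered by descent from Theorem~\ref{list1}.

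The paper never base changes to a real closure. It works entirely over $\kappa$: Lemma~\ref{genlist} forces $\omega(\gta)=2$; Corollary~\ref{genlistdim2} reduces to $\kappa[[\x,\y]]$ or a principal $(\z^{2}-F)$; Lemma~\ref{genord} bounds $\omega(F)\leq 3$; Lemmas~\ref{ord2}, \ref{ord3}, \ref{clue0} classify $F$ of order $2$ and $3$ using Weierstrass preparation, the Tschirnhaus trick, and finite (quasi)determinacy over $\kappa$ (Theorem~\ref{fdqd}, Examples~\ref{examples}), producing explicit elements of $\psd\setminus\Sos{}$ whenever $F$ falls outside the list; and Theorem~\ref{nonprincipal} handles the non-principal ideal. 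Your Stage II is consistent with this, modulo the fact that $\omega(I)=2$ requires an argument (Lemma~\ref{genlist}) rather than just Cohen's theorem. Stage I is stated too loosely — the biconditional $\psd(A)=\Sos{A}\iff\psd(\widehat{A})=\Sos{\widehat{A}}$ does not follow from faithful flatness alone and needs either a finite Pythagoras number or an argument that the constructed witnesses can be taken in $A$ — but the essential gap is Stage III, which cannot be repaired while keeping the real-closure base change as the central device.
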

\begin{remark}
The assumption \em $A$ has non-empty real spectrum and $\frac{1}{2}\in A$ \em is equivalent to the condition \em the residue field $\kappa$ of $A$ is (formally) real\em. The implication right to left is clear. To prove the converse assume that there exist elements $a_1,\ldots,a_r\in A\setminus\gtm$ such that $a_1^2+\cdots+a_r^2=-1$ modulo $\gtm$, so $a_1$ is a simple root of the polynomial equation $\t^2+a_2^2+\cdots+a_r^2+1=0$ modulo $\gtm$ (because $2\not\in\gtm$). As $A$ is a henselian ring, we may assume that $a_1$ is a root of $\t^2+a_2^2+\cdots+a_r^2+1=0$ in $A$, against the non-emptiness of the real spectrum of $A$.
\end{remark}

\subsubsection*{Pythagoras numbers and $\tau$-invariant.} An important invariant that allows us to use Artin's approximation techniques when approaching a converse to the previous result is Pythagoras number. The \em Pythagoras number $p(A)$ \em of a ring $A$ is the smallest integer $p\ge1$ such that every sum of squares of $A$ is a sum of $p$ squares. We write $p(A)=+\infty$ if such an integer does not exist. If we denote the elements of $A$ that are sums of $p$ squares in $A$ with $\Sosp{A}$, then $p(A)=\inf\{p\geq1:\ \Sos{A}=\Sosp{A}\}$. This is a very delicate invariant whose estimation (quantitative problem) has deserved a lot of attention from specialists in number theory, quadratic forms, real algebra and real geometry \cite{bcr,cdlr,clrr,l,pf,sch1,sch2}. In \cite{f4,f5} we proved that the local henselian excellent rings of dimension $2$ and embedding dimension $3$ with Pythagoras number $2$ coincide essentially with those in the list provided in Theorem \ref{list1}. In \cite{f7} we showed `positive extension properties' for the elements of the list inside Theorem \ref{list1}, whereas in \cite{fr3} we analyze relations between the Pythagoras numbers of real analytic germs.

In \cite{f1} we showed that a local henselian excellent ring of dimension $\leq2$ and real closed residue field $\kappa$ has a finite Pythagoras number. In \cite{frs2} we improved the previous result and proved the following.

\begin{thm}[{\cite[Prop.2.7, Thm.2.9]{frs2}}]\label{pyth}
Let $(A,\gtm)$ be a local henselian excellent ring of dimension $2$ such that its residue field $\kappa$ satisfies $p(\kappa[\t])<+\infty$. Let $m$ be the number of generators of the completion $\widehat{A}$ as a $\kappa[[\x_1,\x_2]]$-module. Then $p(A)\leq 2p(\kappa[\t])m$.
\end{thm}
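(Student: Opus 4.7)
The strategy is to split the bound into a reduction from $A$ to its completion $\widehat A$ via Artin approximation, followed by a structural estimate of $p(\widehat A)$ using the prescribed presentation of $\widehat A$ as a module-finite extension of $R:=\kappa[[\x_1,\x_2]]$.

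First I would invoke Rotthaus's version of Artin approximation \cite{rt}: because the Remark after Theorem \ref{gp=s} ensures $\tfrac{1}{2}\in A$ (and hence $\Q\subset A$), every system of polynomial equations with coefficients in $A$ that is solvable in $\widehat A$ is already solvable in $A$. Applied to the equation $f=\y_1^2+\cdots+\y_s^2$ in the unknowns $\y_1,\ldots,\y_s$, this yields that if $f\in A$ is a sum of $s$ squares in $\widehat A$, then it is a sum of $s$ squares in $A$. Consequently $p(A)\leq p(\widehat A)$, and it suffices to prove $p(\widehat A)\leq 2mp(\kappa[\t])$.

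The main step then consists of two sub-estimates. The first is a bound for the regular base ring, $p(R)\leq 2p(\kappa[\t])$, adapting Scheiderer's methods for $2$-dimensional regular local rings \cite[\S4]{sch2}: a sum of squares $f\in R$ is analyzed through its $\x_2$-adic expansion with coefficients in $\kappa[[\x_1]]$, whose Pythagoras number is controlled by $p(\kappa[\t])$ via a Hensel/Artin-type density argument that compares quadratic forms over $\kappa[[\x_1]]$ with quadratic forms over $\kappa[\t]$. The factor $2$ arises from handling the leading-term behaviour (typically through a quadratic substitution separating odd and even parts). The second sub-estimate is a module-finite transfer: given $f=g_1^2+\cdots+g_s^2\in\widehat A$ and the presentation $g_i=\sum_{j=1}^m r_{ij}e_j$ in the prescribed generators $e_1,\ldots,e_m$, one uses the polarization identity $2ab=(a+b)^2-a^2-b^2$ (legitimate since $\tfrac{1}{2}\in A$) to linearize the cross terms $r_{ij}r_{ik}e_je_k$ and reorganize $f$ as a combination of at most $m\,p(R)$ squares, exploiting that each product $e_je_k$ lies in $\widehat A$ and can be reabsorbed into the sum. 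Combining the two bounds gives $p(\widehat A)\leq mp(R)\leq 2mp(\kappa[\t])$.

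The main obstacle is the module-finite transfer estimate: a naive expansion of $g_i^2$ produces $m^2$ cross terms per square, so obtaining a bound linear in $m$ rather than quadratic requires a careful reorganization of the contributions generator-by-generator instead of pair-by-pair, using the multiplicative structure of $\widehat A$ in a crucial way. Verifying the factor $2$ in the regular base case $p(R)\leq 2p(\kappa[\t])$ --- a statement of independent interest cited as \cite[Prop.2.7]{frs2} --- is the other delicate ingredient, and relies on controlling how sums of squares in $\kappa[[\x_1]]$ descend to sums of squares in $\kappa[\t]$ under approximation.
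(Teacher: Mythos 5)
The statement is quoted in the paper from \cite[Prop.2.7, Thm.2.9]{frs2} without proof, so your proposal has to be measured against the argument in that reference. Your first reduction is fine: Rotthaus's approximation theorem applied to the equation $f=\y_1^2+\cdots+\y_s^2$ does give $p(A)\leq p(\widehat{A})$, and the bound $p(\kappa[[\x_1,\x_2]])\leq 2p(\kappa[\t])$ for the regular base is indeed the content of \cite[Prop.2.7]{frs2} (compare the chain of inequalities \eqref{p2} in the paper).

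The genuine gap is in your module-finite transfer step. Writing $f=\sum_{i=1}^s g_i^2$ with $g_i=\sum_{j=1}^m r_{ij}e_j$ gives $f=q(e_1,\ldots,e_m)$ where $q(\t_1,\ldots,\t_m):=\sum_{i=1}^s\bigl(\sum_{j=1}^m r_{ij}\t_j\bigr)^2$ is a quadratic form over $R:=\kappa[[\x_1,\x_2]]$ that is a sum of $s$ squares of \emph{linear} forms, with $s$ a priori unbounded. What is needed is that any such form is a sum of at most $2p(\kappa[\t])\,m$ squares of linear forms over $R$ --- one must preserve linearity in the $\t_j$ so that the substitution $\t_j\mapsto e_j$ lands back in $\Sos{\widehat{A}}$. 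This is precisely the main theorem of \cite{frs2} and it is \emph{not} a formal consequence of $p(R)\leq 2p(\kappa[\t])$: the off-diagonal Gram coefficients $\sum_i r_{ij}r_{il}$ need not be sums of squares, and your proposed polarization $2ab=(a+b)^2-a^2-b^2$ introduces negative squares that cannot be ``reabsorbed,'' so the sketched reorganization does not produce a sum of squares at all. The actual proof in \cite{frs2} proceeds by induction on $m$: using that the form represents an element of $\Sos{R}$, one splits off a rank-one piece $c\,L^2$ with $c\in\Sos{R}$ and $L$ linear, rewrites it as at most $2p(\kappa[\t])$ squares of linear forms via Prop.~2.7, and continues with a form in $m-1$ variables; this uses quadratic-form-theoretic input over the $2$-dimensional regular local ring (residue forms, cancellation) that your outline does not supply. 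Correctly identifying this linear-forms statement as the required lemma --- rather than the ring-theoretic bound $p(\widehat{A})\leq m\,p(R)$ --- is the essential missing idea.
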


For a higher dimension we proved in \cite{f1,frs1} that a real (reduced) ring $A$ of dimension $\geq3$ has an infinite Pythagoras number. The invariant $p(\kappa[\t])$ has been bounded by Scheiderer in \cite{sch2} following \cite{pf}. For a (formally) real field $\kappa$ define the invariant
$$
\tau(\kappa):=\sup\{s(F):\ F|\kappa \text{ finite, non-real}\},
$$ 
where $s(F)$ denotes the \em level of $F$\em, that is, the minimum number of elements of $F$ needed to represent $-1$ as a sum of squares in $F$, which is always a power of $2$, see \cite[Pfister's Thm.XI.2.2]{l}. Scheiderer proved in \cite[Prop.5.17]{sch2} (using in an essential way Pfister's results \cite{pf}) the following inequalities:
\begin{equation}\label{p2}
1+\tau(\kappa)\leq p(\kappa[\y])\leq p(\kappa[[\x,\y]])\leq p(\kappa[[\x]][\y])\leq 2\tau(\kappa).
\end{equation}
In addition, $\tau(\kappa)=\tau(\kappa((\x)))$, see \cite[Lem.5.13]{sch2}. The Pythagoras number $p(\kappa[[\x,\y]])$ has been also studied by Hu in \cite[\S3]{hu} where he showed that
\begin{multline*}
p(\kappa[[\x,\y]])=p(\kappa[\x][[\y]])=p(\kappa[[\x]][\y])=p(\kappa((\x,\y)))=p(\qf(\kappa[\x][[\y]]))\\
=p(\kappa((\x))(y))=\sup\{p(K(\x)):\ K|\kappa\text{ is a finite field extension}\}.
\end{multline*}
The last equality is due to Becher-Grimm-Van Geel \cite{bgv}. It is conjectured in \cite[Conj.4.16]{bgv} that the inequality $p(\kappa[\y])\leq p(\kappa((\x))(\y))$ is in fact an equality or, equivalently, that $p(K(\x))\leq p(\kappa(\x))$ for each finite extension $K|\kappa$. Recall that $4=p(\Q)\leq p(\Q[\y])=p(\Q((\x,\y)))=p(\Q[[\x,\y]])=5$ (see \cite{p} and \cite[Rem.3.5]{hu}), so $\tau(\Q)=4$ (use \eqref{p2}).

As a kind of converse of Theorem \ref{gp=s} we prove the following result.

\begin{thm}[Affirmative cases]\label{list2}
Let $(A,\gtm)$ be a local henselian excellent ring of dimension $2$ and embedding dimension $3$. Suppose that the residue field $\kappa$ is (formally) real and $\tau(\kappa)<+\infty$. Assume that the completion $\widehat{A}$ is isomorphic to: 
\begin{itemize}
\item[(1)] $\kappa[[\x,\y]]$ or
\item[(2)] $\kappa[[\x,\y,\z]]/(\z\x,\z\y)$ or 
\item[(3)] $\kappa[[\x,\y,\z]]/(\z^2-F(\x,\y))$,
\end{itemize}
where $F\in\kappa[[\x,\y]]$ is one of the series in the following list: 
\begin{itemize}
\item[(i)] $a\x^2+b\y^{2k}$ where $a\in\Sos{\kappa}$, $a,b\neq0$ and $k\geq 1$,
\item[(ii)] $a\x^2+\y^{2k+1}$ where $a\in\Sos{\kappa}$, $a\neq0$ and $k\geq 1$,
\item[(iii)] $a\x^2$ where $a\in\Sos{\kappa}$ and $a\neq0$,
\item[(iv)] $\x^2\y+(-1)^ka\y^k$ where $a\not\in-\Sos{\kappa}$ and $k\geq 3$,
\item[(v)] $\x^2\y$,
\item[(vi)] $\x^3+a\x\y^2+b\y^3$ irreducible,
\item[(vii)] $\x^3+a\y^4$ where $a\not\in-\Sos{\kappa}$, 
\item[(viii)] $\x^3+\x\y^3$,
\item[(ix)] $\x^3+\y^5$. 
\end{itemize} 
Then $\psd(A)=\Sos{A}$. In addition, $p(A)\leq 4\tau(\kappa)$. 
\end{thm}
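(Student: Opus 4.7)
The plan is to first establish $\psd(\widehat{A})=\Sos{\widehat{A}}$ with an effective Pythagoras bound, and then descend to $A$ by Rotthaus' version of Artin approximation, which is available because $\frac{1}{2}\in A$ implies $\Q\subset A$ (as explained in the comments preceding Theorem~\ref{gp=s}). Concretely, given $f\in\psd(A)\subset\psd(\widehat{A})$, any formal identity $f=\hat g_1^2+\cdots+\hat g_N^2$ is a formal solution of the polynomial system $\y_1^2+\cdots+\y_N^2=f$ with coefficients in $A$, so the system has an actual solution in $A^N$, and the Pythagoras bound for $A$ inherits the one established for $\widehat{A}$.

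The work in $\widehat{A}$ is organised case by case. In case (1), $\widehat{A}\cong\kappa[[\x,\y]]$ is $2$-dimensional regular, hence $\psd(\widehat{A})=\Sos{\widehat{A}}$ by Scheiderer's theorem for $2$-dimensional regular semilocal rings, and the bound $p(\widehat{A})\leq 2\tau(\kappa)$ is provided by \eqref{p2}. In case (2), $\widehat{A}$ decomposes as the fibred product $\kappa[[\x,\y]]\times_\kappa\kappa[[\z]]$; a positive semidefinite element restricts to psd elements on the two factors, which are sums of at most $2\tau(\kappa)$ squares by Scheiderer's regular theorems in dimensions $2$ and $1$, and the two representations are glued back using the invertibility of $2$ in $A$ and the coincidence of their reductions modulo $\gtm$.

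The substantial part is case (3), $\widehat{A}\cong\kappa[[\x,\y,\z]]/(\z^2-F)$ with $F$ ranging over (i)--(ix). In every such model $\widehat{A}$ is a free $\kappa[[\x,\y]]$-module of rank $2$ with basis $\{1,\z\}$, so each element decomposes uniquely as $f=f_0(\x,\y)+f_1(\x,\y)\z$, and every sum of squares has the form
\[
\sum_i(a_i+b_i\z)^2=\Bigl(\sum_i a_i^2+F\sum_i b_i^2\Bigr)+2\Bigl(\sum_i a_ib_i\Bigr)\z.
\]
The strategy is, for each model, to translate the psd condition on $f$ into explicit divisibility and congruence conditions relating $f_0$ and $f_1$ through $F$, and then construct the pairs $(a_i,b_i)$ directly. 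The coefficient hypotheses on $a$ in cases (i)--(iv) and (vii), namely $a\in\Sos{\kappa}$ or $a\notin-\Sos{\kappa}$, are exactly what is needed to make the normal forms realisable, replacing the ``positivity'' arguments from \cite{f2,f4,f5,f7} that were available when $\kappa$ is real closed. The Pythagoras bound is finally controlled by expressing $\sum a_i^2$ and $\sum b_i^2$ as sums of at most $2\tau(\kappa)$ squares in $\kappa[[\x,\y]]$ via \eqref{p2}, while arranging that the cross term $2\sum a_ib_i$ does not increase the overall count, yielding $p(\widehat{A})\leq 4\tau(\kappa)$.

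The main obstacle is the explicit construction in case (3) for the cuspidal models (vi)--(ix), where $\widehat{A}$ is a non-normal integral domain and $F$ may be irreducible over $\kappa$ even though it factors over a real closure of $\kappa$. In those models the geometric resolution-of-singularities arguments available in the real closed setting \cite{f2,f4,f5,f7,fr1} are no longer at one's disposal, and the positivity of $f$ must be extracted purely algebraically from the Newton polygon of $f$ with respect to $F$ and from the Pythagoras structure of $\kappa$. Once the representations have been produced for every model in the list (i)--(ix), the descent step of the first paragraph transfers them to $A$ and delivers both the qualitative equality $\psd(A)=\Sos{A}$ and the quantitative bound $p(A)\leq 4\tau(\kappa)$.
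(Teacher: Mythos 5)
Your overall framework is sound---the reduction from $A$ to $\widehat{A}$ via Rotthaus approximation, the treatment of case (1) by Scheiderer's regular theorem, and the splitting of case (2) along the fibred product---and this matches the paper's broad strategy. However, there is a genuine gap at the heart of the argument, which is case (3), and in particular the order-three models (iv)--(ix). You correctly identify that the psd condition on $f_0+f_1\z$ translates into conditions on $f_0$ and $f_0^2-Ff_1^2$ (this is Corollary~\ref{psd3} in the paper), but saying one will ``construct the pairs $(a_i,b_i)$ directly'' from ``the Newton polygon of $f$ with respect to $F$ and the Pythagoras structure of $\kappa$'' is a description of the problem, not a solution to it. This construction is precisely what occupies almost all of Sections~\ref{s4} and~\ref{s5}, and the machinery is non-trivial: a polynomial density lemma (Lemma~\ref{density}) showing psd elements of $\widehat{B}$ are limits of psd elements of the polynomial ring, a polynomial reduction (Corollary~\ref{polred}) via strong Artin approximation, an erasure-of-denominators lemma (Lemma~\ref{grd}) to handle the blow-up in the order-two cases, and for order three the Elephant's Theorem (Theorem~\ref{et}) built around a carefully chosen quadratic transformation $h(\x,\y,\z)\mapsto h(\x,\u\x^q,\v\x)$ together with Weierstrass preparation, curve-selection in $\Sper$, and Puiseux-series control.

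A more specific difficulty that your sketch overlooks: you claim you will ``arrange that the cross term $2\sum a_ib_i$ does not increase the overall count.'' There is no such free arrangement. The number of summands is the number of pairs $(a_i,b_i)$, and controlling it requires a separate, non-elementary fact: Lemma~\ref{sosok} (resting on Lemma~\ref{quadratic}, Lemma~\ref{sosq}, the bound $\tau(\kappa((\x)))=\tau(\kappa)$, and the main theorem of~\cite{frs2}), which asserts that a positive semidefinite quadratic polynomial $a_0+a_1\z+a_2\z^2\in\kappa[[\x,\y]][\z]$ is a sum of $4\tau(\kappa)$ squares of degree-one polynomials. This is the mechanism that actually produces the $4\tau(\kappa)$ bound, and without it there is no a-priori bound on the number of pairs, hence no way to apply strong Artin approximation (which demands a fixed number of unknowns). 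Your proposal also does not explain how positivity is verified after the blow-up; the paper must show (in the proof of Theorem~\ref{order3}, case by case and sometimes after a change of coordinates) that for each model in (iv)--(ix) every $f+\z g\in\psd(A)$ has $\omega(f(\x,0))\geq2$, which is the hypothesis that makes the Elephant's Theorem applicable. None of these ingredients is present in, or recoverable from, what you wrote, so the proposal is incomplete at the decisive step.
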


Theorems \ref{gp=s} and \ref{list2} can be understood as the necessary (list of candidates) and sufficient (affirmative cases) conditions for a local henselian excellent ring of dimension $2$ and embedding dimension $3$ to enjoy the property $\psd(A)=\Sos{A}$. In order to join both Theorems \ref{gp=s} and \ref{list2}, we ask in addition that the residue field $\kappa$ admits a unique ordering (that is, $\kappa=-\Sos{\kappa}\cup\Sos{\kappa}$) and obtain straightforwardly the following full characterization.

\begin{cor}[Full characterization]\label{unique}
Let $(A,\gtm)$ be a local henselian excellent ring of dimension $2$ and embedding dimension $3$. Suppose that the residue field $\kappa$ admits a unique ordering and $\tau(\kappa)<+\infty$. Then $\psd(A)=\Sos{A}$ if and only if the completion $\widehat{A}$ is isomorphic to: 
\begin{itemize}
\item[(1)] $\kappa[[\x,\y]]$ or
\item[(2)] $\kappa[[\x,\y,\z]]/(\z\x,\z\y)$ or 
\item[(3)] $\kappa[[\x,\y,\z]]/(\z^2-F(\x,\y))$,
\end{itemize}
where $F\in\kappa[[\x,\y]]$ is one of the series in the following list: 
\begin{itemize}
\item[(i)] $a\x^2+b\y^{2k}$ such that $a,b>0$ and $k\geq 1$,
\item[(ii)] $a\x^2+\y^{2k+1}$ where $a>0$ and $k\geq 1$,
\item[(iii)] $a\x^2$ where $a>0$,
\item[(iv)] $\x^2\y+(-1)^ka\y^k$ where $a>0$ and $k\geq 3$,
\item[(v)] $\x^2\y$,
\item[(vi)] $\x^3+a\x\y^2+b\y^3$ irreducible,
\item[(vii)] $\x^3+a\y^4$ where $a>0$, 
\item[(viii)] $\x^3+\x\y^3$,
\item[(ix)] $\x^3+\y^5$. 
\end{itemize}
In addition, $p(A)\leq 4\tau(\kappa)$.
\end{cor}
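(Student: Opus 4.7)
The plan is to combine Theorems \ref{gp=s} (necessity) and \ref{list2} (sufficiency) directly; the unique-ordering hypothesis is precisely what makes the coefficient conditions in the two lists match, up to a small normalisation step for case (i).

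First I set up the dictionary. Since $\kappa$ is formally real with a unique ordering, the positive cone of $\kappa$ is exactly $\Sos{\kappa}$, so $\Sos{\kappa}\cap(-\Sos{\kappa})=\{0\}$ and $\kappa=\Sos{\kappa}\cup(-\Sos{\kappa})$. Consequently, for $a\in\kappa$ the three conditions ``$a>0$'', ``$a\in\Sos{\kappa}\setminus\{0\}$'' and ``$a\not\in-\Sos{\kappa}$'' are mutually equivalent. Under this dictionary, the coefficient hypotheses ``$a\not\in-\Sos{\kappa}$'' appearing in Theorem \ref{gp=s} and ``$a\in\Sos{\kappa}$, $a\neq 0$'' appearing in Theorem \ref{list2} both collapse to the positivity conditions ``$a>0$'' stated in the corollary.

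For the forward implication, suppose $\psd(A)=\Sos{A}$. Since $\kappa$ is formally real, we have $\frac{1}{2}\in A$ and $\Sper(A)\neq\varnothing$ (by the remark following Theorem \ref{gp=s}), so Theorem \ref{gp=s} applies and asserts that $\widehat{A}$ is isomorphic to one of the rings (1)--(3) with $F$ in its list. Translating ``$a\not\in-\Sos{\kappa}$'' into ``$a>0$'' yields precisely the list of the corollary. Conversely, if $\widehat A$ is of the form listed in the corollary, Theorem \ref{list2} applies (using $\tau(\kappa)<+\infty$) and delivers both $\psd(A)=\Sos{A}$ and the bound $p(A)\le 4\tau(\kappa)$.

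The only point needing a brief check is case (i): Theorem \ref{list2}(i) allows $b$ of either sign, whereas the corollary stipulates $b>0$. I handle this by means of the $\kappa$-algebra automorphism $\sigma$ of $\kappa[[\x,\y,\z]]$ that swaps $\x$ and $\z$. It sends the ideal $(\z^2-a\x^2-b\y^{2k})$ with $a>0$, $b<0$ to $(\x^2-a\z^2+|b|\y^{2k})$, which (after multiplying the generator by the unit $-1/a$) equals $(\z^2-(1/a)\x^2-(|b|/a)\y^{2k})$, an ideal of the same shape as case (i) but now with both coefficients strictly positive. Hence every ring falling under case (i) of Theorem \ref{list2} is already isomorphic to one in case (i) of the corollary with $b>0$, and the corollary's list is therefore exhaustive.
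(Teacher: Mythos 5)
Your proof is correct and follows the same route the paper intends: Corollary~\ref{unique} is stated immediately after Theorem~\ref{list2} with the remark that it is obtained ``straightforwardly'' by joining Theorems~\ref{gp=s} and~\ref{list2} under the dictionary $a>0 \Leftrightarrow a\in\Sos{\kappa}\setminus\{0\} \Leftrightarrow a\notin-\Sos{\kappa}$ that the unique-ordering hypothesis provides. What you add, and what the paper leaves implicit, is the normalisation in case~(i): the necessity statement (Theorem~\ref{gp=s}(3.i)) only yields $a>0$, $b\neq 0$, whereas the corollary asserts $a,b>0$, and these do not coincide literally. Your change of variables $\x\leftrightarrow\z$ followed by rescaling by $-1/a$ correctly shows that $\kappa[[\x,\y,\z]]/(\z^2-a\x^2-b\y^{2k})$ with $a>0$, $b<0$ is isomorphic to $\kappa[[\x,\y,\z]]/(\z^2-(1/a)\x^2-(|b|/a)\y^{2k})$, which has both coefficients positive, so the corollary's list is indeed exhaustive. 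This is a genuine (if small) detail worth making explicit, and for all the remaining cases (ii)--(ix) the coefficient conditions in Theorems~\ref{gp=s} and~\ref{list2} translate verbatim to those in the corollary, as you observe.
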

\begin{remarks}
The case (3.vi) `$F=\x^3+a\x\y^2+b\y^3$ irreducible' requires a special comment. Denote $A:=\kappa[[\x,\y,\z]]/(\z^2-F)$, the real closure of $\kappa$ with $R$ (endowed with its unique ordering) and $B:=R[[\x,\y,\z]]/(\z^2-F)$.

(i) For simplicity suppose $\kappa=\Q$ and consider the irreducible polynomial $F:=\x^3+2\y^3\in\Q[\x,\y]$. By Corollary \ref{unique} we have $\psd(A)=\Sos{A}$. On the other hand, $F=(\x+\sqrt[3]{2}\y)((\x-\frac{1}{2}\sqrt[3]{2}\y)^2+\frac{3}{4}\sqrt[3]{4}\y^2)$ is reducible in $R[\x,\y]$ and $\x+\sqrt[3]{2}\y\in\psd(B)\setminus\Sos{B}$. 

(ii) This surprising situation can only appear in case (3.vi), because the condition of `irreducibility of $F$' disappears when we extend coefficients to the real closure $R$ of $\kappa$ because $F(\x,1)$ has degree $3$, whereas the obstructions for the remaining elements of the list keep the same when extending coefficients to the real closure. If $F(\x,1)=\x^3+a\x+b$ has only one root in $R$, then $\psd(B)\setminus\Sos{B}\neq\varnothing$ is always true. If $F(\x,1)$ has three roots in $R$ (which happens if and only if $a<0,4a^3+27b^2<0$, see Example \ref{irredrevisted}), then $\psd(B)=\Sos{B}$ (case (3.iv) for $k=3$).
\end{remarks}

\subsubsection*{Formally real fields with a unique ordering}
Apart from real closed fields there are well-known examples of fields with a unique ordering. The most simple one corresponds to the field $\Q$ of rational numbers (which has Pythagoras number $4$), but also each finite algebraic extension $\Q[\theta]\subset\R$ of $\Q$ such that the irreducible polynomial of $\theta$ over $\Q$ has odd degree and a unique root in $\R$ admits a unique ordering. The field $\Q[\theta]$ has Pythagoras number either $3$ or $4$, see \cite{si}. Other examples of fields with a unique ordering are the real constructible numbers \cite[Ex.p.236]{l} or \em Euclidean fields\em, which are those fields $\kappa$ in which every element is either a square or the opposite of a square \cite[Prop.VIII.1.6]{l}. The previous examples of fields contained in $\R$ that admit a unique ordering are all archimedean, but it is not difficult to construct examples of non-archimedean fields with a unique ordering that are non-euclidean (and consequently non-real closed). An example of this type of fields is $\Q((\t^{1/2^*})):=\bigcup_{n\geq0}\Q((\t^{1/2^n}))$, which is a field with Pythagoras number $4$. More generally, if $\kappa$ is a field with a unique ordering and Pythagoras number $p$, also $\kappa((\t^{1/2^*})):=\bigcup_{n\geq0}\kappa((\t^{1/2^n}))$ is a non-archimedean (formally) real field with a unique ordering and Pythagoras number $p$. In \cite[Thm.2]{h} the existence of (formally) real fields with a unique ordering and Pythagoras number $p$ is proved for each $p\geq1$.

\subsection*{Applications: Principal saturated preorderings of low order.}

In his articles \cite{sch4,sch5} Scheiderer approached the problem of determining when a finitely generated preordering $T$ in an excellent regular ring $(A,\gtm)$ of dimension $2$ is saturated. To that end, he established criteria to decide when the saturation of the preordering $\widehat{T}$ generated by $T$ in the completion $\widehat{A}$ implies the saturation of $T$. Recall that if $T$ is generated by $h_1,\ldots,h_r$, then $T:=\{\sum_{\nu\in\{0,1\}^r}\sigma_\nu h_1^{\nu_1}\cdots h_r^{\nu_r}:\ \sigma_i\in\Sos{A}\}$. We say that $T$ is \em saturated \em if it contains every $f\in A$ such that $f\geq_\alpha0$ for each $\alpha\in\Sper(A)$ satisfying $h_1\geq_\alpha0,\ldots,h_r\geq_\alpha0$. Observe that 
$$
\widehat{T}:=\Big\{\sum_{\nu\in\{0,1\}^r}\sigma_\nu h_1^{\nu_1}\cdots h_r^{\nu_r}:\ \sigma_i\in\Sos{\widehat{A}}\Big\}.
$$
In \cite[Cor.3.25]{sch4} the following is proved:

\begin{cor}\label{scht}
Let $A$ be an excellent henselian local ring with $p(A)<+\infty$ and let $T$ be a finitely generated preordering in $A$. Then $T$ is saturated in $A$ if and only if $\widehat{T}$ is saturated in $\widehat{A}$. 
\end{cor}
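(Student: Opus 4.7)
The plan is to prove the two implications separately, with Artin approximation of Rotthaus type as the decisive tool. Because Artin approximation applies only to finite systems of polynomial equations, a uniform bound on the number of squares needed to represent elements of $\widehat{T}$ is indispensable, and this is exactly where the hypothesis $p(A)<+\infty$ enters: one first derives finiteness of $p(\widehat{A})$ from it (via a comparison in the spirit of Theorem \ref{pyth}), so that every sum of squares in $\widehat{A}$ is a sum of at most $p:=p(\widehat{A})$ squares.

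For the direction \emph{$\widehat{T}$ saturated implies $T$ saturated}, I would take $f\in A$ satisfying $f\geq_\alpha 0$ on $\{h_1\geq_\alpha 0,\ldots,h_r\geq_\alpha 0\}\subset\Sper(A)$ and observe that, since every prime cone of $\widehat{A}$ restricts to one of $A$ through the faithfully flat map $A\to\widehat{A}$, the same $f$ is nonnegative on the corresponding basic set in $\Sper(\widehat{A})$. Saturation of $\widehat{T}$ then yields $f=\sum_{\nu\in\{0,1\}^r}\widehat{\sigma}_\nu h^\nu$ with $\widehat{\sigma}_\nu\in\Sos{\widehat{A}}$; writing each $\widehat{\sigma}_\nu$ as a sum of $p$ squares, the existence of a representation of $f$ in $T$ reduces to the solvability in $A$ of the finite polynomial system
\[
f-\sum_{\nu\in\{0,1\}^r}h^\nu\sum_{j=1}^{p}\y_{\nu,j}^2=0,
\]
which admits a solution in $\widehat{A}$ and hence, by Rotthaus's theorem, in $A$, so that $f\in T$.

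For the converse \emph{$T$ saturated implies $\widehat{T}$ saturated}, I would start with $\widehat{f}\in\widehat{A}$ nonnegative on $\{h_1\geq 0,\ldots,h_r\geq 0\}\subset\Sper(\widehat{A})$, approximate $\widehat{f}$ in the $\widehat{\gtm}$-adic topology by elements of $A$ inheriting the required sign conditions, apply the saturation of $T$ to each approximant, and pass to the limit, using once more the uniform bound on the number of squares to keep the representing parameters in a finite-dimensional set. The hardest step, which controls the whole argument, is establishing and then exploiting this uniform bound: without first converting the \emph{a priori} unbounded sum-of-squares in the definition of $\widehat{T}$ into a bounded one, Artin approximation cannot be applied term by term to the polynomial identity witnessing membership in $\widehat{T}$, and neither direction reduces to a finite problem.
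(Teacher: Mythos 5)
The paper does not prove this statement; it is quoted from \cite[Cor.3.25]{sch4}, so there is no internal proof to measure your proposal against. On its own merits: your ``only if'' direction (saturated $\widehat{T}$ implies saturated $T$) is correct. Pulling $f$ back along $\Sper(\widehat{A})\to\Sper(A)$ to conclude $f\in{\rm Sat}(\widehat{T})$ is fine; the bound $p(\widehat{A})\le p(A)<+\infty$ follows by $\gtm$-adic approximation of the summands together with Strong Artin approximation (rather than ``in the spirit of Theorem \ref{pyth}'', which is a two-dimensional statement); and then the displayed finite polynomial system has a solution in $\widehat{A}$, hence one in $A$ by Artin approximation.

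The ``if'' direction, however, has a genuine gap at its pivotal sentence: ``approximate $\widehat{f}$ in the $\widehat{\gtm}$-adic topology by elements of $A$ inheriting the required sign conditions.'' Positive semidefiniteness on $X(T)$ is not $\gtm$-adically open, so an arbitrary $\gtm$-adic approximant of $\widehat{f}\in{\rm Sat}(\widehat{T})$ need not lie in ${\rm Sat}(\widehat{T})$, let alone produce an element of ${\rm Sat}(T)$ after truncating to $A$. This is exactly the hard step of the whole corollary, and you give no argument for it. For contrast, observe how much dedicated machinery the present paper has to build in Section \ref{s4} (Lemma \ref{pd+}, Corollary \ref{orderings}, Corollary \ref{psdkxy}, Lemma \ref{density}) precisely to carry out such a sign-preserving approximation in the very special rings $\kappa[[\x,\y,\z]]/(\z^2-F)$, and even there it only works for elements that are positive \emph{definite} away from the maximal ideal, or after adding an explicit positive-definite correction term that changes the element. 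Nothing of the sort is available in the generality of an arbitrary excellent henselian local ring; Scheiderer's proof of this implication in \cite{sch4} takes a different route, and your proposal as written does not close this direction.
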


The previous result reduces the problem of studying the saturation of finitely generated preorderings on an excellent henselian local ring with a finite Pythagoras number to the study of the saturation of finitely generated preorderings on complete rings. Let $R$ be a real closed field. Taking advantage of \cite{f3,rz2}, Scheiderer characterizes the saturated preorderings of $A:=R[[\x,\y]]$ generated by an element $h\in R[[\x,\y]]$ of order $\leq3$ and he obtains the same list (up to right equivalence) as the one appearing in Theorem \ref{list1}(3). In the same vein we characterize in Section \ref{s6} (Corollaries \ref{ord2p} and \ref{ord3p}) the saturated preorderings of $A:=\kappa[[\x,\y]]$ generated by an element $F\in\kappa[[\x,\y]]$ of order $\leq3$ (up to right equivalence), where $\kappa$ is a (formally) real field that has $\tau(\kappa)<+\infty$. In addition, we ask that $\kappa$ admits a unique ordering when $\omega(F)=2$. The lists obtained in these cases (for $\omega(F)=2$ and $\omega(F)=3$) coincide with those proposed in Theorem \ref{list2}(3).

\subsection*{Structure of the article}

The article is organized as follows. In Section \ref{s2} we present the main finite determinacy tools in order to prove Theorem \ref{gp=s} in Section \ref{s3}. The proofs of some of the results in Section \ref{s2}, which are substantially different from the classical ones over the complex or the real numbers, are included in Appendix \ref{b} for the sake of completeness. In Section \ref{s4} we introduce the main tools to prove Theorem \ref{list2} in Section \ref{s5}. We highlight Theorem \ref{et}. In Section \ref{s6} we prove (see Corollaries \ref{ord2p} and \ref{ord3p}) the counterpart of the results of Scheiderer concerning principal preorderings on an excellent henselian local ring with a finite Pythagoras number, when the residue field is a field $\kappa$ with a unique ordering (instead of a real closed field). Finally, in Appendix \ref{a} we present two additional examples: one concerning the ring $\widehat{A}$ in Theorem \ref{list2}(3.iv) (Example \ref{irredrevisted}) and another one (quite tricky!) concerning a ring $A$ with the property $\psd(A)=\Sos{A}$ that does not appear in the list provided in Theorem \ref{list2}(3) (Example \ref{newtrends}). Such example suggests that there is still further work to do (surely with the aid of new techniques) when the residue field $\kappa$ admits more than one ordering and the series $F\in\kappa[[\x,\y]]$ has order $2$.

\subsection*{Acknowledgements}

The author is indebted with C. Scheiderer for sharing his knowledge and proposing him (in the earlier 2002, during a 6 months post-doc research stay at Universit\"at Duisburg) this type of problems when the residue field $\kappa$ is (formally) real but not necessarily real closed. It has taken quite a long time to obtain satisfactory results because cases (3.vi) and (3.ix) in Theorem \ref{list2} stayed apart from any progress (except for very restrictive situations) until very recently. The author is also very grateful to S. Schramm for a careful reading of the final version and for the suggestions to refine its redaction. The author thanks the anonymous referees for their clever suggestions to substantially improve the presentation of the article.

\section{Basic tools when dealing with formal rings}\label{s2}

As we work in the environment of excellent henselian local rings $(A,\gtm)$ (that contain $\Q$) with a finite Pythagoras number $p$, the qualitative problem $\psd(A)=\Sos{A}$ has an affirmative solution in $A$ if and only if the equation
$$
f={\tt X}_1^2+\cdots+{\tt X}_p^2
$$
has a solution in the completion $\widehat{A}$ for each $f\in\psd(A)$. This is a straightforward consequence of Rotthaus Theorem on Artin's approximation property \cite[Thm.4.2]{rt}. Denote the residue field of $A$ with $\kappa:=A/\gtm$. As $A$ contains $\Q$, Cohen's structure theorem \cite[Thm.7.7]{e} states that $\widehat{A}\cong\kappa[[\x_1,\ldots,\x_n]]/\gta$ where $\gta$ is an ideal of $\kappa[[\x_1,\ldots,\x_n]]$ and $n:=\dim_{\kappa}(\gtm/\gtm^2)$ is the embedding dimension of $A$. In order to take advantage of formal rings, we recall some useful properties.

\subsection{Formal rings}
A \em formal ring \em over a field $\kappa$ is a ring $A:=\kappa[[\x]]/\gta$ where $\gta$ is an ideal of the ring $\kappa[[\x]]$ of (formal power) series in the variables $\x:=(\x_1,\ldots,\x_n)$. If $f\in\kappa[[\x]]$, we write $f:=\sum_{k=0}^{\infty}f_k$ where $f_k\in\kappa[\x]$ is (either $0$ or) an homogeneous polynomial of degree $k$. Denote the \em order of $f$ \em with $\omega(f):=\inf\{k:\ f_k\neq0\}$ and the \em initial form of $f$ \em with $\ini(f):=f_{\omega(f)}$. We write $\gtm_n:=(\x_1,\ldots,\x_n)\kappa[[\x]]$ to refer to the maximal ideal of $\kappa[[\x]]$ and $\|\x\|^2:=\x_1^2+\cdots+\x_n^2$. In addition, $\kappa((\x))$ stands for the quotient field of $\kappa[[\x]]$. Recall that a series $f\in\kappa[[\x_1,\ldots,\x_n]]$ is \em regular with respect to $\x_n$ (of order $d$) \em if $f(0,\ldots,0,\x_n)=a_d\x_n^d+\cdots$ with $a_d\neq0$. Let $\x':=(\x_1,\ldots,\x_{n-1})$. A \em Weierstrass polynomial \em $P\in\kappa[[\x']][\x_n]$ is a monic polynomial of degree $d$ such that $P(0,\x_n)=\x_n^d$. By \cite[Ch.7.\S.1.Thm.5 \& Cor.1]{zs} the ring $\kappa[[\x]]$ enjoys Weierstrass division and preparation theorems, which are fundamental tools when dealing with rings of series with coefficients in a field. 

\begin{thm}[Weierstrass division theorem {\cite[Ch.7.\S.1.Thm.5]{zs}}]
Let $f,g\in\kappa[[\x]]$ be series such that $f$ is a regular series with respect to $\x_n$ of order $d$. Then there exist $Q\in\kappa[[\x]]$ and a polynomial $R\in\kappa[[\x']][\x_n]$ of degree $\leq d-1$ such that $f=gQ+R$. 
\end{thm}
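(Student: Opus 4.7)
I read the conclusion as the usual Weierstrass division statement, $g = fQ + R$ with $R \in \kappa[[\x']][\x_n]$ of degree $< d$ and $f$ the regular series named in the hypothesis (the printed equation $f = gQ + R$ appears to have the two series swapped). My plan is the standard fixed-point argument, phrased so that the only input is the completeness of $\kappa[[\x']]$.

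First I would normalize $f$. Writing $f = \sum_{k \ge 0} f_k(\x')\x_n^k$, regularity of order $d$ gives $f_d(0) \ne 0$ and $f_k(0) = 0$ for $k < d$, so $f_d$ is a unit in $\kappa[[\x']]$. Multiplying by $f_d^{-1}$ (and absorbing the unit into $Q$ at the end) I may assume $f = \x_n^d + h$, where $h = \sum_{k<d} f_k(\x')\x_n^k$ with every $f_k$ in $\gtm_{n-1} := (\x_1,\ldots,\x_{n-1})\kappa[[\x']]$.

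Next I would split any $P \in \kappa[[\x]] = \kappa[[\x']][[\x_n]]$ by the $\kappa[[\x']]$-linear projections $P = \beta(P) + \x_n^d\alpha(P)$, where $\beta(P) := \sum_{k<d} P_k\x_n^k$ is a polynomial of $\x_n$-degree less than $d$ and $\alpha(P) := \sum_{k \ge 0} P_{k+d}\x_n^k$. Since $fQ = \x_n^d Q + hQ$, one has $\alpha(fQ) = Q + \alpha(hQ)$; writing $g = fQ + R$ with $\deg_{\x_n} R < d$ is then equivalent to the pair of identities $Q = \alpha(g) - \alpha(hQ)$ and $R = \beta(g) - \beta(hQ)$, so producing the series $Q$ is the whole content.

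Finally I would solve for $Q$ by Banach contraction. The operator $T : Q \mapsto \alpha(g) - \alpha(hQ)$ satisfies $T(Q_1) - T(Q_2) = -\alpha(h(Q_1 - Q_2))$; since $h \in \gtm_{n-1}\kappa[[\x]]$ and $\alpha$ is $\kappa[[\x']]$-linear, $T$ sends $\gtm_{n-1}^N\kappa[[\x]]$ into itself and sends differences in $\gtm_{n-1}^N\kappa[[\x]]$ into $\gtm_{n-1}^{N+1}\kappa[[\x]]$. Now $\kappa[[\x]]$ is complete in the $\gtm_{n-1}$-adic topology (inherited from the completeness of $\kappa[[\x']]$ via the product decomposition $\kappa[[\x]] \cong \prod_{k\ge 0}\kappa[[\x']]\cdot\x_n^k$), so iterating $T$ from $Q_0 := \alpha(g)$ yields a Cauchy sequence converging to a unique fixed point $Q \in \kappa[[\x]]$; setting $R := \beta(g) - \beta(hQ)$ completes the construction, and uniqueness of $(Q,R)$ follows by applying the same contraction argument to the difference of two solutions. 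The main subtlety is the choice of topology: in the $\gtm_n$-adic topology $\alpha$ is not continuous (applying it can drop the $\x_n$-degree by $d$), so one must work in the coarser $\gtm_{n-1}$-adic topology, in which $\alpha$ is continuous and the factor $h$ supplies the contraction.
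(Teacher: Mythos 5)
This theorem is quoted in the paper from Zariski--Samuel without proof, so there is no internal argument to compare with; your reading of the statement (that the roles of $f$ and $g$ are swapped in the printed formula, and the intended conclusion is $g=fQ+R$ with $f$ the regular series) is correct and is the form actually used later in the paper. Your overall strategy --- split off the degree-$<d$ part with the projections $\alpha,\beta$, reduce to a fixed-point equation for $Q$, and contract in the $\gtm_{n-1}$-adic topology, where $\gtm_{n-1}=(\x_1,\ldots,\x_{n-1})\kappa[[\x']]$ --- is the standard and correct one, and your remark about which topology makes $\alpha$ continuous is exactly the right point to emphasize.

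There is, however, a genuine flaw in your normalization step. Multiplying $f=\sum_{k\geq0}f_k(\x')\x_n^k$ by $f_d^{-1}$ does not produce $f=\x_n^d+h$ with $h=\sum_{k<d}f_k\x_n^k$: the terms $f_k\x_n^k$ with $k>d$ do not disappear, and their coefficients $f_k/f_d$ need not lie in $\gtm_{n-1}$ (take $f=\x_n^d+\x_n^{d+1}$). So with your reduction $h\notin\gtm_{n-1}\kappa[[\x]]$ in general, and the contraction estimate collapses: for the high-degree part one has $\alpha(\x_n^{d+1}PQ)=\x_n PQ$, which gains nothing in the $\gtm_{n-1}$-adic filtration. (Your claimed normal form, $\x_n^d$ plus a degree-$<d$ polynomial with coefficients in $\gtm_{n-1}$, is exactly the assertion that $f$ is associated to a Weierstrass polynomial --- that is the Preparation theorem, which you cannot assume here.) The fix is standard and leaves the rest of your argument intact: regularity of order $d$ says precisely that $\alpha(f)$ is a unit of $\kappa[[\x]]$ (its value at the origin is $f_d(0)\neq0$), so divide by this unit instead of by $f_d$; then $f\cdot\alpha(f)^{-1}=\x_n^d+h$ with $h=\beta(f)\cdot\alpha(f)^{-1}\in\gtm_{n-1}\kappa[[\x]]$, now a full power series in $\x_n$ rather than a polynomial of degree $<d$. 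Your subsequent steps never use the degree bound on $h$, only the containment $h\in\gtm_{n-1}\kappa[[\x]]$, so with this correction the fixed-point argument, the completeness of $\kappa[[\x]]\cong\prod_{k\geq0}\kappa[[\x']]\x_n^k$ for the $\gtm_{n-1}$-adic topology, and the uniqueness argument all go through as you wrote them.
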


\begin{thm}[Weierstrass preparation theorem {\cite[Ch.7.\S.1.Cor.1]{zs}}]
Let $f\in\kappa[[\x]]$ be a regular series with respect to $\x_n$ of order $d$. Then there exist a Weierstrass polynomial $P\in\kappa[[\x']][\x_n]$ of degree $d$ and a unit $U\in\kappa[[\x]]$ such that $f=PU$. 
\end{thm}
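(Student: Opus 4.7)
The plan is to deduce the preparation theorem from the division theorem that has just been stated, which is the standard route and fits the exposition of the paper. I would apply the division theorem with $g=\x_n^d$ and divisor $f$ (which is regular in $\x_n$ of order $d$ by hypothesis) to obtain series $Q\in\kappa[[\x]]$ and $R\in\kappa[[\x']][\x_n]$ with $\deg_{\x_n}R\leq d-1$ such that
$$
\x_n^d=fQ+R.
$$
Then I would set $P:=\x_n^d-R\in\kappa[[\x']][\x_n]$, which is automatically monic of degree $d$, and $U:=Q$, so that $fU=P$.

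The main step is to verify that $U=Q$ is a unit of $\kappa[[\x]]$ and that the remainder $R$ vanishes when $\x'=0$, so that $P(0,\x_n)=\x_n^d$. For this I would substitute $\x'=0$ in the identity $\x_n^d=f(0,\x_n)Q(0,\x_n)+R(0,\x_n)$. By regularity, $f(0,\x_n)=\x_n^d v(\x_n)$ with $v(\x_n)=a_d+a_{d+1}\x_n+\cdots\in\kappa[[\x_n]]$ a unit, so
$$
\x_n^d\bigl(1-v(\x_n)Q(0,\x_n)\bigr)=R(0,\x_n).
$$
The right-hand side is a polynomial of degree $\leq d-1$ in $\x_n$, while the left-hand side, as a series in $\kappa[[\x_n]]$, has order $\geq d$; comparing coefficients in $\kappa[[\x_n]]$ forces both sides to vanish. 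Hence $R(0,\x_n)=0$, so $P(0,\x_n)=\x_n^d$, and $Q(0,\x_n)=v(\x_n)^{-1}$; in particular $Q(0,0)=a_d^{-1}\neq 0$, so $Q$ is a unit of the local ring $\kappa[[\x]]$ (its residue modulo $\gtm_n$ is nonzero).

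Setting $U:=Q$ then yields the required factorization $f=PU$ with $P$ a Weierstrass polynomial of degree $d$ in $\kappa[[\x']][\x_n]$ and $U$ a unit of $\kappa[[\x]]$. The only delicate point in the argument is the degree/order comparison in the one-variable ring $\kappa[[\x_n]]$ used to conclude that $R(0,\x_n)=0$; everything else is a formal manipulation of the output of the division theorem. Uniqueness of the pair $(P,U)$, if desired, follows from the uniqueness of $(Q,R)$ in the division theorem applied to $g=\x_n^d$, but it is not part of the statement.
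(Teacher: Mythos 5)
Your derivation of the preparation theorem from the division theorem is the standard one, and the paper itself does not prove this statement — it simply cites Zariski--Samuel, so there is no ``paper's own proof'' to compare against. Your order-vs-degree argument in $\kappa[[\x_n]]$ correctly forces $R(0,\x_n)=0$ and $Q(0,\x_n)=v(\x_n)^{-1}$, which establishes both that $P$ is a Weierstrass polynomial and that $Q$ is a unit. The only blemish is at the very end: from $\x_n^d=fQ+R$ you get $fQ=P$, so the factorization is $f=PQ^{-1}$, and the unit in the statement should be $U:=Q^{-1}$, not $U:=Q$ as you wrote (indeed, right after writing ``$U:=Q$, so that $fU=P$'' you claim this ``yields the required factorization $f=PU$'', which is not the same identity). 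Since you have shown $Q$ is a unit, $Q^{-1}$ exists and is a unit, so this is a one-line fix rather than a gap; just replace the final assignment.
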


As a consequence of the previous results, Implicit and Inverse Function Theorems arise standardly. We will use them freely in the article.

\begin{thm}[Implicit Function Theorem]\label{ift}
Let $f_1,\ldots,f_m\in\kappa[[\x,\y]]$ be such that $f_i(0,0)=0$ and $\det(\frac{\partial f_i}{\partial\y_j}(0,0))_{1\leq i,j\leq m}\neq0$.
Then there exist unique series $g_1,\ldots,g_m\in\kappa[[\x]]$ such that $g_j(0)=0$ and $f_i(\x,g_1,\ldots,g_m)=0$ for $i=1,\ldots,m$.
\end{thm}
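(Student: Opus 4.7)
The plan is to reduce the theorem to the case $m=1$ (which itself follows directly from the Weierstrass preparation theorem) and then proceed by induction on $m$.

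For the base case $m=1$, I would write $f = f_1 \in \kappa[[\x,\y_1]]$. The hypotheses $f(0,0)=0$ and $\frac{\partial f}{\partial\y_1}(0,0)\neq 0$ say precisely that $f$ is regular with respect to $\y_1$ of order $1$. By the Weierstrass preparation theorem, there exist a unit $U\in\kappa[[\x,\y_1]]$ and a Weierstrass polynomial $P=\y_1-g(\x)\in\kappa[[\x]][\y_1]$ with $g(0)=0$ such that $f=PU$. Then $g_1:=g$ solves $f(\x,g_1)=0$; uniqueness follows because any solution $\tilde g\in\kappa[[\x]]$ with $\tilde g(0)=0$ would make $\tilde g - g$ a root of the unit $U$, forcing $\tilde g=g$.

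For the inductive step, assuming the result holds for $m-1$ equations, I first perform a linear change in the variables $\y_1,\ldots,\y_m$ (which does not affect any hypothesis) so that $\frac{\partial f_m}{\partial \y_m}(0,0)\neq 0$; this is possible because the full Jacobian determinant is nonzero. Applying the base case (treating $\x,\y_1,\ldots,\y_{m-1}$ jointly as parameters) to $f_m$ yields a unique $h\in\kappa[[\x,\y_1,\ldots,\y_{m-1}]]$ with $h(0)=0$ and $f_m(\x,\y_1,\ldots,\y_{m-1},h)=0$. Substituting, I define
\[
\tilde f_i(\x,\y_1,\ldots,\y_{m-1}) := f_i\bigl(\x,\y_1,\ldots,\y_{m-1},h(\x,\y_1,\ldots,\y_{m-1})\bigr), \quad i=1,\ldots,m-1,
\]
which satisfy $\tilde f_i(0,0)=0$. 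Differentiating the identity $f_m(\x,\y',h)=0$ in $\y_j$ gives $\frac{\partial h}{\partial\y_j}(0,0)=-\frac{\partial f_m/\partial\y_j}{\partial f_m/\partial\y_m}\bigr|_{(0,0)}$, so the Jacobian $\bigl(\frac{\partial\tilde f_i}{\partial\y_j}(0,0)\bigr)_{i,j<m}$ is precisely the Schur complement of the entry $\frac{\partial f_m}{\partial\y_m}(0,0)$ in the full Jacobian. Hence its determinant equals $\det(\partial f_i/\partial\y_j)(0,0)/\frac{\partial f_m}{\partial\y_m}(0,0)\neq 0$, and the inductive hypothesis produces unique $g_1,\ldots,g_{m-1}\in\kappa[[\x]]$ with $g_j(0)=0$ solving $\tilde f_i=0$. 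Setting $g_m:=h(\x,g_1,\ldots,g_{m-1})$ gives the desired solution of the full system, and uniqueness is inherited at each step from the uniqueness in the base case and in the inductive hypothesis.

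I do not expect a genuine obstacle here: the only slightly delicate point is verifying that the Schur complement computation shows the reduced Jacobian remains invertible, which is a formal consequence of the block-matrix identity $\det\begin{pmatrix}A&B\\ C&d\end{pmatrix}=d\cdot\det(A-d^{-1}BC)$ applied at the origin. Everything else is standard symbolic substitution inside $\kappa[[\x]]$, where the substitutions $h(\x,g_1,\ldots,g_{m-1})$ make sense because the $g_j$ vanish at the origin, so they lie in $\gtm_n$-adically admissible situations.
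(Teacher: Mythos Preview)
Your argument is correct and is precisely the standard derivation of the formal Implicit Function Theorem from Weierstrass preparation. The paper does not actually give a proof of this statement: it merely records it as a theorem and remarks that the Implicit and Inverse Function Theorems ``arise standardly'' from the Weierstrass division and preparation theorems, to be used freely thereafter. So there is nothing to compare against beyond the fact that your approach is the one the paper has in mind.
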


\begin{thm}[Inverse Function Theorem]\label{ift2}
Let $f_1,\ldots,f_n\in\kappa[[\x]]$ be series such that $f_i(0)=0$ and $\det(\frac{\partial f_i}{\partial\x_j}(0))_{1\leq i,j\leq n}\neq0$. Then there exist unique series $g_1,\ldots,g_n\in\kappa[[\y]]$ such that $g_i(0)=0$ and $f_i(g_1,\ldots,g_n)=\y_i$ and $g_i(f_1,\ldots,f_n)=\x_i$ for $i=1,\ldots,n$. 
\end{thm}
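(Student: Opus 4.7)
\smallskip

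The plan is to deduce the Inverse Function Theorem from the Implicit Function Theorem already stated (Theorem \ref{ift}), then promote the one-sided relation obtained this way to a two-sided inverse by means of a standard Nakayama argument in the complete local category.

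\textbf{Step 1 (applying the IFT).} Introduce a second block of variables $\y:=(\y_1,\ldots,\y_n)$ and consider the auxiliary series
$$
F_i(\y,\x):=f_i(\x)-\y_i\in\kappa[[\y,\x]],\qquad i=1,\ldots,n.
$$
Then $F_i(0,0)=f_i(0)=0$, and the Jacobian with respect to the $\x$-variables at the origin is
$$
\det\!\left(\frac{\partial F_i}{\partial\x_j}(0,0)\right)_{1\leq i,j\leq n}=\det\!\left(\frac{\partial f_i}{\partial\x_j}(0)\right)_{1\leq i,j\leq n}\neq0.
$$
Theorem \ref{ift} (applied with the roles of $\x$ and $\y$ swapped) then furnishes unique series $g_1,\ldots,g_n\in\kappa[[\y]]$ with $g_i(0)=0$ such that $F_i(\y,g_1(\y),\ldots,g_n(\y))=0$, that is,
$$
f_i(g_1,\ldots,g_n)=\y_i,\qquad i=1,\ldots,n.
$$

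\textbf{Step 2 (reformulation in terms of local homomorphisms).} Since $f_i(0)=0$ and $g_i(0)=0$, the assignments
$$
\varphi\colon\kappa[[\y_1,\ldots,\y_n]]\longrightarrow\kappa[[\x_1,\ldots,\x_n]],\quad \y_i\longmapsto f_i,
$$
$$
\psi\colon\kappa[[\x_1,\ldots,\x_n]]\longrightarrow\kappa[[\y_1,\ldots,\y_n]],\quad \x_i\longmapsto g_i,
$$
define local $\kappa$-algebra homomorphisms between complete local rings. The identity $f_i(g_1,\ldots,g_n)=\y_i$ obtained in Step~1 translates into $\psi\circ\varphi=\id$, so $\varphi$ is injective and the remaining task is to prove that $\varphi\circ\psi=\id$, which on generators reads exactly $g_i(f_1,\ldots,f_n)=\x_i$.

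\textbf{Step 3 (surjectivity of $\varphi$ via Nakayama and completeness).} Let $\gtm_n^{\x}$ and $\gtm_n^{\y}$ denote the maximal ideals of source and target. The hypothesis on the Jacobian means that the classes $\ol f_1,\ldots,\ol f_n$ form a $\kappa$-basis of $\gtm_n^{\x}/(\gtm_n^{\x})^2$, so by Nakayama's lemma the elements $f_1,\ldots,f_n$ generate $\gtm_n^{\x}$. Since $\varphi(\gtm_n^{\y})\subset\gtm_n^{\x}$, an induction on $k\geq1$ together with the binomial expansion yields $\varphi(\gtm_n^{\y})^k=\varphi((\gtm_n^{\y})^k)+(\gtm_n^{\x})^{k+1}$, and combining these for all $k$ gives $\varphi(\kappa[[\y]])+(\gtm_n^{\x})^{k}=\kappa[[\x]]$ for every $k\geq1$. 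The $\gtm_n^{\x}$-adic completeness of $\kappa[[\x]]$ then implies that every element of $\kappa[[\x]]$ is the limit of a Cauchy sequence lying in the image of $\varphi$; writing such a sequence explicitly as a telescoping sum with summands in $\varphi((\gtm_n^{\y})^k)$ and using that $\varphi$ preserves $\gtm_n$-adic convergence, one concludes that $\varphi$ is surjective.

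\textbf{Step 4 (conclusion).} Being an injective and surjective ring homomorphism, $\varphi$ is an isomorphism. Its two-sided inverse must coincide with the left inverse $\psi$ already constructed, hence $\varphi\circ\psi=\id$ as well, which reads $g_i(f_1,\ldots,f_n)=\x_i$ for every $i$. Uniqueness of the $g_i$ is built into Theorem \ref{ift}: any tuple $(g_1',\ldots,g_n')\in\kappa[[\y]]^n$ vanishing at the origin and satisfying $f_i(g_1',\ldots,g_n')=\y_i$ is a solution of the implicit equations $F_i(\y,g'_1,\ldots,g'_n)=0$ and therefore equals $(g_1,\ldots,g_n)$. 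The main delicate point of the whole argument is Step~3: once the Jacobian hypothesis is converted into Nakayama-style generation of the maximal ideal, the formal completeness of $\kappa[[\x]]$ does the rest, and the two-sided inversion follows without any extra calculation.
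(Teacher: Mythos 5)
Your proof is correct, and it is worth noting at the outset that the paper gives no proof of Theorem \ref{ift2} at all --- it is stated as arising ``standardly'' from the Weierstrass theorems --- so there is no official argument to compare against. Your overall strategy (use Theorem \ref{ift} on $F_i(\y,\x):=f_i(\x)-\y_i$ to get the left inverse, then upgrade to a two-sided inverse by showing surjectivity of $\varphi$) is sound, and Step 3's combination of Nakayama with $\gtm_n$-adic completeness is the standard way to make the surjectivity work. One notational point in Step 3 deserves a fix: the displayed relation $\varphi(\gtm_n^{\y})^k=\varphi((\gtm_n^{\y})^k)+(\gtm_n^{\x})^{k+1}$ is not an equality of anything well-defined ($\varphi(\gtm_n^{\y})$ is not an ideal, and the two sides do not match as sets). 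The auxiliary fact you actually need and actually prove is the inclusion of subsets $(\gtm_n^{\x})^{k}\subset\varphi((\gtm_n^{\y})^{k})+(\gtm_n^{\x})^{k+1}$, obtained by writing each degree-$k$ monomial $\x^\nu$ as $\varphi(P_\nu)+r_\nu$ with $P_\nu\in\kappa[\y]$ homogeneous of degree $k$ and $r_\nu\in(\gtm_n^{\x})^{k+1}$ (which in turn uses that each $\x_j$ is, modulo $(\gtm_n^{\x})^2$, a constant linear combination of the $f_i$ via the inverse Jacobian); summing over $|\nu|=k$ is a finite sum, so no convergence issue arises. From this inclusion your telescoping argument is correct.

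There is a lighter route that sidesteps the Nakayama/completeness machinery entirely and stays fully inside Theorem \ref{ift}: having produced $g$ with $f_i(g(\y))=\y_i$, differentiate this identity to see that the Jacobian matrix of $g$ at the origin is the inverse of that of $f$, hence nonsingular, so Theorem \ref{ift} applies again to $G_i(\x,\y):=g_i(\y)-\x_i$ and yields unique $h_i\in\kappa[[\x]]$ with $h_i(0)=0$ and $g_i(h(\x))=\x_i$. Substituting $\y=h(\x)$ into $f_i(g(\y))=\y_i$ and using $g(h(\x))=\x$ gives $f_i(\x)=h_i(\x)$, so $h=f$ and $g_i(f(\x))=\x_i$ for free. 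This buys a shorter, purely compositional argument at the cost of invoking the IFT twice; your version buys the observation that $\varphi$ is an automorphism of $\kappa[[\x]]$, a fact the paper uses repeatedly under the label ``change of coordinates,'' so there is genuine value in having recorded it explicitly.
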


Let $\kappa$ be a (formally) real field and $U\in\kappa[[\x,\y]]$ a unit such that $U(0,0)\in\Sosp{\kappa}$. By the Implicit Function Theorem the quotient $\frac{U}{U(0,0)}$ is a square in $\kappa[[\x,\y]]$, so $U\in\Sosp{\kappa[[\x,\y]]}$.

\subsection{Equivalence of series and finite determinacy tools}

In order to simplify the discussions when dealing with series, it is usual to use `equivalence of series'. Let $f,g\in \gtm$ where $\gtm$ is the maximal ideal of $\kappa[[\x]]$. We say that:
\begin{itemize}
\item[(i)] $f$ is \em right equivalent to $g$ \em (and we write $f\simr g$) if there exists an automorphism $\Phi:\kappa[[\x]]\to\kappa[[\x]]$ such that $\Phi(f)=g$.
\item[(ii)] $f$ is \em contact equivalent to $g$ \em (and we write $f\simc g$) if there exists an automorphism $\Phi:\kappa[[\x]]\to\kappa[[\x]]$ such that $(\Phi(f))=(g)$. This is equivalent to the existence of a unit $u\in\kappa[[\x]]$ such that $\Phi(f)=ug$. 
\end{itemize}

Two series $f,g$ are contact equivalent if and only if the $\kappa$-algebras $\kappa[[\x]]/(f)$, $\kappa[[\x]]/(g)$ are isomorphic. In relation to this we recall the meaning of $k$-determinacy and $k$-quasideterminacy. A series $f\in\gtm$ is $k$-determined (resp. $k$-quasidetermined) if each $g\in\kappa[[\x]]$ with $f-g\in\gtm^{k+1}$ is right equivalent to $f$ (resp. contact equivalent to $f$). If $f$ is $k$-determined (resp. $k$-quasidetermined) for some $k\geq1$, then $f$ is right (resp. contact) equivalent to a polynomial of $\kappa[\x]$. This fact will be `squeezed to the extreme' in the following sections. As it seems natural, it will be useful to have a criterion to determine when a series is finitely determined (resp. quasi determined), that is, when it is $k$-determined (resp. $k$-quasidetermined) for some $k\geq1$.

\begin{thm}[Finite Determinacy and Quasideterminacy Theorem]\label{fdqd} 
Let $f\in\gtm^2\subset\kappa[[\x]]$. Suppose that $\gtm^k\subset\gtm J(f)$ (resp. $\gtm^k\subset\gtm J(f)+(f)$) where $J(f):=(\frac{\partial f}{\partial\x_1},\ldots,\frac{\partial f}{\partial\x_n})$. Then $f$ is $k$-determined (resp. $k$-quasidetermined).
\end{thm}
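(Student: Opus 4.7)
The plan is to adapt the classical homotopy (or ``path'') method to the purely formal setting over $\kappa$, replacing the flow of a vector field---the standard tool over $\R$ or $\C$---by a Nakayama--type successive-approximation scheme that crucially uses $\Q\subset\kappa$. Given $g\in\kappa[[\x]]$ with $f-g\in\gtm^{k+1}$, I introduce an extra formal parameter $\t$ and consider the interpolating family
$$F(\x,\t):=f(\x)+\t\bigl(g(\x)-f(\x)\bigr)\in\kappa[[\x,\t]].$$
The goal is to construct a $\kappa[[\t]]$-automorphism $\Psi$ of $\kappa[[\x,\t]]$ with $\Psi(\t)=\t$ and $\Psi|_{\t=0}=\id$ such that $\Psi(F)=f$ in the right-determinacy case, respectively $\Psi(F)=Uf$ for some unit $U\in\kappa[[\x,\t]]$ with $U|_{\t=0}=1$ in the quasideterminacy case. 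Specialising $\t\mapsto1$---which is legitimate because the construction delivers $\Psi(\x_i)-\x_i\in\t\gtm\kappa[[\x,\t]]$---then produces the desired automorphism $\Phi$ of $\kappa[[\x]]$ with $\Phi(f)=g$ (resp.\ $\Phi(f)\in(g)$).

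To build $\Psi$, I differentiate the target identity $F(\Psi(\x),\t)=U(\x,\t)f(\x)$ with respect to $\t$ (setting $U\equiv1$ in the right case). This yields the infinitesimal conjugation equation
$$\sum_{i=1}^n\frac{\partial F}{\partial\x_i}(\Psi,\t)\cdot\frac{\partial\Psi(\x_i)}{\partial\t}\ +\ \mu(\x,\t)\cdot F(\Psi,\t)\ =\ f(\x)-g(\x),$$
where $\mu:=\partial_\t U$ appears only in the contact case. Since $f-g\in\gtm^{k+1}$ and $\frac{\partial F}{\partial\x_i}\equiv\frac{\partial f}{\partial\x_i}\pmod{\gtm^k\kappa[[\x,\t]]}$, the hypothesis $\gtm^k\subset\gtm J(f)$ (resp.\ $\gtm^k\subset\gtm J(f)+(f)$) lifts via Nakayama's lemma in $\kappa[[\x,\t]]$ to an inclusion $\gtm^{k+1}\kappa[[\x,\t]]\subset\gtm\cdot J_\t(F)$ (resp.\ $\subset\gtm J_\t(F)+(F)$), where $J_\t(F):=(\frac{\partial F}{\partial\x_1},\ldots,\frac{\partial F}{\partial\x_n})$. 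Hence the equation is solvable with $\frac{\partial\Psi(\x_i)}{\partial\t}$ (and $\mu$, when present) lying in $\kappa[[\x,\t]]$. Because $\Q\subset\kappa$, I may integrate termwise in $\t$ to obtain $\xi_i:=\Psi(\x_i)-\x_i$ (and $U-1$ in the contact case) in $\t\gtm\kappa[[\x,\t]]$; the Inverse Function Theorem \ref{ift2} then guarantees that $\Psi$ is indeed an automorphism.

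The main obstacle is to carry out the solvability step rigorously inside the formal ring $\kappa[[\x,\t]]$: over $\R$ or $\C$ one merely integrates an ODE, whereas here the vector-field equation must be solved degree by degree through a Nakayama-driven recursion. Concretely, I would produce a sequence of partial solutions $\Psi_N$ making the target identity hold modulo $\gtm^N\kappa[[\x,\t]]$, with $\Psi_{N+1}-\Psi_N\in\gtm^{N-k+1}\kappa[[\x,\t]]$, and show that at each stage the remainder lies in the relevant Jacobian ideal thanks to the standing hypothesis. The limit $\Psi=\lim_N\Psi_N$ then converges in the maximal-ideal-adic topology of $\kappa[[\x,\t]]$ and yields the desired automorphism. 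This is the step in which the proof genuinely departs from its classical analytic counterpart, and it is the one place where the algebraic hypotheses---the inclusion of $\gtm^k$ in $\gtm J(f)$ (resp.\ in $\gtm J(f)+(f)$), the presence of $\Q$ in $\kappa$, and the completeness of $\kappa[[\x,\t]]$---must be combined.
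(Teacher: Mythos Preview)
There is a genuine gap: the specialisation $\t\mapsto1$ is \emph{not} legitimate in $\kappa[[\x,\t]]$, and your justification (``because $\Psi(\x_i)-\x_i\in\t\gtm\kappa[[\x,\t]]$'') does not repair it. An element of $\t\gtm\kappa[[\x,\t]]$ can perfectly well be $\t\x_1\sum_{m\ge0}\t^m$, which has no value at $\t=1$. Nothing in your construction forces the $\t$-expansion of $\Psi(\x_i)$ to be polynomial in $\t$, nor to have $\x$-coefficients that are summable when $\t=1$. Over $\R$ or $\C$ this step is rescued by analytic convergence of the flow on the compact interval $[0,1]$; that rescue is precisely what is unavailable in the purely formal setting. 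Your successive-approximation paragraph does not escape the difficulty either: the $\Psi_N$ you describe still live in $\kappa[[\x,\t]]$, the corrections $\Psi_{N+1}-\Psi_N$ are obtained by writing an error term as a $\kappa[[\x,\t]]$-combination of the $\partial F/\partial\x_i$ and $F$, and those coefficients need not be polynomial in $\t$---so at the end you still cannot set $\t=1$.

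The paper sidesteps this by changing the parametrisation. Writing $g-f=\sum_i\x_ih_i$ with $h_i\in\gtm^k$, it introduces $n$ extra variables $\y=(\y_1,\dots,\y_n)$ and sets $F(\x,\y):=f+\sum_i(\x_i+\y_i)h_i$, so that $F(\x,0)=g$ while $F(\x,-\x)=f$. A Nakayama argument (as in yours) gives $\partial F/\partial\y_j\in\gtm J_\x(F)+(F)$, and then a separate lemma (Lemma~\ref{var}) eliminates the $\y_j$ one at a time by solving a formal ODE in an auxiliary variable $\t$---but the crucial substitution at the end of each step is $\t:=\y_m$, an element of the maximal ideal, so the evaluation is honest. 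Finally one specialises $\y_i:=-\x_i$, again elements of $\gtm$, to descend from $\kappa[[\x,\y]]$ to $\kappa[[\x]]$. Replacing a single ``time'' parameter running to $1$ by several parameters running to points of $\gtm$ is exactly the device that makes the homotopy method go through formally; your proposal is missing this.
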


The previous result is well-known when $\kappa=\C$ is the field of complex numbers and it is easy to adapt when $\kappa=\R$ or, more generally, when $\kappa$ is algebraically closed or real closed. A quite straightforward proof of the previous result when $\kappa=\C$ can be found in \cite[Thm.I.2.23]{gls} and \cite[Thm.9.1.7]{jp}. The details there can be adapted to approach the case when $\kappa$ is a field of characteristic $0$. For the sake of completeness we provide an elementary proof of Theorem \ref{fdqd} in Appendix \ref{b}.

\begin{examples}\label{examples}
Let $\kappa$ be a field of characteristic $0$ and $0\neq a,b\in\kappa$ and write $\gtm:=(\x,\y)\kappa[[\x,\y]]$. We have:

(i) If $F:=a\x^2+b\y^\ell$ where $\ell\geq 2$, then $J(F)=(\x,\y^{\ell-1})$ and $\gtm^{\ell}\subset\gtm J(F)$. Thus, $F$ is $\ell$-determined.

(ii) If $F:=a\x^2\y+b\y^\ell$ with $\ell\geq 3$, then $J(F)=(\x\y,a\x^2+\ell b\y^{\ell-1})$ and $\gtm^{\ell}\subset\gtm J(F)=(\x^2\y,\x\y^2,\x^3,\y^{\ell})$. Thus, $F$ is $\ell$-determined.

(iii) If $F:=\x^3+a\x\y^2+b\y^3$, then $J(F)=(3\x^2+a\y^2,2a\x\y+3b\y^2)$. We have $\gtm^{3}\subset\gtm J(F)=(3\x^3+a\x\y^2,3\x^2\y+a\y^3,2a\x^2\y+3b\x\y^2,2a\x\y^2+3b\y^3)$ if and only if the cubic forms $\{3\x^3+a\x\y^2,3\x^2\y+a\y^3,2a\x^2\y+3b\x\y^2,2a\x\y^2+3b\y^3\}$ are linearly independent or, equivalently, if $4a^3+27b^2\neq0$. Thus, $F$ is $3$-determined if the discriminant $4a^3+27b^2$ of $F(\x,1)$ is non-zero.

(iv) If $F:=\x^3+\x\y^3$, then $J(F)=(3\x^2+\y^3,\x\y^2)$ and $\gtm^{5}\subset(\x^3,\x^2\y^2,\x\y^3,\y^5)\subset\gtm J(F)=(3\x^3+\x\y^3,3\x^2\y+\y^4,\x^2\y^2,\x\y^3)$. Thus, $F$ is $5$-determined.

(v) If $F:=\x^3+a\y^k$ where $k\geq3$, then $J(F)=(\x^2,\y^{k-1})$ and 
$$
\gtm^k\subset\gtm J(F)=(\x^3,\x^2\y,\x\y^{k-1},\y^k).
$$ 
Thus, $F$ is $k$-determined. 

(vi) If $F:=\x^3+a\x\y^\ell+b\y^k$ where $2\leq\ell<k$ and $\frac{\ell}{k}\neq\frac{2}{3}$, then $F$ is $k$-quasidetermined. Observe that $J(F)=(3\x^2+a\y^\ell,\ell a\x\y^{\ell-1}+kb\y^{k-1})$ and 
\begin{multline*}
\gtm J(F)+(F)=(3\x^3+a\x\y^\ell,3\x^2\y+a\y^{\ell+1},\ell a\x^2\y^{\ell-1}+kb\x\y^{k-1},\\
\ell a\x\y^\ell+kb\y^k,\x^3+a\x\y^\ell+b\y^k).
\end{multline*}
As $\frac{\ell}{k}\neq\frac{2}{3}$, we deduce $\x^3,\x^2\y^{\ell-1},\x\y^\ell,\y^k\in\gtm J(F)+(F)$, so $\gtm^k\subset\gtm J(F)+(F)$ and $F$ is $k$-quasidetermined.

(vii) Let $F:=\x^3+a\x\y^{2\rho}+b\y^{3\rho}(1+h)$ where $\rho\geq2$, $4a^3+27b^2\neq0$, $h\in\kappa[[\y]]$ and either $h=0$ or $1\leq\omega(h)\leq\rho-1$. If $k:=\min\{3\rho+1+\omega(\frac{d h}{d\y}),4\rho-1\}$, then $F$ is $k$-quasidetermined. Define $g:=1+h+\frac{\y}{3\rho}\frac{d h}{d\y}$ and observe that $J(F)=(3\x^2+a\y^{2\rho},2\rho a\x\y^{2\rho-1}+3\rho b\y^{3\rho-1}g)$, so
\begin{multline*}
\gtm J(F)+(F)=(3\x^3+a\x\y^{2\rho},3\x^2\y+a\y^{2\rho+1},2\rho a\x^2\y^{2\rho-1}+3\rho b\x\y^{3\rho-1}g,\\
2\rho a\x\y^{2\rho}+3\rho b\y^{3\rho}g,\x^3+a\x\y^{2\rho}+b\y^{3\rho}(1+h)).
\end{multline*}
In particular, $\y^{3\rho+1}\frac{d h}{d\y}\in\gtm J(F)+(F)$.
{\em As $4a^3+27b^2\neq0$, also $\x\y^{3\rho-1},\x^2\y^{2\rho-1},\x^3\y^{\rho-1},\x^4,\y^{4\rho-1}\in\gtm J(F)+(F)$}, so $\gtm^k\subset\gtm J(F)+(F)$ and $F$ is $k$-quasidetermined.
\end{examples}
\begin{proof}[Proof of \em(vii)]
First, observe that
$$
\y^{3\rho+1}\frac{d h}{d\y}=\tfrac{\rho}{b}(3\x^3+a\x\y^{2\rho})+\tfrac{1}{b}(2\rho a\x\y^{2\rho}+3\rho b\y^{3\rho}g)-\tfrac{3\rho}{b}(\x^3+a\x\y^{2\rho}+b\y^{3\rho}(1+h))\in\gtm J(F)+(F).
$$
Consequently, $f_1:=2a\x\y^{2\rho}+3b\y^{3\rho}(1+h)\in\gtm J(F)+(F)$. In addition,
$$
f_2:=9b\x\y^{3\rho-1}g-2a^2\y^{4\rho-1}=3(2a\x^2\y^{2\rho-1}+3b\x\y^{3\rho-1}g)-2a\y^{2\rho-2}(3\x^2\y+a\y^{2\rho+1})\in\gtm J(F)+(F)
$$
and we conclude
$$
(4a^3+27b^2g(1+h))\y^{4\rho-1}=9b\y^{\rho-1}gf_1-2af_2\in\gtm J(F)+(F).
$$
As $4a^3+27b^2\neq0$, we deduce $\y^{4\rho-1}\in\gtm J(F)+(F)$. Thus, $9b\x\y^{3\rho-1}g=f_2+2a^2\y^{4\rho-1}\in\gtm J(F)+(F)$, so also $\x\y^{3\rho-1}\in\gtm J(F)+(F)$. Now, 
$$
2\rho a\x^2\y^{2\rho-1}=(2\rho a\x^2\y^{2\rho-1}+3\rho b\x\y^{3\rho-1}g)-3\rho b\x\y^{3\rho-1}g\in\gtm J(F)+(F),
$$
so $\x^2\y^{2\rho-1}\in\gtm J(F)+(F)$. Then $\x^4=\frac{1}{3}\x(3\x^3+a\x\y^{2\rho})-\frac{1}{3}a\y(\x^2\y^{2\rho-1})\in\gtm J(F)+(F)$. Finally, $\x^3\y^{\rho-1}=\frac{1}{3}\y^{\rho-1}(3\x^3+a\x\y^{2\rho})-\frac{1}{3}a\x\y^{3\rho-1}\in\gtm J(F)+(F)$. Now, as $k:=\min\{3\rho+1+\omega(\frac{d h}{d\y}),4\rho-1\}\geq\rho+2,2\rho+1,3\rho$, we have
$$
x^{\ell}\y^{k-\ell}=\begin{cases}
\x^4(x^{\ell-4}\y^{k-\ell})\in\gtm J(F)+(F)&\text{if $4\leq\ell\leq k$,}\\
\x^3\y^{\rho-1}(\y^{k-2-\rho})\in\gtm J(F)+(F)&\text{if $\ell=3$,}\\
\x^2\y^{2\rho-1}(\y^{k-2\rho-1})\in\gtm J(F)+(F)&\text{if $\ell=2$,}\\
\x\y^{3\rho-1}(\y^{k-3\rho})\in\gtm J(F)+(F)&\text{if $\ell=1$,}\\
\y^k=\y^{4\rho-1}(\y^{k-4\rho+1})\in\gtm J(F)+(F)&\text{if $\ell=0$,}
\end{cases}
$$
so $\gtm^k\subset\gtm J(F)+(F)$ and $F$ is $k$-quasidetermined.
\end{proof}

\subsection{Exchanging positiveness}
Along this work we will freely use the following facts.
\begin{lem}\label{trans}
Let $\varphi:A\to B$ be a homomorphism of rings. We have:
\begin{itemize}
\item[(i)] For each $\beta\in\Sper(B)$ there exists $\alpha\in\Sper(A)$ such that ${\rm sign}_\alpha(g)={\rm sign}_\beta(\varphi(g))$ for each $g\in A$.
\item[(ii)] If $f\in\psd(A)$, then $\varphi(f)\in\psd(B)$.
\end{itemize}
\end{lem}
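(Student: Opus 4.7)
\textbf{Proof plan for Lemma \ref{trans}.}

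The plan is to exhibit $\alpha$ as the ``pullback'' of $\beta$ along $\varphi$, using the two equivalent descriptions of a prime cone recalled in the introduction. Concretely, I set $\gtp_\alpha := \varphi^{-1}(\gtp_\beta) = \varphi^{-1}(\supp(\beta))$, which is a prime ideal of $A$ because $\gtp_\beta$ is prime in $B$ and preimages of prime ideals under ring homomorphisms are prime. The homomorphism $\varphi$ then factors through an injective homomorphism $A/\gtp_\alpha\hookrightarrow B/\gtp_\beta$, which extends uniquely to an embedding of quotient fields $\psi:\qf(A/\gtp_\alpha)\hookrightarrow\qf(B/\gtp_\beta)$. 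I pull the ordering $\leq_\beta$ of $\qf(B/\gtp_\beta)$ back via $\psi$ to obtain an ordering $\leq_\alpha$ of $\qf(A/\gtp_\alpha)$; that this is an ordering is immediate because the restriction of an ordering to a subfield is again an ordering, and $\psi$ embeds one field in the other.

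Having defined $\alpha:=(\gtp_\alpha,\leq_\alpha)$, part (i) amounts to a routine trichotomy on $g\in A$ according to whether $g\in\gtp_\alpha$, $g+\gtp_\alpha>_\alpha 0$, or $g+\gtp_\alpha<_\alpha 0$. In the first case, $\varphi(g)\in\gtp_\beta$ so both signs vanish; in the remaining two cases, $\psi$ sends the class of $g$ to the class of $\varphi(g)$, and by the very definition of $\leq_\alpha$ as the pullback of $\leq_\beta$ the two signs agree. This establishes $\sign_\alpha(g)=\sign_\beta(\varphi(g))$ for every $g\in A$. Equivalently, one could bypass the pair description and verify directly that $\alpha:=\{a\in A:\varphi(a)\in\beta\}$ satisfies the axioms of a prime cone (closure under sums and products, containing all squares, $\alpha\cup(-\alpha)=A$, $-1\notin\alpha$, and $\alpha\cap(-\alpha)$ prime), which all follow trivially from the corresponding properties of $\beta$ and the fact that $\varphi$ is a ring homomorphism.

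Part (ii) is then a one-line consequence of (i): given $f\in\psd(A)$ and any $\beta\in\Sper(B)$, choose $\alpha\in\Sper(A)$ as in (i); since $f\geq_\alpha 0$ we obtain $\sign_\beta(\varphi(f))=\sign_\alpha(f)\geq 0$, and as $\beta$ was arbitrary, $\varphi(f)\in\psd(B)$.

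There is no substantive obstacle here; the statement is just the standard functoriality of $\Sper$ regarded as a contravariant functor from rings to spectral spaces, together with the compatibility of sign with the induced map on real spectra. The only point that deserves a second of attention is the well-definedness of the induced ordering on $\qf(A/\gtp_\alpha)$, which is handled by the factorization of $\varphi$ through $A/\gtp_\alpha\hookrightarrow B/\gtp_\beta$ and the extension to fraction fields.
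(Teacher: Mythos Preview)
Your proposal is correct and follows essentially the same approach as the paper: the paper defines $\alpha:=\varphi^{-1}(\beta)$ via the real spectral map $\Sper(\varphi)$ (citing \cite[Prop.7.1.7]{bcr} for its well-definedness) and then observes that $\varphi(g)\in\beta\Leftrightarrow g\in\alpha$ and $\varphi(g)\in\supp(\beta)\Leftrightarrow g\in\supp(\alpha)$, which is exactly your pullback construction stated more tersely. Your explicit unpacking via the pair $(\gtp_\alpha,\leq_\alpha)$ and the embedding of quotient fields is just a spelled-out version of what the cited reference contains.
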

\begin{proof}
By \cite[Prop.7.1.7]{bcr} the real spectral map $\Sper(\varphi):\Sper(B)\to\Sper(A),\ \beta\mapsto\varphi^{-1}(\beta)$ is well-defined. 

(i) Observe that $\varphi(g)\in\beta$ if and only if $g\in\alpha:=\varphi^{-1}(\beta)$ and $\varphi(g)\in\supp(\beta)$ if and only if $g\in\supp(\alpha)=\varphi^{-1}(\supp(\beta))$.

(ii) If $\beta\in\Sper(B)$, then $f\in\varphi^{-1}(\beta)$ (because $f\in\psd(A)$), so $\varphi(f)\in\beta$. Thus, $\varphi(f)\in\psd(B)$, as required.
\end{proof}

The following result will be useful when dealing with blow-ups in the proof of Theorem \ref{list2}.

\begin{lem}\label{blowup}
Let $A,B$ be two rings and $g\in B$ a non-nilpotent element. Denote the localization of $B$ at the multiplicative set $S:=\{g^n:\ n\geq0\}$ with $B_g$. Let $\varphi:A\to B_g$ be a homomorphism of rings and $f\in\psd(A)$. Write $\varphi(f)=\frac{b}{g^\ell}$ for some $\ell\geq1$ and assume $g$ does not divide $b$. Define 
$$
k:=\begin{cases}
1&\text{if $\ell$ is odd,}\\
2&\text{if $\ell$ is even.}
\end{cases}
$$
Then $g^kb\in\psd(B)$.
\end{lem}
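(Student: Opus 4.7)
\smallskip
My plan is to reduce to working one prime cone at a time on $B$, using Lemma \ref{trans} to transport positivity along $\varphi$, and then handling the localization inversion by multiplying by an appropriate even power of $g$.

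First I would apply Lemma \ref{trans}(ii) to the homomorphism $\varphi\colon A\to B_g$: since $f\in\psd(A)$, this gives $\varphi(f)=b/g^\ell\in\psd(B_g)$. Next I fix an arbitrary prime cone $\beta\in\Sper(B)$ and aim to show $g^kb\geq_\beta 0$. Split into two cases according to whether $g\in\supp(\beta)$ or not.

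If $g\in\supp(\beta)$, then $g=_\beta 0$ and therefore $g^kb=_\beta 0$, which is certainly non-negative. If instead $g\notin\supp(\beta)$, then $g$ becomes a unit in the ordered residue field $(\qf(B/\supp(\beta)),\leq_\beta)$, so the localization map $B\to B_g$ factors through this quotient field and $\beta$ extends uniquely to a prime cone $\tilde\beta\in\Sper(B_g)$ with $\supp(\tilde\beta)=\supp(\beta)B_g$ and such that $\sign_{\tilde\beta}(c)=\sign_\beta(c)$ for each $c\in B$. From $b/g^\ell\in\psd(B_g)$ we get $b/g^\ell\geq_{\tilde\beta}0$, and multiplying by $g^{2\ell}>_{\tilde\beta}0$ (here we use that $g\notin\supp(\beta)$, so $g^2>_{\tilde\beta}0$) yields $bg^\ell\geq_{\tilde\beta}0$, hence $bg^\ell\geq_\beta 0$ in $B$.

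To finish I extract the sign of $g^kb$ from the sign of $bg^\ell$. If $\ell$ is odd, write $\ell=2m+1$, so $bg^\ell=(bg)g^{2m}$ and, dividing by $g^{2m}>_\beta 0$, we conclude $g^kb=bg\geq_\beta 0$. If $\ell$ is even, write $\ell=2m$ with $m\geq 1$; then $bg^\ell=(bg^2)g^{2m-2}$ and dividing by $g^{2m-2}>_\beta 0$ (or noting $bg^\ell=bg^2$ when $m=1$) we obtain $g^kb=bg^2\geq_\beta 0$. This covers all $\beta\in\Sper(B)$, so $g^kb\in\psd(B)$.

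The only mildly delicate point is the construction of the extension $\tilde\beta$ of $\beta$ to $B_g$ in the second case, but this is standard once one observes that $g\notin\supp(\beta)$ forces $g$ to be a non-zero element of the ordered field $\qf(B/\supp(\beta))$; the rest of the argument is just bookkeeping with parities.
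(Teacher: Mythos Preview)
Your proof is correct and follows essentially the same approach as the paper's: both use Lemma~\ref{trans} to get $\varphi(f)\in\psd(B_g)$, split $\Sper(B)$ according to whether $g\in\supp(\beta)$, and then adjust by an even power of $g$ so that $k+\ell$ is even. The only cosmetic difference is that the paper invokes a reference for the identification of $\Sper(B_g)$ with $\{\alpha\in\Sper(B):g^2>_\alpha0\}$, whereas you build the extension $\tilde\beta$ by hand via the ordered residue field; both are standard.
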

\begin{proof}
By Lemma \ref{trans} we deduce $\varphi(f)\in\psd(B_g)$. Let ${\tt i}:B\to B_g,\ x\mapsto\frac{x}{1}$ be the canonical map. By \cite[Thm.13.3.7 \& Rem.13.3.8(ii)]{dst} the real spectrum $\Sper(B_g)$ is isomorphic via $\Sper({\tt i})$ to the constructible subset ${\mathcal U}:=\{\alpha\in\Sper(B):\ g^2>_\alpha0\}$ of $\Sper(B)$. This means that $g^kb>_\alpha0$ for each $\alpha\in{\mathcal U}$ because $\frac{g^kb}{g^{k+\ell}}\in\psd(B_g)$ and $k+\ell$ is even. As $k\geq1$, we have $g^kb\in\supp(\beta)$ for each $\beta\in\Sper(B)\setminus{\mathcal U}=\{\alpha\in\Sper(B):\ g\in\supp(\alpha)\}$. We conclude $g^kb\in\psd(B)$, as required.
\end{proof}

\subsection{Applications of Weierstrass polynomials and Puiseux series}
We prove next that series in two variables that are close enough in the $\gtm$-adic topology have similar structures. 

\begin{lem}\label{PU}
Let $P\in\kappa[[\y]][\x]$ be Weierstrass polynomials of degree $d$, let $U\in\kappa[[\x,\y]]$ be a unit and set $f:=PU$. For each $m\geq1$ there exists $r\geq1$ such that if $Q\in\kappa[[\y]][\x]$ is a Weierstrass polynomial of degree $d$, $V\in\kappa[[\x,\y]]$ is a unit with $V(0,0)=U(0,0)$ and $g:=QV$ satisfies $f-g\in\gtm_2^r$, then $P-Q,U-V\in\gtm_2^m$.
\end{lem}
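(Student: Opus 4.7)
Set $\Delta P:=Q-P$ and $\Delta U:=V-U$. Since $P$ and $Q$ are Weierstrass polynomials of the same degree $d$ in $\x$, the difference $\Delta P$ has $\x$-degree $<d$ and every coefficient lies in $(\y)\kappa[[\y]]$; and $\Delta U(0,0)=0$ by the hypothesis $V(0,0)=U(0,0)$. Expanding,
\[
g-f=(P+\Delta P)(U+\Delta U)-PU=P\Delta U+\Delta P\cdot V,
\]
and applying Weierstrass division of the product $\Delta P\cdot V$ by $P$, we may write $\Delta P\cdot V=PE+F$ with $E\in\kappa[[\x,\y]]$ and $F\in\kappa[[\y]][\x]$ of $\x$-degree $<d$. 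Substituting,
\[
P(\Delta U+E)+F=g-f\in\gtm_2^r,
\]
which by uniqueness is the Weierstrass decomposition of $g-f$ by $P$.

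The plan rests on two continuity estimates. First, Weierstrass division by $P$ is $\gtm_2$-adically continuous: if $\chi\in\gtm_2^N$ and $\chi=PA+B$ with $\deg_\x B<d$, then $B\in\gtm_2^{\lfloor N/d\rfloor}$ and $A\in\gtm_2^{\lfloor N/d\rfloor-d}$. The reason is that in the quotient $R:=\kappa[[\x,\y]]/(P)$ one has $\x^d\equiv -\sum_{i<d}a_i(\y)\x^i\pmod P$ with $a_i\in(\y)\kappa[[\y]]$, so the image of $\x^d$ lies in $(\y)R$; by a direct check on monomials $\x^I\y^J$ this forces $(\x,\y)^{Nd}R\subset(\y)^NR$, and writing the image of $\chi$ in the $\kappa[[\y]]$-basis $\{1,\x,\ldots,\x^{d-1}\}$ of $R$ bounds each coefficient of $B$ by $(\y)^{\lfloor N/d\rfloor}$; then $A=(\chi-B)/P$ gives the bound on $A$. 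Second, the $\kappa[[\y]]$-linear endomorphism $\mu_V\colon\kappa[[\y]][\x]_{<d}\to\kappa[[\y]][\x]_{<d}$ sending $\sigma$ to the Weierstrass remainder of $\sigma V$ by $P$ is invertible with $\mu_V^{-1}$ having entries in $\kappa[[\y]]$: its matrix in the basis $\{1,\x,\ldots,\x^{d-1}\}$, reduced modulo $(\y)$, is lower triangular with diagonal $V(0,0)\neq 0$. Since the entries of $\mu_V^{-1}$ are in $\kappa[[\y]]$, a coordinatewise computation shows that $F\in\gtm_2^M$ implies $\Delta P\in\gtm_2^{M-d+1}$.

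Combining these estimates with $g-f\in\gtm_2^r$: the first gives $F\in\gtm_2^{\lfloor r/d\rfloor}$ and $\Delta U+E\in\gtm_2^{\lfloor r/d\rfloor-d}$; the second then gives $\Delta P\in\gtm_2^{\lfloor r/d\rfloor-d+1}$. From $PE=\Delta P\cdot V-F\in\gtm_2^{\lfloor r/d\rfloor-d+1}$ and $\omega(P)\le d$ we obtain $E\in\gtm_2^{\lfloor r/d\rfloor-2d+1}$, whence $\Delta U=(\Delta U+E)-E\in\gtm_2^{\lfloor r/d\rfloor-2d+1}$. Choosing $r:=d(m+2d-1)$ makes both $\Delta P$ and $\Delta U$ lie in $\gtm_2^m$, so $P-Q,U-V\in\gtm_2^m$, as required. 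The main technical obstacle is the first continuity estimate: Weierstrass division does lose a factor of $d$ in $\gtm_2$-adic order (the example $\x^{kd}\equiv\y^k\pmod{\x^d-\y}$ shows this bound is sharp), so the order arithmetic must be tracked carefully throughout.
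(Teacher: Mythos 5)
Your proof is correct, and it takes a genuinely different route from the paper's. The paper first reduces to the case $\omega(f)=d$ (so that the initial forms of $P,Q$ are both $\x^d$) via the substitution $\y\mapsto\t^\ell$, and in that case compares homogeneous components of the relation $Q\equiv(V^{-1}U)\,P\pmod{\gtm_2^r}$ degree by degree, using the constraint $\deg_\x p_{d+k},\deg_\x q_{d+k}\le d-1$ ($k\ge1$) to force $q_{d+k}=p_{d+k}$ and $w_k=0$ inductively. You instead stay in the original coordinates, write $g-f=P(\Delta U+E)+F$ as a Weierstrass decomposition, and track $\gtm_2$-adic orders via two quantitative lemmas: a weighted-order estimate (weight $1$ on $\x$, $d$ on $\y$) showing Weierstrass division by a degree-$d$ Weierstrass polynomial loses at most a factor $d$ in order, and the invertibility over $\kappa[[\y]]$ of the operator $\mu_V$ (multiplication by $V$ followed by reduction mod $P$), whose matrix mod $(\y)$ is triangular with diagonal $V(0,0)\neq0$. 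The order arithmetic checks out — from $g-f\in\gtm_2^r$ one gets $F\in\gtm_2^{\lfloor r/d\rfloor}$, hence $\Delta P\in\gtm_2^{\lfloor r/d\rfloor-d+1}$ and $\Delta U\in\gtm_2^{\lfloor r/d\rfloor-2d+1}$, so $r=d(m+2d-1)$ works. What you buy is an explicit bound for $r$ and no case split or Puiseux-type substitution; what the paper's route buys is a cleaner argument once the substitution normalizes the weights. Both are essentially handling the same obstruction (the discrepancy between the $(\x,\y)$-adic filtration and the one adapted to $P$), just at different points in the argument.
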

\begin{proof}
Assume first $\omega(f)=d$ and let $r:=d+1$. Pick $Q,V$ satisfying the conditions of the statement. We have
$$
Q=V^{-1}UP+V^{-1}(g-f)\equiv V^{-1}UP\mod\gtm_2^r.
$$
We claim: $Q\equiv P\mod\gtm_2^{r-1}$.

Consider the homogeneous components of $P,Q,W:=V^{-1}U$ and write:
\begin{align*} 
&P:=p_d+p_{d+1}+\cdots+p_{d+k}+\cdots,\\ 
&Q:=q_d+q_{d+1}+\cdots+q_{d+k}+\cdots,\\ 
&W:=w_0+w_1+\cdots+w_k+\cdots, 
\end{align*} 
where $p_d$ and $q_d$ are homogeneous polynomials of degree $d$ monic with respect to $\x$ and 
$$
\deg_{\x}(p_{d+k}),\deg_{\x}(q_{d+k})\leq d-1
$$ 
for $k\geq 1$. As $Q\equiv WP \mod \gtm_2^r$ for each $k=0,\ldots,r-d-1$, we deduce 
$$
q_{d+k}=\sum_{i+j=d+k}p_iw_j.
$$
We claim: \em $q_{d+k}=p_{d+k}$ for $k=0,\ldots,r-d-1$, $w_0=1$ and $w_k=0$ for $k=1,\ldots,r-d-1$\em.

For $k=0$ we have $q_d=p_dw_0$, so $w_0=1$ and $q_d=p_d$. Assume $q_{d+j}=p_{d+j}$ for
$j=0,\ldots,k<r-d-1$ and $w_j=0$ for $j=1,\ldots,k$. Then:
$$
q_{d+k+1}=p_{d+k+1}w_0+p_{d+k}w_1+\cdots+p_dw_{k+1}=p_{d+k+1}+p_dw_{k+1}.
$$ 
If $w_{k+1}\ne0$,
$$
d\le\deg_{\x}(p_dw_{k+1})=\deg_{\x}(q_{d+k+1}-p_{d+k+1})
\leq\max\{\deg_{\x}(q_{d+k+1}),\deg_{\x}(p_{d+k+1})\}\leq d-1,
$$
which is impossible. Consequently, $w_{k+1}=0$ and $q_{d+k+1}=p_{d+k+1}$.

If $\omega(f)<d$, we substitute $\y=\t^\ell$ for some $2\leq\ell\leq d$ such that $\omega(f(\x,\t^\ell))=d$. Define $r:=m\ell+1$ and let $Q,V$ satisfy the conditions of the statement. Observe that by the previous case $P(\x,\t^\ell)\equiv Q(\x,\t^\ell)\mod(\x,\t)^{m\ell}$ and $U(\x,\t^\ell)\equiv V(\x,\t^\ell)\mod(\x,\t)^{m\ell}$. Consequently, $P\equiv Q\mod\gtm_2^m$ and $U\equiv V\mod\gtm_2^m$, as required. 
\end{proof}

Let $\kappa$ be a field of characteristic $0$ and $\ol{\kappa}$ its algebraic closure. Let $\ol{\kappa}[[\t^*]]=\bigcup_{p\geq1}\ol{\kappa}[[\t^{1/p}]]$ be the ring of Puiseux series with coefficients in $\ol{\kappa}$ and $\ol{\kappa}((\t^*))$ its field of fractions, which is an algebraically closed field.

\begin{lem}\label{roots}
Let $P\in\kappa[[\y]][\x]$ be a Weierstrass polynomial of degree $d$, let $\ol{\kappa}$ be the algebraic closure of $\kappa$ and let $\alpha_1,\ldots,\alpha_d\in\ol{\kappa}[[\y^*]]$ be the roots of $P$ in $\ol{\kappa}((\y^*))$. For each $m\geq1$ there exists $r\geq1$ such that if $Q\in\kappa[[\y]][\x]$ is a Weierstrass polynomial of degree $d$ with roots $\beta_1,\ldots,\beta_d\in\ol{\kappa}((\y^*))$ satisfying $P-Q\in\gtm_2^r$, then (after reordering the indices of the roots $\beta_i$) $\omega(\alpha_i-\beta_i)\geq m$ for each $i=1,\ldots,n$. 
\end{lem}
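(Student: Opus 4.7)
The plan is to prove a continuity-of-roots result for Puiseux series via the fundamental identity
\[
(P-Q)(\beta_i,\y)=P(\beta_i,\y)=\prod_{j=1}^{d}(\beta_i-\alpha_j),
\]
which holds because $Q(\beta_i,\y)=0$ and $P$ factors over the algebraically closed field $\ol{\kappa}((\y^*))$. The strategy is then to convert smallness of $P-Q$ in the $\gtm_2$-adic topology into closeness of the $\beta_i$'s to the $\alpha_j$'s in the valuation $\omega$ of $\ol{\kappa}[[\y^*]]$.

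\emph{Step 1 (uniform lower bound on the orders of the roots).} Since $P$ is a Weierstrass polynomial, its coefficients $a_k\in\kappa[[\y]]$ satisfy $\omega(a_k)\geq1$, and the Newton polygon of $P$ viewed as a polynomial in $\x$ over $(\ol{\kappa}((\y^*)),\omega)$ forces $\omega(\alpha_j)\geq 1/d$ for every $j$. Once $r\geq d$, any Weierstrass $Q$ of degree $d$ with $P-Q\in\gtm_2^r$ has all of its coefficients in $(\y)\kappa[[\y]]$ as well, so the same Newton polygon estimate yields $\omega(\beta_i)\geq 1/d$ uniformly in $Q$.

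\emph{Step 2 (valuation estimate and pigeonhole).} Writing $P-Q=\sum_{k+\ell\geq r}c_{k,\ell}\x^k\y^\ell$ and substituting $\x=\beta_i$, each monomial contributes at least $k\omega(\beta_i)+\ell\geq(k+\ell)/d\geq r/d$ to the valuation, so $\omega((P-Q)(\beta_i))\geq r/d$. Combined with the fundamental identity this gives
\[
\sum_{j=1}^d\omega(\beta_i-\alpha_j)=\omega(P(\beta_i))\geq\frac{r}{d},
\]
and by pigeonhole there exists an index $j=\sigma(i)$ with $\omega(\beta_i-\alpha_{\sigma(i)})\geq r/d^2$. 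Set $M:=\max\{\omega(\alpha_j-\alpha_{j'}):\alpha_j\neq\alpha_{j'}\}$ (and $M:=0$ if all the $\alpha_j$'s coincide); this maximum is finite. Choose $r$ so that $r/d^2>\max\{m,M\}$. By the ultrametric inequality, whenever $\omega(\beta_i-\alpha_j)>M$, the root $\alpha_j$ is determined up to repetition: if $\gamma_1,\dots,\gamma_s$ denote the distinct values of the $\alpha_j$'s with multiplicities $n_1,\dots,n_s$, then $\beta_i$ is close to a single value $\gamma_{k(i)}$.

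\emph{Step 3 (bijective matching).} The remaining task, which is the main obstacle, is to show that exactly $n_k$ of the $\beta_i$'s are associated with $\gamma_k$ for each $k$; the naive pigeonhole of Step 2 only gives a many-to-one map. This will be handled by a Hensel-type factorization: for $N$ a common denominator of the ramification indices of the $\alpha_j$'s, the ring $\ol{\kappa}[[\y^{1/N}]]$ is henselian; the factorization $P=\prod_{k=1}^s(\x-\gamma_k)^{n_k}$ has pairwise resultants whose $\omega$-valuations are controlled in terms of $M$; and for $r$ large enough this factorization lifts to $Q=\prod_{k=1}^s Q_k$ with $Q_k\in\ol{\kappa}[[\y^{1/N}]][\x]$ monic of degree $n_k$ and all its $n_k$ roots at $\omega$-distance $\geq r/d^2$ from $\gamma_k$. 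The degree count $\sum_k n_k=d=\sum_k\deg Q_k$ then forces the map $i\mapsto k(i)$ to split the $\beta_i$'s into blocks of the correct sizes, and reindexing block by block produces the desired permutation of $\{1,\dots,d\}$ with $\omega(\alpha_i-\beta_i)\geq r/d^2\geq m$ for every $i$.
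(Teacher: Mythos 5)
Your approach is genuinely different from the paper's. The paper encodes the coefficient--root relation as the polynomial system $a_j(\y^p)=(-1)^j{\tt s}_{d-j}(\z_1,\ldots,\z_d)$ over $\ol{\kappa}[[\y]]$, observes that $(\beta_i(\y^p))_i$ is an approximate solution modulo a high power of the maximal ideal, and invokes Strong Artin approximation to produce a nearby \emph{exact} solution; since the exact solutions are precisely the permutations of $(\alpha_j(\y^p))_j$, the bijective matching is automatic and the proof is a few lines. You work instead through valuation theory: a Newton polygon bound $\omega(\beta_i)\geq1/d$, a direct estimate $\omega((P-Q)(\beta_i))\geq r/d$ via the identity $(P-Q)(\beta_i)=\prod_j(\beta_i-\alpha_j)$, and pigeonhole. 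Steps 1 and 2 are correct and give $\omega(\beta_i-\alpha_{\sigma(i)})\geq r/d^2$ for some (a priori only many-to-one) assignment $\sigma$; this is more elementary than Strong Artin, which is a point in your favor.

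The gap is in Step 3. The Hensel lemma you want to invoke --- lifting $P=\prod_k(\x-\gamma_k)^{n_k}$ to a compatible factorization $Q=\prod_k Q_k$ with $\deg Q_k=n_k$ --- is not the textbook version: all the factors $(\x-\gamma_k)^{n_k}$ are congruent to $\x^{n_k}$ modulo the maximal ideal of $\ol{\kappa}[[\y^{1/N}]]$, so they are \emph{not} coprime modulo $\gtm$, and the resultant-controlled variant you gesture at is a genuine (if known) result that would need to be stated precisely and proved or cited. As written this is a real gap. A cleaner way to close it within your framework is a derivative comparison at each $\gamma_k$. Set $m_k:=\#\{i:\omega(\beta_i-\gamma_k)>M\}$. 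From $Q^{(\ell)}=\ell!\sum_{|I|=\ell}\prod_{i\notin I}(\x-\beta_i)$ one reads off that $\omega(Q^{(\ell)}(\gamma_k))>r/d^2$ for $\ell<m_k$, while for $\ell=m_k$ the term $I=S_k$ dominates ultrametrically and gives $\omega(Q^{(m_k)}(\gamma_k))\leq(d-m_k)M$; on the other side $P^{(\ell)}(\gamma_k)=0$ for $\ell<n_k$ and $\omega(P^{(n_k)}(\gamma_k))\leq(d-n_k)M$. Combining these with the easy bound $\omega((P-Q)^{(\ell)}(\gamma_k))\geq(r-\ell)/d$ for $\ell\leq d$ forces $m_k=n_k$ once $r$ exceeds roughly $d^3M+d$, and the bijection follows by reindexing inside each block.
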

\begin{proof}
Write $P:=\x^d+\sum_{j=0}^{d-1}a_j\x^j$, where $a_j\in\kappa[[\y]]$. Let $Q:=\x^d+\sum_{j=0}^{d-1}b_j\x^j\in\kappa[[\y]][\x]$ be another Weierstrass polynomial of degree $d$. For each $r\geq1$ 
$$
P-Q=\sum_{j=0}^{d-1}(a_j-b_j)\x^j\in\gtm_2^r
$$
if and only if $a_j-b_j\in\gtm_1^{r-j}$ for $j=0,\ldots,d-1$ (where $\gtm_1$ is the maximal ideal of $\kappa[[\y]]$). By \cite[Prop.8.3.3]{che} the roots $\alpha_1,\ldots,\alpha_d,\beta_1,\ldots,\beta_d$ of $P,Q$ in $\ol{\kappa}((\y^*))$ belong to $\ol{\kappa}[[\y^{1/p}]]$ where $p:=d!$. Let ${\tt s}_j\in\Z[\z_1,\ldots,\z_d]$ be the $j$th symmetric elementary form and consider the polynomial system
$$
a_j(\y^p)-(-1)^j{\tt s}_{d-j}(\z_1,\ldots,\z_d)=0
$$
for $j=1,\ldots,d-1$. We have 
$$
a_j(\y^p)-(-1)^j{\tt s}_{d-j}(\beta_1,\ldots,\beta_d)=a_j(\y^p)-b_j(\y^p)\in\gtm_1^{(r-j)p}
$$
for $j=0,\ldots,d-1$. By Artin's approximation theorem there exists $k\geq1$ such that if $\omega(a_j(\y^p)-b_j(\y^p))\geq k$ for $j=0,\ldots,d$, then $\alpha_j-\beta_j\in\gtm_1^{mp}$. Thus, it is enough to take $r\geq\frac{k}{p}+d$.
\end{proof}

\begin{cor}\label{roots2}
Let $f\in\kappa[[\x,\y]]$ be such that $f(\x,0)\neq0$ and let $m\geq1$. Denote the algebraic closure of $\kappa$ with $\ol{\kappa}$. There exists $r\geq1$ such that if $g\in\kappa[[\x,\y]]$ and $f-g\in\gtm_2^r$, then $\omega(f(\zeta,\y)-g(\zeta,\y))>\min\{\omega(f(\zeta,\y)),m\}$ for each $\zeta\in\ol{\kappa}[[\y^*]]$. In addition, if $f,g$ are Weierstrass polynomials, $\omega(f(\zeta,\y)-g(\zeta,\y))>m$.
\end{cor}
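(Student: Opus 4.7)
The plan is to reduce the statement to a comparison of Weierstrass polynomials and then to an estimate on their Puiseux roots. Since $f(\x,0)\neq 0$, set $d:=\omega(f(\x,0))$; then $f$ is regular in $\x$ of order $d$, and the Weierstrass preparation theorem gives $f=PU$ with $P\in\kappa[[\y]][\x]$ a Weierstrass polynomial of degree $d$ and $U$ a unit. For any $g$ with $f-g\in\gtm_2^r$ and $r>d$, setting $\y=0$ yields $g(\x,0)-f(\x,0)\in\x^{r}\kappa[[\x]]$, so $g$ is also $\x$-regular of order exactly $d$ and admits a factorization $g=QV$ with $Q$ a Weierstrass polynomial of degree $d$ and $V$ a unit satisfying $V(0,0)=U(0,0)$.

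Given a target integer $M$ (to be chosen in terms of $m$ and $d$), Lemma \ref{PU} supplies an $r_1$ such that $f-g\in\gtm_2^{r_1}$ forces both $P-Q\in\gtm_2^M$ and $U-V\in\gtm_2^M$, and Lemma \ref{roots} supplies an $r_2$ such that $P-Q\in\gtm_2^{r_2}$ forces the Puiseux roots $\alpha_1,\ldots,\alpha_d$ of $P$ and $\beta_1,\ldots,\beta_d$ of $Q$ (after relabeling) to satisfy $\omega(\alpha_i-\beta_i)\geq M$ for every $i$. Take $r:=\max\{d+1,r_1,r_2\}$. Now for any $\zeta\in\ol{\kappa}[[\y^*]]$, I would decompose
\[
f(\zeta,\y)-g(\zeta,\y)=P(\zeta,\y)\bigl(U(\zeta,\y)-V(\zeta,\y)\bigr)+V(\zeta,\y)\bigl(P(\zeta,\y)-Q(\zeta,\y)\bigr).
\]
Since $U(\zeta,\y)$ and $V(\zeta,\y)$ have order $0$, the first summand has order $\omega(f(\zeta,\y))+\omega((U-V)(\zeta,\y))$, and the bound $U-V\in\gtm_2^M$ ensures $\omega((U-V)(\zeta,\y))\geq M\min(1,\omega(\zeta))>0$, so the first summand has order strictly greater than $\omega(f(\zeta,\y))$. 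For the second summand I would use the telescoping identity
\[
P(\zeta,\y)-Q(\zeta,\y)=\sum_{k=1}^{d}(\beta_k-\alpha_k)\prod_{i<k}(\zeta-\alpha_i)\prod_{i>k}(\zeta-\beta_i),
\]
and note that once $M$ exceeds every $\omega(\zeta-\alpha_i)$ appearing, one has $\omega(\zeta-\beta_i)=\omega(\zeta-\alpha_i)$, so the $k$-th summand has order $\omega(\alpha_k-\beta_k)+\sum_{i\neq k}\omega(\zeta-\alpha_i)\geq M$. Assembling both estimates and choosing $M>m$ yields $\omega(f(\zeta,\y)-g(\zeta,\y))>\min\{\omega(f(\zeta,\y)),m\}$; in the Weierstrass-polynomial case $U=V=1$, so the first summand is absent and one obtains the sharper bound $\omega(f(\zeta,\y)-g(\zeta,\y))>m$.

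The delicate point I expect to be the main obstacle is the uniformity in $\zeta$: the quantities $\omega(\zeta-\alpha_i)$ can be arbitrarily large when $\zeta$ is close to (or equal to) one of the roots of $P$, in which case $\omega(f(\zeta,\y))$ can also be large or infinite. The crucial observation that makes the argument uniform is that in each telescoping summand the factor $\alpha_k-\beta_k$ of order $\geq M$ \emph{replaces} the factor $\zeta-\alpha_k$, so the remaining $d-1$ factors contribute exactly $\omega(P(\zeta,\y))-\omega(\zeta-\alpha_k)\geq 0$; consequently the telescoping summand has order at least $M$ regardless of how near $\zeta$ is to any $\alpha_i$. This is also what forces the $\min\{\omega(f(\zeta,\y)),m\}$ form of the conclusion in the general (non-Weierstrass) case, where the unit-difference contribution is only guaranteed to exceed $\omega(f(\zeta,\y))$ rather than an absolute constant.
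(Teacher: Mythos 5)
Your proposal is correct and follows essentially the same route as the paper: Weierstrass preparation for both $f$ and $g$, Lemmas \ref{PU} and \ref{roots} to control $P-Q$, $U-V$, and the Puiseux roots, and the decomposition $f-g=P(U-V)+V(P-Q)$ together with a telescoping of the root factors (which the paper leaves implicit) to bound the orders. The only inessential detour is your intermediate claim that $\omega(\zeta-\beta_i)=\omega(\zeta-\alpha_i)$ once $M$ dominates $\omega(\zeta-\alpha_i)$: this is neither needed nor uniformly true (it fails when $\zeta=\alpha_i$), but, as you yourself note in the final paragraph, the actual argument only uses that the non-$(\beta_k-\alpha_k)$ factors in each telescoping summand have nonnegative order, so the conclusion stands.
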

\begin{proof}
As $f(\x,0)\neq0$ is a regular series with respect to $\x$ there exists by Weierstrass preparation theorem a Weierstrass polynomial $P\in\kappa[[\y]][\x]$ and a unit $U\in\kappa[[\x,\y]]$ such that $f=PU$. Let $d:=\omega(f(\x,0))$ and let $\alpha_1,\ldots,\alpha_d\in\ol{\kappa}[[\y^{1/(d!)}]]$ be the roots of $P$ in $\ol{\kappa}((\y^*))$ (use \cite[Prop.8.3.3]{che}).

If $r\geq d+1$, we have $\ini(g(\x,0))=\ini(f(\x,0))\neq0$ (so $\omega(g(\x,0))=d$) for each $g\in\kappa[[\x,\y]]$ such that $f-g\in\gtm_2^r$. For such a $g$ there exists by Weierstrass preparation theorem a Weierstrass polynomial $Q\in\kappa[[\y]][\x]$ and a unit $V\in\kappa[[\x,\y]]$ such that $g=QV$. In addition, $V(0,0)\x^d=\ini(g(\x,0))=\ini(f(\x,0))=U(0,0)\x^d$, so $c:=U(0,0)=V(0,0)\in\kappa\setminus\{0\}$. By \cite[Prop.8.3.3]{che} the roots $\beta_1,\ldots,\beta_d$ of $Q$ in $\ol{\kappa}((\y^*))$ belong to $\ol{\kappa}[[\y^{1/(d!)}]]$. For each $k\geq1$ there exists by Lemma \ref{PU} an integer $r:=r(k)\geq d+1$ such that if $f-g\in\gtm_2^r$, then $P-Q,U-V\in\gtm_2^k$. We will see that there exists $k$ large enough such that the corresponding $r(k)$ is the $r$ sought in the statement.

By Lemma \ref{roots} if $k\geq1$ is large enough, the roots of $Q$ suitably reordered satisfy $\omega(\beta_j-\alpha_j)>m$ for each $j=1,\ldots,d$. If $\zeta\in\ol{\kappa}[[\y^*]]$, then
\begin{align*}
&f(\zeta,\y)=U(\zeta,\y)\prod_{j=1}^d(\zeta-\alpha_j),\\
&g(\zeta,\y)=V(\zeta,\y)\prod_{j=1}^d(\zeta-\beta_j).
\end{align*}
As $\delta_j:=(\zeta-\alpha_j)-(\zeta-\beta_j)=\beta_j-\alpha_j$ and $\omega(\beta_j-\alpha_j)>m$, the difference
$$
P(\zeta,\y)-Q(\zeta,\y)=\prod_{j=1}^d(\zeta-\alpha_j)-\prod_{j=1}^d(\zeta-\beta_j)=\prod_{j=1}^d(\zeta-\beta_j+\delta_j)-\prod_{j=1}^d(\zeta-\beta_j)
$$
has order $>m$. Write $U:=c+h_1$, $V:=c+h_2$ and $U-V=h_3$ where each $h_i\in\gtm_2$. We have $f-g=P(U-V)+V(P-Q)$, so
\begin{align*}
&f(\zeta,\y)=P(\zeta,\y)(c+h_1(\zeta,\y)),\\
&f(\zeta,\y)-g(\zeta,\y)=P(\zeta,\y)h_3(\zeta,\y)+(c+h_2(\zeta,\y))(P(\zeta,\y)-Q(\zeta,\y))
\end{align*}
and the latter series has order $>\min\{\omega(f(\zeta,\y)),m\}$. In addition, if $f=P$ and $g=Q$, then $h_3=0$ and $\omega(f(\zeta,\y)-g(\zeta,\y))>m$, as required.
\end{proof}

\subsection{Applications of curve selection lemma and Puiseux series}
We recall here the following version of the curve selection lemma, which has relevant consequences for the prime cones of $\Sper(\kappa[[\x,\y]])$. As usual we identify $\Sper(\kappa)$ with the subspace of $\Sper(\kappa[[\x,\y]])$ consisting of all prime cones of $\kappa[[\x,\y]]$ with support $\gtm_2$.

\begin{lem}[Curve selection lemma]\label{dimen1}
Let $\kappa$ be a (formally) real field and $\beta\in\Sper(\kappa[[\x,\y]])$ a prime cone such that $\y>_\beta0$ and $\dim(\kappa[[\x,\y]]/\supp(\beta))=1$. Let $\alpha\in\Sper(\kappa)$ be such that $\beta\to\alpha$ and let $R(\alpha)$ be the real closure of $(\kappa,\leq_\alpha)$. Then there exists a homomorphism $\phi:\kappa[[\x,\y]]\to R(\alpha)[[\y]]$ such that $\phi(\y)=\y^q$ for some $q\geq1$ and $f\geq_\beta0$ if and only if $\phi(f)=f(\phi(\x),\y^q)\geq0$. In addition, $f\in\supp(\beta)$ if and only if $f(\phi(\x),\y^q)=0$.
\end{lem}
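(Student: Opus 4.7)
My plan is to realize the abstract ordering $\leq_\beta$ as coming from a concrete order-preserving embedding of $\qf(\kappa[[\x,\y]]/\supp(\beta))$ into the real closed Puiseux series field over $R(\alpha)$, and then unroll the resulting Puiseux coordinate via the substitution $\y\mapsto\y^q$.

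First I would reduce to a Weierstrass polynomial. Since $\dim(\kappa[[\x,\y]]/\supp(\beta))=1$ in the regular $2$-dimensional UFD $\kappa[[\x,\y]]$, the prime $\gtp:=\supp(\beta)$ has height $1$ and is thus principal: $\gtp=(f)$ for some irreducible $f$. As $\y\not\in\gtp$ (we have $\y>_\beta 0$), $f$ is regular in $\x$, and Weierstrass preparation yields $\gtp=(P)$ for a Weierstrass polynomial $P\in\kappa[[\y]][\x]$ of some degree $d$. The quotient field $K:=\qf(\kappa[[\x,\y]]/(P))$ is a finite extension of $\kappa((\y))$ and inherits an ordering from $\leq_\beta$, still denoted $\leq_\beta$. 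I would then verify that the restriction of $\leq_\beta$ to $\kappa((\y))$ is the standard ordering: on $\kappa$ it agrees with $\leq_\alpha$ (forced by $\beta\to\alpha$), and $\y$ is a positive infinitesimal, because for any $c>_\alpha 0$ in $\kappa$ the element $c-\y$ has residue $c$ modulo $\gtm_2=\supp(\alpha)$, hence $c-\y>_\alpha 0$, so by specialization $c-\y>_\beta 0$, while $\y>_\beta 0$ is given.

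Next I would bring in the real closed Puiseux series field
$$
R(\alpha)((\y^*)):=\bigcup_{q\geq 1}R(\alpha)((\y^{1/q})).
$$
Its degree-$2$ extension $R(\alpha)(\sqrt{-1})((\y^*))$ is the Puiseux series field over the algebraically closed field $R(\alpha)(\sqrt{-1})$, hence algebraically closed by Newton--Puiseux, so Artin--Schreier guarantees that $R(\alpha)((\y^*))$ is real closed (with $\y$ positive infinitesimal over $R(\alpha)$). The natural inclusion $\kappa((\y))\hookrightarrow R(\alpha)((\y^*))$ is order-preserving for the orderings identified in the previous step. By uniqueness of real closures over $\kappa((\y))$, the real closure of $(K,\leq_\beta)$ embeds into $R(\alpha)((\y^*))$ over $\kappa((\y))$; restricting to $K$ gives an order-preserving $\kappa((\y))$-embedding $\psi:(K,\leq_\beta)\hookrightarrow R(\alpha)((\y^*))$. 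Now set $\xi:=\psi(\x+\gtp)$. Being a root of $P$, $\xi$ lies in $R(\alpha)[[\y^*]]$ with $\xi(0)=0$, so $\xi\in R(\alpha)[[\y^{1/q}]]$ for some $q\geq 1$. Writing $\xi=\zeta(\y^{1/q})$ with $\zeta\in R(\alpha)[[\t]]$, $\zeta(0)=0$, I define
$$
\phi:\kappa[[\x,\y]]\to R(\alpha)[[\y]],\qquad \phi(\x):=\zeta(\y),\quad \phi(\y):=\y^q.
$$
Under the order-preserving ring isomorphism $R(\alpha)[[\y]]\cong R(\alpha)[[\y^{1/q}]]\subset R(\alpha)((\y^*))$ sending $\y\leftrightarrow\y^{1/q}$, we have $\phi(f)=\psi(f+\gtp)$ for every $f\in\kappa[[\x,\y]]$; both stated equivalences then follow immediately from $\psi$ being order-preserving and injective.

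The main obstacle will be the middle step: producing the order-preserving embedding $\psi$ with $\y\mapsto\y$ and $\kappa\mapsto\kappa$ pointwise. This requires both the real-closedness of $R(\alpha)((\y^*))$ (via Newton--Puiseux and Artin--Schreier) and a careful match between the two candidate orderings on $\kappa((\y))$; the hypothesis $\beta\to\alpha$ is essential to align $\leq_\beta$ with $\leq_\alpha$ on $\kappa$, while $\y>_\beta 0$ together with specialization is needed to place $\y$ as a positive infinitesimal rather than merely positive. Once $\psi$ is in hand, the rest is straightforward bookkeeping with Puiseux exponents.
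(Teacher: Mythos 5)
Your proposal is correct and follows essentially the same route as the paper: reduce via Weierstrass preparation to a Weierstrass polynomial $P$ generating $\supp(\beta)$, order-embed the residue field $\qf(\kappa[[\x,\y]]/(P))$ into the real closed Puiseux series field $R(\alpha)((\y^*))$, and then parametrize via the resulting Puiseux expansion of $\x$. The only difference is that the paper directly cites \cite[Ex.II.3.13]{abr} for the fact that $R(\alpha)((\y^*))$ serves as a real closure of $(\kappa((\y)),\leq_{\alpha_1})$, whereas you unpack that step (Newton--Puiseux plus Artin--Schreier for real-closedness of $R(\alpha)((\y^*))$, and the specialization argument $\beta\to\alpha$ together with $\y>_\beta0$ to identify $\y$ as a positive infinitesimal over $(\kappa,\leq_\alpha)$), which is just as good.
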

\begin{proof}
As ${\rm ht}(\supp(\beta))=1$, we have $\supp(\beta)$ is a principal prime ideal. Write $(P):=\supp(\beta)$ where $P\in\kappa[[\x,\y]]$ is irreducible. As $\y\not\in\supp(\beta)$, we have $P(\x,0)\neq0$. By Weierstrass preparation theorem we may assume $P\in\kappa[[\y]][\x]$ is a Weierstrass polynomial with respect to $\x$. Thus,
$$
{\tt j}:\kappa[[\y]]\hookrightarrow\kappa[[\x,\y]]/\supp(\beta)\cong\kappa[[\y]][\x]/(P).
$$
Write $\alpha_1:={\tt j}^{-1}(\beta)$. By \cite[Ex.II.3.13]{abr} the real closure of $(\kappa((\y)),\leq_{\alpha_1})$ is $R(\alpha)((\y^*))$. As $(\kappa((\y))[\x]/(P),\leq_\beta)$ is a finite algebraic extension of $(\kappa((\y)),\leq_{\alpha_1})$ and $\alpha_1={\tt j}^{-1}(\beta)$, we deduce $\varphi:\kappa((\y))[\x]/(P)\hookrightarrow R(\alpha)((\y^*))$. As $P\in\kappa[[\y]][\x]$ is a monic polynomial, there exist $\zeta\in R(\alpha)[[\y]]$ and $q\geq1$ such that $\varphi(\x)=\zeta(\y^{1/q})$. Define $\phi:\kappa[[\x,\y]]\to R(\alpha),\ f\mapsto f(\zeta,\y^q)$ and observe that $f\in\kappa[[\x,\y]]$ satisfies $f\geq_\beta0$ if and only if $\phi(f)\geq0$, as required.
\end{proof}

\begin{cor}\label{orderings}
Let $f\in\kappa[[\x,\y]]$, $M\in\kappa\setminus\{0\}$ and $n\geq1$. Let $C\subset\Sper(\kappa[[\x,\y]])$ and assume $f\geq_\beta0$ for each $\beta\in C$ with ${\rm ht}(\supp(\beta))=1$. We have:
\begin{itemize}
\item[(i)] There exists $r\geq1$ such that if $g\in\kappa[[\x,\y]]$ and $f-g\in\gtm_2^r$, then $g+M^2(\x^2+\y^2)^n>_\beta0$ for each $\beta\in C$ with ${\rm ht}(\supp(\beta))=1$.
\item[(ii)] If in addition $f(\x,0)\neq0$, there exists $r\geq1$ such that if $f-g\in\gtm_2^r$, then $g+M^2\y^{2n}>_\beta0$ for each $\beta\in C$ with ${\rm ht}(\supp(\beta))=1$. 
\end{itemize}
\end{cor}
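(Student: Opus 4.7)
The plan is to parameterize every height-$1$ prime cone $\beta\in C$ via the curve selection lemma (Lemma \ref{dimen1}) and compare orders of evaluations. Replacing $\y$ by $-\y$ if necessary (an operation that leaves $\gtm_2^r$, $\y^{2n}$, and $\x^2+\y^2$ invariant) we may assume $\y\geq_\beta0$. Two subcases appear: the \emph{edge case} $\supp(\beta)=(\y)$, where $\beta$ restricts to an ordering of $\kappa((\x))$ and the relevant evaluation is $\y\mapsto0$; and the \emph{main case} $\y>_\beta0$, where Lemma \ref{dimen1} produces a homomorphism $\phi_\beta\colon\kappa[[\x,\y]]\to R(\alpha_\beta)[[\y]]$ with $\phi_\beta(\x)=\eta$ of order $e:=\omega(\eta)$ (possibly infinite, when $\x\in\supp(\beta)$) and $\phi_\beta(\y)=\y^q$.

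For part (i) I would take $r:=2n+1$. In the main case, set $A:=\phi_\beta(f)\geq0$, $B:=\phi_\beta(g-f)$, and $C:=M^2(\eta^2+\y^{2q})^n$. The sum of squares $\eta^2+\y^{2q}$ has order $2\min(e,q)$ with strictly positive leading coefficient (no cancellation is possible between two nonnegative summands), so $C>0$ with $\omega(C)=2n\min(e,q)$, and $A+C$ inherits the same strictly positive initial form. The key elementary observation is the monomial-by-monomial bound $\omega(\phi_\beta(\x^a\y^b))=ae+bq\geq\min(e,q)(a+b)$, which yields $\omega(B)\geq\min(e,q)\cdot r>2n\min(e,q)=\omega(C)\geq\omega(A+C)$; hence $\ini(A+B+C)=\ini(A+C)>0$. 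The edge case is analogous: one reduces to showing $g(\x,0)+M^2\x^{2n}>0$, and this follows because $f(\x,0)+M^2\x^{2n}$ has strictly positive initial form of order $\leq2n<r$ while $(g-f)(\x,0)\in(\x)^r$.

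For part (ii) the form $\y^{2n}$ no longer dominates every direction (its order along the main-case parameterization is $2nq$, with $q$ unbounded as $\beta$ varies), so I would invoke Corollary \ref{roots2} with $m:=2n$ to obtain $r_1$ and set $r:=\max\{r_1,\omega(f(\x,0))+1\}$, which is finite thanks to the hypothesis $f(\x,0)\neq0$. In the main case, the substitution $\zeta(t):=\eta(t^{1/q})$ identifies $\phi_\beta(h)=h(\zeta(t),t)\big|_{t=\y^q}$, so Corollary \ref{roots2} gives $\omega_t(f(\zeta,t)-g(\zeta,t))>\min\{\omega_t(f(\zeta,t)),2n\}$, and multiplying by $q$ yields $\omega(B)>\min\{\omega(A),2nq\}=\omega(A+C)$ with $C:=M^2\y^{2nq}>0$; the positivity argument from (i) then concludes. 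The edge case reduces to showing $g(\x,0)>0$, and this holds because $f(\x,0)>_\beta0$ has order $d':=\omega(f(\x,0))<r$ while $(g-f)(\x,0)\in(\x)^r$, so the initial forms agree. The principal obstacle is precisely the non-uniformity of $q$ as $\beta$ ranges over $C$: part (i) sidesteps it because both $\omega(B)$ and $\omega(C)$ scale by the same factor $\min(e,q)$, whereas part (ii) requires exactly the uniformity in the Puiseux parameter built into Corollary \ref{roots2}.
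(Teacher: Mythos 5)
Your proof is correct and follows the same overall architecture as the paper's (split on whether $\supp(\beta)=(\y)$, parameterize the height-one case via Lemma \ref{dimen1}, then compare orders). The genuine difference is in part (i): you replace the invocation of Corollary \ref{roots2} by the direct monomial bound $\omega(\phi_\beta(\x^a\y^b))=ae+bq\geq\min(e,q)(a+b)$, which gives the uniform estimate $\omega(B)\geq\min(e,q)\,r$ and lets you take $r=2n+1$ outright. This is worth noting because Corollary \ref{roots2} requires $f(\x,0)\neq0$, and the paper applies it in Case~2 without separating out the possibility $f(\x,0)=0$ — which part (i) must allow; your monomial bound sidesteps this entirely since $\min(e,q)$ scales both $\omega(B)$ and $\omega(C)$ by the same factor, regardless of whether $\eta=0$ (i.e.\ $e=\infty$). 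For part (ii) you apply Corollary \ref{roots2} with $m=2n$ where the paper takes $m=2n+1+\omega(f(\x,0))$; both bounds suffice, and your remark at the end correctly identifies why Corollary \ref{roots2} (and hence the hypothesis $f(\x,0)\neq0$) is genuinely needed here — namely the non-uniformity of the ramification index $q$ as $\beta$ ranges over $C$, which the elementary monomial bound cannot absorb once the correcting term $\y^{2n}$ has order $2nq$ rather than $2n\min(e,q)$.

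Two minor points to keep an eye on. First, in the edge case of part (i), after writing $f(\x,0)=a\x^\ell u^2$ with $a>_\beta0$, the leading coefficient of $g(\x,0)+M^2\x^{2n}$ is not literally that of $f(\x,0)+M^2\x^{2n}$ unless $r$ is chosen large enough to also freeze the relevant jets of $g(\x,0)$; your choice $r=2n+1$ handles this since you only need the truncation up to degree $2n$, and anything of higher order in $(g-f)(\x,0)$ is dominated. Second, in the main case of part (ii), you implicitly use that $\omega_\y(\phi_\beta(h))=q\cdot\omega_t(h(\zeta,t))$ via the identification $t=\y^q$; this is correct but deserves the one-line justification you give, since Corollary \ref{roots2} is stated for Puiseux roots $\zeta\in\overline{\kappa}[[\y^*]]$ and one must check that the real root $\eta\in R(\alpha)[[\y]]$ yields such a $\zeta$ after the reparameterization.
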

\begin{proof}
We prove both statements simultaneously. Let $\beta\in C$ with ${\rm ht}(\supp(\beta))=1$. We distinguish two cases:

\noindent{\sc Case 1.} If $\y\in\supp(\beta)$, then $\supp(\beta)=(\y)$ and $f\geq_\beta0$ if and only if $f(\x,0)\geq_\beta0$. Assume first $f(\x,0)=0$ and let $r\geq 2n+1$. Set $h:=g+M^2(\x^2+\y^2)^n$ and observe that $h(\x,0)=M^2\x^{2n}u^2$ for some unit $u\in\kappa[[\x]]$ such that $u(0)=1$. Thus, $h+\supp(\beta)=h(\x,0)+\supp(\beta)>0$. 

Assume next $f(\x,0)\neq0$ and write $f(\x,0)=a\x^\ell u^2$ where $\ell\geq0$, $a\in\kappa\setminus\{0\}$ and $u\in\kappa[[\x]]$ is a unit such that $u(0)=1$. If $f-g\in\gtm_2^{\ell+1}$, then $g(\x,0)=a\x^\ell v^2$ where $v\in\kappa[[\x]]$ is a unit such that $v(0)=1$. Thus, 
\begin{align*}
&g+M^2\y^{2n}+\supp(\beta)=g(\x,0)+\supp(\beta)=f(\x,0)\frac{v^2}{u^2}+\supp(\beta)>0,\\
&g+M^2(\x^2+\y^2)^n+\supp(\beta)=g(\x,0)+M^2\x^{2n}+\supp(\beta)=f(\x,0)\frac{v^2}{u^2}+M^2\x^{2n}+\supp(\beta)>0.
\end{align*}

By Lemma \ref{roots2} there exists (maybe a larger) $r\geq1$ such that if $f-g\in\gtm_2^r$, then $\omega(f(\zeta,\y)-g(\zeta,\y))>\min\{\omega(f(\zeta,\y)),2n+1+\omega(f(\x,0))\}$ for each $\zeta\in\ol{\kappa}[[\y^*]]$.

\noindent{\sc Case 2.} Assume next $\y\not\in\supp(\beta)$ (without loss of generality we may assume that $\y>_{\beta}0$). Let $\alpha\in\Sper(\kappa)$ be such that $\beta\to\alpha$ and let $R(\alpha)$ be the real closure of $(\kappa,\leq_\alpha)$. By Lemma \ref{dimen1} there exists a homomorphism $\phi:\kappa[[\x,\y]]\to R(\alpha)[[\y]]$ such that $\phi(\y)=\y^q$ for some $q\geq1$ and $h\geq_\beta0$ if and only if $\phi(h)\geq0$. Write $\xi:=\varphi(\x)\in R(\alpha)[[\y]]$ and $\zeta:=\xi(\y^{1/q})\in R(\alpha)[[\y^{1/q}]]$. Consider the homomorphism 
$$
\psi:\kappa[[\x,\y]]\to R(\alpha)[[\y^{1/q}]],\ h\mapsto h(\zeta,\y)
$$ 
and observe that $h\geq_\beta0$ if and only if $\psi(h)\geq0$. Denote 
$$
s:=\omega(f(\zeta,\y)-g(\zeta,\y))>\min\{\omega(f(\zeta,\y)),2n+1+\omega(f(\x,0))\}\geq\min\{\omega(f(\zeta,\y)),2n+1\}
$$
and $\theta:=\frac{g(\zeta,\y)-f(\zeta,\y)}{\y^s}\in\kappa[[\y^{1/q}]]$ in case $f(\zeta,\y)-g(\zeta,\y)\neq0$ and $s:=2n+1$ and $\theta:=0$ otherwise. As $f\geq_\beta0$, we have
\begin{align*}
g(\zeta,\y)+M^2\y^{2n}&=f(\zeta,\y)+(g(\zeta,\y)-f(\zeta,\y))+M^2\y^{2n}\\
&=\begin{cases}
M^2\y^{2n}+\y^s\theta>0&\text{if $f(\zeta,\y)=0$,}\\
f(\zeta,\y)+M^2\y^{2n}+\y^s\theta>0&\text{if $f(\zeta,\y)\neq0$,}
\end{cases}\\
g(\zeta,\y)+M^2(\zeta^2+\y^2)^n&=f(\zeta,\y)+(g(\zeta,\y)-f(\zeta,\y))+M^2(\zeta^2+\y^2)^n\\
&=\begin{cases}
M^2(\zeta^2+\y^2)^n+\y^s\theta>0&\text{if $f(\zeta,\y)=0$,}\\
f(\zeta,\y)+M^2(\zeta^2+\y^2)^n+\y^s\theta>0&\text{if $f(\zeta,\y)\neq0$,}
\end{cases}
\end{align*}
so $g+M^2\y^{2n}>_\beta0$ and $g+M^2(\x^2+\y^2)^n>_\beta0$, as required.
\end{proof}

If we set
$$
\psd^+(\kappa[[\x,\y]]):=\{f\in\psd(\kappa[[\x,\y]]):\ f>_\alpha0\ \forall\alpha\in\Sper(\kappa[[\x,\y]]),\ \supp(\alpha)\neq\gtm_2\},
$$
we obtain the following result.

\begin{cor}\label{psdkxy}
Let $f\in\psd^+(\kappa[[\x,\y]])$. There exists $r\geq1$ such that if $g\in\kappa[[\x,\y]]$ and $f-g\in\gtm_2^r$, then $g\in\psd^+(\kappa[[\x,\y]])$.
\end{cor}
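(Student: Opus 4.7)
The plan is to verify, cone by cone in $\Sper(\kappa[[\x,\y]])$, that small $\gtm_2$-adic perturbations of $f$ remain in $\psd^+$, distinguishing three cases by the height of the support. Cones $\alpha$ with $\supp(\alpha)=\gtm_2$ cause no trouble: for any $r\geq 1$ the condition $g-f\in\gtm_2$ forces $g(0,0)=f(0,0)$ in $\kappa$, so $g\geq_\alpha 0$ is inherited from $f\in\psd$. For a cone $\beta$ with $\supp(\beta)=(P)$ of height $1$ I would invoke the curve selection lemma (Lemma~\ref{dimen1}) to realise the sign at $\beta$ as an ordinary sign in $R(\alpha)[[\y^{1/q}]]$ via a substitution $\psi_\beta\colon h\mapsto h(\zeta,\y)$. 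Since $f\in\psd^+$, one has $\psi_\beta(f)=c_\beta\y^{s_\beta}(1+\cdots)$ with $c_\beta>0$ and $s_\beta:=\omega(\psi_\beta(f))$ finite, and the sign of $\psi_\beta(g)=\psi_\beta(f)+\psi_\beta(g-f)$ will match that of $\psi_\beta(f)$ as soon as $\omega(\psi_\beta(g-f))>s_\beta$. Corollary~\ref{roots2} furnishes, for each $m\geq 1$, some $r(m)$ such that $\omega(\psi_\beta(g-f))>\min(s_\beta,m)$ whenever $f-g\in\gtm_2^{r(m)}$, so what remains is the existence of a single $m$ with $m\geq s_\beta$ uniformly in $\beta$.

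This uniform upper bound $\sup_\beta s_\beta<\infty$ is the main obstacle and the heart of the argument. To establish it, I would factor $f=u\prod_i P_i^{e_i}$ in $\kappa[[\x,\y]]$ (after, if needed, a coordinate change making $f$ regular in $\x$), with $u$ a unit and each $P_i\in\kappa[[\y]][\x]$ an irreducible Weierstrass polynomial. The hypothesis $f\in\psd^+$ forces every $(P_i)$ to be a non-real prime, since otherwise $f\in(P_i)=\supp(\beta')$ for a real $\beta'$ would contradict $f>_{\beta'}0$. Thus $s_\beta=\sum_i e_i\,\omega(P_i(\zeta,\y))$; writing $P_i(\zeta,\y)=\prod_j(\zeta-\alpha_{i,j})$ in $\overline{\kappa}[[\y^*]]$ and pairing the roots by the nontrivial element $\sigma_\alpha$ of the quadratic extension $\overline{\kappa}=R(\alpha)(\sqrt{-1})$, each conjugate pair $\{\alpha_{i,j},\sigma_\alpha(\alpha_{i,j})\}$ contributes a factor of the form $(\zeta-a)^2+b^2$ whose order is at most $2\omega(b)=2\omega(\alpha_{i,j}-\sigma_\alpha(\alpha_{i,j}))$. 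Non-reality of $(P_i)$ prevents $\sigma_\alpha$ from fixing any root of $P_i$, so each $\omega(\alpha_{i,j}-\sigma_\alpha(\alpha_{i,j}))$ is finite; and since the restriction of $\sigma_\alpha$ to the (finite) root set of the $P_i$ can only take finitely many values as $\alpha$ ranges over $\Sper(\kappa)$, the maximum $\max_\sigma\omega(\alpha_{i,j}-\sigma(\alpha_{i,j}))$ is finite and supplies the sought uniform bound on $s_\beta$.

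Finally, for a cone $\gamma$ with $\supp(\gamma)=(0)$, any such $\gamma$ admits a specialization $\gamma\to\beta$ to some cone of height $1$ (whose support is automatically a real prime, since $\beta$ exists), and the strict positivity $g>_\beta 0$ from the previous steps, which in particular ensures $g\notin\supp(\beta)$, then propagates by the very definition of specialization to $g>_\gamma 0$. Assembling the three cases, the integer $r$ dictated by the uniform bound in the second paragraph simultaneously handles all supports, guaranteeing $g\in\psd^+(\kappa[[\x,\y]])$.
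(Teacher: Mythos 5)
Your proof is correct and takes a genuinely different route from the paper's. The paper's proof applies Weierstrass preparation to $g$ as well, writing $g=cQV^2$; it then invokes the Cutkosky--Kashcheyeva finiteness theorem to produce a critical order $m_\xi$ below which perturbing a non-real root of $P$ preserves non-reality, deduces via Lemmas~\ref{PU} and~\ref{roots} that the roots of $Q$ stay non-real, and exhibits $Q\in\Sosd{(R(\alpha)[[\y^{1/p}]][\x])}$ for every $\alpha$ by conjugate-pairing, whence $Q>_\beta 0$. You never factor $g$: you instead bound $s_\beta:=\omega(\psi_\beta(f))$ uniformly over height-$1$ cones by observing that $\sigma_\alpha$ permutes the finite root set of each irreducible factor $P_i$ fixed-point-freely (non-reality of $(P_i)$), so $\omega(P_i(\zeta,\y))$ is controlled through $\omega((\zeta-a)^2+b^2)\leq 2\omega(b)$ by the finitely many nonzero root-differences $\alpha_{i,j}-\alpha_{i,k}$, and Corollary~\ref{roots2} then forces $\psi_\beta(g)$ to share the positive leading term of $\psi_\beta(f)$. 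Your route buys a real simplification: the Cutkosky--Kashcheyeva input disappears — only the finiteness of the root set matters, not the finiteness of the coefficient field extension $L_\xi|\kappa$ — whereas the paper's version yields the sharper structural byproduct that the Weierstrass factor of $g$ is a sum of two squares in $R(\alpha)[[\y^{1/p}]][\x]$. One case to add: Lemma~\ref{dimen1} presupposes $\y\notin\supp(\beta)$, so the cone with $\supp(\beta)=(\y)$ must be treated separately; this is immediate, since once $r>\omega(f(\x,0))$ the series $g(\x,0)$ shares the even-degree, totally-positive leading term of $f(\x,0)$ and is therefore positive in every ordering of $\kappa[[\x]]$.
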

\begin{proof}
As $f\in\psd^+(\kappa[[\x,\y]])$, we have $f(\x,0)\neq0$. By Weierstrass preparation theorem there exists a Weierstrass polynomial $P\in\kappa[[\y]][\x]$ of degree $d$, $c\in\kappa\setminus\{0\}$ and a unit $U\in\kappa[[\x,\y]]$ such that $f=cPU^2$ and $U(0,0)=1$. A straightforward argument shows that $c\in\Sos{\kappa}$ and $d=2k$ is even. If $P=1$ (that is, $d=0$), it is enough to take $r=2$. So let us assume $d\geq2$ and observe that $P\in\psd^+(\kappa[[\x,\y]])$. 

Let $\ol{\kappa}$ be the algebraic closure of $\kappa$ and $\xi\in\ol{\kappa}[[\y^{1/p}]]$ (where $p=d!$) a root of $P$. Write $\xi:=\sum_{\ell\geq0}a_\ell\y^{\ell/p}\in\ol{\kappa}[[\y^{1/p}]]$. Denote $L_\xi:=\kappa[a_\ell:\ \ell\geq0]$. By \cite[Thm.2.2]{ck} the extension $L_\xi|\kappa$ is finite. As $P\in\psd^+(\kappa[[\x,\y]])$, we deduce from Lemma \ref{dimen1} that $\xi\not\in R(\alpha)[[\y^*]]$ for each $\alpha\in\Sper(\kappa)$. Thus, $L_\xi$ is not a (formally) real field. As $L_\xi|\kappa$ is finite, there exists $m_\xi$ such that $L_\xi=\kappa[a_0,\ldots,a_{m_\xi}]$. Consequently, \em each series $\zeta\in\ol{\kappa}[[\y^{1/p}]]$ such that $\omega(\zeta-\xi)>m_\xi$ satisfies $\zeta\not\in R(\alpha)[[\y^*]]$ for each $\alpha\in\Sper(\kappa)$\em.

Let $\xi_1,\ldots,\xi_d\in\ol{\kappa}[[\y^{1/p}]]$ be the roots of $P$ and set $m:=\max\{m_{\xi_k}:\ k=1,\ldots,d\}+1$. By Lemmas \ref{PU} and \ref{roots} there exists $r\geq d+1$ such that if $f-g\in\gtm_2^r$ and we write $g=cQV^2$ where $Q:=(\x-\theta_1)\cdots(\x-\theta_d)\in\kappa[[\y]][\x]$ is a Weierstrass polynomial, $V\in\kappa[[\x,\y]]$ is a unit with $V(0,0)=1$ and $\theta_1,\ldots,\theta_d\in\ol{\kappa}[[\y^{1/p}]]$ are the roots of $Q$ in $\ol{\kappa}((\y^*))$, then (after reordering the roots $\theta_k$ if necessary) $\omega(\xi_k-\theta_k)\geq m$ for each $k=1,\ldots,d$. Thus, the roots of $Q$ do not belong to $R(\alpha)[[\y^*]]$ for each $\alpha\in\Sper(\kappa)$. In addition, the involution 
$$
\sigma_\alpha:R(\alpha)[\sqrt{-1}][[\y^{1/p}]]\to R(\alpha)[\sqrt{-1}][[\y^{1/p}]],\ \lambda+\sqrt{-1}\mu\mapsto \lambda-\sqrt{-1}\mu
$$ 
satisfies: \em if $\eta\in R(\alpha)[\sqrt{-1}][[\y^{1/p}]]$ is a root of $Q$, then $\sigma_\alpha(\eta)$ is also a root of $Q$ of the same multiplicity\em.
 
Consequently, for each $\alpha\in\Sper(\kappa)$ we deduce $Q\in\Sosd{(R(\alpha)[[\y^{1/p}]][\x])}$. By Lemma \ref{dimen1} we deduce that $Q>_\beta0$ for each $\beta\in\Sper(\kappa[[\x,\y]])$ with ${\rm ht}(\supp(\beta))=1$. By \cite[Prop.VII.5.1]{abr} we conclude $Q>_\beta0$ for each $\beta\in\Sper(\kappa[[\x,\y]])$ such that $\supp(\beta)\neq\gtm_2$. As $Q\in\gtm_2$, we conclude $Q\in\psd^+(\kappa[[\x,\y]])$, so also $g=cQV^2\in\psd^+(\kappa[[\x,\y]])$, as required.
\end{proof}

\section{List of candidates}\label{s3}

The purpose of this section is to prove Theorem \ref{gp=s}. This will be conducted in several steps after some preliminary preparation. Denote $\x:=(\x_1,\ldots,\x_n)$ and $\|\x\|^2:=\x_1^2+\cdots+\x_n^2$.

\begin{lem}\label{genlist0} 
Let $\kappa$ be a (formally) real field and $f\in\kappa[[\x]]$ a series of order $\geq2s$. Then there exists $M\in\Sos{\kappa}$ such that $M^2\|\x\|^{2s}-f\in\psd(\kappa[[\x]])$. 
\end{lem}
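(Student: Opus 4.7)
The plan is to prove the lemma by induction on $s\ge 0$, strengthening the conclusion to: there exists $M\in\Sos{\kappa}$ with $M^2\|\x\|^{2s}\pm f\in\psd(\kappa[[\x]])$ for both signs. The two-sided version is needed because the inductive step will apply the hypothesis to both $f_{ij}$ and $-f_{ij}$ for a family of series $f_{ij}$.

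For the base case $s=0$ I would take $M:=f(0)^2+1\in\Sos{\kappa}$; completing the square gives
$$M^2\pm f(0)=f(0)^4+f(0)^2+\Big(f(0)\mp\tfrac{1}{2}\Big)^2+\tfrac{3}{4}\in\Sos{\kappa}.$$
Hence $M^2\pm f$ is a unit of $\kappa[[\x]]$ whose constant term lies in $\Sos{\kappa}$; extracting $\sqrt{1+h}$ in $\kappa[[\x]]$ for $h\in\gtm$ (valid because $\mathrm{char}(\kappa)=0$) factors $M^2\pm f=(M^2\pm f(0))\cdot u^2\in\Sos{\kappa[[\x]]}\subset\psd(\kappa[[\x]])$.

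For the inductive step $s-1\Rightarrow s$ I would use $\gtm^{2s}=\gtm^{2}\cdot\gtm^{2(s-1)}$ together with the fact that the ideal $\gtm^2$ is generated by the monomials $\{\x_i\x_j\}_{i\le j}$ to write $f=\sum_{i\le j}\x_i\x_j\,f_{ij}$ with $f_{ij}\in\gtm^{2(s-1)}$. The inductive hypothesis applied to $\pm f_{ij}$ furnishes $M_{ij}\in\Sos{\kappa}$ with $M_{ij}^{2}\|\x\|^{2(s-1)}\pm f_{ij}\in\psd(\kappa[[\x]])$, and the identities $\|\x\|^2-\x_i^2=\sum_{k\ne i}\x_k^2$ together with $\|\x\|^2\mp\x_i\x_j=(\x_i\mp\x_j/2)^2+\tfrac{3}{4}\x_j^2+\sum_{k\ne i,j}\x_k^2$ for $i\ne j$ show that $\|\x\|^2\pm\x_i\x_j\in\Sos{\kappa[\x]}\subset\psd(\kappa[[\x]])$.

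The heart of the argument is a multiplicative combination: since products of psd elements are psd, each of the four $(\|\x\|^2\pm\x_i\x_j)(M_{ij}^{2}\|\x\|^{2(s-1)}\pm f_{ij})$ lies in $\psd(\kappa[[\x]])$. Adding the two matching-sign products the cross terms $\|\x\|^2 f_{ij}$ and $M_{ij}^{2}\x_i\x_j\|\x\|^{2(s-1)}$ cancel and one obtains $M_{ij}^{2}\|\x\|^{2s}+\x_i\x_j f_{ij}\in\psd(\kappa[[\x]])$, while the mixed-sign pairing yields the same statement with the opposite sign. Summing over $(i,j)$ one gets $\big(\sum_{i,j}M_{ij}^{2}\big)\|\x\|^{2s}\pm f\in\psd(\kappa[[\x]])$, and taking $M:=\sum_{i,j}M_{ij}\in\Sos{\kappa}$ the cross-terms $M^{2}-\sum_{i,j}M_{ij}^{2}=2\sum_{(i,j)<(k,l)}M_{ij}M_{kl}$ again lie in $\Sos{\kappa}$, upgrading the bound to $M^{2}\|\x\|^{2s}\pm f\in\psd(\kappa[[\x]])$ and closing the induction. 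The main obstacle is precisely the cancellation of these cross terms: inequalities in an ordered ring cannot be multiplied factor by factor when a factor has unknown sign, so one is forced into the symmetric four-sign combination above, and this is the only step where the specific structure of $\|\x\|^{2}$ (a sum of squares controlling $\gtm^{2}$) is really used.
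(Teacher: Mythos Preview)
Your argument is correct. The induction closes as you describe: the four-sign multiplicative combination is a clean way to cancel the cross terms, and the passage from $\sum M_{ij}^2$ to $M^2$ works because products of elements of $\Sos{\kappa}$ stay in $\Sos{\kappa}$.

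The paper's proof is different in that it is \emph{direct} rather than inductive. It writes $f=\sum_{|\nu|=2s}b_\nu(\x)\x^\nu$ in one shot, then bounds each monomial by $\|\x\|^{2s}$ via $\|\x\|^2-\x_i\x_j\in\Sos{\kappa[\x]}$ and each coefficient series by its constant $b_\nu(0)^2+1$ via the same completing-the-square identity you used in your base case; summing over $\nu$ yields $M=\sum_\nu(b_\nu(0)^2+1)$. Both proofs rest on the same two identities ($\|\x\|^2\pm\x_i\x_j\in\Sos{\kappa[\x]}$ and $c^2+1\pm c\in\Sos{\kappa}$), so the difference is purely organizational: the paper absorbs the whole order-$2s$ part at once, whereas you peel off one factor of $\gtm^2$ at a time. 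Your inductive formulation has the mild advantage of making the two-sided control $M^2\|\x\|^{2s}\pm f\in\psd$ explicit at every stage; the paper's direct argument implicitly uses both signs (to bound $|b_\nu\x^\nu|_\alpha$) but only writes one of them down.
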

\begin{proof}
As $\omega(f)\geq2s$, we can write $f=\sum_{|\nu|=2s}b_\nu(\x)\x^{\nu}$. Observe that $b_\nu(0)^2+1-b_\nu\in\Sos{\kappa[[\x]]}$ for each $\nu$, because the initial coefficient $b_\nu(0)^2+1-b_\nu(0)=(b_\nu(0)-\frac{1}{2})^2+3\frac{1}{2^2}\in\Sos{\kappa}$. In addition,
$$
\|\x\|^2-\x_i\x_j=\sum_{k\neq i,j}\x_k^2+\Big(\x_i-\frac{\x_j}{2}\Big)^2+3\Big(\frac{\x_j}{2}\Big)^2\in\Sos{\kappa[[\x]]}
$$ 
for each $1\leq i,j\leq n$. Let $\x^\nu$ be a monomial such that $|\nu|=2s$. Write
$$
\x^\nu=\prod_{(i,j)\in\Lambda}\x_i\x_j
$$
where $\#\Lambda=s$ and $1\leq i,j\leq n$. As $\|\x\|^2-\x_i\x_j\in\psd(\kappa[[\x]])$, we deduce
$$
\|\x\|^{2s}-\x^\nu=\|\x\|^{2s}-\prod_{(i,j)\in\Lambda}\x_i\x_j\geq_\alpha0
$$
for each $\alpha\in\Sper(\kappa[[\x]])$, so $\|\x\|^{2s}-\x^\nu\in\psd(\kappa[[\x]])$ for each $\nu$ with $|\nu|=2s$. Thus, 
$$
(b_\nu(0)^2+1)\|\x\|^{2s}-b_\nu(\x)\x^\nu\in\psd(\kappa[[\x]])
$$ 
for each $\nu$ satisfying $|\nu|=2s$. Consequently,
$$
\sum_{|\nu|=2s}(b_\nu(0)^2+1)\|\x\|^{2s}-f\in\psd(\kappa[[\x]]).
$$
If we set $M:=\sum_{|\nu|=2s}(b_\nu(0)^2+1)$, we obtain $M^2\|\x\|^{2s}-f\in\psd(\kappa[[\x]])$, as required.
\end{proof}

Given an ideal $\gta\subset\kappa[[\x]]$, we denote the minimal order of a series in $\gta$ with $\omega(\gta)$.

\begin{lem}\label{genlist} 
Let $\kappa$ be a (formally) real field and $A:=\kappa[[\x]]/\gta$ a formal ring such that $\psd(A)=\Sos{A}$. Then $\omega(\gta)=2$.
\end{lem}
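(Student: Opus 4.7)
The plan is to argue by contradiction: assume $\omega(\gta)\geq3$ (so $\gta\subseteq\gtm_n^3$, with $\gta\neq(0)$ and the presentation minimal) and produce $f\in\psd(A)\setminus\Sos{A}$. First I would invoke the real-dimension machinery of the introduction. Since $\kappa$ is formally real, every ordering of $\kappa$ lifts to a prime cone of $A$ with support $\gtm_A$, so $\Sper(A)\neq\varnothing$; the hypothesis $\psd(A)=\Sos{A}$ then forces $A$ to be real reduced and $\dim(A)\leq2$, yielding $\hgt(\gta)\geq n-2$, and in particular $\gta\neq0$ as soon as $n\geq3$.

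The central structural handle is that $\gta\subseteq\gtm_n^3$ makes the natural surjection $\kappa[[\x]]/\gtm_n^3\to A/\gtm_A^3$ an isomorphism, so degree $\leq2$ jets are preserved. Therefore, if $f\in A$ admits $f=\sum g_i^2$ in $A$, the identity modulo $\gtm_n^3$ is a genuine polynomial identity in $\kappa[\x]$; writing $g_i=c_i+L_i+\cdots$, formal reality of $\kappa$ forces the $c_i$ to vanish whenever $f\in\gtm_A$, and then $\sum L_i^2$ must coincide with the quadratic part of any lift of $f$. Any obstruction to $f$ being a sum of squares must therefore live in orders $\geq3$, where the interaction with $\gta$ becomes delicate.

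The obstruction element is engineered from Lemma \ref{genlist0}. I would pick $g\in\gta$ of minimal order $d\geq3$ with initial form $\ini(g)\in\kappa[\x]_d$, set $s:=\lceil d/2\rceil$, and apply Lemma \ref{genlist0} to obtain $M\in\Sos{\kappa}$ with $M^2\|\x\|^{2s}-g\in\psd(\kappa[[\x]])$, which descends via Lemma \ref{trans} to a PSD element of $A$. Combining this with a PSD perturbation of order $2$ chosen to interact nontrivially with $\ini(g)$ under the degree $d$ cancellation forced by any SoS decomposition, I would build $f\in\psd(A)$ whose required degree $\leq2$ data cannot be filled in without violating the minimality of $d$ or the constraint $\gta\subseteq\gtm_n^3$.

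The main obstacle is the explicit construction of $f$ and the verification that every candidate SoS decomposition fails. To handle this I expect to first use $\hgt(\gta)\geq n-2$ to reduce by a height argument to the hypersurface case $\gta=(g)$ with $n=3$, then apply Weierstrass preparation together with the finite-determinacy machinery of Theorem \ref{fdqd} to normalize $g$, and finally exploit the resulting rigidity of SoS expressions modulo $(g)$, combined with the Pythagorean arithmetic of $\kappa$, to locate the required contradiction.
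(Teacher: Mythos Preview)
Your structural observation is correct and is half of the paper's argument: if $\omega(\gta)\geq3$, then any sum-of-squares identity in $A$ reduces modulo $\gtm_n^3$ to a genuine identity in $\kappa[\x]$, so a candidate $q\in\gtm_A^2$ that lay in $\Sos{A}$ would have its quadratic part equal to a sum of squares of linear forms over $\kappa$. What you are missing is the other half: the explicit construction of a quadratic form in $\psd(A)$ whose quadratic part is \emph{not} such a sum. You acknowledge this as ``the main obstacle'' and propose to reach it via a height reduction to a three-variable hypersurface plus finite determinacy; that route is a detour that never produces the element.

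The paper does none of that. It picks any single $F\in\gta$ of order $r\geq3$, puts it in Weierstrass form $F=\x_n^r+\sum_{j<r}a_j(\x')\x_n^j$ with $\omega(a_j)\geq r-j$, and applies Lemma~\ref{genlist0} to the $a_j^2$ (not to $F$) to get a uniform $N\in\Sos{\kappa}$ with $a_j^2\leq_\alpha N^2\|\x'\|^{2(r-j)}$ for every prime cone. Feeding these bounds into the relation $\x_n^r=-\sum_j a_j\x_n^j$ that holds in $A$ and estimating with the absolute value of $\alpha$ yields $\x_n^2\leq_\alpha (N^2r^2+1)^2\|\x'\|^2$ for all $\alpha\in\Sper(A)$. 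Hence the indefinite quadratic form
\[
q:=(N^2r^2+1)^2\|\x'\|^2-\x_n^2
\]
belongs to $\psd(A)$. By your own jet argument, $q\in\Sos{A}$ would force $q$ to be a sum of squares of linear forms over $\kappa$, and setting $\x'=0$ gives $-1\in\Sos{\kappa}$, a contradiction.

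Your proposed element $M^2\|\x\|^{2s}-g$ with $s=\lceil d/2\rceil\geq2$ has order $\geq4$ and carries no obstruction at the quadratic level; the unspecified ``PSD perturbation of order~$2$'' is exactly the missing idea, and it does not emerge from height arguments or Theorem~\ref{fdqd}. The whole proof lives in one Weierstrass chart and an elementary absolute-value estimate; no reduction of $n$, no principality of $\gta$, and no determinacy are needed.
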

\begin{proof}
Let $F\in\gta$ be a series of order $r>0$. Set $\x':=(\x_1,\ldots,\x_{n-1})$ and $\|\x'\|^2:=\x_1^2+\cdots+\x_{n-1}^2$. After a linear change of coordinates, we may assume by Weierstrass preparation theorem
$$
F:=\x_n^r+a_{r-1}\x_n^{r-1}+\cdots+a_1\x_n+a_0,
$$ 
where $a_j\in\kappa[[\x']]$ and $\omega(a_j)\geq r-j$ for $0\leq j\leq r-1$. 

\paragraph{}\label{genlist1}
We claim: \em There exists $N\in\Sos{\kappa}$ such that $N^2\|\x'\|^{2(r-j)}-a_j^2\in\psd(\kappa[[\x']])$ for each $0\leq j\leq r-1$\em.

As $\omega(a_j^2)\ge2(r-j)$, there exists by Lemma \ref{genlist0} $M_j\in\Sos{\kappa}$ such that $M_j\|\x'\|^{2(r-j)}-a_j^2\in\psd(\kappa[[\x']])$. Denote $M:=M_0+\cdots+M_{r-1}$ and $N:=M+1$. Observe that $N^2-M_j=(M+1)^2-M_j=M^2+M+(M-M_j)+1\in\Sos{\kappa}$ for each $0\leq j\leq r-1$. Thus, $N^2\|\x'\|^{2(r-j)}-a_j^2\in\psd(\kappa[[\x']])$ for each $0\leq j\leq r-1$, as claimed.

\paragraph{}\label{genlist2} Let us prove: \em The quadratic form $q:=(N^2r^2+1)^2\|\x'\|^2-\x_n^2\in\psd(A)$\em. 

Otherwise, there exists a prime cone $\beta\in\Sper(A)$ such that $q<_\beta0$. In particular, $\x_n\not\in\supp(\beta)$ and $\|\x'\|^2<_\beta\frac{1}{(N^2r^2+1)^2}\x_n^2$. As $A$ is a henselian ring, $\beta$ induces by \cite[Prop.II.2.4]{abr} an ordering $\alpha\in\Sper(\kappa)$ such that $\beta\to\alpha$. Consider the \em absolute value \em associated to $\beta$:
$$
|\,.\,|_{\beta}:A\to A,\ a\mapsto\begin{cases}
a&\text{if $a\geq_\beta0$,}\\
-a&\text{if $a<_\beta0$.}
\end{cases}
$$
By claim \ref{genlist1} we obtain 
$$
a_j^2\leq_\beta N^2\|\x'\|^{2(r-j)}<_\beta\frac{N^2}{(N^2r^2+1)^{2(r-j)}}\x_n^{2(r-j)}\quad\leadsto\quad|a_j|_\beta<_\beta\frac{N}{(N^2r^2+1)^{r-j}}|\x_n|_\beta^{r-j}, 
$$
for $0\leq j\leq r-1$. As the Weierstrass polynomial $F\in\gta$, it holds $F\in\supp(\beta)$. Hence,
\begin{equation*}
\begin{split} 
\x_n^{2r}&=\Big(\sum_{j=0}^{r-1}a_j(\x')\x_n^j\Big)^2=\sum_{j,k=0}^{r-1}a_j(\x')a_k(\x')\x_n^{j+k}\\
&\leq_\beta\sum_{j,k=0}^{r-1}|a_j(\x')|_{\beta}|a_k(\x')|_{\beta}|\x_n|_{\beta}^{j+k}\leq_\beta N^2\sum_{j,k=0}^{r-1}\frac{1}{(N^2r^2+1)^{2r-j-k}}|\x_n|_{\beta}^{2r-j-k}|\x_n|_\beta^{j+k}\\
&<_\beta N^2\sum_{j,k=0}^{r-1}\frac{|\x_n|_\beta^{2r}}{(N^2r^2+1)^{2r-j-k}}<_\beta\x_n^{2r}\sum_{j,k=0}^{r-1}\frac{N^2}{N^2r^2+1}\\
&=\x_n^{2r}\frac{N^2r^2}{N^2r^2+1}.
\end{split}
\end{equation*} 
As $x_n^{2r}>_\beta0$, we deduce $1<_\beta\frac{N^2r^2}{N^2r^2+1}$, which is a contradiction.

\paragraph{}\label{genlist3} We claim: \em $q\in\psd(A)\setminus\Sos{A}$ if $\omega(\gta)\geq 3$\em.

Otherwise, $q\in\Sos{A}$, so $q=h_1^2+\cdots+h_s^2+h$ for some $h_i\in\kappa[[\x]]$ and $h\in\gta$ satisfying $\omega(h)\geq 3$. Comparing initial forms in the previous expression, we find homogeneous polynomials $a_1,\ldots,a_r\in\kappa[\x]$ such that $q=a_1^2+\cdots+a_r^2$, which is impossible because $\kappa$ is a (formally) real field and $-1$ is not a sum of squares in $\kappa$.

We conclude: $\omega(\gta)\leq 2$, as required.
\end{proof}

\begin{cor}\label{genlistdim2}
Let $A:=\kappa[[\x,\y,\z]]/(\varphi)$ where $\varphi\in\kappa[[\x,\y,\z]]$ and $\kappa$ is a (formally) real field. Assume $\psd(A)=\Sos{A}$. Then $A$ is isomorphic to either $\kappa[[\x,\y]]$ or $\kappa[[\x,\y,\z]]/(\z^2-F)$ where $-F\in\kappa[[\x,\y]]$ is not a sum of squares and $\omega(F)\geq2$.
\end{cor}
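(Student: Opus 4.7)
The plan is to combine Lemma \ref{genlist} (which bounds $\omega(\varphi)$), a Weierstrass preparation in $\z$ together with completion of the square, and the real-ideal property forced by $\psd(A)=\Sos{A}$.

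First I would rule out $\varphi=0$: otherwise $A=\kappa[[\x,\y,\z]]$ is an excellent regular ring of dimension $3$, whose real dimension also equals $3$ because the chain of real primes $(0)\subsetneq(\z)\subsetneq(\y,\z)\subsetneq(\x,\y,\z)$ lifts to a specialization chain in $\Sper$ (using that $\kappa$ is real), contradicting Theorem \ref{mfrs}. Lemma \ref{genlist} applied to $\gta:=(\varphi)$ then forces $\omega(\varphi)\in\{1,2\}$. If $\omega(\varphi)=1$, a linear change of coordinates sends the linear part of $\varphi$ to a unit multiple of $\z$, making $\varphi$ regular in $\z$ of order $1$; Weierstrass preparation yields $\varphi=U(\z-p(\x,\y))$ with $U$ a unit and $p(0,0)=0$, whence $\kappa[[\x,\y]]\to A$ is an isomorphism, giving the first listed case.

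The main case is $\omega(\varphi)=2$. Setting $Q:=\ini(\varphi)\neq 0$, the fact that $\kappa$ is infinite (as $\kappa\supseteq\Q$) furnishes $v\in\kappa^3$ with $Q(v)\neq 0$; a linear change of coordinates moving $v$ to $(0,0,1)$ makes the $\z^2$-coefficient of $Q$ nonzero, so $\varphi$ becomes regular in $\z$ of order $2$. Weierstrass preparation then produces $\varphi=U\cdot P$ with $U$ a unit and $P=\z^2+p_1(\x,\y)\z+p_0(\x,\y)$, where $p_0(0,0)=p_1(0,0)=0$; comparing with $\omega(P)=\omega(\varphi)=2$ and using that $P$ has no degree-$1$ monomials in $(\x,\y,\z)$, one extracts the sharp bounds $\omega(p_1)\geq 1$ and $\omega(p_0)\geq 2$. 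Because $\tfrac12\in\kappa$, the Inverse Function Theorem legitimizes the automorphism $\z\mapsto\z-p_1/2$ of $\kappa[[\x,\y,\z]]$, which transforms $P$ into $\z^2-F$ with $F:=p_1^2/4-p_0\in\kappa[[\x,\y]]$ and $\omega(F)\geq\min\{2\omega(p_1),\omega(p_0)\}\geq 2$. Hence $A\cong\kappa[[\x,\y,\z]]/(\z^2-F)$.

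It remains to show $-F\notin\Sos{\kappa[[\x,\y]]}$, and this is where the hypothesis bites. Any ordering of the real field $\kappa=A/\gtm_A$ extends to a prime cone of $A$, so $\Sper(A)\neq\varnothing$, and \cite[Lem.6.3]{sch1} combined with $\psd(A)=\Sos{A}$ forces $A$ to be real reduced; equivalently, the ideal $(\z^2-F)\subset\kappa[[\x,\y,\z]]$ is real. If one had $-F=g_1^2+\cdots+g_r^2$ with $g_i\in\kappa[[\x,\y]]$, then $\z^2+g_1^2+\cdots+g_r^2=\z^2-F\in(\z^2-F)$, and the real-ideal property would force $\z\in(\z^2-F)\subset\gtm^2$, contradicting $\z\notin\gtm^2$. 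I expect the main obstacle to be the order bookkeeping around the initial form in the Weierstrass step that yields the sharp bounds $\omega(p_1)\geq 1$, $\omega(p_0)\geq 2$ (these are what guarantee $\omega(F)\geq 2$ and hence that the normal form sits inside case (3) rather than collapsing to case (1)); everything else is a direct application of the preparation theorem plus the same real-radical principle already exploited in the proof of Lemma \ref{genlist}.
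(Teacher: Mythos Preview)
Your proof is correct and follows essentially the same route as the paper: invoke Lemma~\ref{genlist} to bound $\omega(\varphi)\le 2$, use a linear change plus Weierstrass preparation to reach either $\z$ or $\z^2+p_1\z+p_0$, complete the square, and appeal to \cite[Lem.~6.3]{sch1} for the real-reduced conclusion. Your treatment is in fact slightly more careful than the paper's: you explicitly dispose of $\varphi=0$ via Theorem~\ref{mfrs} (the paper applies Lemma~\ref{genlist} directly, tacitly assuming $\gta\neq(0)$), and you spell out the implication ``$(\z^2-F)$ real $\Rightarrow-F\notin\Sos{\kappa[[\x,\y]]}$'' via $\z\in(\z^2-F)\subset\gtm^2$, whereas the paper states this in one clause. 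The order bookkeeping you flag as a potential obstacle is harmless: $p_0(0,0)=p_1(0,0)=0$ is automatic for a Weierstrass polynomial, and $\omega(p_0)\ge 2$ follows immediately from $\omega(P)=2$.
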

\begin{proof}
By Lemma \ref{genlist} we have $\omega(\varphi)\leq 2$. After a linear change of coordinates we may assume (by Weierstrass preparation theorem) either $\varphi=\z+a(\x,\y)$ with $\omega(a)\geq1$ or $\varphi=\z^2+2a_1(\x,\y)+a_2(\x,\y)=(\z+a_1(\x,\y))^2+(a_2(\x,\y)-a_1(\x,\y)^2)$ where $\omega(a_k)\geq k$. After the change of coordinates $\z_1:=\z+a(\x,\y)$ in the first case and $\z_1:=\z+a_1(\x,\y)$ in the second, we may assume $\varphi=\z$ in the first case and $\varphi=\z^2-F(\x,\y)$ where $F\in\kappa[[\x,\y]]$ has order $\geq2$ in the second. As $\kappa$ is a (formally) real field, $\Sper(A)\neq\varnothing$. By \cite[Lem.6.3]{sch1} $A$ is a real (reduced) ring, so $-F\not\in\Sos{\kappa[[\x,\y]]}$, as required.
\end{proof}

In order to prove Theorem \ref{gp=s} let us find certain order restrictions for the series $F\in\kappa[[\x,\y]]$ such that the ring $A:=\kappa[[\x,\y,\z]]/(\z^2-F)$ has the property $\psd(A)=\Sos{A}$. Beforehand we need to characterize the positive semidefinite elements of $A$.

\subsection{Characterization of positive semidefinite elements.}\label{chpsd} 
Let $F\in\gtm_2\subset\kappa[[\x,\y]]$ where $\kappa$ is a (formally) real field and let $A:=\kappa[[\x,\y,\z]]/(\z^2-F)$. The ring $A$ is a rank $2$ free module over $\kappa[[\x,\y]]$. This means that the elements of $A$ are uniquely represented in the form $f+\z g$ where $f,g\in\kappa[[\x,\y]]$. The elements of $A$ are operated using the relation $\z^2=F(\x,\y)$. Consider the inclusion ${\tt i}:\kappa[[\x,\y]]\hookrightarrow A$. Denote 
$$
\psd(\{F\geq0\}):=\{f\in\kappa[[\x,\y]]:\ f\geq_\alpha0,\ \forall\alpha\in\Sper(\kappa[[\x,\y]])\text{ with }F\geq_\alpha0\},
$$
the maximal ideal of $\kappa[[\x,\y]]$ with $\gtm_2$ and the maximal ideal of $A$ with $\gtm_A$.

\begin{lem}\label{cluepsd}
Let $\alpha\in\Sper(\kappa[[\x,\y]])$ and let $\Sper({\tt i}):\Sper(A)\to\Sper(\kappa[[\x,\y]])$ be the real spectral map associated to ${\tt i}$. There exists a prime cone $\beta\in\Sper(A)$ such that $\Sper({\tt i})(\beta)=\alpha$ if and only if $F\geq_\alpha0$\em. In addition, $\supp(\beta)=\gtm_A$ if and only if $\supp(\alpha)=\gtm_2$.
\end{lem}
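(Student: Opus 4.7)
The plan is to handle the two implications separately, and then a short case analysis for the additional statement.

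\textbf{Forward direction.} Assume $\beta\in\Sper(A)$ with $\Sper({\tt i})(\beta)=\alpha$. Since $F=\z^2$ in $A$, we have $F\geq_\beta 0$ as it is a square. By Lemma \ref{trans}(i) applied to ${\tt i}$ (already in the statement that defines $\Sper({\tt i})$), ${\rm sign}_\alpha(F)={\rm sign}_\beta({\tt i}(F))\geq 0$, so $F\geq_\alpha 0$.

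\textbf{Reverse direction.} Assume $F\geq_\alpha 0$ and set $\gtp:=\supp(\alpha)$. I distinguish two cases according to whether $F$ lies in $\gtp$.

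\emph{Case 1: $F\notin\gtp$.} Let $R$ be a real closure of the ordered field $(\qf(\kappa[[\x,\y]]/\gtp),\leq_\alpha)$. Then the image of $F$ in $R$ is strictly positive (it is non-zero and non-negative), so it admits a square root $\sqrt{F}\in R$. Compose the canonical map $\kappa[[\x,\y]]\to\qf(\kappa[[\x,\y]]/\gtp)\hookrightarrow R$ with the assignment $\z\mapsto\sqrt{F}$; the relation $\z^2-F=0$ is respected, so this yields a well-defined homomorphism $\varphi:A\to R$. By Lemma \ref{trans}(i) (applied to $\varphi$) the preimage of the non-negative cone of $R$ is a prime cone $\beta\in\Sper(A)$, and its restriction via ${\tt i}$ agrees with $\alpha$ by construction.

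\emph{Case 2: $F\in\gtp$.} Then $A/\gtp A\cong(\kappa[[\x,\y]]/\gtp)[\z]/(\z^2)$ has a unique minimal prime corresponding to $\gtq:=\gtp A+(\z)A$, and the canonical map induces an isomorphism $A/\gtq\cong\kappa[[\x,\y]]/\gtp$. The ordering $\leq_\alpha$ on $\qf(\kappa[[\x,\y]]/\gtp)$ transports along this isomorphism to an ordering on $\qf(A/\gtq)$, giving a prime cone $\beta\in\Sper(A)$ with $\supp(\beta)=\gtq$ that restricts to $\alpha$.

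\textbf{Additional claim.} The ring $A$ is local with maximal ideal $\gtm_A=\gtm_2 A+(\z)A$ (and residue field $\kappa$), so ${\tt i}^{-1}(\gtm_A)=\gtm_2$. Hence if $\supp(\beta)=\gtm_A$ then $\supp(\alpha)={\tt i}^{-1}(\supp(\beta))=\gtm_2$. Conversely, if $\supp(\alpha)=\gtm_2$, then $F\in\gtm_2\subset\supp(\beta)$, so $\z^2=F\in\supp(\beta)$, and since $\supp(\beta)$ is prime we get $\z\in\supp(\beta)$; combined with $\gtm_2\subset\supp(\beta)$ this yields $\gtm_A\subset\supp(\beta)$, whence $\supp(\beta)=\gtm_A$ by maximality.

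\textbf{Main obstacle.} Nothing here is deep: the only delicate point is the construction in Case 1, where one has to make sure that the assignment $\z\mapsto\sqrt{F}$ really produces a homomorphism $A\to R$ (which is immediate from the presentation $A=\kappa[[\x,\y]][\z]/(\z^2-F)$ via the universal property of the quotient) and that the pulled-back cone really restricts to $\alpha$ (clear because $\varphi\circ{\tt i}$ is the canonical map into $R$ defining $\alpha$). The rest is bookkeeping with supports and with the elementary fact that a square in an ordered field is non-negative.
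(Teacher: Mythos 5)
Your proof is correct and follows essentially the same strategy as the paper: the forward direction uses that $F=\z^2$ is a square, and the reverse direction produces $\beta$ via a homomorphism $A\to R(\alpha)$ into a real closure of $(\qf(\kappa[[\x,\y]]/\supp(\alpha)),\leq_\alpha)$. The only difference is that you split the reverse direction into two cases according to whether $F\in\supp(\alpha)$, whereas the paper unifies both by sending $\z$ to a root $\xi$ of $\z^2-F$ in $R(\alpha)$ (possibly $\xi=0$), which subsumes your Case~2 into your Case~1.
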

\begin{proof}
Let $\alpha\in\Sper(\kappa[[\x,\y]])$ and $R(\alpha)$ be the real closure of the ordered field 
$$
(K(\alpha):=\qf(\kappa[[\x,\y]]/\supp(\alpha)),\leq_\alpha).
$$

Suppose there exists $\beta\in\Sper(A)$ such that $\Sper({\tt i})(\beta)=\alpha$. Then $F=\z^2\geq_\beta0$ in $A$ implies $F\geq_\alpha0$. If $\supp(\beta)=\gtm_A$, then $\supp(\alpha)=\supp(\beta)\cap\kappa[[\x,\y]]=\gtm_A\cap\kappa[[\x,\y]=\gtm_2$.

Suppose conversely that $F\geq_\alpha0$. Define $\gta:=(\supp(\alpha),\z^2-F)/(\z^2-F)$. Then 
\begin{equation*}
\begin{split}
A\to A/\gta&\cong\kappa[[\x,\y,\z]]/(\supp(\alpha),\z^2-F)\cong\kappa[[\x,\y]][\z]/(\supp(\alpha),\z^2-F)\\
&\cong(\kappa[[\x,\y]]/\supp(\alpha))[\z]/(\z^2-F+\supp(\alpha))\hookrightarrow K(\alpha)[\z]/(\z^2-F+\supp(\alpha)).
\end{split}
\end{equation*}

As $F\geq_\alpha0$, the polynomial $\z^2-F$ has two roots in $R(\alpha)$. Let $\xi\in R(\alpha)$ be one of them and consider the evaluation homomorphism $K(\alpha)[\z]/(\z^2-F+\supp(\alpha))\to R(\alpha),\ P\mapsto P(\xi)$. Thus, we have a homomorphism $A\to R(\alpha)$ and the inverse image of the set of non-negative elements of $R(\alpha)$ defines a prime cone $\beta\in\Sper(A)$ such that $\Sper({\tt i})(\beta)=\alpha$. If $\supp(\alpha)=\gtm_2$ and $\Sper({\tt i})(\beta)=\alpha$, then $(\z^2,\gtm_2)=(\z^2-F,\gtm_2)\subset\supp(\beta)$. As $\supp(\beta)$ is prime, we deduce $\z\in\supp(\beta)$ and $\supp(\beta)=\gtm_A$, as required.
\end{proof}

\begin{cor}\label{psd3}
An element $f+\z g\in A$ (where $f,g\in\kappa[[\x,\y]]$) belongs to $\psd(A)$ if and only if $f\in\psd(\{F\geq0\})$ and $f^2-Fg^2\in\psd(\kappa[[\x,\y]])$. 
\end{cor}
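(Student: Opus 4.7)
The plan is to exploit the $\kappa[[\x,\y]]$-algebra involution $\sigma\colon A\to A$ that fixes $\kappa[[\x,\y]]$ and sends $\z\mapsto -\z$. This is well-defined because $(-\z)^2-F=\z^2-F=0$ in $A$, and it is clearly a ring automorphism with $\sigma^2=\id$. Writing $h:=f+\z g$ and $\bar h:=\sigma(h)=f-\z g$, the two algebraic identities
$$h+\bar h=2f\qquad\text{and}\qquad h\cdot\bar h=f^2-Fg^2\in{\tt i}(\kappa[[\x,\y]])$$
will drive both directions. Two further facts will be used repeatedly: since $\sigma$ is a ring automorphism, $\psd(A)$ is $\sigma$-stable; and combining Lemma~\ref{cluepsd} with Lemma~\ref{trans}(i), for any $h_0\in\kappa[[\x,\y]]$ one has
$${\tt i}(h_0)\in\psd(A)\iff h_0\in\psd(\{F\geq 0\}),$$
because the image of $\Sper({\tt i})$ is exactly the set of $\alpha$ with $F\geq_\alpha 0$.

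For the forward implication, assume $h\in\psd(A)$. Then $\bar h=\sigma(h)\in\psd(A)$ as well, and since $\psd(A)$ is closed under addition and multiplication one obtains $2f=h+\bar h\in\psd(A)$ and $f^2-Fg^2=h\cdot\bar h\in\psd(A)$. Both sums lie in ${\tt i}(\kappa[[\x,\y]])$, so by the equivalence above $f\in\psd(\{F\geq 0\})$ and $f^2-Fg^2\geq_\alpha 0$ at every $\alpha\in\Sper(\kappa[[\x,\y]])$ with $F\geq_\alpha 0$. For the remaining $\alpha$ with $F<_\alpha 0$ one has $-F>_\alpha 0$, so $f^2-Fg^2=f^2+(-F)g^2\geq_\alpha 0$ trivially; hence $f^2-Fg^2\in\psd(\kappa[[\x,\y]])$.

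For the converse, assume $f\in\psd(\{F\geq 0\})$ and $f^2-Fg^2\in\psd(\kappa[[\x,\y]])$. Fix $\beta\in\Sper(A)$ and set $\alpha:=\Sper({\tt i})(\beta)$; Lemma~\ref{cluepsd} gives $F\geq_\alpha 0$, so by hypothesis $f\geq_\alpha 0$ and $f^2-Fg^2\geq_\alpha 0$. Transporting through Lemma~\ref{trans}(i) yields $h+\bar h=2f\geq_\beta 0$ and $h\cdot\bar h=f^2-Fg^2\geq_\beta 0$. In the ordered field $\qf(A/\supp(\beta))$, the elementary observation that $u+v\geq 0$ together with $uv\geq 0$ forces $u,v\geq 0$ now gives $h\geq_\beta 0$. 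Since $\beta$ was arbitrary, $h\in\psd(A)$.

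The only genuinely substantive point is recognising that the two square roots of $F$ in $R(\alpha)$ (when $F>_\alpha 0$) correspond to the two prime cones of $A$ lying above $\alpha$; this is packaged neatly by the involution $\sigma$ and is already built into the construction in the proof of Lemma~\ref{cluepsd}. Once $\sigma$ is in hand, the argument is bookkeeping plus the one-line ordered-field identity on sums and products.
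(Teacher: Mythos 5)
Your proof is correct and takes essentially the same approach as the paper: both use the involution $\sigma\colon f+\z g\mapsto f-\z g$, decompose into the sum $2f$ and the product $f^2-Fg^2$ (both in $\kappa[[\x,\y]]$), apply Lemma~\ref{cluepsd} to translate $\psd(A)$-membership for such elements into $\psd(\{F\geq0\})$-membership, and use the ordered-field fact that $u+v\geq0$ together with $uv\geq0$ forces $u,v\geq0$. The only cosmetic difference is that you spell out the pull-back to $\Sper(\kappa[[\x,\y]])$ via $\Sper({\tt i})$ explicitly, while the paper phrases the same step more compactly.
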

\begin{proof}
Consider the involution $\sigma:A\to A,\ f+\z g\mapsto f-\z g$. Thus, $f+\z g\in\psd(A)$ if and only if $f-\z g\in\psd(A)$. These two elements are both positive semidefinite in $A$ if and only if $2f=(f+\z g)+(f-\z g)$ and $f^2-Fg^2=(f+\z g)(f-\z g)$ are positive semidefinite in $A$. As $f,f^2-Fg^2\in\kappa[[\x,\y]]$, we know by Lemma \ref{cluepsd} that $f,f^2-Fg^2\in\psd(A)$ if and only if $f,f^2-Fg^2\in\psd(\{F\geq0\})$. Let $\alpha\in\Sper(\kappa[[\x,\y]])$ be such that $F<_\alpha0$, so $-F>_\alpha0$ and $f^2-Fg^2\geq_\alpha0$. Thus, $f^2-Fg^2\in\psd(\{F\geq0\})$ if and only if $f^2-Fg^2\in\psd(\kappa[[\x,\y]])$. Consequently, the statement follows.
\end{proof}

\subsection{Proof of Theorem \ref{gp=s}}
After this preliminary work, we prove Theorem \ref{gp=s} in several steps. Let $A:=\kappa[[\x,\y,\z]]/(\z^2-F)$ where $\kappa$ is a (formally) real field and $F\in\kappa[[\x,\y]]$. Denote the maximal ideal of $\kappa[[\x,\y]]$ with $\gtm_2$.

\begin{lem}[General restriction]\label{genord} 
If $\psd(A)=\Sos{A}$, then $\omega(F)\leq 3$.
\end{lem}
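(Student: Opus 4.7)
The plan is to assume for contradiction that $\omega(F)\geq 4$ and exhibit an explicit element of $\psd(A)\setminus\Sos{A}$. The candidate, inspired by the quadratic form $(Nr^2+1)^2\|\x'\|^2-\x_n^2$ used in the proof of Lemma \ref{genlist}, is
$$
\psi:=M(\x^2+\y^2)-\z\in A
$$
where $M\in\Sos{\kappa}\setminus\{0\}$ is to be chosen. The heuristic is that in $A$ the element $\z$ behaves like something of order $2$ (since $\z^2=F$ has order $\geq 4$), so it is dominated by $\x^2+\y^2$ up to a suitable constant factor.

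First I apply Lemma \ref{genlist0} to $F\in\kappa[[\x,\y]]$ with $s=2$ to produce $M\in\Sos{\kappa}$, $M\neq 0$, such that $M^2(\x^2+\y^2)^2-F\in\psd(\kappa[[\x,\y]])$. This is the only step where the hypothesis $\omega(F)\geq 4$ enters. Writing $\psi=f+\z g$ in the canonical basis $\{1,\z\}$ of $A$ as a free rank-$2$ module over $\kappa[[\x,\y]]$, with $f=M(\x^2+\y^2)$ and $g=-1$, Corollary \ref{psd3} reduces $\psi\in\psd(A)$ to verifying that $f\in\psd(\{F\geq 0\})$ and $f^2-Fg^2\in\psd(\kappa[[\x,\y]])$. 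The first holds automatically because $f\in\Sos{\kappa[[\x,\y]]}$, and the second is precisely $M^2(\x^2+\y^2)^2-F\in\psd(\kappa[[\x,\y]])$, which holds by our choice of $M$.

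Next I show $\psi\notin\Sos{A}$. Suppose $\psi=\sum_{i=1}^r(a_i+\z b_i)^2$ with $a_i,b_i\in\kappa[[\x,\y]]$. Expanding with the relation $\z^2=F$ and comparing the two $\kappa[[\x,\y]]$-components along the basis $\{1,\z\}$ gives
$$
M(\x^2+\y^2)=\sum_{i=1}^r(a_i^2+Fb_i^2)\quad\text{and}\quad -1=2\sum_{i=1}^r a_ib_i.
$$
Evaluating the first identity at $(\x,\y)=(0,0)$ yields $0=\sum a_i(0)^2$, and since $\kappa$ is formally real this forces $a_i(0)=0$ for every $i$. Substituting into the second identity evaluated at the origin gives $-1=2\sum a_i(0)b_i(0)=0$, the desired contradiction. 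Hence $\psi\in\psd(A)\setminus\Sos{A}$, contradicting $\psd(A)=\Sos{A}$, and so $\omega(F)\leq 3$.

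The construction is completely explicit and the sole nontrivial ingredient is Lemma \ref{genlist0}. The real ``obstacle'' is not technical but conceptual: one must guess the right test element, namely a linear combination of $\z$ and a universal positive form in $(\x,\y)$ whose square dominates $F$. Once $\psi$ is identified, the contradiction is forced immediately by the free-module structure of $A$ over $\kappa[[\x,\y]]$ and by the formal reality of the residue field $\kappa$; no Puiseux analysis or finite-determinacy machinery from Section \ref{s2} is needed for this particular step.
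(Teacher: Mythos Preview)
Your proof is correct and follows essentially the same approach as the paper: the same test element $M(\x^2+\y^2)\pm\z$ built from Lemma~\ref{genlist0}, verified positive semidefinite via Corollary~\ref{psd3}. The only cosmetic difference is in the obstruction step: the paper lifts to $\kappa[[\x,\y,\z]]$ and observes that $\sum a_i^2+(\z^2-F)b$ can only have order $0$ or $\geq 2$, never $1$, whereas you stay in $A$, use the $\{1,\z\}$-basis, and evaluate at the origin---these are equivalent arguments.
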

\begin{proof}
Assume $\omega(F)\geq 4$. By Lemma \ref{genlist0} (applied for $s=2$) there exists $M\in\Sos{\kappa}$ such that $M^2(\x^2+\y^2)^2-F\in\psd(\kappa[[\x,\y]])$. As $M(\x^2+\y^2)\in\psd(\kappa[[\x,\y]])\subset\psd(\{F\geq0\})$, we deduce $\varphi:=M(\x^2+\y^2)+\z\in\psd(A)=\Sos{A}$. Thus, there exist $a_1,\ldots,a_r,b\in\kappa[[\x,\y,\z]]$ such that
$$
\varphi=M(\x^2+\y^2)+\z=a_1^2+\ldots+a_r^2+(\z^2-F)b.
$$
Comparing orders we conclude $1=\omega(M(\x^2+\y^2)+\z)=\omega(a_1^2+\ldots+a_r^2+(\z^2-F)b)$, which is a contradiction, so $\omega(F)\leq 3$, as required.
\end{proof}

\begin{lem}[Order $2$ restrictions]\label{ord2} 
If $\omega(F)=2$ and $\psd(A)=\Sos{A}$, then $F$ is right equivalent to one of the following:
\begin{itemize}
\item[(i)] $a\x^2$ where $a\not\in-\Sos{\kappa}$.
\item[(ii)] $a\x^2+\y^{2k+1}$ where $a\not\in-\Sos{\kappa}$ and $k\geq 1$.
\item[(iii)] $a\x^2+b\y^{2k}$ where $a\not\in-\Sos{\kappa}$, $b\neq0$ and $k\geq 1$.
\end{itemize}
\end{lem}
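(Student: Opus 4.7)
My plan is to analyze the initial form $F_2$ of $F$, apply the finite determinacy results of Section~\ref{s2} to reduce $F$ to a normal form, and then verify the coefficient condition $a\not\in-\Sos\kappa$ by producing an explicit element of $\psd(A)\setminus\Sos(A)$ in the forbidden cases. Since $\omega(F)=2$ and $\mathrm{char}(\kappa)=0$, the quadratic form $F_2$ diagonalizes by a linear change of coordinates to either $F_2=a\x^2+b\y^2$ with $a,b\in\kappa^*$ (rank~$2$) or $F_2=a\x^2$ with $a\in\kappa^*$ (rank~$1$).

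In the rank~$2$ case, Example~\ref{examples}(i) with $\ell=2$ shows $a\x^2+b\y^2$ is $2$-determined, so $F\simr a\x^2+b\y^2$; the reality of $A$ from Corollary~\ref{genlistdim2} excludes both $a,b\in-\Sos\kappa$, and swapping $\x\leftrightarrow\y$ if necessary gives $a\not\in-\Sos\kappa$, placing $F$ in case~(iii) with $k=1$. In the rank~$1$ case I complete the square via the Implicit Function Theorem~\ref{ift}: the unique $\phi\in\kappa[[\y]]$ with $\phi(0)=0$ and $(\partial F/\partial\x)(\phi,\y)=0$ exists because $\partial F/\partial\x$ is regular of order $1$ in $\x$, and the substitution $\x\mapsto\x+\phi(\y)$ produces $F=H(\x,\y)\x^2+g(\y)$ with $H$ a unit, $H(0,0)=a$, and $g\in\kappa[[\y]]$ of order $\ge 3$ (or $g=0$); the substitution $\x\mapsto v\x$, where $v=(H/a)^{1/2}$ is the unit square root supplied by IFT, yields $F\simr a\x^2+g(\y)$. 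If $g=0$, this is case~(i). Otherwise $g=c\y^m(1+h(\y))$ with $c\in\kappa^*$, $m\ge 3$ and $h(0)=0$, and the $m$-th root of $1+h$ provided by IFT makes $\y\mapsto\y/(1+h)^{1/m}$ normalize $g$ to $c\y^m$, landing in case~(iii) for even $m=2k$ and in case~(ii) for odd $m=2k+1$; the coefficient of $\y^{2k+1}$ is reduced to $1$ using $\gcd(2,2k+1)=1$, which yields $\kappa^*=(\kappa^*)^2(\kappa^*)^{2k+1}$, together with the ring isomorphism $\z\mapsto s\z$ on $\kappa[[\x,\y,\z]]/(\z^2-F)$ absorbing the required unit-square factor.

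The main obstacle is verifying $a\not\in-\Sos\kappa$ in every subcase. Arguing by contradiction, assume $-a=s_1^2+\cdots+s_\ell^2$ with some $s_i\ne 0$. The case $g=0$ is immediate: $F=a\x^2\in-\Sos{\kappa[[\x,\y]]}$ would contradict reality. For $g\ne 0$ the identity $\z^2=a\x^2+c\y^m$ rearranges in $A$ to $\z^2+\sum_{i=1}^\ell(s_i\x)^2=c\y^m$, so $c\y^m\ge_\alpha 0$ for every $\alpha\in\Sper(A)$. For odd $m=2k+1$, writing $c\y^{2k+1}=(c\y)(\y^k)^2$ and dividing by $(\y^k)^2$ (trivial when $\y\in\supp(\alpha)$) yields $c\y\in\psd(A)$; but $c\y$ has order $1$, hence lies outside $\Sos(A)$, contradicting $\psd(A)=\Sos(A)$. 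For even $m=2k$, a factorization in $A$ modelled on $(c\y^k+t_1 s_1\x)(c\y^k-t_1 s_1\x)=c\z^2+(c-t_1^2)(s_1\x)^2+c\sum_{i\ge 2}(s_i\x)^2$ (where $c=t_1^2+\cdots+t_r^2$ when $c\in\Sos\kappa$, with an analogous adaptation in the indefinite case) shows the two factors share a common sign in every $\alpha\in\Sper(A)$; combined with the sign behaviour of their sum $2c\y^k$, this produces a psd element of order $\le 2$ in $\gtm_A$, whose non-sos-ness is checked modulo $\gtm_A^3$: the relation $\z^2\equiv-\sum(s_i\x)^2$ (valid since $c\y^m\in\gtm_A^3$) prevents any $\gtm_A$-squared sum from producing the required cross monomial $s_1\x\y$ (or $s_1\x$). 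This contradiction establishes $a\not\in-\Sos\kappa$ throughout and completes the classification.
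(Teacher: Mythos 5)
The reduction to the normal forms is essentially sound — the diagonalization, the determinacy argument in the rank-$2$ case, the IFT completion of the square in the rank-$1$ case, and the handling of the cases $g=0$ and $m$ odd all parallel the paper's argument (which uses Weierstrass preparation plus explicit coordinate changes rather than direct diagonalization and IFT, but these are interchangeable routes to the same normal forms $a\x^2$, $a\x^2+\y^{2k+1}$, $a\x^2+b\y^{2k}$). Your reality argument in the rank-$2$ case and the order-$1$ psd element $c\y$ in the $m$-odd case are exactly the paper's moves.

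The genuine gap is in the case $F=a\x^2+c\y^{2k}$ with $k\ge 2$. Your factorization identity
\[
(c\y^k+t_1 s_1\x)(c\y^k-t_1 s_1\x)=c\z^2+(c-t_1^2)(s_1\x)^2+c\sum_{i\ge 2}(s_i\x)^2
\]
only exhibits a psd right-hand side when $c\in\Sos\kappa$ and $c-t_1^2\in\Sos\kappa$, which you obtain by writing $c=t_1^2+\cdots+t_r^2$. But the statement of the lemma imposes no sign condition on $c$ — only $c\neq 0$ — and the assumption $a\in-\Sos\kappa$ together with reality of $A$ gives only $c\notin-\Sos\kappa$, not $c\in\Sos\kappa$ (the coefficient could be indefinite). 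Your phrase \emph{``analogous adaptation in the indefinite case''} names the hole without filling it: the adaptation is not routine. The paper circumvents this precisely by replacing the coefficient $c$ with $c^2+1$ and using the algebraic identity $(c^2+1)^2-c\in\Sos\kappa$ (valid for every $c\in\kappa$, as it equals $c^4+c^2+(c-\tfrac12)^2+\tfrac34$); this yields $(c^2+1)^2\y^{2k}-s_1^2\x^2=\bigl((c^2+1)^2-c\bigr)\y^{2k}+\sum_{i\ge 2}s_i^2\x^2+\z^2\in\psd(\{F\ge0\})$ and hence $(c^2+1)\y^k+s_1\x\in\psd(A)$ (for $k$ even), respectively $(c^2+1)\y^{k+1}+s_1\x\y\in\psd(A)$ (for $k$ odd, after multiplying through by $\y$). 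This specific device is the missing idea: without it, your construction does not produce a psd element in the indefinite case. A secondary issue is that you should separate $k$ even from $k$ odd explicitly (your parenthetical ``$s_1\x\y$ (or $s_1\x$)'' suggests you intend to, but you never state that for $k$ odd you must pass to the order-$2$ element obtained by multiplying by $\y$, since $c\y^k$ is not psd when $k$ is odd). Once the correct psd element is in hand, your ``check modulo $\gtm_A^3$'' is fine in spirit — and for the order-$2$ element it is actually cleaner than the paper's evaluation argument, since the $\y^2$-coefficient constraint forces all $\beta_i=0$ and kills the $\x\y$ cross term directly.
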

\begin{proof}
After a linear change of coordinates, we may assume (by Weierstrass preparation theorem) that there exist $a\in\kappa\setminus\{0\}$, a unit $U\in\kappa[[\x,\y]]$ such that $U(0,0)=1$ and a Weierstrass polynomial $P:=\x^2+2a_1(\y)\x+a_2(\y)\in\kappa[[\y]][\x]$ of degree $2$ (with $\omega(a_1)\geq1$ and $\omega(a_2)\geq2$) such that
$$
F=a(\x^2+2a_1\x+a_2)U^2=a((\x+a_1)^2+a_2-a_1^2)U^2.
$$
After the change of coordinates $(\x,\y,\z)\mapsto(\x-a_1,\y,\z U)$, we may assume
$$
F=a\x^2+\psi(\y)
$$
where $\omega(\psi)\geq2$. If $\psi=0$, then $F=a\x^2$. Otherwise, write $\psi=b\y^\ell u^\ell$ where $b\in\kappa\setminus\{0\}$ and $u\in\kappa[[\y]]$ is a unit such that $u(0)=1$. After the change of coordinates $(\x,\y,\z)\mapsto(\x,\frac{\y}{u},\z U)$, we may assume $$
F=a\x^2+b\y^\ell
$$
where $a,b\in\kappa\setminus\{0\}$. If $\ell=2k+1$ is odd (where $k\geq1$), after the change of coordinates $(\x,\y,\z)\mapsto(b^{k+1}\x,b\y,b^{k+1}\z)$ we can suppose $F=a\x^2+\y^{2k+1}$. Let us explain now the restrictions concerning the coefficients $a,b\in\kappa\setminus\{0\}$ in the statement:

\noindent{\sc Case 1.} If $F=a\x^2$ and $\psd(A)=\Sos{A}$, then $a\not\in-\Sos{\kappa}$ by \cite[Lem.6.3]{sch1}.

\noindent{\sc Case 2.} If $F=a\x^2+b\y^2$ and $\psd(A)=\Sos{A}$, then either $a$ or $b\not\in-\Sos{\kappa}$ by \cite[Lem.6.3]{sch1}. Interchanging $\x$ and $\y$, we may assume $a\not\in-\Sos{\kappa}$.

\noindent{\sc Case 3.} If $F=a\x^2+\y^{2k+1}$ (where $k\geq1$) and $a\in-\Sos{\kappa}$, then $\y\in\psd(\{F\geq0\})\setminus\Sos{A}\subset\psd(A)\setminus\Sos{A}$.

\noindent{\sc Case 4.} If $F=a\x^2+b\y^{2k}$ (where $k\geq2$) and $a\in-\Sos{\kappa}$, let us find $\varphi\in\psd(A)\setminus\Sos{A}$. We get 
\begin{equation}\label{b0}
(b^2+1)^2-b=b^4+b^2+\Big(b-\frac{1}{2}\Big)^2+\frac{3}{4}\in\Sos{\kappa}.
\end{equation}
Set $a:=-\sum_{i=1}^qa_i^2$ where $a_i\in\kappa\setminus\{0\}$.

Assume first that $k$ is even. Then
\begin{align*}
(b^2+1)^2\y^{2k}-a_1^2\x^2&=((b^2+1)^2-b)\y^{2k}+\sum_{k=2}^qa_k^2\x^2+(a\x^2+b\y^{2k})\\
&=((b^2+1)^2-b)\y^{2k}+\sum_{k=2}^qa_k^2\x^2+\z^2\in\psd(A)\cap\kappa[[\x,\y]]=\psd(\{F\geq0\}),\\
(b^2+1)\y^k&\in\psd(\kappa[[\x,\y]])\subset\psd(\{F\geq0\}).
\end{align*}
Thus, $\varphi:=(b^2+1)\y^k+a_1\x\in\psd(\{F\geq0\})\setminus\Sos{A}\subset\psd(A)\setminus\Sos{A}$, because it has order $1$.

Assume next that $k$ is odd (and recall that $k\geq2$). Then
\begin{align*}
(b^2+1)^2\y^{2k+2}&-a_1^2\x^2\y^2=((b^2+1)^2-b)\y^{2k+2}+\sum_{k=2}^qa_k^2\x^2\y^2+(a\x^2+b\y^{2k})\y^2\\
&=((b^2+1)^2-b)\y^{2k+2}+\sum_{k=2}^qa_k^2\x^2\y^2+\z^2\y^2\in\psd(A)\cap\kappa[[\x,\y]]=\psd(\{F\geq0\}),\\
(b^2+1)\y^{k+1}&\in\psd(\kappa[[\x,\y]])\subset\psd(\{F\geq0\}).
\end{align*}
Thus, $\varphi:=(b^2+1)\y^{k+1}+a_1\x\y\in\psd(\{F\geq0\})$. Let us check: $\varphi\not\in\Sos{A}$. 

Otherwise, there exist $h_1,\ldots,h_p,h\in\kappa[[\x,\y,\z]]$ such that
$$
\varphi=(b^2+1)\y^{k+1}+a_1\x\y=\sum_{i=1}^ph_i^2-(\z^2-a\x^2-b\y^{2k})h.
$$
Comparing initial forms, we find $c\in\Sos{\kappa}$ such that the quadratic form $\psi:=a_1\x\y+c\z^2-ca\x^2$ is a sum of squares of linear forms in the variables $\x,\y,\z$ and coefficients in $\kappa$, so $-\frac{a_1^2}{2}(a+c)^2-1=\psi(a_1,-\frac{c^2}{2}-\frac{a^2}{2}-\frac{1}{a_1^2},0)\in\Sos{\kappa}$, which is a contradiction because $\kappa$ is a (formally) real field. Thus, $\varphi\in\psd(\{F\geq0\})\setminus\Sos{A}\subset\psd(A)\setminus\Sos{A}$, as required.
\end{proof}

\begin{lem}[Order $3$ restrictions]\label{ord3} 
If $\omega(F)=3$ and $\psd(A)=\Sos{A}$, then $F$ is right equivalent to one of the following series:
\begin{itemize}
\item[(i)] $\x^2\y$ or $\x^2\y+(-1)^ka\y^k$ where $a\not\in-\Sos{\kappa}$ and $k\geq3$.
\item[(ii)] $\x^3+a\x\y^2+b\y^3$ irreducible.
\item[(iii)] $\x^3+a\y^4$, $\x^3+\x\y^3$ or $\x^3+\y^5$ where $a\not\in\Sos{\kappa}$.
\end{itemize}
\end{lem}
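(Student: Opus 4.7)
My plan is to classify the initial cubic form $F_3\in\kappa[\x,\y]$ of $F$ according to its factorization type over $\kappa$ and $\overline{\kappa}$, use the finite determinacy criteria of Theorem~\ref{fdqd} and Examples~\ref{examples} to normalize $F$ in each case, and impose coefficient restrictions by combining \cite[Lem.6.3]{sch1} with explicit constructions of elements of $\psd(A)\setminus\Sos{A}$ eliminating the cases absent from the list.

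\textbf{Classification of $F_3$.} After a $\kappa$-linear change of $(\x,\y)$, the cubic $F_3\neq 0$ is of one of three types: (A) a triple factor, $F_3\simr c\x^3$; (B) a double factor times an independent simple factor, $F_3\simr c\x^2\y$; or (C) three distinct factors over $\overline{\kappa}$. Case (C) splits further into (C3) $F_3$ irreducible over $\kappa$, (C2) $F_3$ a $\kappa$-rational linear form times an irreducible quadratic, and (C1) $F_3$ a product of three distinct $\kappa$-rational linear forms.

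\textbf{Finite determinacy in the surviving cases.} In case (C) the discriminant of $F_3$ is nonzero, so Example~\ref{examples}(iii) yields $3$-determinacy and a rescaling of $\z$ to absorb the leading coefficient gives $F\simr\x^3+a\x\y^2+b\y^3$; subcase (C3) is then exactly form (ii). In case (B), successive coordinate changes and rescalings (justified by the Implicit Function Theorem~\ref{ift}) reduce $F$ to $\x^2\y+h(\y)$, and Example~\ref{examples}(ii) then gives $F\simr\x^2\y$ or $F\simr\x^2\y+(-1)^ka\y^k$ with $k\geq 3$. In case (A) with $F_3\simr c\x^3$, Examples~\ref{examples}(iv)--(vii) supply the $k$-quasideterminacy of the $E_6,E_7,E_8$-type forms; adapting the normalization strategy of \cite{f2,f4,f5} to the merely formally real setting yields the three forms $\x^3+a\y^4$, $\x^3+\x\y^3$, $\x^3+\y^5$, while all more degenerate perturbations of $\x^3$ are eliminated by constructing an explicit $\varphi\in\psd(A)\setminus\Sos{A}$ in the spirit of Lemma~\ref{genord}.

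\textbf{Elimination of (C1)--(C2) and coefficient restrictions.} When $F_3$ admits a $\kappa$-rational linear factor while not being of type (A) or (B), we normalize this factor to $\x$ and obtain $F=\x Q(\x,\y)+\text{h.o.t.}$ with $Q(0,\y)=e\y^2$, $e\in\kappa\setminus\{0\}$. Following Cases~3--4 of Lemma~\ref{ord2}, we build a low-order $\varphi=(e^2+1)\y^{m+1}+\lambda\x\y\in\psd(\{F\geq 0\})$ using the relation $\z^2=F$, and observe that any identity $\varphi=\sum h_i^2+h(\z^2-F)$ in $A$ forces, upon comparing initial forms, a quadratic identity in $\kappa[\x,\y,\z]$ whose evaluation at a suitable point yields $-1\in\Sos{\kappa}$, contradicting the formal reality of $\kappa$. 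The restriction $a\not\in-\Sos{\kappa}$ of (i) and (iii) is then forced by \cite[Lem.6.3]{sch1}: otherwise $-F$ restricted to an appropriate coordinate axis would be a sum of squares, contradicting the reality of $A$ required by $\psd(A)=\Sos{A}$. The main obstacle is producing obstructions sharp enough in cases (A), (C1), and (C2) to detect the coefficient conditions that become essential when $\kappa$ is not real closed --- these are either vacuous or automatically satisfied in the real closed setting of \cite{f2,f4,f5}, but in the general formally real case they require careful low-order bookkeeping of initial forms, and one must verify that the $\kappa$-rational linear factor in (C1)--(C2) alone suffices to yield the obstruction irrespective of whether the remaining quadratic factor of $F_3$ splits over $\kappa$ or not.
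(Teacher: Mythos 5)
There is a genuine conceptual error in your treatment of cases (C1)–(C2). These cases should \emph{not} be eliminated: after normalizing the $\kappa$-rational linear factor to $\y$ (not $\x$) and completing the square in the quadratic cofactor, you obtain $F_3=\y(\x^2-a\y^2)$ with $a\neq0$, which already has nonvanishing discriminant and is $3$-determined, so $F\simr\x^2\y-a\y^3=\x^2\y+(-1)^3 a\y^3$. This is precisely item (i) of the lemma for $k=3$, \emph{not} an excluded case. Your subcase (C1) (three distinct $\kappa$-rational lines) corresponds to $a$ being a nonzero square, for which $-a\notin\Sos{\kappa}$ holds automatically because $\kappa$ is formally real; such $F$ is \emph{always} in the list (e.g.\ $\kappa=\R$, $F=\y(\x^2-\y^2)$ already appears in Theorem~\ref{list1}). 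Your subcase (C2) (rational line $\times$ irreducible quadratic) lands in item (i) as well, subject only to the restriction $a\notin-\Sos{\kappa}$. Your proposed order-$2$ obstruction $\varphi=(e^2+1)\y^{m+1}+\lambda\x\y$ — which you claim kills both (C1) and (C2) — therefore cannot be a valid psd-but-not-sos element in general, since the corresponding rings do have $\psd(A)=\Sos{A}$ when the sign condition is met.

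The mechanism you invoke for the coefficient restrictions is also off. You attribute the condition $a\notin-\Sos{\kappa}$ in (i) and (iii) to \cite[Lem.6.3]{sch1} via ``$-F$ restricted to a coordinate axis would be a sum of squares''. For $F=\x^2\y-a\y^3$ or $F=\x^3+a\y^4$, the series $-F$ is never psd (because of the sign-indefinite monomial $\x^2\y$ or $\x^3$), so this route fails. The correct and much simpler argument is an \emph{order-one} obstruction: when $-a\in\Sos{\kappa}$ one has $\x^2-a\y^2\in\Sos{\kappa[[\x,\y]]}$, so $\{F\geq0\}=\{\y\geq0\}$ away from the zero set and $\y\in\psd(\{F\geq0\})$; since $\omega(\y)=1$ and $\omega(\z^2-F)=2$, Lemma~\ref{psd3} gives $\y\in\psd(A)\setminus\Sos{A}$. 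Similarly, for $F=\x^3+a\y^4$ with $a\in-\Sos{\kappa}$ one deduces $\x^3=\z^2-a\y^4\in\Sos{A}$, hence $\x\in\psd(A)\setminus\Sos{A}$. Finally, the hand-wave ``adapting the normalization strategy of \cite{f2,f4,f5}'' to case (A) glosses over the genuinely new step that the non-real-closed setting requires: the paper needs Lemma~\ref{clue0} (producing an order-one element $\x+M^2\y^2$ via a carefully chosen $M\in\Sos{\kappa}$) to rule out the degenerate perturbations of $\x^3$; this construction has no analogue over $\R$ and must be supplied explicitly.
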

\begin{proof}[First part of the proof of Lemma \em\ref{ord3}]
After a linear change of coordinates (Tschirnhaus trick) we may assume that the initial form of $F$ is of type $P:=\lambda(\x^3+a\x\y^2+b\y^3)$ for some $\lambda\in\kappa\setminus\{0\}$. After the change of coordinates $(\x,\y,\z)\mapsto(\lambda\x,\lambda\y,\lambda^2\z)$, we may assume $P:=\x^3+a\x\y^2+b\y^3$. If $P$ is reducible, after a linear change of coordinates that only involves the variables $\x,\y$ we can suppose that $P$ is a homogeneous polynomial of one of the following types:
$$
\x^3,\x^2\y,\y(\x^2-a\y^2) 
$$
where $a\neq0$. If $P$ is either irreducible or $P=\x(\x^2+a\y^2)$ with $a\neq0$, then its discriminant is non-zero and $P$ is $3$-determined as we have seen in Example \ref{examples}(iii). Thus, after a change of coordinates we may assume $F=P$. 

If $F=\y(\x^2-a\y^2)$ and $a\in-\Sos{\kappa}$, then $\y\in\psd(\{F\geq0\})\setminus\Sos{A}\subset\psd(A)\setminus\Sos{A}$. Thus, $F=\x^2\y-a\y^3$ where $a\not\in-\Sos{\kappa}$.

Assume next $P:=\x^2\y$, but $F\neq\x^2\y$. Let $s\geq4$ be the degree of the next non-zero homogeneous form of $F$ and set $F:=\x^2\y+a\y^s+b\x\y^{s-1}+\x^2\varphi$ where $a,b\in\kappa$ and $\varphi\in\gtm_2^{s-2}$. After the change of coordinates $(\x,\y)\mapsto(\x-\frac{1}{2}b\y^{s-2},\y-\varphi)$ we may assume $F:=\x^2\y+a\y^s+\psi$ where $\psi\in\gtm^{s+1}$. If $a\neq0$, after an additional change of coordinates we can suppose $F:=\x^2\y+a\y^s$ because $\x^2\y+a\y^s$ is $s$-determined (Example \ref{examples}(ii)). If $a=0$, we iterate the previous process until we find $\ell>s$ such that $F$ is right equivalent to $\x^2\y+a'\y^\ell$ for some $a'\neq0$ or we conclude that $F$ is right equivalent to $\x^2\y$ if such an $\ell$ does not exist.

If $F:=\x^2\y+a\y^{2k+1}$ and $a\in\Sos{\kappa}$, then $\y\in\psd(\{F\geq0\})\setminus\Sos{A}\subset\psd(A)\setminus\Sos{A}$. If $F:=\x^2\y-a\y^{2k}$ and $a\in\Sos{\kappa}$, then $\x^2\y=\z^2+a\y^{2k}$ and $\y\in\psd(\{F\geq0\})\setminus\Sos{A}\subset\psd(A)\setminus\Sos{A}$. 
\end{proof}

Before finishing the proof of Lemma \ref{ord3} we need an intermediate result.

\begin{lem}\label{clue0}
Let $F\in\kappa[[\x,\y]]$ be a series with inital form $\x^3$. Then 
\begin{itemize}
\item[(i)] There exist a unit $U\in\kappa[[\x,\y]]$ with $U(0,0)=1$, a unit $W\in\kappa[[\y]]$ with $W(0)=1$, $b,c\in\kappa$ and $k,\ell\geq0$ such that $F$ is right equivalent to $(\x^3+b\x\y^{k+3}+c\y^{\ell+4}W)U^2$. 
\item[(ii)] If $c\neq0$ and $k\geq\ell+1$, we may assume $b=0$ and $W=1$. 
\item[(iii)] If $\psd(A)=\Sos{A}$, then either $k=0$ or $\ell\leq 1$ and we may assume $U=1$.
\end{itemize} 
\end{lem}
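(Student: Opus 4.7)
I first apply Weierstrass Preparation to $F$ (which is $\x$-regular of order $3$) to write $F = P\cdot V$ with $P=\x^3+a_2(\y)\x^2+a_1(\y)\x+a_0(\y)\in\kappa[[\y]][\x]$ a Weierstrass polynomial and $V\in\kappa[[\x,\y]]$ a unit with $V(0,0)=1$. Since $2\in\kappa^\times$, the Implicit Function Theorem produces a square root $V=U^2$ with $U(0,0)=1$. Next I apply the Tschirnhaus shift $\x\mapsto\x-a_2(\y)/3$ to kill the $\x^2$-coefficient. Since the initial form of $F$ is $\x^3$, the resulting polynomial $\x^3+b_1(\y)\x+b_0(\y)$ satisfies $\omega(b_1)\geq 3$ and $\omega(b_0)\geq 4$. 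Writing $b_1=\y^{k+3}u_1(\y)$ and $b_0=\y^{\ell+4}u_0(\y)$ with $u_0,u_1\in\kappa[[\y]]$ units (or zero), I solve via the Inverse Function Theorem for $v\in\kappa[[\eta]]$ with $v(0)=1$ satisfying $v^{k+3}u_1(\eta v)=u_1(0)$: the normalized derivative at $(1,0)$ is $(k+3)u_1(0)\neq 0$. The substitution $\y\mapsto\eta v(\eta)$ then normalizes the coefficient of $\x$ to the monomial $b\eta^{k+3}$ with $b:=u_1(0)\in\kappa$, while the constant term becomes $c\eta^{\ell+4}W(\eta)$ with $c:=u_0(0)\in\kappa$ and $W$ a unit, $W(0)=1$. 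The factor $U^2$ pulls back to another unit square.

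\textbf{Part (ii).} Since $c\neq 0$ and $k\geq\ell+1$, both $b\x\y^{k+3}$ (of order $k+4\geq\ell+5$) and $c\y^{\ell+4}(W-1)$ (of order $\geq\ell+5$) lie in $\gtm^{\ell+5}$. As $\x^3+c\y^{\ell+4}$ is $(\ell+4)$-determined by Example \ref{examples}(v), the polynomial $\x^3+b\x\y^{k+3}+c\y^{\ell+4}W$ is right-equivalent to $\x^3+c\y^{\ell+4}$ via an automorphism $\sigma$ fixing the origin; pulling back $U^2$ through $\sigma$ yields another unit square.

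\textbf{Part (iii).} The substitution $\z\mapsto U\z$ is a $\kappa$-algebra isomorphism $\kappa[[\x,\y,\z]]/(\z^2-F)\to\kappa[[\x,\y,\z]]/(\z^2-F/U^2)$, allowing the reduction to $U=1$. For the dichotomy $k=0$ or $\ell\leq 1$, I would assume for contradiction that $k\geq 1$ and $\ell\geq 2$, and exhibit $\varphi\in\psd(A)\setminus\Sos{A}$. The strategy is to factor $F$ --- either literally when $c=0$ (as $F=\x(\x^2+b\y^{k+3})$), or over the algebraic closure of $\kappa((\y^*))$ via Newton--Puiseux when $c\neq 0$ --- and locate a real Puiseux root $\xi$ of $F$ (viewed as a cubic in $\x$) with $\y$-order $\geq 2$, whose companion factor is PSD (the discriminant of $F$ with respect to $\x$ is dominated either by $-27c^2W^2\y^{2(\ell+4)}$ when $3(k+3)>2(\ell+4)$, or by $-4b^3\y^{3(k+3)}$ otherwise, and a case-analysis produces the desired real root in the real closure of any ordering of $\kappa$). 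A $\kappa$-polynomial approximation of $\xi$ controlled by Corollary \ref{roots2}, combined with Corollary \ref{psd3}, then yields an element $\varphi\in\psd(\{F\geq 0\})\subset\psd(A)$ whose initial form in $A$ is the linear form $\x$. Since $\kappa$ is formally real and $\z^2-F\in(\x,\y,\z)^2$, any representation $\varphi=\sum h_i^2$ in $A$ forces each $h_i$ to lie in $\gtm_A$ (by evaluating at the residue field), whence $\sum h_i^2\in\gtm_A^2$; but $\varphi\notin\gtm_A^2$, contradiction.

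The main obstacle is step (iii): the Newton-polygon/Puiseux bookkeeping needed to locate a real root of $F$ with $\y$-order $\geq 2$ across the four subcases ($c=0$ vs.\ $c\neq 0$, $k\leq\ell$ vs.\ $k\geq\ell+1$), and the controlled $\kappa$-approximation required to package that root into a genuine PSD series over $\kappa[[\x,\y]]$. Parts (i) and (ii) reduce to standard applications of Weierstrass preparation, the Tschirnhaus trick, the Implicit and Inverse Function Theorems, and the finite-determinacy criterion of Example \ref{examples}(v).
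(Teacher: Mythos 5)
Your parts (i) and (ii) are sound and follow the paper's route closely: Weierstrass preparation plus Tschirnhaus, a substitution $\y\mapsto\eta v(\eta)$ to normalize the coefficient of $\x$ to a monomial, and finite determinacy of $\x^3+c\y^{\ell+4}$ (Example \ref{examples}(v)) for part (ii). (For (ii) the paper instead rewrites the unit directly via $W':=W+\tfrac{b}{c}\x\y^{k-\ell-1}$ and the change $W'\y\mapsto\y$, which avoids invoking determinacy; both are fine.)

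For part (iii), however, what you propose is not a proof but a strategy with a real gap, and the gap is exactly where the work lies. The paper's argument is much more direct and avoids Newton--Puiseux entirely: assuming $k\ge1$ and $\ell\ge2$, it takes $\varphi:=\x+M^2\y^2$ with $M:=2(b^2+1)(c^2+1)\in\Sos{\kappa}$ and shows, by elementary order comparisons using the absolute value $|\cdot|_\alpha$, that any $\alpha$ with $\varphi<_\alpha 0$ satisfies $F<_\alpha 0$: from $-\x>_\alpha M^2\y^2$ one deduces $\x^3<_\alpha -M^6\y^6$ and $|b\x\y^{k+3}|_\alpha<_\alpha (b^2+1)M^2\y^6$ (using $k\ge1$) and $|c\y^{\ell+4}W|_\alpha<_\alpha(c^2+1)\y^6$ (using $\ell\ge2$), and the choice of $M$ forces $M^6-(b^2+1)M^2-(c^2+1)\in\Sos{\kappa}$ so the sum is $<_\alpha 0$. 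This is uniform over all orderings of $\kappa$ and does not require locating any root. Your plan, by contrast, requires finding a real Puiseux root $\xi\in R(\alpha)[[\y^{1/p}]]$ for every $\alpha$, then producing a single $\tilde\xi\in\kappa[[\y]]$ with $\x-\tilde\xi\in\psd(\{F\ge0\})$. But $\xi$ generally has leading coefficient algebraic over $\kappa$ and depends on $\alpha$, so ``approximating $\xi$ well'' is the wrong goal: you actually need $-\tilde\xi$ to \emph{dominate} every $\xi$ in every ordering, not to be close to it. Corollary \ref{roots2} compares roots of nearby polynomials; it does not produce a $\kappa$-rational minorant for a Puiseux root. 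Moreover, your discriminant case-analysis is incomplete: when $3(k+3)<2(\ell+4)$ and $k$ is odd, the sign of $-4b^3\y^{3(k+3)}$ flips with $\y$, and when the discriminant is positive there are three real branches, so identifying which one has a PSD companion (and whether that still gives a PSD element of order $1$) needs a separate argument. All of these difficulties evaporate under the paper's ``overshoot with large $M$'' construction, which is the key idea you are missing.
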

\begin{proof}
(i) By the Weierstrass preparation theorem and the Tschirnhaus trick there exist a Weierstrass polynomial $P:=\x^3+B(\y)x+C(\y)\in\kappa[[\y]][\x]$ and a unit $U_0\in\kappa[[\x,\y]]$ such that $F=PU_0$. As the initial form of $F$ is $\x^3$, we have in addition $U_0(0,0)=1$, $\omega(B)\geq 3$ and $\omega(C)\geq 4$. Let $b,c\in\kappa$, let $k,\ell\geq0$ be integers and $V,W_0\in\kappa[[\y]]$ units such that $V(0)=1$, $W_0(0)=1$ and $F=(\x^3+b(V\y)^{k+3}\x+cW_0\y^{\ell+4})U_0$. After the change of coordinates $V\y\mapsto\y$ we assume $V=1$, so $F=(\x^3+b\y^{k+3}x+cW\y^{\ell+4})U^2$ for units $U\in\kappa[[\x,\y]]$ and $W\in\kappa[[\y]]$ such that $U(0,0)=1$ and $W(0)=1$.

(ii) If $c\neq0$ and $k\geq\ell+1$, then $W':=W+\frac{b}{c}\x\y^{k-\ell-1}$ is a unit. After the change of coordinates $W'\y\mapsto\y$, we get $F=(\x^3+c\y^{\ell+4})U'^2$ for a unit $U'\in\kappa[[\x,\y]]$ with $U'(0,0)=1$.

(iii) After the change of coordinates $\z\rightarrow U\z$ we assume $F=\x^3+b\x\y^{k+3}+c\y^{\ell+4}W$ with the restrictions described in (i) and (ii). We claim: \em If $k\geq 1$ and $\ell\geq 2$, there exists $M\in\Sos{\kappa}$ such that
$\varphi=\x+M^2\y^2\in\psd(\{F\geq0\})\setminus\Sos{A}\subset\psd(A)\setminus\Sos{A}$\em. 

It is enough to find $M\in\Sos{\kappa}$ such that: \em if $\alpha\in\Sper(\kappa[[\x,\y]])$ satisfies $\varphi<_\alpha0$, then $F<_\alpha0$\em. 

If $\y\in\supp(\alpha)$, then $\varphi+\supp(\alpha)=\x+\supp(\alpha)$ and $F+\supp(\alpha)=\x^3+\supp(\alpha)$ and both have the same sign with respect to $\alpha$.

In order to find the suitable $M\in\Sos{\kappa}$, let us make first some computations valid for each $M\in\Sos{\kappa}$. If $\y\not\in\supp(\alpha)$ and $\varphi<_\alpha0$, then $\x<_\alpha-M^2\y^2$, so $-\x>_\alpha M^2\y^2>_\alpha0$. As $k\geq1$, we have $((b^2+1)\y^4+b\y^{k+3})>_\alpha0$. Thus,
$$
-\x b\y^{k+3}>_\alpha-\x(-(b^2+1))\y^4)>_\alpha-(b^2+1)M^2\y^6,
$$
so $\x b\y^{k+3}<_\alpha(b^2+1)M^2\y^6$ and $\x^3<_\alpha-M^6\y^6$. Consequently, as $\ell\geq2$,
$$
F=(\x^3+b\x\y^{k+3}+c\y^{\ell+4}W)<_\alpha(-M^6+(b^2+1)M^2+(c^2+1))\y^6<_\alpha0.
$$
To guarantee the last inequality for each $\alpha\in\Sper(\kappa[[\x,\y]])$ such that $\varphi<_\alpha0$, it is enough to find $M\in\Sos{\kappa}$ such that $M^6-(b^2+1)M^2-(c^2+1)\in\Sos{\kappa}$. We choose $M:=2(b^2+1)(c^2+1)\in\Sos{\kappa}$ and observe that
\begin{multline*}
M^6-(b^2+1)M^2-(c^2+1)\\
=(c^2+1)(16(b^2+1)^3(c^2+1)^4-1)(4(b^2+1)^3(c^2+1)-1)\in\Sos{\kappa},
\end{multline*}
as required.
\end{proof}

We are now ready to finish the proof of Lemma \ref{ord3}.

\begin{proof}[Second part of the proof of Lemma \em\ref{ord3}]
If $\psd(A)=\Sos{A}$, there exist by Lemma \ref{clue0} a unit $W\in\kappa[[\y]]$ with $W(0)=1$ and $b,c\in\kappa$ such that $F$ is one among
\begin{itemize}
\item[(i)] $\x^3+b\x\y^{3}+c\y^{4}W$ where $c\neq0$,
\item[(ii)] $\x^3+b\x\y^{3}+c\y^{\ell+5}W$ where $b\neq0$ and $\ell\geq0$,
\item[(iii)] $\x^3+b\x\y^{4}+c\y^{5}W$ where $c\neq0$.
\end{itemize}
We now approach these three cases:

(i) If $F=\x^3+b\x\y^{3}+c\y^{4}W$ and $c\neq0$, after the change of coordinates
\begin{align*}
\x&\mapsto\x+\frac{b^4}{256c^3}\x^2-\frac{b^3}{24c^2}\x\y+\frac{b^2}{8c}\y^2,\\
\y&\mapsto\y-\frac{b}{4c}\x
\end{align*}
and we may assume $F=\x^3+c\y^4+\psi$ where $\psi\in\gtm_2^5$. As $\x^3+c\y^4$ is by Example \ref{examples}(v) $4$-determined, we can suppose after an additional change of coordinates that $F=\x^3+c\y^4$. If $c\in-\Sos{\kappa}$, we have $\x^3=\z^2-c\y^4\in\Sos{A}$, so $\x\in\psd(\{F\geq0\})\setminus\Sos{A}\subset\psd(A)\setminus\Sos{A}$. Thus, $F=\x^3+c\y^4$ where $c\not\in-\Sos{\kappa}$.

(ii) If $F=\x^3+b\x\y^{3}+c\y^{\ell+5}W$ and $b\neq0$, after the change of coordinates $(\x,\y,\z)\mapsto(b^2\x,b\y,b^3\z)$ we can suppose $b=1$. As $\x^3+\x\y^3$ is by Example \ref{examples}(iv) $5$-determined, we can suppose after an additional change of coordinates that $F=\x^3+\x\y^3$ if $\ell\geq1$. Otherwise $\ell=0$, $c\neq0$ and $F=\x^3+\x\y^3+c\y^5W$. After the change of coordinates
\begin{align*}
\x&\mapsto\x-c\y^2-\frac{1}{3}c^3\x^2-c^2\x\y-\frac{1}{3}c^6\x^3-\frac{5}{3}c^5\x^2\y-2c^4\x\y^2-\frac{5}{9}c^3\y^3,\\
\y&\mapsto\y+c\x-\frac{4}{3}c^2\y^2
\end{align*}
there exists a series $\psi\in\gtm_2^6$ such that $F=\x^3+\x\y^3+\psi$. As $\x^3+\x\y^3$ is $5$-determined, $F$ is right equivalent to $\x^3+\x\y^3$ and we suppose $F=\x^3+\x\y^3$.

(iii) If $F=\x^3+b\x\y^{4}+c\y^{5}W$ and $c\neq0$, after the change of coordinates
\begin{align*}
\x&\mapsto\x-\frac{4b^5}{9375c^4}\x^3+\frac{b^4}{125c^3}\x^2\y-\frac{4b^3}{75c^2}\x\y^2+\frac{2b^2}{15c}\y^3,\\
\y&\mapsto\y-\frac{b}{5c}\x,
\end{align*}
we assume $F=\x^3+c\y^5+\psi$ where $\psi\in\gtm_2^6$. As $\x^3+c\y^5$ is by Example \ref{examples}(v) $5$-determined, there exists an additional change of coordinates after which $F=\x^3+c\y^5$. After the linear change of coordinates $(\x,\y,\z)\mapsto(c^2\x,c\y,c^3\z)$ we get $F=\x^3+\y^5$, as required.
\end{proof}

\subsubsection{The non-principal case}
To finish the proof of Theorem \ref{gp=s} we explore the case of a ring $A:=\kappa[[\x,\y,\z]]/\gta$ of dimension $2$ such that $\gta$ is not a principal ideal, but $A$ has the property $\psd(A)=\Sos{A}$. Before we need the following example.

\begin{example}\label{dim1}
Let $\kappa$ be a (formally) real field and $\varphi\in\kappa[[\x,\y]]$ such that $A:=\kappa[[\x,\y]]/(\varphi)$ has dimension $1$ and satisfies $\psd(A)=\Sos{A}$. Then $A$ is isomorphic to either $\kappa[[\x]]$ or $\kappa[[\x,\y]]/(\x^2-b\y^2)$ where $b\not\in-\Sos{\kappa}$.

First, by Lemma \ref{genlist} $\omega(\varphi)\leq2$. If $\omega(\varphi)=1$, we may assume $\varphi=\y$ and $A\cong\kappa[[\x]]$. Assume next $\omega(\varphi)=2$. By Weierstrass preparation theorem we can suppose after a linear change of coordinates $\varphi=\x^2+2a_1(\y)\x+a_2(\y)\in\kappa[[\x,\y]]$ where $\omega(a_i)\geq i$. We write
$$
\varphi=(\x+a_1)^2+a_2-a_1^2
$$ 
and after the change of coordinates $\x\mapsto\x-a_1$ we assume
$$
\varphi=\x^2-b\y^\ell u^\ell
$$
where $b\in\kappa$, $\ell\geq2$ and $u\in\kappa[[\y]]$ is a unit such that $u(0)=1$. After the change $u\y\mapsto\y$ we may assume $u=1$. By \cite[Lem.6.3]{sch1} the ideal $(\varphi)$ is real radical, so $b\neq0$ and if $\ell$ is even, then $b\not\in-\Sos{\kappa}$. 

If $\ell\geq4$, then by \eqref{b0} $(b^2+1)^2\y^4-b\y^\ell\in\psd(\kappa[[\y]])$. By Lemma \ref{psd3} adapted to two variables $\varphi:=(b^2+1)\y^2+\x\in\psd(A)\setminus\Sos{A}$ (because $\varphi$ has order $1$). Thus, $\ell\leq 3$.

If $\ell=3$, then $\x^2=(b\y)\y^2$, so $b\y\in\psd(A)\setminus\Sos{A}$ (because it has order $1$). Consequently, $\ell=2$ and $A$ is isomorphic to $\kappa[[\x,\y]]/(\x^2-b\y^2)$ where $b\not\in-\Sos{\kappa}$, as required.\qed
\end{example} 

\begin{thm}\label{nonprincipal}
Let $\gta$ be a non-principal ideal of $\kappa[[\x,\y,\z]]$ such that $A:=\kappa[[\x,\y,\z]]/\gta$ has dimension $2$. Then $\psd(A)=\Sos{A}$ if and only if $A$ is isomorphic to $\kappa[[\x,\y,\z]]/(\z\x,\z\y)$.
\end{thm}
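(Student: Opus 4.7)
The plan is to prove both implications separately, with the main content residing in the $\Rightarrow$ direction.

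For the direction $\Leftarrow$, I would show that $A_0 := \kappa[[\x,\y,\z]]/(\z\x,\z\y)$ satisfies $\psd(A_0) = \Sos(A_0)$. The factorization $(\z\x,\z\y) = (\z) \cap (\x,\y)$ gives a fibered-product structure $A_0 \cong \kappa[[\x,\y]] \times_\kappa \kappa[[\z]]$: elements are pairs $(a,b)$ with $a(0,0)=b(0)$, and squares are $(a^2,b^2)$. I would decouple the positivity question to the regular case $\kappa[[\x,\y]]$ (where $\psd=\Sos$ by Scheiderer's theorem quoted before Theorem \ref{list1}) and the trivial one-variable case, then glue sum-of-squares representations across the two components via the fibered product. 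This direction will also follow from Theorem \ref{list2}(2), to be proved in Section \ref{s5}.

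For the substantive direction $\Rightarrow$, I will assume $\gta$ non-principal, $\dim A = 2$, and $\psd(A) = \Sos(A)$. By \cite[Lem.6.3]{sch1}, $A$ is real reduced, so $\gta$ is radical. Lemma \ref{genlist} gives $\omega(\gta)\le 2$, and $\omega(\gta)=1$ combined with $\dim A = 2$ would force $\gta$ to be principal via IFT, so $\omega(\gta)=2$. Decomposing $\gta = \bigcap_i \gtp_i$ into minimal primes, at least one (say $(f_1)$) has height one and is principal (since $\kappa[[\x,\y,\z]]$ is a UFD regular of dimension $3$); non-principality of $\gta$ forces the existence of a minimal prime of height $\ge 2$, whose intersection with the remaining height-$\ge 2$ primes I denote $J$. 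The identity $(f)\cap\gtq = f\gtq$ (valid for $f\notin\gtq$ in a domain) iterates to yield $\gta = (f_1\cdots f_r)\cdot J$. The arithmetic $\omega(\gta) = \sum_{i=1}^{r}\omega(f_i) + \omega(J) = 2$, with $r\ge 1$, $\omega(f_i)\ge 1$, and $\omega(J)\ge 1$, will force $r=1$ and $\omega(f_1)=\omega(J)=1$. A coordinate change brings $f_1$ to $\z$, giving $\gta = \z J$ with $J$ radical of height $2$, $\z\notin J$, and $\omega(J)=1$.

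The crux of the proof is to show that $V(J)$ is an irreducible smooth curve whose tangent at the origin is transversal to $\{\z=0\}$. To rule out the alternatives I will construct explicit elements of $\psd(A)\setminus\Sos(A)$ in the style of Lemmas \ref{ord2} and \ref{ord3}. In the tangential case, where $V(J)$ is smooth but tangent to $\{\z=0\}$ (so after a coordinate change $J = (\x - \alpha(\y), \z - \gamma(\y))$ with $\omega(\alpha), \omega(\gamma)\ge 2$), the witness will be
\[
\varphi := \z - \gamma(\y) + M^{2}\y^{2s},
\]
with $s\ge 1$ and $M\in\Sos(\kappa)$ chosen so that $-\gamma(\y) + M^2\y^{2s}\in\psd(\kappa[[\y]])$ (available via Lemma \ref{genlist0} together with the identity $(c^{2}+1)^{2}-c\in\Sos(\kappa)$). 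Indeed, $\varphi$ restricts to a psd element on $V(\z)\cong\kappa[[\x,\y]]$ and to $M^{2}\y^{2s}$ on $V(J)$, so $\varphi\in\psd(A)$; yet $\varphi\equiv\z\not\equiv 0\pmod{\gtm^{2}}$, and since both squares and elements of $\gta$ lie in $\gtm^{2}$, $\varphi$ cannot be a sum of squares in $A$. The reducible case ($V(J)$ with two coplanar tangent lines, permitted by $\omega(J)=1$) is handled by analogous constructions whose obstruction is an indefinite quadratic initial form modulo $\gtm^{3}+\gta$, modelled on the computation with $\x^{2}-\z^{2}$. Ruling out singular irreducible components of $V(J)$ will rely on finite-determinacy results from Section \ref{s2}. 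This case analysis across the various tangent-cone configurations is the main obstacle of the proof.

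Once $V(J)$ is known to be an irreducible smooth curve with tangent transversal to $\{\z=0\}$, the Implicit Function Theorem \ref{ift} provides $\alpha,\beta\in\kappa[[\z]]$ vanishing at $0$ with $V(J) = \{(\alpha(\z),\beta(\z),\z)\}$. The coordinate change $\x':=\x-\alpha(\z)$, $\y':=\y-\beta(\z)$, $\z':=\z$ is a $\kappa$-automorphism of $\kappa[[\x,\y,\z]]$ that preserves $(\z)$ and sends $V(J)$ to the $\z$-axis, so $J = (\x',\y')$ and $\gta = (\z\x',\z\y')$. Hence $A\cong\kappa[[\x,\y,\z]]/(\z\x,\z\y)$, completing the proof.
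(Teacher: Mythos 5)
Your outline matches the paper's through the reduction $\gta=(\z)\cdot J$ with $\omega(J)=1$, where $J$ is a height-$2$ radical ideal not containing $\z$, and the witness $\z-\gamma(\y)+M^{2}\y^{2s}$ you build in the smooth-tangential case is a valid obstruction. The genuine gap lies in the remaining cases of your analysis of $V(J)$: you defer to ``analogous constructions'' for the reducible case and to ``finite-determinacy results'' for the singular irreducible one, but neither is carried out, and the latter route does not obviously close the argument — finite determinacy classifies normal forms, it does not by itself manufacture an element of $\psd(A)\setminus\Sos{A}$ for each normal form. Since you yourself call this analysis ``the main obstacle,'' the substantive half of the $\Rightarrow$ direction is left unproved.

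The paper sidesteps the case analysis with two moves your proposal misses. First it shows directly that any order-$1$ element $\psi\in J$ has initial form not proportional to $\z$, by producing the order-$1$ witnesses $M(\x^{2}+\y^{2})\pm(\z+g)\in\psd(A)\setminus\Sos{A}$ from the relation $\z\psi=(\z+g)^{2}-g^{2}\in\gta$; this construction requires no smoothness or irreducibility of $V(J)$, so it disposes of \emph{all} your bad tangential configurations at once, including the singular and reducible ones. Second, after a coordinate change achieving $\x\in J$, hence $\gta=(\z\x,\z\phi)$ with $\phi\in\kappa[[\y,\z]]$, it sets $\x=0$: the subring $B:=\kappa[[\y,\z]]/(\z\phi)\hookrightarrow A$ inherits $\psd(B)=\Sos{B}$ (substitute $\x=0$ in a sum-of-squares representation), and the one-dimensional Example \ref{dim1} then forces $\phi\simr\y$, uniformly handling every remaining configuration for $\phi$. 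This reduction to the already-solved one-dimensional case is the idea your proof is missing, and without it (or concrete witnesses in each branch) the argument does not close.
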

\begin{proof}
Assume first that $A=\kappa[[\x,\y,\z]]/(\z\x,\z\y)$ and let $f\in\psd(A)$. If $f$ is a unit, then $f=bU^2$ where $b\in\kappa\setminus\{0\}$ and $U(0,0,0)=1$. If $\gtm$ is the maximal ideal of $A$, then $A/\gtm=\kappa$. As $f\in\psd(A)$, then $b\in\psd(\kappa)=\Sos{\kappa}$, so $f\in\Sos{A}$. As a consequence, we assume that $f$ is not a unit. As $\z\x=0,\z\y=0$, we have $f=f_1(\x,\y)+f_2(\z)$ where $f_1\in\kappa[[\x,\y]]$ and $f_2\in\kappa[[\z]]$ satisfy $f_1(0,0)=0$ and $f_2(0)=0$. Observe that $f_1\in\psd(\kappa[[\x,\y]])$ and $f_2\in\psd(\kappa[[[\z]])$ because $\gta:=(\z\x,\z\y)=(\z)\cap(\x,\y)$, $A/((\z)/\gta)\cong\kappa[[\x,\y]]$ and $A/((\x,\y)/\gta)\cong\kappa[[\z]]$. Thus, there exist $a_i\in\kappa[[\x,\y]]$ and $b_j\in\kappa[\z]$ such that $f=f_1+f_2=\sum_ia_i^2+\sum_jb_j^2\in\Sos{A}$. 

Assume next $\psd(A)=\Sos{A}$. By Lemma \ref{genlist} we know $\omega(\gta)\leq2$. If $\omega(\gta)=1$, we may assume $\z\in\gta$. But this is impossible because $\dim(A)=2$ and $\gta$ is not a principal ideal. Thus, $\omega(\gta)=2$ and by \cite[Lem.6.3]{sch1} $\gta$ is a real radical ideal. Let $\gta=\gtp_1\cap\cdots\cap\gtp_r\cap\gtq_1\cap\cdots\cap\gtq_s$ be the irredundant primary decomposition of $\gta$ and assume that ${\rm ht}(\gtp_i)=1$ for each $i=1,\ldots,r$ and ${\rm ht}(\gtq_j)=2$ for $j=1,\ldots,s$. Let $\gta_1:=\gtp_1\cap\cdots\cap\gtp_r$ and $\gta_2:=\gtq_1\cap\cdots\cap\gtq_s$. As each ideal $\gtp_i$ is prime and has height $1$, there exist $\varphi_i\in\kappa[[\x,\y,\z]]$ irreducible such that $\gtp_i=(\varphi_i)$ for $i=1,\ldots,r$. It holds $\gta_1=(\varphi)$ where $\varphi:=\prod_{i=1}^r\varphi_i$. We claim: $\gta=(\varphi)\cdot\gta_2$.

The inclusion right to left is clear, so let $f\in\gta=\gta_1\cap\gta_2$. As $\gta_1=(\varphi)$, there exists $h\in\kappa[[\x,\y,\z]]$ such that $f=\varphi h$. Let us check: $h\in\gta_2$. 

Otherwise, we may assume $h\not\in\gtq_1$. As $(\prod_{i=1}^r\varphi_i)h=\varphi h\in\gta_2\subset\gtq_1$, we can suppose $\varphi_1\in\gtq_1$, so $\gtp_1\subset\gtq_1$, which is a contradiction because the primary decomposition of the real radical ideal $\gta$ is irredundant. Consequently, $h\in\gta_2$ and $f\in(\varphi)\cdot\gta_2$, so $\gta=(\varphi)\cdot\gta_2$. 

Thus, $2=\omega(\gta)=\omega(\varphi)+\omega(\gta_2)$, so $\omega(\varphi)=\omega(\gta_2)=1$. We may assume $\varphi=\z$ and there exists $\psi\in\gta_2$ such that $\omega(\psi)=1$. We claim: \em The initial form of $\psi$ is not a multiple of $\z$\em.

Otherwise, we assume that the initial form of $\psi$ is equal to $\z$. By Weierstrass preparation theorem we may write $\psi=\z+2g(\x,\y)$ for some $g\in\kappa[[\x,\y]]$ of order $\geq 2$. Observe that 
$$
\z(\z+2g)=(\z+g)^2-g^2\in\gta.
$$ 
As $\omega(g)\geq2$, there exists by Lemma \ref{genlist0} $M\in\Sos{\kappa}$ such that $M^2(\x^2+\y^2)^2-g^2\in\psd(\kappa[[\x,\y]])$. Using the chain of homomorphisms 
$$
\kappa[[\x,\y]]\hookrightarrow\kappa[[\x,\y,\z]]\to A
$$
one deduces (using that $(\z+g)^2-g^2\in\gta$)
\begin{multline*}
(M(\x^2+\y^2)-g-\z)(M(\x^2+\y^2)+g+\z)=M^2(\x^2+\y^2)^2-(g+\z)^2\\
=M^2(\x^2+\y^2)^2-g^2\in\psd(A) 
\end{multline*}
and $(M(\x^2+\y^2)-g-\z)+(M(\x^2+\y^2)+g+\z)=2M(\x^2+\y^2)\in\psd(A)$. Thus, 
$$
M(\x^2+\y^2)-g-\z,M(\x^2+\y^2)+g+\z\in\psd(A)\setminus\Sos{A}
$$ 
because they have order $1$ and $\omega(\gta)=2$. As $\psd(A)=\Sos{A}$, we deduce the initial form of $\psi$ is not a multiple of $\z$.

Thus, we assume $\x\in\gta_2$ and observe that $\kappa[[\x,\y,\z]]/\gta_2\cong\kappa[[\y,\z]]/\gta_2'$ for some ideal $\gta_2'$ of $\kappa[[\y,\z]]$. As ${\rm ht}(\gta_2)=2$, we deduce ${\rm ht}(\gta_2')=1$, so $\gta_2'$ is a principal ideal and there exists $\phi\in\kappa[[\y,\z]]$ such that $\gta_2'=(\phi)$. Note that $\gta_2=(\x,\phi)$ and $\gta=(\z\x,\z\phi)$. Define $\gtb:=(\z\phi)$ and $B:=\kappa[[\y,\z]]/\gtb$. We claim: $\psd(B)=\Sos{B}$.

Consider the inclusion of rings $B:=\kappa[[\y,\z]]/\gtb\hookrightarrow A/\gta$. If $f\in\psd(B)$, then $f\in\psd(A)=\Sos{A}$, so there exist $a_1,\ldots,a_p,b_1,b_2\in\kappa[[\x,\y,\z]]$ such that 
$$
f(\y,\z)=a_1^2(\x,\y,\z)+\cdots+a_p^2(\x,\y,\z)+\z\x b_1(\x,\y,\z)+\z\phi(\y,\z)b_2(\x,\y,\z). 
$$
Substituting $\x=0$ in the previous equality we deduce 
$$
f(\y,\z)=a_1^2(0,\y,\z)+\cdots+a_p^2(0,\y,\z)+\z\phi(\y,\z)b_2(0,\y,\z), 
$$
so $f\in\Sos{B}$. Thus, $\psd(B)=\Sos{B}$. 

By Example \ref{dim1} we deduce that $B$ is isomorphic to $\kappa[[\y,\z]]/(\z^2-b\y^2)$ for some $b\not\in-\Sos{\kappa}$. This means that $\phi\in\kappa[[\y,\z]]$ has order $1$ and its initial form is not a multiple of $\z$. After a change of coordinates, we may assume $\phi=\y$. Consequently, $\gta=(\z\x,\z\y)$, as required.
\end{proof}

\section{Polynomial density and polynomial reduction}\label{s4}

In this section we develop several tools that will be crucial to prove Theorem \ref{list2}. We need to analyze the good properties concerning `polynomial approximation' of the positive definite elements of the rings $\kappa[[\x,\y,\z]]/(\z^2-F)$ where $F\in\kappa[[\x,\y]]$ and $\kappa$ is a (formally) real field.

\subsection{Positive definite elements.}\label{pde}
Let $F\in\kappa[[\x,\y]]$ and $A:=\kappa[[\x,\y,\z]]/(\z^2-F)$. Denote 
\begin{align*}
\psd^+(\{F\geq0\}):=&\{f\in\kappa[[\x,\y]]:\ f\in\psd(\{F\geq0\}),\ f>_\alpha0\\ &\hspace{4.9cm}\forall\alpha\in\Sper(\kappa[[\x,\y]]),\ \supp(\alpha)\neq\gtm_2,\ F\geq_\alpha0\},\\
\psd^+(A):=&\{f+\z g\in A:\ f+\z g\in\psd(A),\ f+\z g>_\beta0\ \forall\beta\in\Sper(A), \supp(\beta)\neq\gtm_A\},\\
\psd^{\oplus}(A):=&\{f+\z g\in A:\ f\in\psd^+(\{F\geq0\}),\ f^2-Fg^2\in\psd^+(\kappa[[\x,\y]])\}.
\end{align*}

The following result allows us to construct elements of $\psd^{\oplus}(A)$ from elements of $\psd(A)$ that are very similar to the original ones (and as close as needed in the $\gtm_2$-adic topology).

\begin{lem}[Construction of positive semidefinite elements]\label{pd+}
We have:
\begin{itemize}
\item[(i)] $\psd^{\oplus}(A)\subset\psd^+(A)$.
\item[(ii)] Let $f+\z g\in\psd(A)$ and let $f_i\in\kappa[[\x,\y]]$ be such that $\sqrt[r]{(f+\sum_{i=1}^pf_i^2,g)}=\gtm_2$ and $\sum_{i=1}^pf_i^2>_\beta0$ for those $\beta\in\Sper(\kappa[[\x,\y]])$ satisfying: $\supp(\beta)\neq\gtm_2$, $F\geq_\beta0$ and either $f\in\supp(\beta)$ or $f^2-Fg^2\in\supp(\beta)$. Then $f+\sum_{i=1}^pf_i^2+\z g\in\psd^{\oplus}(A)$.
\item[(iii)] Let $f+\z g\in\psd(A)$ be such that $f(\x,0)\neq0$, $f^2(\x,0)-F(\x,0)g^2(\x,0)\neq0$ and $g\neq0$. For each $M\in\kappa\setminus\{0\}$ there exists a finite set $S\subset\N$ such that $f+M^2\y^{2n}+\z g\in\psd^{\oplus}(A)$ for each $n\in\N\setminus S$.
\item[(iv)] Let $f+\z g\in\psd(A)$ be such that $g\neq0$. Fix $M\in\kappa\setminus\{0\}$ and let $h_n$ be either $M^2(\x^2+\y^2)^n$ or $M^2(\x^{2n}+\y^{2n})$. There exists a finite set $S\subset\N$ such that $f+h_n+\z g\in\psd^{\oplus}(A)$ for each $n\in\N\setminus S$.
\item[(v)] If $f\in\psd(\{F\geq0\})$, then $f+\z f=(1+\z)f\in\psd(A)$.
\end{itemize}
\end{lem}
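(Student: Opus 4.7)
The plan is to work throughout via Corollary \ref{psd3}, which identifies membership in $\psd(A)$ with the pair of conditions $f \in \psd(\{F \geq 0\})$ and $f^2 - F g^2 \in \psd(\kappa[[\x,\y]])$, and to control strict positivity on $\Sper(A)$ by contracting to $\Sper(\kappa[[\x,\y]])$ via Lemma \ref{cluepsd}. For part (i), given $f + \z g \in \psd^{\oplus}(A)$ and a prime cone $\beta \in \Sper(A)$ with $\supp(\beta) \neq \gtm_A$, Lemma \ref{cluepsd} tells me the contracted cone $\alpha := \Sper({\tt i})(\beta)$ satisfies $\supp(\alpha) \neq \gtm_2$, so $f >_\alpha 0$ and $f^2 - F g^2 >_\alpha 0$, and hence the same holds under $\leq_\beta$. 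The algebraic identities $(f + \z g) + (f - \z g) = 2f$ and $(f + \z g)(f - \z g) = f^2 - F g^2$, together with positivity of the sum and product, force $f + \z g >_\beta 0$.

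Part (ii) is the technical heart. Setting $\tilde f := f + \sum f_i^2$, I expand
\[
\tilde f^2 - F g^2 = (f^2 - F g^2) + 2 f \sum f_i^2 + \Bigl(\sum f_i^2\Bigr)^2
\]
and verify the two conditions in the definition of $\psd^{\oplus}(A)$ by case analysis on a $\beta \in \Sper(\kappa[[\x,\y]])$ with $\supp(\beta) \neq \gtm_2$. For $\tilde f$ (assuming $F \geq_\beta 0$): either $f >_\beta 0$, or $f \in \supp(\beta)$ and then the hypothesis on $\sum f_i^2$ gives strict positivity. For $\tilde f^2 - F g^2$ I split on the sign of $F$: when $F <_\beta 0$, the expression $\tilde f^2 + (-F) g^2$ can only vanish if both $\tilde f, g \in \supp(\beta)$, which the real-radical hypothesis forbids; when $F \geq_\beta 0$, each summand above is nonnegative, and either $f^2 - F g^2 \notin \supp(\beta)$ contributes directly, or the hypothesis delivers $\sum f_i^2 >_\beta 0$ so that $\bigl(\sum f_i^2\bigr)^2 > 0$.

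For parts (iii) and (iv) I invoke (ii) with $f_1 := M \y^n$ or, respectively, with $h_n$ decomposed as a sum of squares of monomials of degree $n$ (using the binomial expansion of $(\x^2 + \y^2)^n$). Condition (b) of (ii) is automatic in each case: in (iii) it can fail only on cones supported at $(\y)$, and the hypotheses $f(\x,0) \neq 0$ and $f^2(\x,0) - F(\x,0) g^2(\x,0) \neq 0$ rule out the bracket constraint there; in (iv) the perturbation $h_n$ vanishes only at $\gtm_2$. For condition (a), Weierstrass preparation shows that $g$ has only finitely many irreducible factors, and for each real factor $P$ with Puiseux root $\zeta \in R[[\y^{1/p}]]$, the constraint $P \mid f + h_n$ amounts to $f(\zeta, \y) = -h_n(\zeta, \y)$ in a Puiseux ring over a real closure; since $\zeta^2 + \y^2 \neq 0$ over $R$, the order $\omega(h_n(\zeta, \y))$ grows without bound in $n$, while $f(\zeta, \y)$ is fixed, so the equation holds for at most one $n$ per factor and only finitely many values are excluded.

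Part (v) is immediate from Corollary \ref{psd3} once I observe that $F \in \gtm_2$ implies $1 - F \equiv 1 \pmod{\gtm_2}$, which is therefore a square $U^2 \in \kappa[[\x,\y]]$ by the Implicit Function Theorem; hence $f^2(1-F) = (fU)^2 \in \Sos{\kappa[[\x,\y]]}$. The main obstacle is the careful bookkeeping in (ii) at the borderline case where $F \geq 0$ but $f^2 - F g^2$ vanishes on a non-maximal real prime cone — precisely the situation the auxiliary hypothesis on $\sum f_i^2$ is designed to rescue — and the verification, via Puiseux expansions, that the concrete choices of $f_i$ in (iii) and (iv) satisfy both the real-radical condition and the strict positivity requirement for all but finitely many $n$.
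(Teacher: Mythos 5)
Your proposal is correct and essentially reproduces the paper's argument: the sum-and-product deduction through Lemma~\ref{cluepsd} for (i), the identical expansion of $\tilde f^2 - F g^2$ with the same two-case analysis on $\sign_\beta(F)$ for (ii), reduction to (ii) with the finite exceptional set controlled by Puiseux roots of $g$ for (iii)--(iv), and a direct verification through Corollary~\ref{psd3} for (v). The only cosmetic divergence is in (v), where you check $f^2(1-F)=(fU)^2\in\Sos{\kappa[[\x,\y]]}$ using that the unit $1-F$ is a square, rather than citing that $1+\z$ is a positive unit of $A$; these amount to the same observation.
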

\begin{proof}
(i) Assume $f\in\psd^+(\{F\geq0\})$ and $f^2-Fg^2\in\psd^+(\kappa[[\x,\y]])$. By Lemma \ref{cluepsd} we have 
\begin{align*}
&2f\in\psd^+(\{F\geq0\})\subset\psd^+(A),\\
&f^2-Fg^2\in\psd^+(\kappa[[\x,\y]])\subset\psd^+(A).
\end{align*} 
This means $(f+\z g)+(f-\z g)=2f\in\psd^+(A)$ and $(f+\z g)(f-\z g)=f^2-Fg^2\in\psd^+(A)$, so both $f+\z g,f-\z g\in\psd^+(A)$.

(ii) As $f\in\psd(\{F\geq0\})$ and $\sum_{i=1}^pf_i^2>_\beta0$ for those $\beta\in\Sper(\kappa[[\x,\y]])$ satisfying $\supp(\beta)\neq\gtm_2$, $f\in\supp(\beta)$ and $F\geq_\beta0$, we deduce $f+\sum_{i=1}^pf_i^2\in\psd^+(\{F\geq0\})$.

We claim: $h:=(f+\sum_{i=1}^pf_i^2)^2-Fg^2\in\psd^+(\kappa[[\x,\y]])$.

Let $\beta\in\Sper(\kappa[[\x,\y]])$ be such that $\supp(\beta)\neq\gtm_2$. We distinguish two cases:

\noindent{\sc Case 1.} If $F\geq_\beta0$, then $f\geq_\beta0$. As $f^2-Fg^2\in\psd(\kappa[[\x,\y]])$, we obtain $f^2-Fg^2\geq_\beta0$. Consequently,
$$
\Big(f+\sum_{i=1}^pf_i^2\Big)^2-Fg^2=(f^2-Fg^2)+2f\Big(\sum_{i=1}^pf_i^2\Big)+\Big(\sum_{i=1}^pf_i^2\Big)^2\geq_\beta0.
$$
If $(f+\sum_{i=1}^pf_i^2)^2-Fg^2\in\supp(\beta)$, then $f^2-Fg^2\in\supp(\beta)$ and $\sum_{i=1}^pf_i^2\in\supp(\beta)$, which contradicts the hypothesis. Thus, $(f+\sum_{i=1}^pf_i^2)^2-Fg^2>_\beta0$.

\noindent{\sc Case 2.} If $F<_\beta0$, then $(f+\sum_{i=1}^pf_i^2)^2-Fg^2\geq_\beta0$ and $(f+\sum_{i=1}^pf_i^2\Big)^2-Fg^2\in\supp(\beta)$ if and only if $f+\sum_{i=1}^pf_i^2,g\in\supp(\beta)$. As $\supp(\beta)$ is a real prime ideal, 
$$
\gtm_2=\sqrt[r]{\Big(f+\sum_{i=1}^pf_i^2,g\Big)}\subset\supp(\beta)\neq\gtm_2,
$$
which is a contradiction. Thus, $(f+\sum_{i=1}^pf_i^2)^2-Fg^2>_\beta0$.

Consequently, $(f+\sum_{i=1}^pf_i^2)^2-Fg^2\in\psd^+(\kappa[[\x,\y]])$ and $f+\sum_{i=1}^pf_i^2+\z g\in\psd^{\oplus}(A)$.

(iii) By (ii) it is enough to prove that: 
\begin{itemize}
\item[(1)] There exists $S\subset\N$ finite such that $\sqrt[r]{(f+M^2\y^{2n},g)}=\gtm_2$ for each $n\in\N\setminus S$. 
\item[(2)] $\y^{2n}>_\beta0$ for those $\beta\in\Sper(\kappa[[\x,\y]])$ satisfying: $\supp(\beta)\neq\gtm_2$, $F\geq_\beta0$ and either $f\in\supp(\beta)$ or $f^2-Fg^2\in\supp(\beta)$.
\end{itemize}

As $g\neq0$, if $\gta:=\sqrt[r]{(f+M^2\y^{2n},g)}\subsetneq\gtm_2$, the associated real prime ideals $\gtp_i$ of $\gta$ are prime ideals of height $1$. As $f(\x,0)\neq0$, then $\gtp_i\neq(\y)$. As $\gtp_1$ is a real prime ideal, there exists a prime cone $\beta\in\Sper(\kappa[[\x,\y]])$ such that $\supp(\beta)=\gtp_1$. By \cite[Prop.II.2.4]{abr} there exists $\alpha\in\Sper(\kappa)$ such that $\beta\to\alpha$. Let $R(\alpha)$ be the real closure of $(\kappa,\leq_\alpha)$ and $\ol{\kappa}:=R(\alpha)[\sqrt{-1}]$ the algebraic closure of $\kappa$. By Lemma \ref{dimen1} there exists a homomorphism $\phi:\kappa[[\x,\y]]\to R(\alpha)[[\y]]$ such that $\phi(\y)=\y^q$ for some $q\geq1$ and $h\geq_\beta0$ if and only if $\phi(h)\geq0$ (for $\x>0$). As $\gta\subset\supp(\beta)$, we have $\phi(f+M^2\y^{2n})=0$ and $\phi(g)=0$. Define $\zeta:=\phi(\y)\in R(\alpha)[[\y]]$. Observe that $\phi(f+M^2\y^{2n})=(f+M^2\y^{2n})(\zeta,\y^q)=0$ and $\phi(g)=g(\zeta,\y^q)=0$, that is, $\zeta(\y^{1/q})\in\ol{\kappa}((\y^*))$ is a common root of $f+M^2\y^{2n}$ and $g$ in $\ol{\kappa}((\y^*))$. As $g\neq0$, the series $g$ has finitely many roots in $\ol{\kappa}((\x^*))$ and we denote $S:=\{\omega(f(\eta,\y))/2:\ \eta\in\ol{\kappa}((\y^*)),\ g(\eta,\y)=0\}$. 

If $n\in\N\setminus S$, the series $f+M^2\y^{2n}$ and $g$ have no common root in $\ol{\kappa}((\y^*))$, so $\gta=\gtm_2$ (that is, (1) holds).

Let $\beta\in\Sper(\kappa[[\x,\y]])$ be a prime cone such that $\supp(\beta)\neq\gtm_2$, $F\geq_\beta0$ and either $f\in\supp(\beta)$ or $f^2-Fg^2\in\supp(\beta)$. If $\y^{2n}\in\supp(\beta)$, then $\y\in\supp(\beta)\neq\gtm_2$, so $\supp(\beta)=(\y)$. As $f(\x,0)\neq0$ and $f^2(\x,0)-F(\x,0)g^2(\x,0)\neq0$, it holds $f\not\in\supp(\beta)$ and $f^2-Fg^2\not\in\supp(\beta)$, which is a contradiction. Consequently, $\y^{2n}>_\beta0$ (that is, (2) holds).

(iv) As $h_n>_\beta0$ for those $\beta\in\Sper(\kappa[[\x,\y]])$ with $\supp(\beta)\neq\gtm_2$, by (ii) it is enough to prove: 
\begin{itemize}
\item[(1)] There exists $S\subset\N$ finite such that $\sqrt[r]{(f+h_n,g)}=\gtm_2$ for each $n\in\N\setminus S$.
\end{itemize}

The proof of (1) is analogous to the one of (iii.1) and we leave the details to the reader. 

(v) If $f\in\psd(\{F\geq0\})$, then $f+\z f\in\psd(A)$ because $(1+\z)$ is a positive unit of $A$.
\end{proof}

\subsection{Polynomial density}
As an application of Lemma \ref{orderings} we prove the following result that concerns `polynomial approximation' of certain distinguished elements of $\psd^{\oplus}(A)$.

\begin{lem}[Local polynomial density]\label{density0}
Suppose that $\psd(\{F\geq0\})\neq\varnothing$. We have:
\begin{itemize}
\item[(i)] Let $f+\z g\in A$ be such that $f(\x,0)\neq0$, $f^2(\x,0)-F(\x,0)g^2(\x,0)\neq0$ and $g\neq0$. Assume $f\in\psd(\{F\geq0\})$ and let $n\geq1$ be such that $(f+M^2\y^{2n})^2-Fg^2\in\psd^+(\kappa[[\x,\y]])$. There exists $r\geq1$ such that if $f_1,g_1\in\kappa[[\x,\y]]$ satisfy $f-f_1,g-g_1\in\gtm_2^r$, then $f_1+M^2\y^{2n}+\z g_1\in\psd^{\oplus}(A)$. 
\item[(ii)] Let $f+\z g\in A$ be such that $g\neq0$. Assume $f\in\psd(\{F\geq0\})$ and let $n\geq1$ be such that $(f+M^2(\x^2+\y^2)^n)^2-Fg^2\in\psd^+(\kappa[[\x,\y]])$. There exists $r\geq1$ such that if $f_1,g_1\in\kappa[[\x,\y]]$ satisfy $f-f_1,g-g_1\in\gtm_2^r$, then $f_1+M^2(\x^2+\y^2)^n+\z g_1\in\psd^{\oplus}(A)$.
\end{itemize}
\end{lem}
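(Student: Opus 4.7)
The plan is to verify the two defining properties of $\psd^{\oplus}(A)$ separately: namely that $f_1+h_n\in\psd^+(\{F\geq0\})$ and that $(f_1+h_n)^2-Fg_1^2\in\psd^+(\kappa[[\x,\y]])$, where $h_n:=M^2\y^{2n}$ in case (i) and $h_n:=M^2(\x^2+\y^2)^n$ in case (ii). Each property will be handled by one of the two tools just established, with the approximation $f-f_1,g-g_1\in\gtm_2^r$ controlling the perturbation.

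First I would set $C:=\{\beta\in\Sper(\kappa[[\x,\y]]):\ F\geq_\beta0\}$. Since $f\in\psd(\{F\geq0\})$ we have $f\geq_\beta0$ for every $\beta\in C$, so the hypotheses of Corollary \ref{orderings} are met. In case (i) apply Corollary \ref{orderings}(ii) (for which the hypothesis $f(\x,0)\neq0$ is precisely what is needed) and in case (ii) apply Corollary \ref{orderings}(i): in either case one gets $r_1\geq1$ such that if $f-f_1\in\gtm_2^{r_1}$, then $f_1+h_n>_\beta0$ for every $\beta\in C$ of height $1$. Strict positivity then propagates to every $\beta\in C$ with $\supp(\beta)\neq\gtm_2$ by the specialization argument used in the proof of Corollary \ref{psdkxy} via \cite[Prop.VII.5.1]{abr} (each height-$0$ prime cone specializes to a height-$1$ prime cone, and specialization preserves strict positivity). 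On prime cones $\beta$ with $\supp(\beta)=\gtm_2$ the value of $f_1+h_n$ reduces to $f(0,0)$ (provided $r\geq1$), and $f\in\psd(\{F\geq0\})$ forces $f(0,0)\geq_\beta0$; hence $f_1+h_n\in\psd^+(\{F\geq0\})$.

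For the second property I would exploit the hypothesis $(f+h_n)^2-Fg^2\in\psd^+(\kappa[[\x,\y]])$ and apply Corollary \ref{psdkxy}: there exists $r_2\geq1$ such that any series differing from $(f+h_n)^2-Fg^2$ by an element of $\gtm_2^{r_2}$ is again in $\psd^+(\kappa[[\x,\y]])$. The explicit identity
$$
\big((f_1+h_n)^2-Fg_1^2\big)-\big((f+h_n)^2-Fg^2\big)=(f_1-f)(f_1+f+2h_n)-F(g_1-g)(g_1+g)
$$
shows that this difference lies in $\gtm_2^r$ whenever $f-f_1,g-g_1\in\gtm_2^r$ (since $F\in\gtm_2$). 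Taking $r:=\max\{r_1,r_2\}$ (absorbing a small constant to accommodate orders of the bounded factors if needed) yields $(f_1+h_n)^2-Fg_1^2\in\psd^+(\kappa[[\x,\y]])$ and, together with the first step, completes the proof.

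The main technical delicacy is not any single calculation but rather the bookkeeping: one must ensure simultaneously that the approximation $r$ is large enough for Corollary \ref{orderings} to guarantee strictness on all relevant height-$1$ prime cones and large enough for Corollary \ref{psdkxy} to preserve $\psd^+$, and then verify that the strict positivity genuinely transfers to the full spectrum minus $\{\gtm_2\}$, which is where the specialization machinery of \cite{abr} enters. The two cases (i) and (ii) run in complete parallel, differing only in which branch of Corollary \ref{orderings} is invoked.
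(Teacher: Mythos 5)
Your proof is correct and follows essentially the same route as the paper: verifying the two defining conditions of $\psd^{\oplus}(A)$ by applying Corollary \ref{orderings} (the right branch depending on the case) for the first condition and Corollary \ref{psdkxy} for the second, with the explicit difference identity controlling the perturbation. Your write-up is, if anything, somewhat more explicit than the paper's in spelling out how the two corollaries are invoked and how strict positivity propagates from height-$1$ to height-$0$ prime cones.
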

\begin{proof}
We prove both statements simultaneously. Let $\beta\in\Sper(\kappa[[\x,\y]])$ be such that $F\geq_\beta0$. By \cite[Prop.II.2.4]{abr} there exists $\alpha\in\Sper(\kappa)$ such that $\beta\to\alpha$. Let $R(\alpha)$ be the real closure of $(\kappa,\leq_\alpha)$. If $\supp(\beta)=\gtm_2$, then $\alpha=\beta$ and for each $r\geq1$ we have 
\begin{align*}
f+M^2\y^{2n}+\supp(\beta)&=f_1+M^2\y^{2n}+\supp(\beta),\\
(f+M^2\y^{2n})^2-Fg^2+\supp(\beta)&=(f_1+M^2\y^{2n})^2-Fg^2+\supp(\beta),\\
f+M^2(\x^2+\y^2)^n+\supp(\beta)&=f_1+M^2\y^{2n}+\supp(\beta),\\
(f+M^2(\x^2+\y^2)^n)^2-Fg^2+\supp(\beta)&=(f_1+M^2(\x^2+\y^2)^n)^2-Fg^2+\supp(\beta)
\end{align*}
if $f_1,g_1\in\kappa[[\x,\y]]$ satisfy $f-f_1,g-g_1\in\gtm_2^r$.

Assume $\supp(\beta)\neq\gtm_2$. If $\supp(\beta)=0$, then by \cite[Prop.VII.5.1]{abr} there exists $\beta_1\in\Sper(\kappa[[\x,\y]])$ such that $\beta\to\beta_1\to\alpha$ and $\supp(\beta_1)$ is a real prime ideal of height $1$, so we may assume ${\rm ht}(\supp(\beta))=1$. Observe that if $r\geq1$ and $f-f_1,g-g_1\in\gtm_2^r$, then
\begin{multline*}
(f+M^2\y^{2n})^2-Fg^2-((f_1+M^2\y^{2n})^2-Fg_1^2)\\
=(f-f_1)(f+f_1)+2M^2\y^{2n}(f-f_1)-F(g-g_1)(g+g_1)\in\gtm_2^r.
\end{multline*}
\begin{multline*}
(f+M^2(\x^2+\y^2)^n)^2-Fg^2-((f_1+M^2(\x^2+\y^2)^n)^2-Fg_1^2)\\
=(f-f_1)(f+f_1)+2M^2(\x^2+\y^2)^n(f-f_1)-F(g-g_1)(g+g_1)\in\gtm_2^r.
\end{multline*}
By Lemmas \ref{orderings} and \ref{psdkxy} we find $r\geq1$ such that if $f-f_1,g-g_1 \in\gtm_2^r$, then the statements hold, as required.
\end{proof}

In the proof of Theorem \ref{order2} (which corresponds to a part of Theorem \ref{list2}) we will make use of quadratic transformations. To take advantage of them we need a global `polynomial density' result (Lemma \ref{density}) and a `polynomial reduction' result (Corollary \ref{polred}).

\begin{lem}[Global polynomial density]\label{density}
Let $F\in\kappa[\x,\y]$ be such that $F(0,0)=0$, let $\gtm:=(\x,\y,\z)\kappa[\x,\y,\z]$ and $B:=\kappa[\x,\y,\z]/(\z^2-F)$. Let $\widehat{B}:=\kappa[[\x,\y,\z]]/(\z^2-F)$ be its $\gtm$-adic completion and $f+\z g\in\psd(\widehat{B})$. For each $n\geq1$ there exist polynomials $f_n,g_n\in\kappa[\x,\y]$ such that $f_n+\z g_n\in\psd(B)$ and $\omega(f-f_n),\omega(g-g_n)\geq n$. 
\end{lem}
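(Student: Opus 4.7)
The plan is to use Corollary \ref{psd3} to reduce the conclusion to two polynomial positivity conditions, and then to realize each through Rotthaus's Artin approximation applied to sums-of-squares identities obtained from Scheiderer's theorem on the regular $2$-dimensional ring $\kappa[[\x,\y]]$. Corollary \ref{psd3}, whose proof uses only the involution $\z\mapsto-\z$ and Lemma \ref{cluepsd}, applies verbatim when $\widehat{B}$ is replaced by $B$ (and $\kappa[[\x,\y]]$ by $\kappa[\x,\y]$). Consequently, $f_n+\z g_n\in\psd(B)$ is equivalent to the conjunction of (a) $f_n\in\psd(\{F\geq0\})$ in $\kappa[\x,\y]$ and (b) $f_n^2-Fg_n^2\in\psd(\kappa[\x,\y])$; the hypothesis $f+\z g\in\psd(\widehat{B})$ gives the analogous pair of conditions in $\kappa[[\x,\y]]$.

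For condition (b), Scheiderer's theorem $\psd(\kappa[[\x,\y]])=\Sos{\kappa[[\x,\y]]}$ provides a formal representation $f^2-Fg^2=\sum_{i=1}^m h_i^2$ with $h_i\in\kappa[[\x,\y]]$. I would then apply Rotthaus's Artin approximation to the polynomial system $X^2-FY^2=\sum_{i=1}^m Z_i^2$ in the unknowns $X,Y,Z_1,\ldots,Z_m$, whose formal solution is $(f,g,h_1,\ldots,h_m)$: for each $n\geq1$ this produces polynomial solutions $(f_n,g_n,h_1^{(n)},\ldots,h_m^{(n)})$ with each component congruent to the formal one modulo $\gtm_2^n$. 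The resulting identity $f_n^2-Fg_n^2=\sum_i(h_i^{(n)})^2\in\Sos{\kappa[\x,\y]}\subseteq\psd(\kappa[\x,\y])$ secures (b).

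For condition (a), note that (b) already implies $(f_n-\z g_n)(f_n+\z g_n)\in\Sos{B}$, so at every prime cone of $B$ the two factors $f_n\pm\z g_n$ share a common sign; hence $f_n+\z g_n\in\psd(B)$ reduces to $f_n\in\psd(B)\cap\kappa[\x,\y]$, which the polynomial version of Lemma \ref{cluepsd} identifies with $\psd(\{F\geq0\})$ in $\kappa[\x,\y]$. To arrange (a), I plan to modify $f_n$ within its approximation class by adding a polynomial $H\in\gtm_2^n$ drawn from the preordering $\Sos{\kappa[\x,\y]}+F\cdot\Sos{\kappa[\x,\y]}\subseteq\psd(\{F\geq0\})$, chosen to cancel any residual negativity of $f_n$ on $\{F\geq0\}$. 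The existence of a suitable $H$ would follow by Artin-approximating a formal preordering representation of $f$ in $\Sos{\kappa[[\x,\y]]}+F\cdot\Sos{\kappa[[\x,\y]]}$. The main obstacle is precisely producing such a formal representation, i.e., the saturation of the formal preordering generated by $F$ in $\kappa[[\x,\y]]$; this is a delicate issue that will presumably require exploiting the particular structure of $F$ relevant for the later applications of the lemma (e.g., in the quadratic-transformation arguments of Section \ref{s5}).
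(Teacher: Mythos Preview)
Your proposal has a genuine gap, and you essentially acknowledge it yourself. The step you label (a) requires showing that $f$ lies in the preordering $\Sos{\kappa[[\x,\y]]}+F\cdot\Sos{\kappa[[\x,\y]]}$; but that is precisely the \emph{saturation} of ${\rm PO}(F)$ in $\kappa[[\x,\y]]$, which the paper only establishes in Section~\ref{s6} (Corollaries~\ref{ord2p} and~\ref{ord3p}) \emph{as a consequence} of Theorem~\ref{list2}, which in turn relies on Corollary~\ref{polred} and hence on the present lemma. So for a general $F\in\kappa[\x,\y]$ with $F(0,0)=0$, your proposed route is circular. Restricting to ``the particular structure of $F$'' does not help: the lemma is stated and used for arbitrary such $F$.

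There is a second, more technical problem with step (b): Rotthaus's approximation theorem produces solutions in an excellent \emph{henselian} local ring from solutions in its completion. It does not produce \emph{polynomial} solutions in $\kappa[\x,\y]$ from solutions in $\kappa[[\x,\y]]$; the localization $\kappa[\x,\y]_{(\x,\y)}$ is not henselian. Truncating the formal identity $f^2-Fg^2=\sum h_i^2$ only gives congruence modulo a high power of $\gtm_2$, not an exact sum-of-squares identity in $\kappa[\x,\y]$.

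The paper's argument is entirely different and avoids both issues. It first perturbs $f+\z g$ into $\psd^\oplus(\widehat{B})$ using Lemma~\ref{pd+}, then invokes the stability results of \S\ref{s2} (Corollaries~\ref{orderings} and~\ref{psdkxy}, packaged as Lemma~\ref{density0}) to see that sufficiently close polynomial truncations $f_n',g_n'$ remain in $\psd^\oplus(\widehat{B})$. The passage from $\psd(\widehat{B})$ to $\psd(B)$ is then handled by a \emph{compactness argument on $\Sper(B)$}: prime cones $\beta$ on which $f_n'+\z g_n'$ is negative must be ``far from the origin'' (i.e.\ $\x^2+\y^2+\z^2>_\beta\veps^4$ for some $\veps$), and one adds a correction term $\sum_i M_{\beta_i}^2(\x^2+\y^2+F^2)^d$ of high order to kill negativity on finitely many covering open sets. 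No Artin approximation and no preordering saturation are used.
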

\begin{proof}
By Lemmas \ref{pd+} and \ref{density0} there exist polynomials $f_n',g_n'\in\kappa[\x,\y]$ such that $f_n'+\z g_n'\in\psd^\oplus(\widehat{B})$ and $\omega(f-f_n'),\omega(g-g_n')\geq n$. Let us construct from $f_n',g_n'$ the desired polynomials $f_n,g_n\in\kappa[\x,\y]$ in the statement. Define ${\mathcal U}:=\{\beta\in\Sper(B):\ f_n'+\z g_n'<_\beta0\}$. If ${\mathcal U}=\varnothing$, we choose $f_n:=f_n'$ and $g_n:=g_n'$. Assume ${\mathcal U}\neq\varnothing$ and pick $\beta\in{\mathcal U}$. We claim: \em There exists $\veps_\beta\in\kappa\setminus\{0\}$ such that $\veps_\beta^4<_\beta\x^2+\y^2+\z^2$\em.

Suppose that $\x^2+\y^2+\z^2<_\beta\veps^4$ for each $\veps\in\kappa\setminus\{0\}$. Then $\x^2,\y^2,\z^2<_\beta\veps^4$ for each $\veps\in\kappa\setminus\{0\}$. Consider the \em absolute value \em associated to $\beta$:
$$
|\,.\,|_{\beta}:B\to B,\ a\mapsto\begin{cases}
a&\text{if $a\geq_\beta0$,}\\
-a&\text{if $a<_\beta0$.}
\end{cases}
$$
We have $|\x|_\beta<_\beta\veps^2$, $|\y|_\beta<_\beta\veps^2$ and $|\z|_\beta<_\beta\veps^2$ for each $\veps\in\kappa\setminus\{0\}$. Thus, if $P\in\kappa[[\x,\y,\z]]$ and $P(0,0,0)\neq0$, then $|P-P(0,0,0)|_\beta<_\beta|P(0,0,0)|_\beta$, so $P\geq_\beta0$ if and only if $P(0,0,0)>_\beta0$. Let $P_1,P_2\geq_\beta0$ be such that $P_1+P_2\in\gtm$. If $P_1(0,0,0)\neq0$, then $P_2(0,0,0)\neq0$ and $P_1(0,0,0)=-P_2(0,0,0)$. As $P_i\geq_\beta0$ and $P_i(0,0,0)\neq0$, we have $P_i(0,0,0)>_\beta0$, which is a contradiction because $P_1(0,0,0)=-P_2(0,0,0)$. Thus, $P_i(0,0,0)=0$ and $P_i\in\gtm$ for $i=1,2$. By \cite[Prop.4.3.8]{bcr} there exists a specialization $\beta\to\alpha$ such that $\supp(\alpha)=\gtm$. By \cite[Thm.VII.3.2]{abr} there exists a specialization $\widehat{\beta}\to\widehat{\alpha}$ of $\widehat{B}$ lying over $\beta\to\alpha$, that is, $\widehat{\alpha}\cap B=\alpha$ and $\widehat{\beta}\cap B=\beta$. As $f_n'+\z g_n'\in\psd^\oplus(\widehat{B})$, we deduce $f_n'+\z g_n'>_\beta0$, which is a contradiction. Consequently, the claim holds.

As $\veps_\beta^4<_\beta\x^2+\y^2+\z^2$, we obtain $1<_\beta\frac{\x^2+\y^2+\z^2}{\veps_\beta^4}$. If $\x^\nu\y^\mu\z^\rho$ is a monomial and $\nu+\mu+\rho\leq d$, 
$$
\x^{2\nu}\y^{2\mu}\z^{2\rho}\leq_\beta(\x^2+\y^2+\z^2)^{\nu+\mu+\rho}<_\beta\frac{(\x^2+\y^2+\z^2)^{2d}}{\veps_\beta^{4(2d-\nu-\mu-\rho)}}\quad\leadsto\quad|\x^\nu\y^\mu\z^\rho|_\beta<_\beta\frac{(\x^2+\y^2+\z^2)^d}{\veps_\beta^{2(2d-\nu-\mu-\rho)}}.
$$
In addition, $|a|_\beta<_\beta a^2+1$ for each $a\in\kappa$. Thus, if $P\in\kappa[\x,\y,z]$ is a polynomial of degree $\leq d$, there exists $N_\beta\in\Sos{\kappa}$ such that $|P|_\beta<_\beta N_\beta^2(\x^2+\y^2+\z^2)^d$. In particular, for $P:=f_n'+\z g_n'$ we find $M_\beta\in\Sos{\kappa}$ such that $|f_n'+\z g_n'|_\beta\leq M_\beta^2(\x^2+\y^2+\z^2)^d$ where $d:=\max\{\deg(f_n'+\z g_n'),n/2\}$. We deduce
$$
f_n'+\z g_n'+M_\beta^2(\x^2+\y^2+\z^2)^d>_\beta0
$$
for each $\beta\in{\mathcal U}$. Define 
$$
{\mathcal V}_\beta:=\{\gamma\in\Sper(B):\ f_n'+\z g_n'+M_\beta^2(\x^2+\y^2+\z^2)^d>_\gamma0\},
$$
which is an open subset of $\Sper(B)$ that contains $\beta$. As ${\mathcal U}$ is by \cite[Cor.7.1.13]{bcr} compact, there exist $\beta_1,\ldots,\beta_s\in{\mathcal U}$ such that ${\mathcal U}\subset\bigcup_{i=1}^s{\mathcal V}_{\beta_i}$. Define 
\begin{align*}
f_n&:=f_n'+\sum_{i=1}^sM_{\beta_i}^2(\x^2+\y^2+F^2)^d,\\
g_n&:=g_n'.
\end{align*}
As $\z^2-F=0$ in $B$, we deduce $f_n+\z g_n\in\psd(B)$, $\omega(f-f_n)\geq\min\{2d,n\}=n$ and $\omega(g-g_n)=\omega(g-g_n')=n$, as required.
\end{proof}

\subsubsection{Strong Artin's approximation and bounded Pythagoras numbers.}
To prove the property $\psd(A)=\Sos{A}$ for the rings $A=\kappa[[\x,\y,\z]]/(\z^2-F)$ in the list of Theorem \ref{list2} we need also that the Pythagoras numbers of the rings $A$ are finite. This is due to the use of Strong Artin's approximation we make: To represent $f+\z g\in\psd(A)$ as a sum of squares in $A$ we find for each $n\geq1$ elements $f_n+\z g_n\in\Sos{A}$ such that $\omega(f-f_n),\omega(g-g_n)\geq n$. Thus, there exist $a_{ni},b_{ni},q_n\in\kappa[[\x,\y]]$ for $i=1,\ldots,p_n$ such that
$$
f_n+\z g_n=\sum_{i=1}^{p_n}(a_{ni}+\z b_{ni})^2+(\z^2-F)q_n.
$$
Consequently, the polynomial equation
$$
f+\z g=({\tt X}_1+\z{\tt Y}_1)^2+\cdots+({\tt X}_{p_n}+\z{\tt Y}_{p_n})^2+(\z^2-F){\tt Z}
$$
has a solution $\!\!\!\mod\gtm_2^n$ in $\kappa[[\x,\y]]$ for each $n\geq1$. To apply Strong Artin's approximation we need that the sequence of positive integers $\{p_n\}_{n\geq1}$ is bounded by some positive integer $p<+\infty$. Indeed, this fact implies that the polynomial equation
$$
f+\z g=({\tt X}_1+\z{\tt Y}_1)^2+\cdots+({\tt X}_p+\z{\tt Y}_p)^2+(\z^2-F){\tt Z}
$$
has a solution $\!\!\!\mod\gtm_2^n$ in $\kappa[[\x,\y]]$ for each $n\geq1$. Now, by Strong Artin's approximation there exist $a_i,b_i,q\in\kappa[[\x,\y]]$ such that
$$
f+\z g=(a_1+\z b_1)^2+\cdots+(a_p+\z b_p)^2+(\z^2-F)q,
$$ 
so $f+\z g\in\Sosp{A}\subset\Sos{A}$. 

To guarantee that the sequence of positive integers $\{p_m\}_{m\geq1}$ associated to each $f+\z g\in\psd(A)$ is bounded, it seems natural to ask that the Pythagoras number $p(A)$ of the ring $A$ is bounded. By Theorem \ref{pyth} and Equation \eqref{p2} we have $p(A)\leq 4\tau(\kappa)$ and thefore the hypothesis $\tau(\kappa)<+\infty$ appears in the statements of Theorem \ref{list2} and Corollary \ref{unique}. 

We are ready to present the polynomial reduction.

\begin{cor}[Polynomial reduction]\label{polred} 
Let $F\in\kappa[\x,\y]$ be such that $F(0,0)=0$, let $\gtm:=(\x,\y,\z)\kappa[\x,\y,\z]$ and $B:=\kappa[\x,\y,\z]/(\z^2-F)$. Let $\widehat{B}:=\kappa[[\x,\y,\z]]/(\z^2-F)$ be its $\gtm$-adic completion and ${\tt i}:B\hookrightarrow\widehat{B}$ the canonical map. If there exists an integer $p\geq 1$ such that ${\tt i}(\psd(B))\subset\Sosp{\widehat{B}}$, then $\psd(\widehat{B})=\Sosp{\widehat{B}}$. 
\end{cor}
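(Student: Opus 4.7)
The plan is to combine the polynomial density result (Lemma \ref{density}) with Strong Artin's approximation over the complete local ring $\kappa[[\x,\y]]$. Fix $f+\z g\in\psd(\widehat{B})$. By Lemma \ref{density}, for each $n\geq1$ there exist polynomials $f_n,g_n\in\kappa[\x,\y]$ such that $f_n+\z g_n\in\psd(B)$ and $\omega(f-f_n),\omega(g-g_n)\geq n$. By hypothesis, ${\tt i}(f_n+\z g_n)\in\Sosp{\widehat{B}}$, so there exist $a_{ni},b_{ni}\in\kappa[[\x,\y]]$ with
$$
f_n+\z g_n=\sum_{i=1}^p(a_{ni}+\z b_{ni})^2\quad\text{in }\widehat{B}.
$$

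Next, I expand using $\z^2=F$. Separating the $\kappa[[\x,\y]]$-free part and the $\z$-part in $\widehat{B}=\kappa[[\x,\y]]\oplus\z\kappa[[\x,\y]]$, the identity above is equivalent to the pair of identities in $\kappa[[\x,\y]]$:
$$
f_n=\sum_{i=1}^pa_{ni}^2+F\sum_{i=1}^pb_{ni}^2,\qquad g_n=2\sum_{i=1}^pa_{ni}b_{ni}.
$$
Since $f-f_n,g-g_n\in\gtm_2^n$, the tuple $(a_{ni},b_{ni})_{i=1}^p$ is an approximate solution modulo $\gtm_2^n$ to the \emph{fixed} polynomial system (in the variables ${\tt X}_1,\ldots,{\tt X}_p,{\tt Y}_1,\ldots,{\tt Y}_p$ and with coefficients in $\kappa[[\x,\y]]$)
\begin{align*}
&f-\sum_{i=1}^p{\tt X}_i^2-F\sum_{i=1}^p{\tt Y}_i^2=0,\\
&g-2\sum_{i=1}^p{\tt X}_i{\tt Y}_i=0.
\end{align*}

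Now I invoke Strong Artin's approximation (Rotthaus's Theorem \cite[Thm.4.2]{rt}), which applies to $\kappa[[\x,\y]]$ since it is a complete noetherian local ring containing $\Q$ (recall $\kappa$ is formally real, hence of characteristic zero): as this system admits approximate solutions modulo $\gtm_2^n$ for every $n\geq 1$, it admits an exact solution $(a_i,b_i)_{i=1}^p\in\kappa[[\x,\y]]^{2p}$. Writing $A_i:=a_i+\z b_i\in\widehat{B}$, the two polynomial identities above for the true solution translate back to $f+\z g=\sum_{i=1}^pA_i^2$ in $\widehat{B}$. Therefore $f+\z g\in\Sosp{\widehat{B}}$, proving $\psd(\widehat{B})\subset\Sosp{\widehat{B}}$; the reverse inclusion is trivial.

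The main obstacle is precisely the role of the \emph{uniform} integer $p$: without this uniformity, the number of indeterminates of the polynomial system would grow with $n$, preventing the application of a single instance of Artin's approximation. The hypothesis ${\tt i}(\psd(B))\subset\Sosp{\widehat{B}}$ with a fixed $p$ is exactly what is needed to fix the system in advance; everything else reduces to an essentially mechanical expansion in the basis $\{1,\z\}$ of $\widehat{B}$ over $\kappa[[\x,\y]]$ and a direct invocation of the approximation theorem.
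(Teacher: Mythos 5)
Your proof is correct and follows essentially the same strategy as the paper's: polynomial density (Lemma \ref{density}) to get approximate positive semidefinite elements in $B$, the uniform bound $p$ from the hypothesis, and Strong Artin's approximation over $\kappa[[\x,\y]]$. The only cosmetic difference is that you expand in the free basis $\{1,\z\}$ of $\widehat{B}$ over $\kappa[[\x,\y]]$ and split into the pair of scalar equations explicitly, whereas the paper keeps the equation in the form $f+\z g=\sum_i({\tt X}_i+\z{\tt Y}_i)^2+(\z^2-F){\tt Z}$ with a slack variable ${\tt Z}$; these two formulations of the system are equivalent, and your version makes the reduction to a fixed polynomial system over $\kappa[[\x,\y]]$ slightly more transparent.
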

\begin{proof}
Let $f+\z g\in\psd(\widehat{B})$ and fix $n\geq1$. By Lemma \ref{density} there exist polynomials $f_n,g_n\in\kappa[\x,\y]$ such that $f_n+\z g_n\in\psd(B)$ and $\omega(f-f_n),\omega(g-g_n)\geq n$. By hypothesis ${\tt i}(\psd(B))\subset\Sosp{\widehat{B}}$ for some $p\geq1$. There exist $a_{ni},b_{ni},q_n\in\kappa[[\x,\y]]$ such that
$$
f_n+\z g_n=\sum_{i=1}^p(a_{ni}+\z b_{ni})^2+(\z^2-F)q_n.
$$ 
Consequently,
$$
f+\z g\equiv\sum_{i=1}^p(a_{ni}+\z b_{ni})^2+(\z^2-F)q_n\mod\gtm_2^n.
$$
Thus, the polynomial equation
$$
f+\z g=\sum_{i=1}^p({\tt X}_i+\z {\tt Y}_i)^2+(\z^2-F){\tt Z}
$$
has a solution $\!\!\!\mod\gtm_2^n$ for each $n\geq1$. By Strong Artin's approximation we conclude $f+\z g\in\Sosp{\widehat{B}}$, as required.
\end{proof}

\section{Proof of Theorem \ref{list2}}\label{s5}

In this section we prove Theorem \ref{list2}. We may assume: $A=\kappa[[\x,\y,\z]]/\gta$ is a formal ring and by Theorem \ref{nonprincipal} that $\gta$ is a principal ideal generated by a series of the type $\z^2-F$ where $F\in\kappa[[\x,\y]]$. The strategy to prove \cite[Thm.1.3]{f2} consists of starting with two special cases: \em the rings $\R\{\x,\y\}$ and $\R\{\x,\y,\z\}/(\z^2-\x^3-\y^5)$\em. Both rings have the following common property: \em They are factorial rings and their complexifications $\C\{\x,\y\}$ and $\C\{\x,\y,\z\}/(\z^2-\x^3-\y^5)$ are also factorial rings \em \cite{ch,sj}. Using these two facts and that $\tau(\R((\t)))=1$, one shows that every positive semidefinite element of these rings is a sum of two squares of elements of such rings, see \cite[Thm.2.5 (3)]{clrr} and \cite{rz2}. In our setting it would be natural to choose as special cases: $\kappa[[\x,\y]]$ and $\kappa[[\x,\y,\z]]/(\z^2-\x^3-\y^5)$ where $\kappa$ is a (formally) real field with $\tau(\kappa)<+\infty$. Both rings are factorial and if we tensor them by $-\otimes_\kappa\kappa[\sqrt{-1}]$, these new rings are again factorial \cite{ch,sj}. The problem is that the property $\tau(\kappa((\t)))=1$ does not hold anymore! (if $\tau(\kappa)>1$) and we cannot imitate the same procedure. This means that different ideas were needed (Theorem \ref{et}) and the problem stayed apart from any significant progress for quite long. The case $\kappa[[\x,\y]]$ where $\kappa$ is any field was solved by Scheiderer in \cite[Thm.4.1]{sch2} (in fact, he proved $\psd(A)=\Sos{A}$ for all 2-dimensional regular local rings). We include for the sake of completeness here a slightly different proof of \cite[Thm.4.1]{sch2} when $\kappa$ is a (formally) real field.

\begin{thm}\label{liso}
Let $\kappa$ be a (formally) field. Then $\psd(\kappa[[\x]][\y])=\Sos{\kappa[[\x]][\y]}$ and $\psd(\kappa[[\x,\y]])=\Sos{\kappa[[\x,\y]]}$.
\end{thm}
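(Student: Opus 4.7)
The plan is to reduce both assertions to the single polynomial statement: every psd Weierstrass polynomial $P\in R[\y]$, where $R:=\kappa[[\x]]$, lies in $\Sos{R[\y]}$. Since $\Sos{R[\y]}\subset\Sos{\kappa[[\x,\y]]}$, establishing this statement proves both claims at once.

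First I would reduce $\psd(\kappa[[\x,\y]])=\Sos{\kappa[[\x,\y]]}$ to the polynomial case. Given $f\in\psd(\kappa[[\x,\y]])$, after a generic $\kappa$-linear change of coordinates one may assume $f$ is $\y$-regular, so Weierstrass preparation yields $f=cU^2P$ with $c\in\kappa^\times$, $U\in\kappa[[\x,\y]]^\times$ and $P\in R[\y]$ a Weierstrass polynomial of some degree $d$. Specializing at $\x=0$ gives $cU(0,\y)^2\y^d\in\psd(\kappa[[\y]])$, which forces $d$ to be even and $c$ to be non-negative at every ordering of $\kappa$; by Artin's theorem for the formally real field $\kappa$ this yields $c\in\Sos{\kappa}$. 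It therefore suffices to show $P\in\Sos{R[\y]}$.

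Next I would handle the polynomial case. The ring $R[\y]$ is a UFD by Gauss's lemma (since $R$ is a DVR). Factor $P=u\prod q_i^{e_i}$ with $u$ a unit and $q_i$ distinct monic irreducibles. Even-exponent factors contribute squares and can be set aside, and after redistributing signs on the connected components of $\Sper(R[\y])$ into a global sum-of-squares unit coming from $\kappa$, the remaining problem becomes: \emph{a psd irreducible $q\in R[\y]$ lies in $\Sos{R[\y]}$}. To prove this I would set $L:=\qf(R[\y]/(q))$, a finite extension of $K:=\kappa((\x))$. Since $q$ is irreducible and psd, it cannot have a root in any real closure of $K$ (otherwise such a root would appear with even multiplicity, contradicting irreducibility of $q$), so $L$ is a non-real finite extension of $K$. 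Then $-1\in\Sos{L}$, and a Scharlau transfer (equivalently, a trace-form argument) expresses $q$ as a sum of squares in $K[\y]$. A Cassels--Pfister-type lifting, using that $R$ is a DVR with formally real residue field, then converts this $K[\y]$-representation into one in $R[\y]$.

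The main obstacle will be the final lifting step when $\kappa$ carries many orderings. Over a real closed $\kappa$ one simply pairs complex-conjugate Puiseux roots of $q$ to write $q$ as a norm and hence as a sum of two squares, as in the classical argument. Over a general formally real $\kappa$ one still has $s(L)<+\infty$ and thus a sum-of-squares certificate over $K[\y]$, but controlling denominators and ensuring sign compatibility at every closed point of $V(q)\subset\mathrm{Spec}(R[\y])$ requires combining the multiplicativity of $\Sos$, Artin's theorem applied to the residue fields at these closed points, and the UFD structure of $R[\y]$; this is where the delicate bookkeeping lives, and it is the reason no bound on the Pythagoras number need be invoked here.
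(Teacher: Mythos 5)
Your proposal takes a substantially more elaborate route than the paper's and, crucially, leaves a gap at precisely the point where the paper's argument is simplest. After reducing (via the polynomial density Lemma~\ref{density} and polynomial reduction Corollary~\ref{polred}) to $f\in\psd(\kappa[\x,\y])$, the paper's proof is entirely free of factorization: by Artin--Schreier $\psd(\kappa[\x,\y])\subset\psd(\kappa((\x))(\y))=\Sos{\kappa((\x))(\y)}$; by Cassels' theorem~\cite{ca} this gives a denominator-free representation $f=\sum a_k^2$ in $\kappa((\x))[\y]$; and the passage to $\kappa[[\x]][\y]$ is a two-line descent: pick $\ell$ with $b_k:=\x^\ell a_k\in\kappa[[\x]][\y]$, note $\x^{2\ell}f=\sum b_k^2$, set $\x=0$ to get $\sum b_k(0,\y)^2=0$, and use formal reality of $\kappa$ to conclude $\x\mid b_k$; iterate on $\ell$. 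Your ``Cassels--Pfister-type lifting'' from $K[\y]$ to $R[\y]$, which you leave as ``delicate bookkeeping,'' is exactly this descent, and it requires no factorization into irreducibles, no passage through the non-real residue field $L$, and no Scharlau transfer: Artin--Schreier over the field $\kappa((\x))(\y)$ already produces the sum-of-squares certificate that Cassels upgrades, irreducible or not.

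Two further specific concerns. First, your Weierstrass reduction is not available for the $\kappa[[\x]][\y]$ assertion: the only $\kappa$-linear changes of variables that preserve $\kappa[[\x]][\y]$ are of the form $\x\mapsto a\x$, $\y\mapsto c\x+d\y$, and these leave the $\y$-leading coefficient of $f$ unchanged as an element of $\kappa[[\x]]$, which may vanish at $0$, so you cannot in general arrange $\y$-regularity. Second, even for the $\kappa[[\x,\y]]$ assertion, after Weierstrass preparation you implicitly use that $P$ being psd in $\kappa[[\x,\y]]$ makes $P$ psd in $R[\y]$ (your factorization argument operates in $\Sper(R[\y])$), which is a surjectivity statement for $\Sper(\kappa[[\x,\y]])\to\Sper(R[\y])$ that needs justification. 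The paper sidesteps both issues entirely by reducing to honest polynomials in $\kappa[\x,\y]$ via the density lemma, where the $\x=0$ specialization and the inclusion into $\kappa((\x))(\y)$ are both immediate.
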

\begin{proof}
Let $f$ belong to either $\psd(\kappa[[\x]][\y])$ or $\psd(\kappa[[\x,\y]])$. By Lemma \ref{density} and Corollary \ref{polred} we may assume
$$
f\in\psd(\kappa[\x,\y])\subset\psd(\kappa((\x))[\y])\subset\psd(\kappa((\x))(\y))=\Sos{\kappa((\x))(\y)}.
$$
By \cite[Thm.1]{ca} $f\in\Sos{\kappa((\x))[\y]}$. Thus, there exist $a_1,\ldots,a_p\in\kappa((\x))[\y]$ such that $f=\sum_{k=1}^pa_k^2$. Let $\ell\geq1$ be such that $b_k=\x^\ell a_k\in\kappa[[\x]][\y]$ for each $k=1,\ldots,p$. We have
$$
\x^{2\ell}f=\sum_{k=1}^pb_k^2(\x,\y).
$$
Setting $\x=0$, we deduce $0=\sum_{k=1}^pb_k(0,\y)^2$. As $\kappa$ is a (formally) real field, each $b_k(0,\y)=0$, so $\x$ divides each $b_k$. Thus, there exists $b_k'\in\kappa[[\x]][\y]$ such that $b_k=\x b_k'$. Hence,
$$
\x^{2\ell-2}f=\sum_{k=1}^pb_k'^2(\x,\y).
$$
Proceeding inductively we conclude $f\in\Sos{\kappa[[\x]][\y]}\subset\Sos{\kappa[[\x,\y]]}$, as required.
\end{proof}

\subsection{Order two cases}
Let $\kappa$ be a (formally) real field such that $\tau(\kappa)<+\infty$. To prove the property $\psd(A)=\Sos{A}$ for those rings $A=\kappa[[\x,\y,\z]]/(\z^2-F)$ where $F\in\kappa[[\x,\y]]$ is one of the series in the following list: 
\begin{itemize}
\item[(i)] $a\x^2+b\y^{2k}$ where $a\in\Sos{\kappa}$, $a,b\neq0$ and $k\geq 1$,
\item[(ii)] $a\x^2+\y^{2k+1}$ where $a\in\Sos{\kappa}$, $a\neq0$ and $k\geq 1$,
\item[(iii)] $a\x^2$ where $a\in\Sos{\kappa}$ and $a\neq0$,
\end{itemize}
we develop a strategy similar to the one presented in \cite{f2} to prove its main result. Namely, we relate the problem of proving $\psd(A)=\Sos{A}$ for each ring $A$ above to the already known property (Theorem \ref{liso}) for the ring $\kappa[[\x,\y]]$ via suitable blow-ups. During the process certain `denominators' appear that we have to `erase'. To that end we develop the following procedure.

\begin{lem}[Erasure of denominators]\label{grd}
Let $\kappa$ be a (formally) real field, $k\geq 1$ a positive integer and $h,g\in\kappa[[\x,\y]]$ relatively prime series such that $h$ generates a real radical ideal. Suppose that there exist polynomials $f,a_1,\ldots,a_p,b\in\kappa[[\x,\y]][\z]$ of degree $\leq k-1$ (with respect to $\z$) and an integer $r\geq 0$ such that
\begin{equation}\label{denome}
h^{2r}f=a_1^2+\cdots+a_p^2-b(\z^k-hg).
\end{equation}
Then there exist polynomials $a_1',\ldots,a_p',b'\in\kappa[[\x,\y]][\z]$ of degree $\leq k-1$ (with respect to $\z$) such that $f=a_1'^2+\cdots+a_p'^2-b'(\z^k-hg)$.
\end{lem}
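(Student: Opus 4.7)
I argue by induction on $r\geq 0$; the case $r=0$ is the conclusion. Assume $r\geq 1$, set $s:=\lceil k/2\rceil$, and put $B:=\kappa[[\x,\y]]/(h)$. Since $(h)$ is real radical, $B$ is a real reduced ring; an easy induction on $\z$-coefficients shows that $B[\z]$ is also real reduced (if $\sum P_i^{\,2}=0$ in $B[\z]$, evaluating at $\z=0$ forces each $P_i(0)=0$, and one iterates after dividing by $\z$). All computations take place in the domain $\kappa[[\x,\y]][\z]$, where $h$ is not a zero divisor, so divisions by $h$ are legitimate.

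\emph{First reduction.} Reducing \eqref{denome} modulo $h$ yields $\sum\bar{a}_i^{\,2}=\bar{b}\,\z^{k}$ in $B[\z]$. Matching the coefficient of $\z^{m}$ on both sides for $m=0,\ldots,k-1$ and inducting on $m$, the realness of $B$ forces the first $s$ coefficients (in $\z$) of each $\bar{a}_i$ to vanish, so $\bar{a}_i\in(\z^{s})B[\z]$. Lifting, write $a_i=\z^{s}\tilde{a}_i+h\mu_i$ with $\tilde{a}_i,\mu_i\in\kappa[[\x,\y]][\z]$, $\deg_{\z}\tilde{a}_i\leq\lfloor k/2\rfloor-1$, $\deg_{\z}\mu_i\leq s-1$; the same coefficient comparison identifies the reduction of $b$ with $\z^{2s-k}\sum\tilde{a}_i^{\,2}$ in $B[\z]$, where $2s-k$ equals $0$ or $1$ according as $k$ is even or odd. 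Hence $b-\z^{2s-k}\sum\tilde{a}_i^{\,2}=h\beta$ for some $\beta\in\kappa[[\x,\y]][\z]$ of $\z$-degree $\leq k-2$. Expanding $a_i^{\,2}$, using $\z^{k}=hg+(\z^{k}-hg)$ (multiplied by an extra $\z$ when $k$ is odd, since then $2s=k+1$), and dividing by $h$, one obtains
\[
h^{2r-1}f=h\sum\mu_i^{\,2}+2\z^{s}\sum\mu_i\tilde{a}_i+g\,\z^{2s-k}\sum\tilde{a}_i^{\,2}-\beta(\z^{k}-hg).
\]

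\emph{Second reduction and conclusion.} Since $r\geq 1$, reducing this new identity modulo $h$ gives a vanishing left-hand side. Because $\gcd(g,h)=1$, the image $\bar{g}$ is a non-zero-divisor in $B$, so a further coefficient-by-coefficient argument (again exploiting the realness of $B$) forces every $\tilde{a}_i$ and $\beta$ to lie in $(h)\kappa[[\x,\y]][\z]$. Writing $\tilde{a}_i=h\tilde{a}_i^{(1)}$, $\beta=h\beta^{(1)}$, substituting back, and dividing by $h$ once more yields
\[
h^{2r-2}f=\sum\mu_i^{\,2}+2\z^{s}\sum\mu_i\tilde{a}_i^{(1)}+hg\,\z^{2s-k}\sum(\tilde{a}_i^{(1)})^{\,2}-\beta^{(1)}(\z^{k}-hg).
\]
The identity $hg\,\z^{2s-k}=\z^{2s}-\z^{2s-k}(\z^{k}-hg)$ then rewrites the middle three terms as the perfect square $\sum\bigl(\mu_i+\z^{s}\tilde{a}_i^{(1)}\bigr)^{2}$, while the extra contribution is absorbed into $b':=\z^{2s-k}\sum(\tilde{a}_i^{(1)})^{\,2}+\beta^{(1)}$. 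The resulting identity has the shape of \eqref{denome} with $r$ replaced by $r-1$ (and the same number $p$ of squares); a direct check of $\z$-degrees gives $\deg_{\z}(\mu_i+\z^{s}\tilde{a}_i^{(1)})\leq k-1$ and $\deg_{\z}b'\leq k-2$, so the inductive hypothesis completes the argument.

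\emph{Main obstacle.} The crux is arranging the two successive divisions by $h$: the first uses only that $(h)$ is real radical in $\kappa[[\x,\y]]$, but the second crucially requires that $\bar{g}$ be a non-zero-divisor in $B$, which is precisely where the coprimality of $g$ and $h$ enters. The $\z$-degree bookkeeping and the mild parity discrepancy (encoded in the exponent $2s-k\in\{0,1\}$) are routine but must be tracked carefully so that the new squares and $b'$ retain $\z$-degree $\leq k-1$ at each stage of the induction.
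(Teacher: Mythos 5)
Your proof is correct, but it takes a genuinely different route from the paper's. The paper expands $a_i, b, f$ in powers of $\z$, extracts the $2k$ coefficient equations $(0),\dots,(2k-1)$, and runs a \emph{single} induction on the coefficient index $\ell=0,\dots,k-1$ that simultaneously establishes $h\mid a_{i\ell}$ (via realness, from equation $(2\ell)$) and $h\mid b_\ell$ (via coprimality of $g,h$, from equation $(\ell)$); the "high" equations $(k),\dots,(2k-2)$ then give $h^2\mid b_\ell$, and one divides by $h^2$. You instead make a \emph{two-pass} reduction: pass one works out equation \eqref{denome} modulo $h$, which after coefficient-matching gives only the partial information $h\mid a_{ij}$ for $j<s$, packaged as the split $a_i=\z^s\tilde a_i+h\mu_i$ and $b=\z^{2s-k}\sum\tilde a_i^2+h\beta$; you then substitute, divide by one $h$, and in pass two reduce the \emph{new} identity modulo $h$ to get $h\mid\tilde a_i,\ h\mid\beta$. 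This recombines into the same final $a_i'=a_i/h$, $b'=b/h^2$ as the paper. Your version makes the reliance on $B:=\kappa[[\x,\y]]/(h)$ being real reduced and $\bar g$ being a nonzerodivisor in $B$ more explicit, but pays for it with the parity split $2s-k\in\{0,1\}$ and heavier degree bookkeeping.

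Two points deserve flagging. First, the sentence "a further coefficient-by-coefficient argument (again exploiting the realness of $B$) forces every $\tilde a_i$ and $\beta$ to lie in $(h)$" hides the only nontrivial part of pass two: the intermediate identity modulo $h$ reads $\bar g\,\z^{2s-k}\sum\bar{\tilde a}_i^2 \;=\; \bar\beta\z^k - 2\z^s\sum\bar\mu_i\bar{\tilde a}_i$, and the right-hand side is \emph{not} a sum of squares, so one cannot simply invoke realness of $B[\z]$. What makes it work is an induction on the index $\ell$ of the lowest nonvanishing coefficient of $\bar{\tilde a}_i$: for $\ell\leq\lfloor k/2\rfloor-1$, the $\z^{2s-k+2\ell}$-coefficient of the cross term $\z^s\bar\mu_i\bar{\tilde a}_i$ involves only $\bar{\tilde a}_{i,\ell''}$ with $\ell''\leq 2\ell-\lfloor k/2\rfloor<\ell$, which have already been killed, and the $\bar\beta\z^k$-term contributes nothing since $2s-k+2\ell<k$; so one is left with $\bar g\sum\bar{\tilde a}_{i\ell}^{\,2}=0$. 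This is essentially the same kind of coefficient induction the paper carries out, so you should spell it out rather than wave at it. Second, your claim $\deg_\z\beta\leq k-2$ needs the extra observation $b_{k-1}=0$ (which falls out of the $\z^{2k-1}$-coefficient of \eqref{denome}, as the paper notes via equation $(2k-1)$); without it you only get $\deg_\z\beta\leq k-1$, which is still sufficient for the inductive step but is a different bound than what you stated.
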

\begin{proof}
Set $a_i:=\sum_{j=0}^{k-1}a_{ij}\z^j$, $b:=\sum_{j=0}^{k-1}b_{j}\z^j$ and $f:=\sum_{j=0}^{k-1}f_{j}\z^j$ where $a_{ij},b_j,f_j\in\kappa[[\x,\y]]$. Comparing coefficients with respect to $\z$ in \eqref{denome} we find the following collection of equalities
$$
\begin{array}{rrcl} 
(0)& h^{2r}f_0-hgb_0&=&\sum_{i=1}^pa_{i0}^2,\\ 
(1)& h^{2r}f_1-hgb_1&=&2\sum_{i=1}^pa_{i0}a_{i1},\\ 
&&\vdots& \\
(\ell)&h^{2r}f_\ell-hgb_\ell&=&\sum_{i=1}^p\Big(\sum_{j+m=\ell}a_{ij}a_{im}\Big),\\
&&\vdots& \\ 
(k-1)&h^{2r}f_{k-1}-hgb_{k-1}&=&\sum_{i=1}^p\Big(\sum_{j+m=k-1}a_{ij}a_{im}\Big),\\
(k)&b_0&=&\sum_{i=1}^p\Big(\sum_{j+m=k}a_{ij}a_{im}\Big),\\ 
&&\vdots& \\
(k+\ell)& b_\ell&=&\sum_{i=1}^p\Big(\sum_{j+m=k+\ell}a_{ij}a_{im}\Big),\\
&&\vdots& \\ 
(2k-2)& b_{k-2}&=&\sum_{i=1}^pa_{ik-1}^2,\\
(2k-1)& b_{k-1}&=&0.
\end{array}
$$ 
We claim: \em $h$ divides $a_{i\ell}$ and $b_\ell$ for each $\ell=0,\ldots,k-1$ and $i=1,\ldots,p$\em. 

We use induction hypothesis to prove the claim. For $\ell=0$ we have $h^{2r}f_0-hgb_0=\sum_{i=1}^pa_{i0}^2$. As $h$ generates a real radical ideal and $h,g$ are relatively prime, we deduce that $h$ divides each $a_{i0}$ and $b_0$. Let $\ell<k$ and assume $h$ divides $a_{ij},b_j$ for $i=1,\ldots,p$ and $1\leq j\leq\ell-1$. If $2\ell\leq k-1$,
$$
h^{2r}f_{2\ell}-hgb_{2\ell}=\sum_{i=1}^p\Big(\sum_{j+m=2\ell,j\neq
m}a_{ij}a_{im}\Big)+\sum_{i=1}^pa_{i\ell}^2.
$$ 
As $h$ divides $a_{i0},\ldots,a_{i,\ell-1}$, we deduce $h$ divides $a_{i\ell}$ for each $i$. If $2\ell>k-1$, then:
$$
b_{2\ell-k}=\sum_{i=1}^p\Big(\sum_{j+m=2\ell,j\neq m}a_{ij}a_{im}\Big)+\sum_{i=1}^pa_{i\ell}^2.
$$ 
As $\ell\leq k-1$, we have $2\ell-k\leq\ell-1$ and by induction hypothesis $h$ divides $b_{2\ell-k}$. By induction hypothesis also $h$ divides $a_{i0},\ldots,a_{i,\ell-1}$, so $h$ divides $a_{i\ell}$ for each $i$. Using equation $(\ell)$ we conclude that $h$ divides $b_\ell$, as claimed.

Now we use equations $(k)$, $\ldots$, $(2k-2)$ to prove that in fact $h^2$ divides $b_\ell$ for $0\leq\ell\leq k-2$. Thus, we can divide the equation \eqref{denome} by $h^2$ (use here that $\kappa[[\x,\y]][\z]$ is an integral domain). Repeating the previous process $r-1$ times one proves the required statement.
\end{proof}

Before proving Theorem \ref{list2} for the order two cases we need an additional result.

\begin{lem}\label{ext}
Let $\kappa$ be a (formally) real field, let $a\in\Sos{\kappa}\setminus\{0\}$ and $\varphi,\psi\in\kappa[[\x,\y,\z]]$ be such that $\psi$ is a sum of squares in $B:=\kappa[\sqrt{a}][[\x,\y,\z]]/(\varphi)$. Then $\psi$ is a sum of squares in $A:=\kappa[[\x,\y,\z]]/(\varphi)$.
\end{lem}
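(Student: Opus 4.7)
The plan is to exploit the fact that $\kappa[\sqrt{a}]$ is at most a degree-$2$ extension of $\kappa$, and that the defining element $a$ of this extension is itself already a sum of squares in $\kappa$. This will allow a direct algebraic decomposition of any sum-of-squares representation in $B$ into one in $A$.

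First I would dispose of the trivial case: if $a$ is already a square in $\kappa$, then $\kappa[\sqrt{a}]=\kappa$, so $B=A$ and there is nothing to prove. From now on assume $a$ is not a square in $\kappa$, so $\t^2-a$ is irreducible over $\kappa$ and $\kappa[\sqrt{a}]=\kappa\oplus\sqrt{a}\,\kappa$ as a free $\kappa$-module of rank $2$. Extending scalars this gives the direct-sum decomposition
\[
\kappa[\sqrt{a}][[\x,\y,\z]]=\kappa[[\x,\y,\z]]\oplus\sqrt{a}\,\kappa[[\x,\y,\z]],
\]
and $\varphi$, $\psi$ sit in the first summand.

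Next, translate the hypothesis. Since $\psi\in\Sos{B}$, there exist $g_1,\dots,g_s,h\in\kappa[\sqrt{a}][[\x,\y,\z]]$ with $\psi=g_1^2+\cdots+g_s^2+\varphi h$. Decompose each $g_i=u_i+\sqrt{a}\,v_i$ and $h=p+\sqrt{a}\,q$ with $u_i,v_i,p,q\in\kappa[[\x,\y,\z]]$. Expanding,
\[
\psi=\sum_{i=1}^s(u_i^2+av_i^2)+\varphi p+\sqrt{a}\Big(2\sum_{i=1}^s u_iv_i+\varphi q\Big).
\]
Comparing the two components of the direct sum (and using that $\psi$ lies in the first summand) yields the identity
\[
\psi=\sum_{i=1}^s u_i^2+a\sum_{i=1}^s v_i^2+\varphi p\quad\text{in }\kappa[[\x,\y,\z]].
\]

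Finally, invoke the hypothesis $a\in\Sos{\kappa}$: write $a=c_1^2+\cdots+c_m^2$ with $c_j\in\kappa$. Then $av_i^2=\sum_{j=1}^m(c_jv_i)^2$, and substituting gives
\[
\psi=\sum_{i=1}^s u_i^2+\sum_{i=1}^s\sum_{j=1}^m(c_jv_i)^2+\varphi p,
\]
which displays $\psi$ as a sum of squares in $A=\kappa[[\x,\y,\z]]/(\varphi)$, as required. There is no real obstacle here: the only ingredients are the freeness of the quadratic extension and the assumption that $a$ itself is a sum of squares in $\kappa$, which is precisely what turns $av_i^2$ into a sum of squares in $\kappa[[\x,\y,\z]]$.
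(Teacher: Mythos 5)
Your proof is correct and takes essentially the same route as the paper: both extract the ``$\kappa$-rational part'' of the sum-of-squares identity in $B$ — you by comparing components in the free decomposition $\kappa[\sqrt{a}][[\x,\y,\z]]=\kappa[[\x,\y,\z]]\oplus\sqrt{a}\,\kappa[[\x,\y,\z]]$, the paper by applying the conjugation involution $\sqrt{a}\mapsto-\sqrt{a}$ and averaging the two resulting equalities — arriving at the identical expression $\psi=\sum u_i^2+a\sum v_i^2+\varphi p$, and then rewriting $a v_i^2$ as a sum of squares using $a\in\Sos{\kappa}$.
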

\begin{proof}
If $\sqrt{a}\in\kappa$, the result is trivial, so we assume $\sqrt{a}\not\in\kappa$. Consider the $\kappa$-involution
$$
\sigma:\kappa[\sqrt{a}]\to\kappa[\sqrt{a}],\ b+\sqrt{a}c\mapsto b-\sqrt{a}c
$$
and the induced $\kappa$-involution
\begin{multline*}
(\cdot)^\sigma:\kappa[\sqrt{a}][[\x,\y,\z]]/(\varphi)\to\kappa[\sqrt{a}][[\x,\y,\z]]/(\varphi),\\ 
f:=\sum_{\nu}a_\nu\x^{\nu_1}\y^{\nu_2}\z^{\nu_3}\mapsto f^\sigma:=\sum_{\nu}\sigma(a_\nu)\x^{\nu_1}\y^{\nu_2}\z^{\nu_3}
\end{multline*}
where $\nu:=(\nu_1,\nu_2,\nu_3)$. As $\psi\in\Sos{B}$, there exist $a_i,b_i,q_1,q_2\in\kappa[[\x,\y,\z]]$ such that 
$$
\psi=\sum_{i=1}^p(a_i+\sqrt{a}b_i)^2+(q_1+\sqrt{a}q_2)\varphi=\sum_{i=1}^pa_i^2+a\Big(\sum_{i=1}^pb_i^2\Big)+q_1\varphi+\sqrt{a}\Big(q_2\varphi+2\sum_{i=1}^pa_ib_i\Big).
$$
We apply the $\kappa$-involution $(.)^\sigma$ to the previous equality and obtain
$$
\psi=\sum_{i=1}^p(a_i-\sqrt{a}b_i)^2+(q_1-\sqrt{a}q_2)\varphi=\sum_{i=1}^pa_i^2+a\Big(\sum_{i=1}^pb_i^2\Big)+q_1\varphi-\sqrt{a}\Big(q_2\varphi+2\sum_{i=1}^pa_ib_i\Big).
$$
Adding both equalities and dividing by $2$ we achieve
$$
\psi=\sum_{i=1}^pa_i^2+a\Big(\sum_{i=1}^pb_i^2\Big)+q_1\varphi.
$$
As $a\in \Sos{\kappa}$, we deduce $\psi\in\psd(A)$, as required.
\end{proof}

We are now ready to prove Theorem \ref{list2} for the rings $A=\kappa[[\x,\y,\z]]/(\z^2-F)$ where $F\in\kappa[[\x,\y]]$ is one of the series of order two quoted in its statement.

\begin{thm}[Order two]\label{order2}
Let $\kappa$ be a (formally) real field such that $\tau(\kappa)<+\infty$. Set $A:=\kappa[[\x,\y,\z]]/(\z^2-F)$ where $F\in\kappa[[\x,\y]]$ is one of the following series:
\begin{itemize}
\item[(i)] $a\x^2+b\y^{2k}$ where $a\in\Sos{\kappa}$, $b\neq0$ and $k\geq 1$,
\item[(ii)] $a\x^2+\y^{2k+1}$ where $a\in\Sos{\kappa}$ and $k\geq 1$,
\item[(iii)] $a\x^2$ where $a\in\Sos{\kappa}$. 
\end{itemize}
Then $\psd(A)=\Sos{A}$ and $p(A)\leq4\tau(\kappa)$.
\end{thm}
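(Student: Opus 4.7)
The plan is to reduce the property $\psd(A)=\Sos{A}$ to the already-proved regular case $\psd(\kappa[[\x,\y]])=\Sos{\kappa[[\x,\y]]}$ (Theorem \ref{liso}) by means of a quadratic transformation adapted to each shape of $F$, and to control the number of squares via Theorem \ref{pyth} together with \eqref{p2}.

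First, by Corollary \ref{polred} it suffices to exhibit, for a uniform $p\le 4\tau(\kappa)$, a representation
\[
f+\z g=\sum_{i=1}^{p}(a_i+\z b_i)^2+(\z^2-F)q\qquad(a_i,b_i,q\in\kappa[[\x,\y]])
\]
of every polynomial element $f+\z g\in\psd(B)$, where $B=\kappa[\x,\y,\z]/(\z^2-F)$; the working description of positivity will be the one given by Corollary \ref{psd3}, and Lemma \ref{density} together with Lemma \ref{pd+} let us freely replace $f+\z g$ by nearby polynomial elements in $\psd^\oplus(B)$ without loss. In cases (i) and (ii) I then perform the quadratic transformation $\x=\y^k\x_1$, $\z=\y^k\z_1$, under which $\z^2-F$ becomes $\y^{2k}(\z_1^2-a\x_1^2-b)$ in case (i) and $\y^{2k}(\z_1^2-a\x_1^2-\y)$ in case (ii). The induced map $B\to C:=\kappa[\x_1,\y,\y^{-1},\z_1]/(\z_1^2-a\x_1^2-c)$ (with $c=b$ or $c=\y$) lands, after completion at a suitable maximal ideal, in a regular $2$-dimensional formal ring: in case (ii) the discriminant is $a\x_1^2+\y$, smooth at the origin, while in case (i) the smoothness of $\z_1^2-a\x_1^2-b=0$ at its points over the origin is attained after the quadratic extension $K=\kappa[\sqrt b]$ (when $b\not\in\kappa^{\times 2}$). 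On each branch Theorem \ref{liso} provides a representation with at most $p(K[[\x_1,\y]])\le 4\tau(K)\le 4\tau(\kappa)$ squares, and Lemma \ref{ext} (applied with the sum of squares $a\in\Sos{\kappa}$ keeping track of $K=\kappa[\sqrt{a}]$ when needed) descends the representation from $K$ to $\kappa$.

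For case (iii), $F=a\x^2$ with $a\in\Sos{\kappa}$, I extend scalars to $K=\kappa[\sqrt a]$: the polynomial $\z^2-a\x^2$ splits as $(\z-\sqrt a\x)(\z+\sqrt a\x)$, so $A\otimes_\kappa K$ decomposes as a product of two copies of $K[[\x,\y]]$; Theorem \ref{liso} handles each factor and Lemma \ref{ext} returns the representation to $A$. The main obstacle throughout will be to pull the sum-of-squares representations back through the quadratic transformation without losing control of the number of squares: the pullback introduces powers of $\y$ in the denominators, and these must be erased. Here Lemma \ref{grd} is the crucial tool, applicable because $\y$ is irreducible and $\kappa[[\x,\y]]/(\y)\cong\kappa[[\x]]$ is real, so $(\y)$ is a real prime; the lemma removes the $\y$-denominators without increasing the number of squares. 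Combining the $4\tau(\kappa)$-bound obtained in the regular model with the denominator-clearing in Lemma \ref{grd} and the descent in Lemma \ref{ext}, the asserted bound $p(A)\le 4\tau(\kappa)$ follows, and applying Corollary \ref{polred} with this uniform $p$ finishes the proof.
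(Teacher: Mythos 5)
Your strategy of blowing up to a regular model, applying Theorem~\ref{liso}, descending via Lemma~\ref{ext}, and clearing denominators via Lemma~\ref{grd} is in the right family of ideas, but several steps fail as written.

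The clearest error is case~(iii). The ring $K[[\x,\y,\z]]/((\z-\sqrt a\x)(\z+\sqrt a\x))$ is \emph{not} a product of two copies of $K[[\x,\y]]$: the two ideals $(\z-\sqrt a\x)$ and $(\z+\sqrt a\x)$ are not comaximal (their sum is $(\x,\z)$), so after the linear change $u:=\z-\sqrt a\x$, $v:=\z+\sqrt a\x$ one obtains $K[[u,v,\y]]/(uv)$, two planes meeting along a line. Sums-of-squares representations on the two branches that have the same restriction of the \emph{sum} to the double line need not glue to a single representation in $K[[u,v,\y]]/(uv)$, so Theorem~\ref{liso} on each factor does not conclude. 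The paper handles~(iii) instead by a limit argument: approximate $\z^2-a\x^2$ by $\z^2-a\x^2-\y^{2k}$, which is case~(i) with $b=-1$, push the sum-of-$p$-squares representations through Strong Artin approximation, and recover a representation for $A$ itself.

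For cases~(i) and~(ii) the denominator clearing does not go through as you state. Lemma~\ref{grd} requires the defining relation, written as a distinguished polynomial in one variable, to have the shape $\z^k-hg$ with $h$ exactly the factor to be cancelled from $h^{2r}f$; the constant term must be divisible by $h$. In your blown-up coordinates the relation is $\z_1^2-a\x_1^2-b$ (resp.\ $\z_1^2-a\x_1^2-\y$), whose constant term $a\x_1^2+b$ (resp.\ $a\x_1^2+\y$) is not divisible by $\y$, so Lemma~\ref{grd} with $h=\y$ simply does not apply: checking that $(\y)$ is a real prime addresses a different hypothesis. The paper's order is essential: first reduce to $a=1$ using Lemma~\ref{ext} (legitimate because $a\in\Sos\kappa$), then apply the linear change $(\x,\y,\z)\mapsto(\tfrac{\z-\x}{2},\y,\tfrac{\z+\x}{2})$ so that $\z^2-\x^2-b\y^k$ becomes $b\y^k-\x\z$. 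Rewritten as $\y^k-\tfrac{\x}{b}\z$ in $\kappa[[\x,\z]][\y]$, this relation \emph{does} have the required form with $h=\z$; the rational substitution $\x\mapsto b\y^k/\z$ lands in $\kappa[[\y,\z]]$ over $\kappa$ with no field extension for $b$, introduces only powers of $\z$ as denominators, and Lemma~\ref{grd} (after Weierstrass division) finishes. Your alternative route of extending to $\kappa[\sqrt b]$ has its own problems: Lemma~\ref{ext} only descends from $\kappa[\sqrt a]$ when $a\in\Sos\kappa$, and $b\in\Sos\kappa$ is not a hypothesis; moreover when $b\in-\Sos\kappa$ the field $\kappa[\sqrt b]$ is not formally real and the appeal to Theorem~\ref{liso} becomes vacuous, while when $b$ takes both signs no descent argument is available at all.
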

\begin{proof}
We denote $B:=\kappa[\x,\y,\z]/(\z^2-F)$ in such a way that $\widehat{B}=A$. As $\tau(\kappa)<+\infty$, it holds $\psd(\kappa[[\x,\y]])=\Sosq{\kappa[[\x,\y]]}$ and $\psd(\kappa[[\x]][\y])=\Sosq{\kappa[[\x]][\y]}$ for $p_0:=2\tau(\kappa)$. Let ${\tt i}:B\to\widehat{B}=A$. Along the proof it is enough to show that ${\tt i}(\psd(B))\subset\Sosp{A}$ for $p:=4\tau(\kappa)$.

(i) and (ii) Assume first $a=1$ and $F:=\x^2+b\y^k$. After the linear change of coordinates $(\x,\y,\z)\mapsto(\frac{\z-\x}{2},\y,\frac{\z+\x}{2})$ we assume that our equation is $b\y^k-\x\z$. Consider the rational substitution $(\y,\z)\mapsto(\frac{b\y^k}{\z},\y,\z)$. Take $P\in\psd(B)$ (of degree $\leq k-1$ with respect to $\y$) and write
$$
P\Big(\frac{\y^k}{\z},\y,\z\Big)=\frac{Q}{\z^r}
$$ 
where $r\geq 1$ and $Q\in\kappa[\y,\z]$. By Lemma \ref{blowup} the product $\z^rQ\in\psd(\kappa[\y,\z])\subset\psd(\kappa[[\y,\z]])=\Sosp{\kappa[[\x,\y]]}$. Thus, there exist $a_1,\ldots,a_p\in\kappa[[\y,\z]]$ such that
$$
\z^{2r}P\Big(\frac{\y^k}{\z},\y,\z\Big)=\z^rQ=a_1^2+\cdots+a_p^2.
$$
We rewrite the previous equation as
$$
\z^{2r}P(\x,\y,\z)=a_1^2(\y,\z)+\cdots+a_p^2(\y,\z)+(b\y^k-\x\z)q(\x,\y,\z),
$$
where $q\in\kappa[[\x,\y,\z]]$. By Weierstrass division theorem there exist $a_1',\ldots,a_p',q'\in\kappa[[\x,\z]][\y]$ of degree $\leq k-1$ with respect to $\y$ such that
$$ 
\z^{2r}P(\x,\y,\z)=a_1'^2+\cdots+a_p'^2+(b\y^k-\x\z)q'.
$$ 
By Lemma \ref{grd} $P$ is a sum of $p$ squares in $A$.

In the following we approach the case when $a\in\Sos{\kappa}\setminus\{0\}$. Consider the inclusion 
$$
{\tt i}:A:=\kappa[[\x,\y,\z]]/(\z^2-F)\hookrightarrow A':=\kappa[\sqrt{a}][[\x,\y,\z]]/(\z^2-F)
$$
and the isomorphism of rings
$$
\Phi:A'\to B':=\kappa[\sqrt{a}][[\x,\y,\z]]/(\z^2-F'),\ h\mapsto h\Big(\frac{\x}{\sqrt{a}},\y,\z\Big),
$$
where 
$$
F'=\begin{cases}
\x^2+b\y^{2k}&\text{if $F=a\x^2+b\y^{2k}$,}\\
\x^2+\y^{2k+1}&\text{if $F=a\x^2+\y^{2k+1}$.}
\end{cases}
$$
Let $f,g\in\kappa[[\x,\y]]$ be such that $f+\z g\in\psd(A)$. Then $\Phi(f+\z g)\in\psd(B')=\Sos{B'}$ (as we have proved above, when we assumed $a=1$ and $F=\x^2+b\y^k$). Thus, $f+\z g\in\Sos{A'}$, so by Lemma \ref{ext} $f+\z g\in\Sos{A}$. In addition, by \cite[Prop.2.7]{frs2} $p(A)\leq4\tau(\kappa)$.

(iii) We use a `limit argument'. Let $f+\z g\in\psd(A)$ and $n\ge1$. As $a\in\Sos{\kappa}\setminus\{0\}$, 
\begin{align*}
&f\in\psd(\{a\x^2\geq0\})=\psd(\kappa[[\x,\y]]),\\
&f^2-a\x^2g^2\in\psd(\kappa[[\x,\y]]).
\end{align*}
If $k\geq 2n+1$, we have $2k-4n\geq 2$ and
\begin{multline*}
(f+\x^{2n}+\y^{2n})^2-(a\x^2+\y^{2k})g^2=(f^2-a\x^2g^2)+2f(\x^{2n}+\y^{2n})\\
+\x^{4n}+2x^{2n}\y^{2n}+\y^{4n}(\sqrt{1-\y^{2k-4n}g^2})^2\in\psd(\kappa[[\x,\y]]).
\end{multline*}
As $f+\x^{2n}+\y^{2n}\in\psd(\kappa[[\x,\y]])=\psd(\{a\x^2+\y^{2k}\geq0\})$, we deduce $f+\x^{2n}+\y^{2n}+\z g\in\psd(A_k)$ where $A_k:=\kappa[[\x,\y,\z]]/(\z^2-a\x^2-\y^{2k})$. As $\psd(A_k)=\Sosp{A_k}$ where $p:=4\tau(\kappa)$, there exist $a_{in},b_{in},q_n\in\kappa[[\x,\y]]$ such that
$$
f+(\x^{2n}+\y^{2n})+\z g=(a_{1n}+\z b_{1n})^2+\cdots+(a_{pn}+\z b_{pn})^2-(\z^2-a\x^2-\y^{2k})q_n.
$$ 
Consequently,
$$ 
f+\z g\equiv (a_{1n}+\z b_{1n})^2+\cdots+(a_{pn}+\z b_{pn})^2-(\z^2-a\x^2)q_n\ \mod\gtm_2^{2n}.
$$ 
By Strong Artin's approximation there exist $a_i,b_i,q\in\kappa[[\x,\y]]$ such that 
$$
f+\z g=(a_1+\z b_1)^2+\cdots+(a_p+\z b_p)^2-(z^2-ax^2)q\in\Sos{A},
$$
as required.
\end{proof}

\subsection{Order three cases}
Let $\kappa$ be a (formally) real field such that $\tau(\kappa)<+\infty$. To prove the property $\psd(A)=\Sos{A}$ for those rings $A=\kappa[[\x,\y,\z]]/(\z^2-F)$ where $F\in\kappa[[\x,\y]]$ is one of the series in the following list: 
\begin{itemize}
\item[(iv)] $\x^2\y+(-1)^ka\y^k$ where $a\not\in-\Sos{\kappa}$ and $k\geq 3$,
\item[(v)] $\x^2\y$,
\item[(vi)] $\x^3+a\x\y^2+b\y^3$ irreducible,
\item[(vii)] $\x^3+a\y^4$ where $a\not\in-\Sos{\kappa}$, 
\item[(viii)] $\x^3+\x\y^3$,
\item[(ix)] $\x^3+\y^5$, 
\end{itemize}
we develop a substantially different strategy to the one presented in \cite{f2}. As commented above if $\tau(\kappa)>1$, the classical procedure to face the qualitative problem for Brieskorn's singularity $A:=\kappa[[\x,\y,\z]]/(\z^2-\x^3-\y^5)$ does not work and new ideas are needed. In fact, we are going to provide a general tool (Theorem \ref{et}) that allows to solve all the cases of the list in Theorem \ref{list2} when $F\in\kappa[[\x,\y]]$ has order three. 

The formulation of this tool is quite technical (because it also points out the obstructions to obtain the property $\psd(A)=\Sos{A}$ for the rings $A$ satisfying the hypotheses in Theorem \ref{et}). We suggest the reader the following initial interpretation of Theorem \ref{et} for a better understanding: \em If $F\in\kappa[[\y]][\x]$ is a Weierstrass polynomial of order and degree $3$ that satisfies some mild conditions and all the positive elements of $f+\z g\in A:=\kappa[[\x,\y,\z]]/(\z^2-F)$ such that $f(\x,0)\neq0$ satisfies $\omega(f(\x,0))\geq2$, then $\psd(A)=\Sosp{A}$ for $p:=4\tau(\kappa)$\em. In addition, the rings $A$ (satisfying the hypotheses of Theorem \ref{et}) such that $\psd(A)\neq\Sos{A}$ arise when there exist positive semidefinite elements $f+\z g\in A$ such that $f(\x,0)\neq0$ and $\omega(f(\x,0))=1$.

\begin{thm}[Elephant's Theorem]\label{et}
Let $F\in\kappa[[\x,\y]]$ be a series of order $3$ with $\omega(F(\x,0))=3$ and $F(0,\y))=b\y^{\rho}v$ where $v\in\kappa[[\y]]$ is a unit such that $v(0)=1$, $b\in\kappa\setminus\{0\}$ and $\rho\geq3$ is either odd or if it is even, we have in addition $b\not\in-\Sos{\kappa}$. Let $A:=\kappa[[\x,\y,\z]]/(\z^2-F)$ and $f+\z g\in\psd(A)\setminus\{0\}$. We obtain:
\begin{itemize}
\item[(i)] There exist $s\geq1$ and $f_1+\z g_1\in\psd(A)$ such that $f_1(\x,0)\neq0$ and $f+\z g=\y^{2s}(f_1+\z g_1)$. In addition, if $\omega(f_1(\x,0))=q\geq 2$, then $\omega(g_1)\geq1$.
\item[(ii)] If $\omega(f(\x,0))=q\geq 2$ and $F$ is $\rho$-quasidetermined, then $f+\z g\in\Sosp{A}$ where $p:=4\tau(\kappa)$.
\end{itemize}
\end{thm}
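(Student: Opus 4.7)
For part (i), I would invoke Corollary \ref{psd3} applied to $f+\z g\in\psd(A)$: this yields $f\in\psd(\{F\geq0\})$ and $h:=f^2-Fg^2\in\psd(\kappa[[\x,\y]])$. Let $m_f$ and $m_g$ denote, respectively, the largest integers such that $\y^{m_f}$ divides $f$ and $\y^{m_g}$ divides $g$ in $\kappa[[\x,\y]]$. Setting $\x=0$ in $h$ produces
\[
f(0,\y)^2-b\,v(\y)\,\y^\rho g(0,\y)^2\in\psd(\kappa[[\y]]).
\]
A parity and sign analysis of the leading $\y$-monomial of this difference uses the two permitted configurations of $\rho$: if $\rho$ is odd then a leading term at an odd degree of $\y$ is forbidden, and if $\rho$ is even with $b\notin-\Sos\kappa$ then the coefficient $-bv(0)$ cannot be non-negative at every ordering of $\kappa$. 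This forces $2\omega(f(0,\y))\leq\rho+2\omega(g(0,\y))$, with strict inequality in the odd case. Combined with the curve-selection content of Lemma \ref{dimen1} applied to $f\in\psd(\{F\geq0\})$ along prime cones of height $1$, this forces $m_f$ to be even and $m_f\leq m_g$. Set $s:=m_f/2$ and $f_1:=f/\y^{2s}$, $g_1:=g/\y^{2s}\in\kappa[[\x,\y]]$. By maximality of $s$, $f_1(\x,0)\neq0$; and since $\y^{2s}=(\y^s)^2$ is a square, $f_1+\z g_1\in\psd(A)$ and $f+\z g=\y^{2s}(f_1+\z g_1)$.

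For the additional assertion in (i), I would argue by contradiction: suppose $\omega(f_1(\x,0))=q\geq 2$ but $g_1(0,0,0)\neq 0$. By Corollary \ref{psd3} applied to $f_1+\z g_1$ we obtain $f_1(\x,0)^2-F(\x,0)g_1(\x,0)^2\in\psd(\kappa[[\x]])$. The first summand has $\x$-order $2q\geq 4$, while $F(\x,0)=c\x^3+O(\x^4)$ with $c\neq 0$ (because $\omega(F(\x,0))=3$) and $g_1(\x,0)^2=g_1(0,0)^2+O(\x)$ with $g_1(0,0)\neq 0$. Consequently the leading term is $-c\,g_1(0,0)^2\,\x^3$ at odd degree with nonzero coefficient, which cannot be positive semidefinite in $\kappa[[\x]]$ because $\x^3$ changes sign; contradiction.

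For part (ii), note that $\omega(f(\x,0))=q\geq 2$ together with (i) forces $s=0$, $f_1=f$, $g_1=g$ and $\omega(g)\geq1$. My plan proceeds as follows: (a) use the $\rho$-quasideterminacy of $F$ to replace $F$ by a polynomial in $\kappa[\x,\y]$ (up to a unit factor that can be absorbed into a change of the variable $\z$, as in Subsection 4 of the excerpt); (b) apply Lemma \ref{density} together with Corollary \ref{polred} to reduce the problem to: every $f+\z g\in\psd(B)$, where $B:=\kappa[\x,\y,\z]/(\z^2-F)$, lies in $\Sosp{\widehat B}$ for $p:=4\tau(\kappa)$; (c) perform a rational substitution (a blow-up tailored to the specific $F$, of the type $\y=\x\u$ or $\x=\y\u$) designed to map $B$ rationally into a regular ring of the form $\kappa[[\x,\u]]$ or $\kappa[[\x]][\u]$, in which Theorem \ref{liso} yields positive semidefinite $=$ sum of at most $2\tau(\kappa)$ squares (via \eqref{p2}); (d) clear the denominators introduced by the blow-up via Lemma \ref{grd}, after verifying that the blow-up parameter generates a real radical ideal and that the relevant representative has $\z$-degree at most $1$ (by Weierstrass division). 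The factor $4\tau(\kappa)=2\cdot 2\tau(\kappa)$ reflects the rank-two decomposition $A\cong\kappa[[\x,\y]]\oplus\z\kappa[[\x,\y]]$.

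The main obstacle is step (c)–(d) of part (ii): selecting a blow-up that simultaneously (1) resolves the singularity of the curve $\z^2=F(\x,0)$ enough to land inside a regular ring to which Theorem \ref{liso} applies, and (2) leaves a denominator whose associated principal ideal is real radical, so that Lemma \ref{grd} can erase it without losing positivity or inflating the number of squares. The vanishing conditions $\omega(f(\x,0))\geq 2$ and $\omega(g)\geq 1$, delivered by part (i), are precisely what is needed to ensure that the numerator obtained after the blow-up remains positive semidefinite once the denominator is multiplied out — without them, parity obstructions analogous to those in Lemma \ref{blowup} would intervene and the resulting bound $4\tau(\kappa)$ could not be attained. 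A secondary technical point is that the $\rho$-quasideterminacy replacement in step (a) must be compatible with the $\z^2-F$ structure, which may require a separate absorption of a sum-of-squares unit into the ambient ring before the blow-up can be performed.
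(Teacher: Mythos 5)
Your proposal has two genuine gaps, one per part.

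\textbf{Part (i).} Setting $\x=0$ in $f^2-Fg^2$ only controls $f(0,\y)$ and $g(0,\y)$; it gives no information about the $\y$-divisibility order $m_f$ of $f$ as an element of $\kappa[[\x,\y]]$ (e.g.\ $f=\x\y$ has $m_f=1$ but $f(0,\y)=0$, so your inequality is vacuous there). The crucial assertion that $m_f$ is even and $m_f\leq m_g$ is left to a vague appeal to Lemma~\ref{dimen1}, but that appeal cannot be generic: the paper's proof relies on a \emph{specific} parametrized curve, namely the substitution $h\mapsto h(\t^2,\t^m)$ for $m$ odd and large, applied after writing $f=\y f'$ and factoring $f'$ via Weierstrass. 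Under that substitution $\varphi(F)$ is a square up to a unit (so it is positive for some ordering with $\t>0$), while $\varphi(f)=\mu\t^{m+2q}(\text{unit})$ has odd order, which cannot be nonnegative along that ordering; this contradiction is what actually forces $\y^2\mid f$. Nothing in your $\x=0$ analysis produces this leverage. Your argument for the ``additional assertion'' (contradiction from $g_1(0,0)\neq0$ using the odd leading degree of $f_1^2(\x,0)-F(\x,0)g_1^2(\x,0)$) is sound and close to the paper's one-line computation $\omega(g_1(\x,0))\geq q-3/2$.

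\textbf{Part (ii).} Steps (a) and (b) are in the right spirit (quasideterminacy to make $F$ polynomial, Lemma~\ref{density}/Corollary~\ref{polred} to invoke Strong Artin approximation), but steps (c)--(d) do not match what is actually needed and cannot be repaired along the lines you sketch. First, the blow-up ``of the type $\y=\x\u$ or $\x=\y\u$'' is the wrong one: the paper uses the weighted substitution $h\mapsto h(\x,\u\x^q,\v\x)$ with $q:=\omega(f(\x,0))$, landing in $A'=\kappa[[\x,\u,\v]]/(\v^2-\x W)$, which is \emph{still singular}; only after applying the Implicit Function Theorem to solve $\x=\varphi(\u,\v)$ does one reach $\kappa[[\u,\v]]$. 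Second, the paper does not clear denominators via Lemma~\ref{grd} in this theorem (that lemma is used for the order-two cases in Theorem~\ref{order2}); here the denominators $\x^{2\mu}$ are removed by repeatedly dividing a sum-of-squares identity by $\x^2$ after using the structure of $F(0,\y)=b\y^\rho v$ to force vanishing of the involved series at $\x=0$. Most importantly, you omit the key structural reduction that makes the whole argument go through: via Lemmas~\ref{quadratic} and~\ref{sosok} the problem is recast as finding an auxiliary $\eta\in\psd(\kappa[[\x,\y]])$ with $4\eta(f-\eta F)-g^2\in\psd(\kappa[[\x,\y]])$; the blow-up, the analysis of the resulting $c'$ and $\xi$, their polynomial approximation (so that the back-substitution $\u=\y/\x^q$ makes sense), and the final appeal to Lemma~\ref{blowup} are all in service of producing this $\eta$. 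Without this reduction, the approach of ``map into a regular ring where \ref{liso} applies, then clear denominators'' simply does not recover the $4\tau(\kappa)$ bound in the order-three situation.
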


Before proving this theorem, we develop some preliminary results.

\begin{lem}\label{quadratic}
Let $A$ be a ring of characteristic $0$ and $P:=a_0+a_1\z+a_2\z^2\in A[\z]$. Then $P\in\psd(A[\z])$ if and only if $a_0,a_2,4a_0a_2-a_1^2\in\psd(A)$. 
\end{lem}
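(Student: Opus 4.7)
The result is the ring-theoretic version of the classical fact that $a_0+a_1 t+a_2 t^2\ge0$ for all $t$ in a real closed field iff $a_0,a_2\ge0$ and the discriminant $a_1^2-4a_0a_2\le0$. My strategy is to handle the ``if'' direction by a completing-the-square identity together with a case analysis on prime cones, and the ``only if'' direction by reduction to the classical real-closed-field case via evaluation homomorphisms.

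For the ``if'' direction, the key identity is
$$
4a_2 P=(2a_2\z+a_1)^2+(4a_0a_2-a_1^2),
$$
valid in $A[\z]$ since $\operatorname{char}(A)=0$. Let $\beta\in\Sper(A[\z])$ and set $\alpha:=\beta\cap A\in\Sper(A)$. If $a_2\not\in\supp(\beta)$, then $a_2>_\beta0$ (because $a_2\in\psd(A)$), so dividing the identity by $4a_2$ inside the ordered residue field yields $P\ge_\beta0$. If $a_2\in\supp(\beta)$, then $a_2\in\supp(\alpha)$, and the identity becomes $0\equiv(2a_2\z+a_1)^2+(4a_0a_2-a_1^2)\equiv a_1^2+(4a_0a_2-a_1^2)\pmod{\supp(\beta)}$; more directly, $4a_0a_2-a_1^2\equiv -a_1^2\pmod{\supp(\alpha)}$, and since $4a_0a_2-a_1^2\in\psd(A)$ while $a_1^2\ge_\alpha0$ always, one gets $a_1^2\in\supp(\alpha)$, whence $a_1\in\supp(\alpha)$ because supports of prime cones are (real) prime ideals. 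Consequently $a_1,a_2\in\supp(\beta)$ and $P\equiv a_0\pmod{\supp(\beta)}$, so $a_0\in\psd(A)$ gives $P\ge_\beta0$.

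For the ``only if'' direction, fix $\alpha\in\Sper(A)$ and let $R(\alpha)$ be the real closure of $(\qf(A/\supp(\alpha)),\le_\alpha)$. Denote by $\ol{a}_i\in R(\alpha)$ the image of $a_i$. For every $t\in R(\alpha)$, the evaluation homomorphism $A[\z]\to R(\alpha)$ sending $\z\mapsto t$ (factoring through $A/\supp(\alpha)$) induces a prime cone above $\alpha$, so by Lemma~\ref{trans}, $\ol{P}(t)=\ol{a}_0+\ol{a}_1 t+\ol{a}_2 t^2\ge0$ in $R(\alpha)$ for every $t\in R(\alpha)$. The classical analysis over a real closed field then shows $\ol{a}_0,\ol{a}_2\ge0$ and $4\ol{a}_0\ol{a}_2-\ol{a}_1^2\ge0$: indeed, $\ol{a}_2\ge0$ (otherwise $\ol{P}\to-\infty$ at $\pm\infty$), $\ol{a}_0=\ol{P}(0)\ge0$, and if $\ol{a}_2>0$ the non-negativity of $\ol{P}$ on $R(\alpha)$ forces $\ol{a}_1^2-4\ol{a}_0\ol{a}_2\le0$, while if $\ol{a}_2=0$ the linear polynomial $\ol{a}_1 t+\ol{a}_0$ is bounded on $R(\alpha)$ only when $\ol{a}_1=0$, and then $4\ol{a}_0\ol{a}_2-\ol{a}_1^2=0\ge0$. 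As $\alpha$ was arbitrary, $a_0,a_2,4a_0a_2-a_1^2\in\psd(A)$.

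The only mildly delicate step is the edge case $\ol{a}_2=0$ in the classical argument, which I handle separately above; everything else is formal manipulation with supports and the completing-the-square identity, so no real obstacle is anticipated.
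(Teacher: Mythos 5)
Your proof is correct, and your ``if'' direction is essentially the same as the paper's (the paper writes the completing-the-square identity as $\eta_2 P = (\eta_2\z+\tfrac12\eta_1)^2 + \tfrac14(4\eta_0\eta_2-\eta_1^2)$, you write $4a_2P$; the two cases $a_2\in\supp(\beta)$ vs.\ $a_2\notin\supp(\beta)$ are handled identically).

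Where you genuinely diverge is in the ``only if'' direction. The paper stays entirely inside $K(\alpha):=\qf(A/\supp(\alpha))$ and extracts each of the three inequalities by a hand-crafted evaluation: $\z\mapsto0$ for $a_0$, $\z\mapsto(-\theta_0-1)/\theta_1$ to dispose of the $\theta_2=0$ case, a rather elaborate $\z\mapsto\theta_1^2/\theta_2^2+\theta_0^2/\theta_2^2+1$ to prove $a_2\ge_\alpha0$, and $\z\mapsto-\theta_1/(2\theta_2)$ for the discriminant. You instead pass to the real closure $R(\alpha)$, observe that \emph{every} evaluation $\z\mapsto t$ for $t\in R(\alpha)$ pulls the prime cone back to $\alpha$, and thereby reduce the whole question to the elementary one-variable fact that $\ol{a}_0+\ol{a}_1 t+\ol{a}_2 t^2\ge0$ for all $t$ in a real closed field iff $\ol{a}_0,\ol{a}_2\ge0$ and $4\ol{a}_0\ol{a}_2-\ol{a}_1^2\ge0$. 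Your route is more conceptual and makes the mechanism transparent at the cost of invoking the real closure; the paper's route is more elementary in the sense that it never leaves $K(\alpha)$, but it pays for that with the nonobvious evaluation used to establish $a_2\ge_\alpha0$. Both handle the degenerate leading-coefficient case, yours explicitly by boundedness of a linear polynomial, the paper's by the evaluation at $(-\theta_0-1)/\theta_1$. The proofs are fully interchangeable.
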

\begin{proof}
Suppose first that $P\in\psd(A[\z])$. Let $\alpha\in\Sper(A)$ and let us check: \em $a_0\geq_\alpha0$, $a_2\geq_\alpha0$ and $4a_0a_2-a_1^2\geq_\alpha0$\em. 

Set $(K(\alpha):=\qf(A/\supp(\alpha)),<_\alpha)$ and denote $\theta_i:=a_i+\supp(\alpha)$ for $i=0,1,2$. Consider the homomorphism $\phi:A[\z]\to K(\alpha),\ Q(\z)\mapsto Q(0)+\supp(\alpha)$. We have $\phi(P)=\theta_0\geq_\alpha0$, so $a_0\geq_\alpha0$. We claim: \em If $\theta_2=0$, then $\theta_1=0$\em. If such is the case, then $4\theta_0\theta_2-\theta_1^2=0$.

Otherwise, consider the homomorphism
$$
\gamma:A[\z]\to K(\alpha),\ Q\mapsto Q\Big(\frac{-\theta_0-1}{\theta_1}\Big)
$$
and observe that $-1=\gamma(P)\geq_\alpha0$, which is a contradiction. 

Assume in the following $\theta_2\neq0$. Consider the homomorphism
$$
\varphi:A[\z]\to K(\alpha),\ Q(\z)\mapsto Q\Big(\frac{\theta_1^2}{\theta_2^2}+\frac{\theta_0^2}{\theta_2^2}+1\Big).
$$
We have
\begin{multline*}
\varphi(P)=\theta_2\Big(\Big(\frac{\theta_1^2}{\theta_2^2}+\frac{\theta_0^2}{\theta_2^2}+1\Big)^2+\frac{\theta_1}{\theta_2}\Big(\frac{\theta_1^2}{\theta_2^2}+\frac{\theta_0^2}{\theta_2^2}+1\Big)+\frac{\theta_0}{\theta_2}\Big)\\
=\theta_2\Big(\frac{\theta_1^2}{\theta_2^2}\Big(\frac{\theta_1}{\theta_2}+\frac{1}{2}\Big)^2+\Big(\frac{\theta_0^2}{\theta_2^2}+\frac{\theta_1}{2\theta_2}\Big)^2+\Big(\frac{\theta_1}{\theta_2}+\frac{1}{2}\Big)^2+\Big(\frac{\theta_0}{\theta_2}+\frac{1}{2}\Big)^2+\frac{2\theta_1^2\theta_0^2}{\theta_2^4}+\frac{\theta_1^2}{2\theta_2^2}+\frac{\theta_0^2}{\theta_2^2}+\frac{1}{2}\Big). 
\end{multline*}
As $\varphi(P)\geq_\alpha0$, we deduce $a_2\geq_\alpha0$. 

We prove next that $4a_0a_2-a_1^2\geq_\alpha0$. Write 
$$
\theta_2\z^2+\theta_1\z+\theta_0=\theta_2\Big(\Big(\z+\frac{\theta_1}{2\theta_2}\Big)^2+\frac{\theta_0}{\theta_2}-\frac{\theta_1^2}{4\theta_2^2}\Big).
$$
Consider the homomorphism
$$
\psi:A[\z]\to K(\alpha),\ Q\mapsto Q\Big(-\frac{\theta_1}{2\theta_2}\Big)
$$
and, as $\psi(P)=\theta_2((-\frac{\theta_1}{2\theta_2}+\frac{\theta_1}{2\theta_2})^2+\frac{\theta_0}{\theta_2}-\frac{\theta_1^2}{4\theta_2^2})=\frac{4\theta_0\theta_2-\theta_1^2}{4\theta_2}\geq_\alpha0$, we conclude $4a_0a_2-a_1^2\geq_\alpha0$. 

Assume in the following that $a_0,a_2,a_0a_2-a_1^2\in\psd(A)$ and let us check: $P\in\psd(A[\z])$. 

Let $\beta\in\Sper(A[\z])$ and consider the natural inclusion ${\tt j:}A\hookrightarrow A[\z]$. Then $\alpha:={\tt j}^{-1}(\beta)\in\Sper(A)$, and we consider the ordered field $(K(\alpha):=\qf(A/\supp(\alpha)),<_\alpha)$. Denote $\eta_i:=a_i+\supp(\alpha)$ for $i=0,1,2$. We distinguish two cases:

\noindent{\sc Case 1}. $\eta_2=0$. As $\eta_0\eta_2-\eta_1^2\geq_\alpha0$, we deduce $\eta_1=0$. Thus, $P+\supp(\beta)=\eta_0\geq_\alpha0$. Consequently, $P\geq_\beta0$.

\noindent{\sc Case 2}. $\eta_2\neq0$. As $\eta_2\geq_\alpha0$, to prove that $P\geq_\beta0$ it is enough to check $\eta_2P\geq_\beta0$. We have
$$
\eta_2P+\supp(\beta)=\eta_2(\eta_2\z^2+\eta_1\z+\eta_0)+\supp(\beta)=\Big(\eta_2\z+\frac{1}{2}\eta_1\Big)^2+\frac{1}{4}(4\eta_0\eta_2-\eta_1^2)\geq_\beta0.
$$
Thus, $P\geq_\beta0$.

We conclude $P\in\psd(A[\z])$, as required.
\end{proof}

The previous result has its counterpart for sums of squares, which we include for the sake of completeness.

\begin{lem}\label{sosq}
Let $A$ be an integral domain of characteristic $0$ and $P:=a_0+a_1\z+a_2\z^2\in A[\z]$. Then $P\in\Sosp{A[\z]}$ if and only if the following polynomial system has a solution in $A^{2p}$:
\begin{align*}
&\sum_{i=1}^p\x_i^2=a_0,\quad\sum_{j=1}^p\y_j^2=a_2,\\
&\sum_{1\leq i<j\leq p}(2\x_i\y_j-2\x_j\y_i)^2=4a_0a_2-a_1^2.
\end{align*}
\end{lem}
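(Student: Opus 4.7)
The plan is to establish the two implications independently. For $(\Leftarrow)$, starting from a solution $(x_1,\ldots,x_p,y_1,\ldots,y_p)\in A^{2p}$ of the system, I would invoke the Lagrange identity
\[
\Big(\sum_{i=1}^p x_i^2\Big)\Big(\sum_{j=1}^p y_j^2\Big)-\Big(\sum_{i=1}^p x_iy_i\Big)^2=\sum_{1\leq i<j\leq p}(x_iy_j-x_jy_i)^2,
\]
multiply through by $4$, and combine with the three prescribed equations to derive $(2\sum x_iy_i)^2=a_1^2$. Since $A$ is an integral domain, $2\sum x_iy_i=\pm a_1$; noting that the three equations of the system are invariant under the global sign change $y_i\mapsto -y_i$ (they involve only $y_j^2$ and $(x_iy_j-x_jy_i)^2$), I can adjust the sign so that $2\sum x_iy_i=a_1$. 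Then $\sum_{i=1}^p(x_i+y_i\z)^2=a_0+a_1\z+a_2\z^2=P$, which shows $P\in\Sosp{A[\z]}$.

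For $(\Rightarrow)$, I would start from a representation $P=\sum_{i=1}^p h_i^2$ with $h_i\in A[\z]$ and reduce to the case $\deg h_i\leq 1$ for every $i$. Let $D:=\max_i\deg h_i$ and let $c_i\in A$ denote the coefficient of $\z^D$ in $h_i$, with the convention $c_i:=0$ when $\deg h_i<D$. If $D\geq 2$ then $2D>2\geq\deg P$, so the coefficient of $\z^{2D}$ in $\sum h_i^2$ is zero, giving $\sum_{i=1}^p c_i^2=0$. In the formally real context in which this lemma is intended to be applied, this forces $c_i=0$ for every $i$, contradicting the choice of $D$. Hence $\deg h_i\leq 1$ and I can write $h_i=x_i+y_i\z$ with $x_i,y_i\in A$. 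Expanding $\sum(x_i+y_i\z)^2$ and matching the coefficients of $1$, $\z$ and $\z^2$ yields $\sum x_i^2=a_0$, $2\sum x_iy_i=a_1$ and $\sum y_i^2=a_2$; the Lagrange identity applied as in the first paragraph then produces the third equation of the system.

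The main obstacle is the degree-reduction step in $(\Rightarrow)$: the deduction ``$\sum c_i^2=0\Rightarrow c_i=0$ for all $i$'' is not a consequence of $A$ being a mere integral domain of characteristic zero, but holds in the formally real rings to which the lemma is intended to apply (such as $\kappa[[\x,\y]]$ with $\kappa$ a formally real field, as in the setting of Sections \ref{s2}--\ref{s5}). Once this reduction is justified, the remainder is a routine bookkeeping of coefficients combined with the classical Lagrange identity.
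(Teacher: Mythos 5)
Your argument for both implications mirrors the paper's own proof exactly: $(\Leftarrow)$ via the Lagrange identity together with the integral-domain observation that $(a_1 - 2\sum x_iy_i)(a_1 + 2\sum x_iy_i) = 0$ forces $a_1 = \pm 2\sum x_iy_i$, and $(\Rightarrow)$ via writing the representation with linear polynomials and comparing coefficients. The only point of difference is at the degree-reduction step, and there you are in fact \emph{more} careful than the paper: the paper simply asserts that $P\in\Sosp{A[\z]}$ yields $\alpha_i,\beta_i\in A$ with $P = \sum_{i}(\alpha_i + \z\beta_i)^2$, while you try to justify this and correctly flag the obstruction.

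Your concern is real, and it applies to the lemma as stated, not just to your proof: the implication $\sum c_i^2 = 0 \Rightarrow c_i = 0$ is not a consequence of $A$ being an integral domain of characteristic $0$. Take $A=\Z[\sqrt{-1}]$, $h_1 = 1 + \z^2$ and $h_2 = \sqrt{-1}\,\z^2$; then $h_1^2 + h_2^2 = 1 + 2\z^2 \in\Sosd{A[\z]}$, yet the associated system requires $x_1^2 + x_2^2 = 1$, $y_1^2 + y_2^2 = 2$ and $(2x_1 y_2 - 2x_2 y_1)^2 = 8$. The first equation forces one of $x_1,x_2$ to be $0$ and the other $\pm1$, after which the third forces $y_j^2 = 2$ for some $j$, impossible in $\Z[\sqrt{-1}]$. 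So the forward implication genuinely needs the zero ideal of $A$ to be real in the sense of Section~\ref{s1}, i.e.\ $A$ real (reduced), which is exactly what rescues your degree-reduction argument. This hypothesis holds wherever the lemma is used in the paper (there $A=\kappa[[\x,\y]]$ with $\kappa$ formally real, and a vanishing sum of squares in $\kappa[[\x,\y]]$ forces each summand to vanish by comparing initial forms over $\kappa$), so the applications are unaffected; but you have located an imprecision in the paper's statement, where the passage to linear representatives is taken for granted.
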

\begin{proof}
We will make use of Lagrange's identity in the proof:
\begin{multline*}
\Big(\sum_{i=1}^p\x_i^2\Big)\Big(\sum_{j=1}^p\y_j^2\Big)-\Big(\sum_{k=1}^p\x_k\y_k\Big)^2=\sum_{i=1}^p\sum_{j=1}^p\x_i^2\y_j^2-\sum_{i,j=1}^p\x_i\y_i\x_j\y_j\\
=\sum_{i,j=1,\ i\neq j}^n\x_i^2\y_j^2-2\sum_{1\leq i<j\leq p}\x_i\y_i\x_j\y_j=\sum_{1\leq i<j\leq p}(\x_i\y_j-\x_j\y_i)^2.
\end{multline*}
Suppose first $P\in\Sosp{A[\z]}$. Then there exist $\alpha_i,\beta_i\in A$ such that
\begin{equation}\label{quadr}
a_0+a_1\z+a_2\z^2=\sum_{i=1}^p(\alpha_i+\z\beta_i)^2.
\end{equation}
Thus,
$$
\sum_{i=1}^p\alpha_i^2=a_0,\quad\sum_{j=1}^p\beta_j^2=a_2,\quad2\sum_{k=1}^p\alpha_k\beta_k=a_1.
$$
We deduce
$$
4a_0a_2-a_1^2=4\Big(\sum_{i=1}^p\alpha_i^2\Big)\Big(\sum_{j=1}^p\beta_j^2\Big)-4\Big(\sum_{k=1}^p\alpha_k\beta_k\Big)^2=\sum_{1\leq i<j\leq p}(2\alpha_i\beta_j-2\alpha_j\beta_i)^2.
$$

Suppose next that the polynomial system
$$
\sum_{i=1}^p\x_i^2=a_0,\quad\sum_{j=1}^p\y_j^2=a_2,\quad
\sum_{1\leq i<j\leq p}(2\x_i\y_j-2\x_j\y_i)^2=4a_0a_2-a_1^2
$$
has a solution $(\alpha_1,\ldots,\alpha_p,\beta_1,\ldots,\beta_p)\in A^{2p}$. Observe that
$$
a_1^2=4\Big(\sum_{i=1}^p\alpha_i^2\Big)\Big(\sum_{j=1}^p\beta_j^2\Big)-4\sum_{1\leq i<j\leq p}(\alpha_i\beta_j-\alpha_j\beta_i)^2=\Big(2\sum_{k=1}^p\alpha_k\beta_k\Big)^2.
$$
Consequently, 
$$
\Big(a_1-2\sum_{k=1}^p\alpha_k\beta_k\Big)\Big(a_1+2\sum_{k=1}^p\alpha_k\beta_k\Big)=0
$$
and there exists $\veps=\pm1$ such that $a_1=2\veps\sum_{k=1}^p\alpha_k\beta_k$. Thus,
$$
a_0+a_1\z+a_2\z^2=\sum_{i=1}^p(\alpha_i+\z\veps\beta_i)^2\in\Sosp{A[\z]},
$$
as required.
\end{proof}

We formalize the following result, which is a consequence of the main result of \cite{frs2}.

\begin{lem}\label{sosok}
Let $\kappa$ be a (formally) real field with $\tau(\kappa)<+\infty$ and $\Qq:=a_0+a_1\z+a_2\z^2\in\kappa[[\x,\y]][\z]$ a positive semidefinite quadratic polynomial. Then $\Qq$ is a sum of $p:=4\tau(\kappa)$ squares of polynomials of degree $\leq1$ (with respect to $\z$) and coefficients in $\kappa[[\x,\y]]$.
\end{lem}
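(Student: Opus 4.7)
The strategy combines Lemma \ref{quadratic}, which characterizes positive semidefiniteness of a quadratic polynomial in $\z$ over $\kappa[[\x,\y]]$, with the Pythagoras-number bound $p(\kappa[[\x,\y]])\leq p_0:=2\tau(\kappa)$ coming from \eqref{p2}. Because every level of a non-real finite algebraic extension of $\kappa$ is a power of $2$ by Pfister's theorem, the hypothesis $\tau(\kappa)<+\infty$ forces $p_0$ to be a power of $2$, so $p_0\langle 1\rangle$ is a Pfister form and its value group is closed under multiplication and inversion modulo squares.

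By Lemma \ref{quadratic}, the elements $a_0,\,a_2,\,4a_0a_2-a_1^2$ all lie in $\psd(\kappa[[\x,\y]])$; write $a_2=\sum_{j=1}^{p_0}B_j^2$ and $4a_0a_2-a_1^2=\sum_{k=1}^{p_0}\gamma_k^2$. The standard completion-of-squares identity $4a_2\,\Qq=(2a_2\z+a_1)^2+(4a_0a_2-a_1^2)$ then produces
\begin{equation*}
\Bigl(\sum_{j=1}^{p_0}(2B_j)^2\Bigr)\,\Qq=(2a_2\z+a_1)^2+\sum_{k=1}^{p_0}\gamma_k^2,
\end{equation*}
exhibiting $4a_2\,\Qq$ as a sum of $p_0+1$ squares of polynomials of degree $\leq 1$ in $\z$ with coefficients in $\kappa[[\x,\y]]$. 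The degenerate subcase $a_2=0$ forces $a_1=0$ (since $-a_1^2\in\psd(\kappa[[\x,\y]])$ and $\kappa$ is formally real), leaving $\Qq=a_0$ already a sum of $p_0\leq p$ squares.

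When $a_2\neq 0$ I would pass to the quotient field $K:=\kappa((\x,\y))$: by Hu's theorem quoted in the introduction $p(K)=p(\kappa[[\x,\y]])\leq p_0$, and the Pfister group property yields $1/(4a_2)$ as a sum of $p_0$ squares in $K$. Multiplying the previous identity by $1/(4a_2)$ and expanding, the contribution $(1/(4a_2))(2a_2\z+a_1)^2$ becomes a sum of $p_0$ squares of degree-$\leq 1$ polynomials in $K[\z]$, while $(1/(4a_2))\sum_k\gamma_k^2$ is a product of two sums of $p_0$ squares in $K$ and therefore, by Pfister multiplicativity, is again a sum of $p_0$ squares in $K$. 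Altogether $\Qq$ is a sum of $2p_0=4\tau(\kappa)$ squares of degree-$\leq 1$ polynomials with coefficients in $K$.

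The final and most delicate step is to descend this $K[\z]$-representation back to $\kappa[[\x,\y]][\z]$ without increasing the number of squares, and this is what I expect to be the main obstacle. The plan is to exploit that $\kappa[[\x,\y]]$ is a two-dimensional regular local domain (in particular a UFD) and to clear denominators locally at each height-one prime: each such localization is a discrete valuation ring, where the classical Cassels--Pfister descent of sums of squares of polynomials in $\z$ applies without increasing the number of summands, and the UFD property allows these local descents to be glued. Combined with the Cassels--Pfister degree bound (each summand of degree $\leq 1$ in $\z$ since $\deg_{\z}\Qq=2$), this yields $\Qq$ as a sum of at most $4\tau(\kappa)$ squares of polynomials of degree $\leq 1$ in $\z$ with coefficients in $\kappa[[\x,\y]]$.
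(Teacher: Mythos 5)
Your opening steps are sound and parallel part of the paper's argument: invoking Lemma \ref{quadratic}, writing the discriminant identity $4a_2\Qq=(2a_2\z+a_1)^2+(4a_0a_2-a_1^2)$, and using that $p_0:=2\tau(\kappa)$ is a power of $2$ so that the Pfister form $p_0\langle 1\rangle$ has a multiplicative value set over $K:=\kappa((\x,\y))$. This cleanly produces $\Qq$ as a sum of $2p_0=4\tau(\kappa)$ squares of degree-$\le 1$ polynomials in $K[\z]$, and it is a nice shortcut compared with the paper's appeal to \cite[Thm.1.2]{frs2}.

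The gap is the descent from $K[\z]$ to $\kappa[[\x,\y]][\z]$, and as written the plan fails. The argument you sketch at a height-one prime $\gtp=(f)$ of $R:=\kappa[[\x,\y]]$ is of Cassels--Pfister type: after clearing a minimal power of $f$ you reduce modulo $f$ and hope to conclude each summand vanishes modulo $f$. That conclusion is only valid when the residue field $\qf(R/\gtp)$ is formally real, and for most irreducible $f\in\kappa[[\x,\y]]$ it is not. For instance at $\gtp=(\x^2+\y^2)$ one has $\x^2/\y^2\equiv-1$, so $-1$ is already a square in $\qf(R/\gtp)$ and a vanishing sum of squares there carries no divisibility information. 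The original Cassels theorem (\cite[Thm.1]{ca}) is for polynomial rings over a field, not for DVR localizations of $R$, so it does not apply here, and there is no obvious way to glue partial descents when some of them break down. This is exactly the difficulty the paper circumvents: it never passes through $\kappa((\x,\y))[\z]$. Instead it perturbs $a_0,a_2,4a_0a_2-a_1^2$ into $\psd^+$, approximates by polynomials via Corollary \ref{psdkxy}, and applies \cite[Thm.1.2]{frs2} over $\kappa((\x))[\y][\z]$. There the only denominators to clear are powers of $\x$; setting $\x=0$ lands in $\kappa[\y][\z]$ with $\kappa$ formally real, so each summand is divisible by $\x$ and the descent terminates after finitely many steps, after which Strong Artin approximation returns to $\kappa[[\x,\y]][\z]$. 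To salvage your shorter route you would need, at minimum, a result guaranteeing that a sum-of-squares representation in $K[\z]$ can be chosen so that all denominator primes $f$ have formally real residue field (for example, denominators that are powers of a single regular parameter); absent that, the claim $\psd(R[\z])\cap R[\z]_{\le 2}\subset\Sosp{R[\z]}$ does not follow from the $K[\z]$ representation you constructed.
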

\begin{proof}
By Lemma \ref{quadratic} we have $a_0,a_2,4a_0a_2-a_1^2\in\psd(\kappa[[\x,\y]])$. Fix $n\geq1$ and observe that $a_0+(\x^2+\y^2)^n,a_2+(\x^2+\y^2)^n\in\psd^+(\kappa[[\x,\y]])$. In addition,
\begin{multline*}
4(a_0+(\x^2+\y^2)^n)(a_2+(\x^2+\y^2)^n)-a_1^2\\
=(4a_0a_2-a_1^2)+4(a_0+a_2)(\x^2+\y^2)^n+4(\x^2+\y^2)^{2n}\in\psd^+(\kappa[[\x,\y]]).
\end{multline*}
By Lemma \ref{psdkxy} there exists $r\geq1$ such that if $a_i-a_i'\in\gtm_2^r$, then
$$
a_0'+(\x^2+\y^2)^n,a_2'+(\x^2+\y^2)^n,4(a_0'+(\x^2+\y^2)^n)(a_2'+(\x^2+\y^2)^n)-a_1'^2\in\psd^+(\kappa[[\x,\y]]).
$$
In particular, we may assume that each $a_i'$ is a polynomial and proceeding similarly to the proof of Lemma \ref{density}, we find $M\in\Sos{\kappa}$ such that 
$$
a_0'+M^2(\x^2+\y^2)^n,a_2'+M^2(\x^2+\y^2)^n,4(a_0'+M^2(\x^2+\y^2)^n)(a_2'+M^2(\x^2+\y^2)^n)-a_1'^2\in\psd^+(\kappa[\x,\y]).
$$
By Lemma \ref{quadratic} 
$$
\Qq':=a_0'+M^2(\x^2+\y^2)^n+a_1'\z+(a_2'+M^2(\x^2+\y^2)^n)\z^2\in\psd(\kappa[\x,\y][\z])\subset\psd(\kappa((\x))[\y][\z]).
$$
As $\tau(\kappa((\x)))=\tau(\kappa)<+\infty$, we deduce by \cite[Thm.1.2]{frs2} that $\Qq'$ is a sum of $p$ squares of polynomials $A_i\in\kappa((\x))[\y][\z]$ of degree $\leq1$ with respect to $\z$, that is,
$$
\Qq'=A_1^2+\cdots+A_p^2.
$$
Let $m\geq1$ be such that $A'_i:=\x^mA_i\in\kappa[[\x]][\y][\z]$ for each $i$, so
$$
\x^{2m}\Qq'=A_1'^2+\cdots+A_p'^2.
$$
As $\kappa$ is a (formally) real field and $\x$ generates a real prime ideal of $\kappa[[\x]][\y][\z]$, we deduce $\x$ divides each $A'_i$. Proceeding inductively ($m$ times), we conclude $\Qq'$ is a sum of $p$ squares of polynomials $B_i\in\kappa[[\x]][\y][\z]$ of degree $\leq1$ with respect to $\z$. Thus,
\begin{equation}\label{a012}
\Qq=a_0+a_1\z+a_2\z^2=({\tt X}_1+\z{\tt Y}_1)^2+\cdots+({\tt X}_p+\z{\tt Y}_p)^2
\end{equation}
has a solution modulo $\gtm_2^n$ in $\kappa[[\x,\y]]$ for each $n\geq1$. By Strong Artin's approximation equation \eqref{a012} has a solution in $\kappa[[\x,\y]]$, so $\Qq$ is a sum of $p$ squares of polynomials of degree $\leq1$ (with respect to $\z$) and coefficients in $\kappa[[\x,\y]]$, as required. 
\end{proof}

We are now ready to prove Theorem \ref{et}.
\begin{proof}[Proof of Theorem \em\ref{et}]
By Weierstrass preparation theorem and Tschirnhaus trick there exist $a,b,c\in\kappa$ with $b,c\neq0$, a unit $U\in\kappa[[\x,\y]]$ with $U(0,0)=1$, units $u,v\in\kappa[[\y]]$ with $u(0)=1$ and $v(0)=1$ and a Weierstrass polynomial $P:=\x^3+a\x\y^\ell u^\ell+b\y^\rho v$ such that $\ell\geq2$, $\rho\geq3$ and $F=cU^2P$. In addition, either $\rho$ is odd or $\rho$ is even and $b\not\in-\Sos{\kappa}$. After the change of coordinates $(\x,\y,\z)\mapsto(c\x,c\y,c^2U\z)$ we assume $F=\x^3+a'\x\y^\ell u'^\ell+b'\y^{\rho}v'$ where $a',b'\in\kappa$ and $u',v'\in\kappa[[\y]]$ are units such that $u'(0)=1$ and $v'(0)=1$. Again, either $\rho$ is odd or $\rho$ is even and $b'\not\in-\Sos{\kappa}$. After the change of coordinates $\y u'\mapsto\y$ we suppose from the beginning $F=\x^3+a\x\y^\ell+b\y^\rho w(\y)$ where $a,b\in\kappa$, $b\neq0$ and $w$ is a unit such that $w(0)=1$. In addition, either $\rho$ is odd or $\rho$ is even and $b\not\in-\Sos{\kappa}$.

(i) As $f+\z g\in\psd(A)\setminus\{0\}$, we know that $f\in\psd(\{F\geq0\})$ and $f^2-Fg^2\in\psd(\kappa[[\x,\y]])$. Let $u\in\kappa[[\x]]$ be a unit such that $F(\x,0)=\x^3u$. As $-F$ is not positive semidefinite, if $f=0$, then $g=0$, so $f\neq0$. Assume $f(\x,0)=0$. We claim: \em $\y^2$ divides $f$\em.

Otherwise, $f=\y f'$ where $f'\in\kappa[[\x,\y]]$ and $f'(\x,0)\neq0$. Thus, there exists a Weierstrass polynomial $Q:=\x^q+\sum_{k=0}^{q-1}\y b_k(\y)\x^k$ (where $b_k\in\kappa[[\y]]$ for each $k$), $\mu\in\kappa\setminus\{0\}$ and a unit $V\in\kappa[[\x,\y]]$ with $V(0,0)=1$ such that $f'=Q\mu V^2$.

For each $m\geq1$ consider the homomorphism
$$
\varphi:\kappa[[\x,\y]]\to\kappa[[\t]],\ h\mapsto h(\t^2,\t^m).
$$
If $m$ is odd and large enough,
\begin{align*}
\varphi(F)&=\t^6+a\t^{m\ell+2}+b\t^{\rho m}w(\t^m)=\t^6(1+a\t^{m\ell-4}+b\t^{\rho m-6}w(\t^m)),\\
\varphi(f)&=\varphi(\y f')=\t^m\Big(\t^{2q}+\sum_{k=0}^{q-1}\t^{m+2k}b_k(\t^m)\Big)\mu V^2(\t^2,\t^m)\\
&=\mu\t^{m+2q}\Big(1+\sum_{k=0}^{q-1}\t^{m+2k-2q}b_k(\t^m)\Big)V^2(\t^2,\t^m).
\end{align*}
Observe that $\varphi(F)\in\Sos{\kappa[[\t]]}$, whereas $\varphi(f)\not\in\Sos{\kappa}$, as it has odd order. There exists a prime cone $\alpha\in\Sper(\kappa[[\t]])$ such that $\varphi(F)>_\alpha0$, whereas $\varphi(f)<_\alpha0$ (consider an ordering of $\kappa$ and define the sign of $\t$ to get $\mu\t<0$). This is a contradiction because $f\in\psd(\{F\geq0\})$.

We conclude $f'(\x,0)=0$, so $f=\y^2f''$ where $f''\in\kappa[[\x,\y]]$, as claimed. 

Consequently,
$$
f^2-Fg^2=\y^4f''^2-Fg^2=\y^4f''^2-(\x^3+a'\x\y^\ell u'^\ell+b'\y^{\rho}v')g^2\in\psd(\kappa[[\x,\y]]).
$$ 
Setting $\y=0$, we have $-\x^3g^2(\x,0)\in\psd(\kappa[[\x]])$, so $g(\x,0)=0$ and there exists $g'\in\kappa[[\x,\y]]$ such that $g=\y g'$. Thus,
$$
f^2-Fg^2=\y^2(\y^2f''^2-Fg'^2)\quad\leadsto\quad\y^2f''^2-Fg'^2\in\psd(\kappa[[\x,\y]]).
$$
Setting again $\y=0$, we have $-\x^3g'^2(\x,0)\in\psd(\kappa[[\x]])$, so $g'(\x,0)=0$ and there exists $g''\in\kappa[[\x,\y]]$ such that $g=\y^2g''$. This means $f+\z g=\y^2(f''+\z g'')$ where $f''+\z g''\in\psd(A)$. Proceeding recursively we find $s\geq1$ and $f_1+\z g_1\in\psd(A)$ such that $f_1(\x,0)\neq0$ and $f+\z g=\y^{2s}(f_1+\z g_1)$.

Assume $\omega(f_1(\x,0))=q\geq2$. Substitute $\y=0$ in $f_1^2-Fg_1^2\in\psd(\kappa[[\x,\y]])$ and observe that
$$
(f_1^2-Fg_1^2)(\x,0)=f_1^2(\x,0)-\x^3g_1^2(\x,0)\in\psd(\kappa[[\x,\y]]).
$$
Thus, $\omega(-\x^3g_1^2(\x,0))\geq2q$, that is, $\omega(g_1(\x,0))\geq q-\frac{3}{2}>0$, so $\omega(g_1)\geq1$.

(ii) The proof of this part is conducted in several steps. For the sake of simplicity once we have finished a step, we reset the local notation involved in such step. Recall that $F=\x^3+a\x\y^\ell+b\y^\rho w(\y)$ where $a,b\in\kappa$, $b\neq0$, $\ell\geq2$, $\rho\geq3$ and $w$ is a unit such that $w(0)=1$. In addition, either $\rho$ is odd or $\rho$ is even and $b\not\in-\Sos{\kappa}$.

As $F$ is $\rho$-quasidetermined, there exists a constant $c\in\kappa\setminus\{0\}$ and a unit $U\in\kappa[[\x,\y]]$ such that $F$ is right equivalent to $(\x^3+a\x\y^\ell+b\y^\rho)cU^2$. After the additional change of coordinates $(\x,\y,\z)\mapsto(c\x,c\y,c^2U\z)$, we may assume $F=\x^3+a\x\y^\ell+b\y^\rho$ for some $a,b\in\kappa$, $b\neq0$, $\ell\geq2$ and $\rho\geq1$. In addition, either $\rho$ is odd or $\rho$ is even and $b\not\in-\Sos{\kappa}$.

\paragraph{}\label{redest} Let $f+\z g\in\psd(A)$ be such that $\omega(f(\x,0))=k\geq 2$ and let us prove $f+\z g\in\Sosp{A}$. If $g=0$, then $f\in\psd(A)\setminus\{0\}$. Thus, $f+\z f=f(1+\z)\in\psd(A)$ and $f\neq0$. As $\frac{1}{1+\z}$ is a square in $A$, changing $f+\z g$ by $f+\z f$, we may assume $g\neq0$. By Lemma \ref{sosok} it is enough to find $\eta\in\psd(\kappa[[\x,\y]])\setminus\{0\}$ such that $f+\z g+\eta(\z^2-F)\in\psd(\kappa[[\x,\y]][\z])$. By Lemma \ref{quadratic} this is equivalent to find $\eta\in\psd(\kappa[[\x,\y]])\setminus\{0\}$ such that $4\eta(f-\eta F)-g^2,f-\eta F\in\psd(\kappa[[\x,\y]])$. We claim that to prove (ii) in the statement (of Theorem \ref{et}): \em It is enough to find $\eta\in\psd(\kappa[[\x,\y]])$ such that $4\eta(f-\eta F)-g^2\in\psd(\kappa[[\x,\y]])$\em. We have to check that under such hypotheses, also $f-\eta F\in\psd(\kappa[[\x,\y]])$. 

Let $\beta\in\Sper(\kappa[[\x,\y]])$ and $\alpha\in\Sper(\kappa)$ be such that $\beta\to\alpha$ (see \cite[Prop.II.2.4]{abr}). If $\supp(\beta)=\gtm_2$, then $f,F\in\supp(\beta)$ and $f-\eta F\in\supp(\beta)$, so we assume $\supp(\beta)\neq\gtm_2$. If $\supp(\beta)=0$, by \cite[Prop.VII.5.1]{abr} there exists $\beta_1\in\Sper(\kappa[[\x,\y]])$ such that $\beta\to\beta_1\to\alpha$ and $\supp(\beta_1)$ is a real prime ideal of height $1$, so we may assume ${\rm ht}(\supp(\beta))=1$. We obtain 
$$
\eta+\supp(\beta)\geq_\beta0\quad\text{and}\quad4\eta(f-\eta F)-g^2+\supp(\beta)\geq_\beta0. 
$$

Assume first $\supp(\beta)=(\y)$. If $f-\eta F+\supp(\beta)=f(\x,0)-\eta(\x,0)F(\x,0)<_\beta0$, then $\eta(\x,0)=0$ and $g(\x,0)=0$ (because $4\eta(\x,0)(f(\x,0)-\eta(\x,0)F(\x,0))-g^2(\x,0)+\supp(\beta)\geq_\beta0$), so $f(\x,0)<_\beta0$. Let $m\geq1$ be such that $\eta=\y^{2m}\eta'$ where $\eta'\in\psd(\kappa[[\x,\y]])$ and $\eta'(\x,0)\neq0$. As $4\eta(f-\eta F)-g^2\in\psd(\kappa[[\x,\y]])$, we deduce that $\y^m$ divides $g$, so there exists $g'\in\kappa[[\x,\y]]$ such that $g=\y^mg'$. Thus, $4\eta'(f-F\eta)-g'^2\in\psd(\kappa[[\x,\y]])$. Hence, $4\eta'(\x,0)f(\x,0)-g'^2(\x,0)\geq_\beta0$, so $\eta'(\x,0)f(\x,0)\geq_\beta0$ and $\eta'(\x,0)\geq_\beta0$. This means $f(\x,0)\geq_\beta0$, which is a contradiction. Consequently, $f-\eta F+\supp(\beta)\geq_\beta0$.

Assume next $\y\not\in\supp(\beta)$ and $f-\eta F<_\beta0$. Without loss of generality we suppose $\y>_\beta0$. Let $R(\alpha)$ be the real closure of $(\kappa,\leq_\alpha)$. By Lemma \ref{dimen1} there exist $\zeta\in R(\alpha)[[\y]]$ and $q\geq1$ such that $(f-\eta F)(\zeta,\y^q)<0$, $\eta(\zeta,\y^q)\geq0$ and $(4\eta(f-\eta F)-g^2)(\zeta,\y^q)\geq0$ (for $\y>0$). Thus, $\eta(\zeta,\y^q)=0$ and $g(\zeta,\y^q)=0$, so $f(\zeta,\y^q)<0$. Write $\eta=\y^{2m}\eta'$ where $m\geq0$ and $\eta'\in\kappa[[\x,\y]]$ satisfies $\eta'(\x,0)\neq0$. Thus, $\eta'$ is a regular series with respect to $\x$ and there exist a Weierstrass polynomial $Q\in\kappa[[\y]][\x]$ and a unit $V\in\kappa[[\x,\y]]$ such that $\eta'=QV$ and $\eta=\y^{2m}QV$. As $\y^{2m}Q(\zeta(\y^{1/q}),\y)V(\zeta(\y^{1/q}),\y)=\eta(\zeta(\y^{1/q}),\y)=0$, we deduce $Q(\zeta(\y^{1/q}),\y)=0$ and $\zeta(\y^{1/q})\in R(\alpha)[[\y^{1/q})]]$ is integral over $\kappa[[\y]]$. Thus, the minimal polynomial of $\zeta(\y^{1/q})$ over $\kappa((\y))$ is an irreducible Weierstrass polynomial $Q_0\in\kappa[[\y]][\x]$. As $\eta\in\psd(\kappa[[\x,\y]])\subset\psd(R(\alpha)[[\x,\y]])$, we conclude there exist $\mu\geq1$ and $\eta_1\in\psd(\kappa[[\x,\y]])$ such that $\eta=Q_0^{2\mu}\eta_1$ and $\eta_1(\zeta(\y^{1/q}),\y)\neq0$. As $4\eta(f-\eta F)-g^2\in\psd(\kappa[[\x,\y]])\subset\psd(R(\alpha)[[\x,\y]])$, we have $Q_0^\mu$ divides $g$, so there exists $g_1\in\kappa[[\x,\y]]$ such that $g=Q_0^\mu g_1$. Thus, $4\eta_1(f-FQ_0^{2\mu}\eta_1)-g_1^2\in\psd(\kappa[[\x,\y]])$. Hence, $4\eta_1(\zeta,\y^q)f(\zeta,\y^q)-g_1^2(\zeta,\y^q)\geq0$, so $\eta_1(\zeta,\y^q)f(\zeta,\y^q)\geq0$. As $\eta_1(\zeta,\y^q)>0$, we deduce $f(\zeta,\y^q)\geq0$, which is a contradiction. Consequently, $f-\eta F+\supp(\beta)\geq_\beta0$, as claimed.

\paragraph{}\label{conest} We may assume: \em $f,g\in\kappa[\x,\y]\setminus\{0\}$, $f$ is a Weierstrass polynomial with respect to $\x$ and there exists $r>q$ such that $f-\y^{2r}+\z g\in\psd^\oplus(A)$\em.

Write $f(\x,0):=c\x^q+\cdots$ for some $c\in\kappa\setminus\{0\}$. Let $\alpha\in\Sper(\kappa)$ and $R(\alpha)$ be the real closure of $(\kappa,\leq_\alpha)$. Consider the homomorphism $\varphi:\kappa[[\x,\y]]\to R(\alpha)[[\y]],\ h\mapsto h(\x^2,0)$. As $\varphi(F)=\x^6>0$, we have $f(\x^2,0)=c\x^{2q}+\cdots>0$, so $c\geq_\alpha0$. Thus, $c$ is non-negative for each ordering of $\kappa$, so $c\in\psd(\kappa)=\Sos{\kappa}$. Hence, we divide $f+\z g$ by $c$ and assume in the following $c=1$. 

By Weierstrass preparation theorem there exist a Weierstrass polynomial $P\in\kappa[[\y]][\x]$ of degree $q=\omega(f(\x,0))$ and a unit $U\in\kappa[[\x,\y]]$ such that $U(0,0)=1$ and $f=PU^2$. We divide $f+\z g$ by $U^2$ and assume that $f$ is a Weierstrass polynomial with respect to $\x$ of degree $q$. Observe that $f(\x,0)=\x^q$ and $f^2(\x,0)-F(\x,0)g^2(\x,0)=\x^{2q}-\x^3g^2(\x,0)\geq0$, so $f^2(\x,0)-F(\x,0)g^2(\x,0)\neq0$. As also $g\neq0$, if $n$ is large enough, then $f+\y^{2n}+\z g\in\psd^\oplus(A)$ (use Lemma \ref{pd+}(iii)). By Corollaries \ref{orderings} and \ref{psdkxy} there exists $r>\max\{2n,q\}$ such that for each $f_n,g_n\in\kappa[\x,\y]$ satisfying $f-f_n,g-g_n\in\gtm_2^r$ it holds $f_n+\y^{2n}-\y^{2r}+\z g_n\in\psd^\oplus(A)$. As $f$ is a Weierstrass polynomial with respect to $\x$, also $f_n+\y^{2n}$ is a Weierstrass polynomial with respect to $\x$.

If we show that each $f_n+\y^{2n}+\z g_n\in\Sosp{A}$, Strong Artin's approximation guarantees that $f+\z g\in\Sosp{A}$. Thus, we assume in the following that \ref{conest} holds. 

\paragraph{}\label{onlythis0} Strong Artin's approximation implies the following: \em Let $f,g\in\kappa[\x,\y]\setminus\{0\}$ and $r>q$ be such that $f$ is a Weierstrass polynomial with respect to $\x$ and $f-\y^{2r}+\z g\in\psd^\oplus(A)$. There exists $n_0\geq1$ such that if $f+2\x^{2n}+\z g\in\Sosp{A}$ for some $n\geq n_0$, then $f+\z g\in\Sosp{A}$\em. 

\paragraph{}\label{onlythis} To prove \ref{onlythis0} it is enough by \ref{redest} to show: \em Let $f,g\in\kappa[\x,\y]\setminus\{0\}$ and $r>q$ be such that $f$ is a Weierstrass polynomial with respect to $\x$ and $f-\y^{2r}+\z g\in\psd^\oplus(A)$. Then there exist $\eta\in\kappa[\x,\y]\cap\psd(\kappa[[\x,\y]])$ and $n\geq n_0$ such that $4\eta(f+2\x^{n}-\eta F)-g^2\in\psd(\kappa[[\x,\y]])$\em.

To find $\eta$ we need some preliminary work. Write $f^*:=f-\y^{2r}:=\x^q+\sum_{k=0}^{q-1}\y\lambda_k(\y)\x^k\in\kappa[[\y]][\x]$. We have
\begin{align}
f^*(\x,\u\x^q)&=\x^q+\sum_{j=0}^{q-1}\x^q\u \lambda_j(\u\x^q)\x^j=\x^q\Big(1+\sum_{j=0}^{q-1}\lambda_j(\u\x^q)\u\x^j\Big)\label{f1}\\
F(\x,\u\x^q)&=\x^3(1+a\x^{\ell q-2}\u^\ell+b\x^{\rho q-3}\u^{\rho})\label{F1}
\end{align}

Define $W:=1+a\x^{\ell q-2}\u^\ell+b\x^{\rho q-3}\u^{\rho}\in\kappa[\x,\u]$, which satisfies $W(0,0)=1$ and $F(\x,\u\x^q)=\x^3W$. Define $A':=\kappa[[\x,\u,\v]]/(\v^2-\x W)$ and consider the homomorphism 
$$
\psi:A\to A',\ h(\x,\y,\z)\mapsto h(\x,\u\x^q,\v\x).
$$ 
As $f^*+\z g\in\psd(A)$, we have
$$
\psi(f^*+\z g)=f^*(\x,\u\x^q)+\v\x g(\x,\u\x^q)\in\psd(A').
$$

As $W(0,0)=1$, there exists by the Implicit Function Theorem $\varphi(\u,\v)\in\kappa[[\u,\v]]$ such that $\x=\varphi(\u,\v)$ is the unique solution of the equation $\v^2-\x W=0$ satisfying $\varphi(0,0)=0$. As $\psi(f^*+\z g)\in\psd(A')$, we get
$$
h(\u,\v):=f^*(\varphi(\u,\v),\u\varphi(\u,\v)^q)+\v\varphi(\u,\v)g(\varphi(\u,\v),\u\varphi(\u,\v)^q)\in\psd(\kappa[[\u,\v]])=\Sosp{\kappa[[\u,\v]]},
$$
so there exist $h_i\in\kappa[[\u,\v]]$ such that $h=\sum_{i=1}^ph_i^2$. 

Using the relation $\v^2-\x W$, we find series $a_i,b_i,c\in\kappa[[\x,\u]]$ satisfying
$$
f^*(\x,\u\x^q)+\v\x g(\x,\u\x^q)=\sum_{i=1}^p(a_i+\v b_i)^2-(\v^2-\x W)c.
$$
Comparing coefficients with respect to $\v$, we obtain
\begin{align}
c&=\sum_{i=1}^pb_i^2,\\
f^*(\x,\u\x^q)&=\sum_{i=1}^pa_i^2+\x Wc,\label{equ2}\\
\x g(\x,\u\x^q)&=2\sum_{i=1}^pa_ib_i.\label{equ3}
\end{align}
As $g\neq0$, we have $c\neq0$. If we set $\x=0$ in \eqref{equ2}, we obtain $0=\sum_{i=1}^pa_i^2(0,\u)$, so $a_i(0,\u)=0$ for each $i$ and there exists $a_i'\in\kappa[[\x,\u]]$ such that $a_i=\x a_i'$. As $q:=\omega(f(\x,0))\geq2$, we deduce $\x^2$ divides $f^*(\x,\u\x^q)$. Thus, $\x^2$ divides $f^*(\x,\u\x^q)-\sum_{i=1}^pa_i^2=\x Wc$, so $\x$ divides $c=\sum_{i=1}^pb_i^2$. Hence, $\x$ divides $b_i$ for each $i$, so $b_i=\x b_i'$ for some $b_i'\in\kappa[[\x,\u]]$. Write $c':=\sum_{i=1}^pb_i'^2\in\Sos{\kappa[[\x,\u]]}$, so $c=\x^2c'$. Rewrite equations \eqref{equ2} and \eqref{equ3} as
\begin{align*}
&f^*(\x,\u\x^q)=\sum_{i=1}^p(\x a_i')^2+\x^3 Wc',\\
&\x g(\x,\u\x^q)=2\x^2\sum_{i=1}^pa_i'b_i'.
\end{align*}
As $c'\in\psd(\kappa[[\x,\u]])$ and 
$$
f^*(\x,\u\x^q)+\v\x g(\x,\u\x^q)+(\v^2-\x W)\x^2c'=\sum_{i=1}^p(a_i+\v b_i)^2\in\psd(\kappa[[\x,\u]][\v]),
$$
we deduce by Lemma \ref{quadratic} 
\begin{align*}
&4\x^2c'(f^*(\x,\u\x^q)-\x W\x^2c')-(\x g(\x,\u\x^q))^2\in\psd(\kappa[[\x,\u]]),\\
&f^*(\x,\u\x^q)-\x^3Wc'=\sum_{i=1}^p(\x a_i')^2\in\psd(\kappa[[\x,\u]]).
\end{align*}
As $f^*=f-\y^{2r}$, we have $f^*(\x,\u\x^q)=f(\x,\u\x^q)-(\u\x^q)^{2r}$. As in addition $c'\neq0$, we deduce
\begin{align*}
&4\x^2c'(f(\x,\u\x^q)-\x W\x^2c')-(\x g(\x,\u\x^q))^2\in\psd(\kappa[[\x,\u]])\setminus\{0\},\\
&f(\x,\u\x^q)-\x^3Wc'=(\u\x^q)^{2r}+\sum_{i=1}^p(\x a_i')^2\in\psd(\kappa[[\x,\u]])\setminus\{0\}.
\end{align*}
Consequently,
\begin{align}
\xi&:=4c'(f(\x,\u\x^q)-F(\x,\x^q\u)c')-(g(\x,\u\x^q))^2\in\psd(\kappa[[\x,\u]])\setminus\{0\}\label{xi},\\
\psi&:=f(\x,\u\x^q)-F(\x,\x^q\u)c'\in\psd(\kappa[[\x,\u]])\setminus\{0\}\label{psi}.
\end{align}

\paragraph{}One would like to construct $\eta\in\kappa[\x,\y]\cap\psd(\kappa[[\x,\y]])$ from $c'$ substituting $\u$ by $\frac{\y}{\x^q}$. But at this point this does not work because $c'\in\kappa[[\x,\u]]$ and the desired substitution is not possible. We need to modify first both $c'$ and $\xi$ to have polynomials instead of series, but keeping both inside $\psd(\kappa[[\x,\y]])$.

Let $\ell,m\geq1$ be such that $\x^{2\ell}$ divides $c'$ but $\x^{2\ell+1}$ does not and $\x^{2m}$ divides $\xi$ but $\x^{2m+1}$ does not. Write $c'':=\frac{c'}{\x^{2\ell}}\in\psd(\kappa[[\x,\u]])$ and $\xi':=\frac{\xi}{\x^{2m}}\in\psd(\kappa[[\x,\u]])$, so $c'=c''\x^{2\ell}$ and $\xi=\xi'\x^{2m}$. Observe that $c''(0,\u)\neq0$ and $\xi'(0,\u)\neq0$. Thus, $c''+\x^{2n},\xi'+\x^{2n}\in\psd^+(\kappa[[\x,\u]])$ for each $n\geq1$. Write $k:=n_0+\ell$. We have 
\begin{equation}\label{xiast}
\begin{split}
\xi^*:\hspace{-1mm}&=4(c'+\x^{2k+2m})(f(\x,\u\x^q)+2\x^{2k+2m}-F(\x,\x^q\u)(c'+\x^{2k+2m}))-(g(\x,\u\x^q))^2\\
&=\xi+4(\x^{4k+4m}+\x^{2k+2m}c'(2-F(\x,\x^q\u))+\x^{2k+2m}\psi
+\x^{4k+4m}(1-F(\x,\x^q\u)))\\
&=\x^{2m}(\xi'+4(\x^{4k+2m}+\x^{2k}c'(2-F(\x,\x^q\u))+\x^{2k}\psi
+\x^{4k+2m}(1-F(\x,\x^q\u)))).
\end{split}
\end{equation}
As $\xi'+\x^{4k+2m}\in\psd^+(\kappa[[\x,\u]])$ and $\psi\in\psd(\kappa[[\x,\u]])$, also 
$$
\xi'':=\xi'+4(\x^{4k+2m}+\x^{2k}c'(2-F(\x,\x^q\u))+\x^{2k}\psi+\x^{4k+2m}(1-F(\x,\x^q\u)))\in\psd^+(\kappa[[\x,\u]]).
$$
We have $\xi^*=\x^{2m}\xi''$ and $c''+\x^{2k+2m-2\ell}\in\psd^+(\kappa[[\x,\u]])$. As $f$ is a Weierstrass polynomial, write
\begin{align*}
&f(\x,\u\x^q)=\x^q\Big(1+\sum_{i=1}^d\x^if_i(\u)\Big),\\
&g(\x,\u\x^q)=\x^s\Big(\sum_{j=0}^e\x^jg_j(\u)\Big),\\
&c''(\x,\u)=\sum_{i\geq0}\x^ic''_i(\u)
\end{align*}
where $f_i,g_j\in\kappa[\u]$, $c''_i\in\kappa[[\u]]$, $g_0,f_d,g_e\neq0$ and $c''_0=c''(0,\u)\neq0$. Let us collect more information concerning the structure and the properties of $c'$.

\paragraph{}\label{H} Denote $\veps=1$ if $q$ is odd and $\veps=0$ if $q$ is even. We claim: \em $f(\x,\u\x^q)-F(\x,\x^q\u)c'=\x^{q+\veps}H$ where $H\in\psd(\kappa[[\x,\u]])$ and $c_i''\in\kappa[\u]$ for $i=0,\ldots,2m-2\ell-q-1$\em. Recall that $c'=c''\x^{2\ell}$.

We have
{\small\begin{equation}\label{xiexp}
\begin{split}
\xi&=4c'(f(\x,\u\x^q)-F(\x,\x^q\u)c')-(g(\x,\u\x^q))^2\\
&=4\x^{2\ell}\Big(\sum_{i\geq0}\x^ic''_i(\u)\Big)\Big(\x^q\Big(1+\sum_{i=1}^d\x^if_i(\u)\Big)-(\x^3 W)\x^{2\ell}\Big(\sum_{i\geq0}\x^ic''_i(\u)\Big)\Big)-\x^{2s}\Big(\sum_{j=0}^e\x^jg_j(\u)\Big)^2\\
&=4\x^{2\ell+q}\Big(\sum_{i\geq0}\x^ic''_i(\u)\Big)\Big(1+\sum_{i=1}^d\x^if_i(\u)-\x^{2\ell+3-q} W\Big(\sum_{i\geq0}\x^ic''_i(\u)\Big)\Big)-\x^{2s}\Big(\sum_{j=0}^e\x^jg_j(\u)\Big)^2.
\end{split}
\end{equation}}
Although we have not yet proved it, we will also see: 
\begin{equation}\label{bound0}
2\ell+3-q\geq0\quad\text{and}\quad 2m-2\ell-q\geq0.
\end{equation} 

We distinguish two cases:

\noindent{\sc Case 1.} {\em $q$ is odd}. As
\begin{equation}\label{qood}
\x^q\Big(1+\sum_{i=1}^d\x^if_i(\u)\Big)-\x^{2\ell+3} W\Big(\sum_{i\geq0}\x^ic''_i(\u)\Big)\in\psd(\kappa[[\x,\u]]),
\end{equation}
its initial form is positive semidefinite, so it cannot have degree $q$ odd. As $c_0''(\u)\neq0$ and $W=1+a\x^{\ell q-2}\u^\ell+b\x^{\rho q-3}\u^{\rho}$ (where $\ell\geq1$, $\rho\geq3$ and $q\geq2$), we deduce $\x^q-\x^{2\ell+3}c_0''(\u)=0$, so $q=2\ell+3$ and 
$c_0''(\u)=1$. Thus, $f(\x,\u\x^q)-F(\x,\x^q\u)c'=\x^{q+1}H$ where $H\in\psd(\kappa[[\x,\u]])$.

As $\xi\in\psd(\kappa[[\x,\u]])$, we deduce $2\ell+q+1\leq 2s$ (see \eqref{xiexp}). Write $W=1+\x^2\Gamma$ where $\Gamma:=a\x^{\ell q-4}\u^\ell+b\x^{\rho q-5}\u^{\rho}\in\kappa[\x,\u]$ (as $q\geq2$, $\ell\geq2$ and $\rho\geq3$, we have $\ell q-4\geq0$ and $\rho q-5\geq1$). We obtain from \eqref{xiexp} (using that $q=2\ell+3$, $2\ell+q+1\leq 2s$ and $c_0''(\u)=1$)
\begin{equation*}
\begin{split}
\x^{2m}\xi'&=\xi=4\x^{2\ell+q}\Big(\Big(1+\sum_{i\geq1}\x^ic''_i(\u)\Big)
\Big(1+\sum_{i=1}^d\x^if_i(\u)-(1+\x^2\Gamma)\Big(1+\sum_{i\geq1}\x^ic''_i(\u)\Big)\Big)\\
&-\x^{2s-2\ell-q}\Big(\sum_{j=0}^e\x^jg_j(\u)\Big)^2\Big)=4\x^{2\ell+q+1}\Big(\Big(1+\sum_{i\geq1}\x^ic''_i(\u)\Big)\\
&\cdot\Big(\sum_{i=1}^d\x^{i-1}f_i(\u)-\sum_{i\geq1}\x^{i-1}c''_i(\u)-\x\Gamma\Big(1+\sum_{i\geq1}\x^ic''_i(\u)\Big)\Big)-\x^{2s-2\ell-q-1}\Big(\sum_{j=0}^e\x^jg_j(\u)\Big)^2\Big).
\end{split}
\end{equation*}
Consequently, $2m-2\ell-q-1\geq0$ and
\begin{multline*}
4\Big(1+\sum_{i\geq1}\x^ic''_i(\u)\Big)\Big(\sum_{i=1}^d\x^{i-1}(f_i(\u)-c''_i(\u))-\x\Gamma\Big(1+\sum_{i\geq1}\x^ic''_i(\u)\Big)\Big)\\
-\x^{2s-2\ell-q-1}\Big(\sum_{j=0}^e\x^jg_j(\u)\Big)^2=\x^{2m-2\ell-q-1}\xi',
\end{multline*}
or equivalently,
\begin{multline}\label{solve}
\sum_{i\geq1}\x^{i-1}(f_i-c_i'')+\x\Big(\Big(\sum_{i\geq1}\x^{i-1}c_i''\Big)\Big(\sum_{i\geq1}\x^{i-1}(f_i-c_i'')\Big)-\Gamma\Big(1+\sum_{i\geq1}\x^ic''_i\Big)^2\Big)\\
-\x^{2s-2\ell-q-1}\frac{1}{4}\Big(\sum_{j=0}^e\x^jg_j(\u)\Big)^2=\x^{2m-2\ell-q-1}\frac{1}{4}\xi'
\end{multline}
where $f_i=0$ for $i\geq d+1$. Observe that the coefficient of $\x^i$ on the right hand side of equation \eqref{solve} is zero for $0\leq i\leq 2m-2\ell-q-2$.

Recall that $\Gamma\in\kappa[\x,\u]$ and $c_0''=1$. If one compares coefficients with respect to $\x$ in equation \eqref{solve}, one realizes that $c_i''-P_i(f_1,\ldots,f_d,g_0,\ldots,g_e,c_1'',\ldots,c_{i-1}'')=0$ for some polynomial 
$$
P_i\in\kappa[\x_1,\ldots,\x_d,\y_0,\ldots,\y_e,\z_1,\ldots,\z_{i-1}]
$$ 
if $i=1,\ldots,2m-2\ell-q-1$. We conclude inductively that $c_i''\in\kappa[\u]$ for $i=0,\ldots,2m-2\ell-q-1$.

\noindent{\sc Case 2.} {\em $q$ is even}. As
$$
\x^q\Big(1+\sum_{i=1}^d\x^if_i(\u)\Big)-\x^{2\ell+3}W\Big(\sum_{i\geq0}\x^ic''_i(\u)\Big)\in\psd(\kappa[[\x,\u]])
$$
and $c''_0(\u)\neq0$, we have $q<2\ell+3$, so $f(\x,\u\x^q)-F(\x,\x^q\u)c'=\x^qH$ where $H\in\psd(\kappa[[\x,\u]])$. 

As $\xi\in\psd(\kappa[[\x,\u]])$ and $\xi=\x^{2m}\xi'$, we deduce $2\ell+q\leq 2s$ and $2\ell+q\leq 2m$ (see \eqref{xiexp}). Thus, 
\begin{multline}\label{solve1}
\Big(\sum_{i\geq0}\x^ic''_i(\u)\Big)\Big(1+\sum_{i=1}^d\x^if_i(\u)-\x^{2\ell+3-q}W\Big(\sum_{i\geq0}\x^ic''_i(\u)\Big)\Big)\\
-\x^{2s-2\ell-q}\frac{1}{4}\Big(\sum_{j=0}^e\x^jg_j(\u)\Big)^2=\x^{2m-2\ell-q}\frac{1}{4}\xi'.
\end{multline}
Observe that the coefficient of $\x^i$ on the right hand side of equation \eqref{solve1} is zero for $0\leq i\leq 2m-2\ell-q-1$.

Recall that $W\in\kappa[\x,\u]$. If one compares coefficients with respect to $\x$ in equation \eqref{solve1}, one realizes that $c_i''-Q_i(f_1,\ldots,f_d,g_0,\ldots,g_e,c_1'',\ldots,c_{i-1}'')=0$ for some polynomial 
$$
Q_i\in\kappa[\x_1,\ldots,\x_d,\y_0,\ldots,\y_e,\z_1,\ldots,\z_{i-1}]
$$ 
if $i=0,\ldots,2m-2\ell-q-1$. We conclude inductively that $c_i''\in\kappa[\u]$ for $i=0,\ldots,2m-2\ell-q-1$, as claimed.

\paragraph{} We are ready to modify $c',\xi$ in order to obtain polynomials $c^\bullet,\xi^\bullet\in\kappa[\x,\u]\cap\psd(\kappa[[\x,\u]])$ (from which we will construct $\eta\in\kappa[\x,\y]\cap\psd(\kappa[[\x,\y]])$ and $n\geq n_0$ in the statement of \ref{onlythis}) instead of the positive semidefinite series $c',\xi$.

As $\xi'',c''+\x^{2k+2m-2\ell}\in\psd^+(\kappa[[\x,\u]])$, there exists by Corollary \ref{psdkxy} $r'\geq1$ such that if $\zeta,\theta\in\kappa[[\x,\u]]$ satisfy $\zeta-\xi'',\theta-c''\in\gtm_2^{r'}$, then $\zeta,\theta+\x^{2k+2m-2\ell}\in\psd^+(\kappa[[\x,\u]])$. We may assume $r'$ is strictly greater than the maximum of the degrees of the polynomials $\x^ic_i''$ for $i=0,\ldots,2m-2\ell-q-1$.

Let $c^\circ\in\kappa[\x,\u]$ be a polynomial such that $\omega(c^\circ-c'')\geq r'+2m$. Then $c^\circ=\sum_{i=0}^{2m-2\ell-q-1}c_i''\x^i+\x^{2m-2\ell-q}h$ for some polynomial $h\in\kappa[\x,\u]$, so $c^\circ=c''+\x^{2m-2\ell-q}h'$ for some $h'\in\kappa[[\x,\u]]$ and $\x^{2\ell}c^\circ=c'+\x^{2m-q}h'$. Observe that
$$
\omega(c^\circ+\x^{2k+2m-2\ell}-(c''+\x^{2k+2m-2\ell}))=\omega(c^\circ-c'')\geq r'+2m>r'.
$$
Thus, $c^\circ+\x^{2k+2m-2\ell}\in\kappa[\x,\u]\cap\psd^+(\kappa[[\x,\u]])$ and $c^\bullet:=\x^{2\ell}c^\circ+\x^{2k+2m}\in\kappa[\x,\u]\cap\psd(\kappa[[\x,\u]])$. Consider the polynomial
\begin{equation}\label{xibull}
\xi^\bullet:=4c^\bullet(f(\x,\u\x^q)+2\x^{2k+2m}-F(\x,\x^q\u)c^\bullet)-(g(\x,\u\x^q))^2.
\end{equation}
We claim: \em $\xi^\bullet-\xi=\x^{2m}E$ for some $E\in\kappa[[\x,\u]]$\em. 

By \eqref{xi} and \ref{H} we have:
\begin{align*}
&4c'(f(\x,\u\x^q)-F(\x,\x^q\u)c')-(g(\x,\u\x^q))^2=\xi,\\
&f(\x,\u\x^q)-F(\x,\x^q\u)c'=\x^{q+\veps}H.
\end{align*}
As $F(\x,\x^q\u)=\x^3W$, $\x^{2\ell}c^\circ=c'+\x^{2m-q}h$ and $c^\bullet=\x^{2\ell}c^\circ+\x^{2k+2m}=c'+\x^{2m-q}h'+\x^{2k+2m}$, we conclude
{\small\begin{equation*}
\begin{split}
\xi^\bullet&=4c^\bullet(f(\x,\u\x^q)+2\x^{2k+2m}-F(\x,\x^q\u)c^\bullet)-(g(\x,\u\x^q))^2\\
&=4(c'+\x^{2m-q}h'+\x^{2k+2m})(f(\x,\u\x^q)+2\x^{2k+2m}-F(\x,\x^q\u)(c'+\x^{2m-q}h'+\x^{2k+2m}))
-(g(\x,\u\x^q))^2\\
&=4c'(f(\x,\u\x^q)-F(\x,\x^q\u)c')-(g(\x,\u\x^q))^2+4(\x^{2m-q}h'+\x^{2k+2m})(f(\x,\u\x^q)-F(\x,\x^q\u)c')\\
&+4(c'+\x^{2m-q}h'+\x^{2k+2m})(2\x^{2k+2m}-\x^3W(\x^{2m-q}h'+\x^{2k+2m}))\\
&=\xi+4(\x^{2m-q}h'+\x^{2k+2m})\x^{q+\veps}H+4(c'+\x^{2m-q}h'+\x^{2k+2m})(2\x^{2k+2m}-\x^3W(\x^{2m-q}h'+\x^{2k+2m})).
\end{split}
\end{equation*}}
Thus, to justify that 
\begin{multline*}
\xi^\bullet-\xi=4\x^{2m+\veps}(h'+\x^{2k+q})H+4\x^{2k+2m}(2\x^{2k+2m}-\x^3W(\x^{2m-q}h'+\x^{2k+2m}))\\
+4(c'+\x^{2m-q}h')(2\x^{2k+2m}-\x^3W\x^{2k+2m})+4(c'+\x^{2m-q}h')(-\x^3W\x^{2m-q}h')
\end{multline*}
is divisible by $\x^{2m}$, we only have to clarify why the product 
\begin{multline*}
4(c'+\x^{2m-q}h')(-\x^3W(\x^{2m-q}h'))=4(\x^{2\ell}c''+\x^{2m-q}h')(-\x^3W(\x^{2m-q}h'))\\
=-4\x^{2m+2\ell+3-q}c''Wh'-4\x^{4m+3-2q}Wh'^2
\end{multline*} 
is divisible by $\x^{2m}$. The first addend on the right hand side is divisible by $\x^{2m}$ because $q\leq2\ell+3$ (see \eqref{bound0}), whereas the second addend is divisible by $\x^{2m}$ because, as $2m-2\ell-q\geq0$ and $2\ell+3-q\geq0$ (see \eqref{bound0}), 
$$
4m+3-2q=2m+(2m-2\ell-q)+(2\ell+3-q)\geq 2m.
$$
We conclude $\xi^\bullet-\xi=\x^{2m}E$ for some $E\in\kappa[[\x,\u]]$. 

As $\x^{2m}$ divides $\xi$, also $\x^{2m}$ divides $\xi^\bullet$, so there exists a polynomial $\xi^\circ\in\kappa[\x,\u]$ such that $\xi^\bullet=\x^{2m}\xi^\circ$. By \eqref{xiast} and \eqref{xibull} 
\begin{multline*}
\xi^\bullet-\xi^*=4\x^{2\ell}(f(\x,\u\x^q)+2\x^{2k+2m}-F(\x,\x^q\u)\x^{2k+2m})(c^\circ-c'')\\
-4\x^{2k+2m+2\ell}F(\x,\x^q\u)(c^\circ-c'')-4\x^{4\ell}F(\x,\x^q\u)(c^\circ+c'')(c^\circ-c''),
\end{multline*}
so $\omega(\xi^\bullet-\xi^*)\geq\omega(c^\circ-c'')$. As $\xi^\bullet-\xi^*=\x^{2m}(\xi^\circ-\xi'')$, we have $\omega(\xi^\circ-\xi'')=\omega(\xi^\bullet-\xi^*)-2m$. As $\omega(c^\circ-c'')\geq r'+2m$, it holds $\omega(\xi^\circ-\xi'')\geq r'$, so $\xi^\circ\in\psd^+(\kappa[[\x,\u]])$. This means that $\xi^\bullet=\x^{2m}\xi^\circ\in\psd(\kappa[[\x,\u]])$. 

\paragraph{} Write $\u:=\frac{\y}{\x^q}$ in $\xi^\bullet(\x,\frac{\y}{\x^q})\in\kappa[\x,\u]$: 
$$
\xi^\bullet\Big(\x,\frac{\y}{\x^q}\Big)=4c^\bullet\Big(\x,\frac{\y}{\x^q}\Big)\Big(f(\x,\y)+2\x^{2k+2m}-F(\x,\y)c^\bullet\Big(\x,\frac{\y}{\x^q}\Big)\Big)-(g(\x,\y))^2,
$$
where $c^\bullet\in\kappa[\x,\u]$. After clearing denominators, by Lemma \ref{blowup} we find an integer $\mu\geq1$ and a polynomial $B\in\kappa[\x,\y]\cap\psd(\kappa[[\x,\y]])$ such that
$$
4B(\x^{2\mu}f+2\x^{2k+2m+2\mu}-FB)-\x^{4\mu}g^2\in\kappa[\x,\y]\cap\psd(\kappa[[\x,\y]]).
$$
Thus, there exist series $A_i\in\kappa[[\x,\y]]$ satisfying
\begin{equation}\label{FB}
4B(\x^{2\mu}f+2\x^{2k+2m+2\mu}-FB)-\x^{4\mu}g^2=\sum_{i=1}^pA_i^2.
\end{equation}
Setting $\x=0$ in \eqref{FB}, we deduce
$$
-4F(0,\y)B^2(0,\y)=-4b\y^{\rho}B^2(0,\y)=\sum_{i=1}^pA_i^2(0,\y).
$$
As $\rho$ is either odd or it is even and $b\not\in-\Sos{\kappa}$, we deduce $B(0,\y)=0$ and $A_i(0,\y)=0$, so there exist $B',A_i'\in\kappa[[\x,\y]]$ such that $B=\x^2 B'$ (recall that $B\in\psd(\kappa[[\x,\y]])$) and $A_i=\x A_i'$. We deduce
\begin{equation}\label{FB2}
4\x^4B'(\x^{2\mu-2}f+2\x^{2k+2m+2\mu-2}-FB')-\x^{4\mu}g^2=\x^2\sum_{i=1}^pA_i'^2.
\end{equation}
Dividing \eqref{FB2} by $\x^2$ and setting $\x=0$ next, we deduce $A_i'(0,\y)=0$, so there exists $A_i''\in\kappa[[\x,\y]]$ such that $A_i=\x A_i''$. Consequently,
$$
4B'(\x^{2\mu-2}f+2\x^{2k+2m+2\mu-2}-FB')-\x^{4\mu-4}g^2=\sum_{i=1}^pA_i''^2.
$$
Proceeding recursively with $\mu$ we find the sought polynomial $\eta\in\kappa[\x,\y]\cap\psd(\kappa[[\x,\y]])$ such that
$$
4\eta(f+2\x^{2k+2m}-F\eta)-g^2\in\kappa[\x,\y]\cap\psd(\kappa[[\x,\y]])
$$
and $n:=k+m\geq n_0+\ell$, as required.
\end{proof}

We are ready to prove Theorem \ref{list2} for the rings $A=\kappa[[\x,\y,\z]]/(\z^2-F)$ where $F\in\kappa[[\x,\y]]$ is one of the series of order three quoted in its statement.

\begin{thm}[Order three]\label{order3}
Let $\kappa$ be a (formally) real field such that $\tau(\kappa)<+\infty$. Denote $A:=\kappa[[\x,\y,\z]]/(\z^2-F)$ where $F\in\kappa[[\x,\y]]$ is one of the following series:
\begin{itemize}
\item[(iv)] $\x^2\y+(-1)^ka\y^k$ where $a\not\in-\Sos{\kappa}$ and $k\geq 3$,
\item[(v)] $\x^2\y$,
\item[(vi)] $\x^3+a\x\y^2+b\y^3$ irreducible,
\item[(vii)] $\x^3+a\y^4$ where $a\not\in-\Sos{\kappa}$, 
\item[(viii)] $\x^3+\x\y^3$,
\item[(ix)] $\x^3+\y^5$. 
\end{itemize}
Then $\psd(A)=\Sos{A}$ and $p(A)\leq4\tau(\kappa)$.
\end{thm}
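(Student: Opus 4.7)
The plan is to reduce, via Corollary \ref{polred}, to showing that ${\tt i}(\psd(B)) \subset \Sosp{\widehat{B}}$ for $B := \kappa[\x,\y,\z]/(\z^2-F)$ and $p := 4\tau(\kappa)$. Fixing $f + \z g \in \psd(B)$, I would first invoke Theorem \ref{et}(i) to extract a factor $\y^{2s}$ (which is a square in $A$) and reduce to the case $f(\x,0) \neq 0$. The central tool will then be Theorem \ref{et}(ii), which immediately handles the subcase $\omega(f(\x,0)) \geq 2$ as soon as $F$ satisfies the Elephant's hypotheses: $\omega(F) = \omega(F(\x,0)) = 3$; $F(0,\y) = b\y^\rho v$ with $v(0) = 1$, $\rho \geq 3$, and $b \not\in -\Sos{\kappa}$ when $\rho$ is even; and $F$ is $\rho$-quasidetermined.

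The first block of work is the case-by-case verification, up to formal change of coordinates, of these hypotheses. Cases (vi), (vii), (ix) fit verbatim, using Example \ref{examples}(iii), (v) for the quasi-determinacy (for (vi), irreducibility of $F$ is equivalent to $4a^3 + 27b^2 \neq 0$). For case (iv), the linear shear $\y \mapsto \y + \lambda\x$ with any $\lambda \in \kappa\setminus\{0\}$ produces $F(\x,0) = \lambda\x^3 + (-1)^k a\lambda^k\x^k$ of order $3$ and preserves $F(0,\y) = (-1)^k a\y^k$ together with $k$-determinacy by Example \ref{examples}(ii). For case (viii), $F = \x^3 + \x\y^3$, the substitution $\x \mapsto \x - \y^2$ converts $F$ into a series with $F(0,\y) = -\y^5(1+\y)$, so that $\rho = 5$ is odd, while $5$-determinacy is retained (Example \ref{examples}(iv)). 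Case (v), $F = \x^2\y$, is genuinely exceptional: its initial form is a non-reduced cubic and no linear change of coordinates can produce a finitely-determined initial form, so Elephant's Theorem cannot be invoked directly; here the plan is to run a separate argument via the normalisation $A \hookrightarrow \kappa[[\x,\w]]$ defined by $\y \mapsto \w^2$, $\z \mapsto \x\w$, pull an arbitrary $f + \z g \in \psd(A)$ back to a sum of squares in $\kappa[[\x,\w]]$ using Theorem \ref{liso}, and descend the decomposition to $A$ via the involution $\w \mapsto -\w$, the conductor inclusion $\x \kappa[[\x,\w]] \subset A$, and an erasure-of-denominators argument modelled on Lemma \ref{grd} applied to the real prime $(\x,\z)$ of $A$.

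The hard part will be disposing of the remaining subcase $q := \omega(f(\x,0)) = 1$, which is not covered by Theorem \ref{et}(ii). By Weierstrass preparation $f = (\x + \alpha(\y))U^2$ with $U$ a unit and $\alpha(0) = 0$; after absorbing $U^2$ into $g$ and applying the formal automorphism $\x \mapsto \x - \alpha(\y)$ (which right-transforms $F$ into a series $\widetilde{F}$ whose ring is isomorphic to $A$), we may assume $f = \x$. The conditions $\x \in \psd(\{\widetilde F \geq 0\})$ and $\x^2 - \widetilde F g^2 \in \psd(\kappa[[\x,\y]])$ then impose strong restrictions forcing $\alpha$ to approximate a real Puiseux branch of $\{F = 0\}$ and forcing $\x \mid g$, reducing the analysis to an element $\x(1 + \z g')$ whose decomposition is assembled either from Theorem \ref{order2} applied to the strict transform on a suitable weighted blow-up chart, or directly from Theorem \ref{liso} after passage to the normalisation. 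Finally, clearing the resulting denominators via Lemma \ref{grd} and Lemma \ref{blowup}, and invoking Strong Artin's approximation as in Section \ref{s4}, produces the uniform bound $p(A) \leq 4\tau(\kappa)$.
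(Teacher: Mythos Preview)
Your plan for the subcase $q:=\omega(f(\x,0))=1$ contains a genuine gap. After your change of coordinates you arrive at $\x(1+\z g')\in\psd(\widetilde{A})$; since $1+\z g'$ is a unit with residue $1$ in the complete local ring $\widetilde{A}$, it is a square $V^2$, so this forces $\x\in\psd(\widetilde{A})$. But $\x$ can \emph{never} lie in $\Sos{\widetilde{A}}$: writing $\x=\sum h_i^2+q(\z^2-\widetilde{F})$ in $\kappa[[\x,\y,\z]]$ and evaluating at the origin gives $h_i(0)=0$, whence the right-hand side has order $\geq 2$ while the left has order $1$. No blow-up, normalisation, or appeal to Theorem~\ref{order2} can circumvent this order obstruction. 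The correct move is the opposite one: show that $q=1$ is \emph{impossible}, i.e.\ that no $f+\z g\in\psd(A)$ with $\omega(f(\x,0))=1$ exists. The paper does this case by case via explicit test homomorphisms $\varphi:\kappa[[\x,\y]]\to\kappa[[\t]]$ (for instance $h\mapsto h(\zeta(\t)-c\t^2,\t)$ for suitable $\zeta,c$) that make $\varphi(F)>_\beta 0$ while $\varphi(f)<_\beta 0$ for some prime cone $\beta$, contradicting $f\in\psd(\{F\geq 0\})$. Your reduction is in fact one step away from this: it remains only to exhibit a prime cone with $\widetilde{F}\geq 0$ and $\x<0$, which is precisely what those test curves supply.

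For case (v) your normalisation idea diverges from the paper's route, which is a limit argument: one checks that $f+(\x^2+\y^2)^n+\z g\in\psd(A_r)$ for $A_r:=\kappa[[\x,\y,\z]]/(\z^2-\x^2\y-\y^{2r})$ with $r$ large (this falls under case (iv), already handled), obtains a sum-of-$p$-squares representation modulo $\gtm_2^{2n}$, and concludes via Strong Artin approximation. Your descent sketch does not fit the shape of Lemma~\ref{grd} (there is no relation $\z^k-hg$ here), and the involution alone does not produce squares lying in the image of $A$; compare Example~\ref{newtrends}, where even with $\tau(\kappa)=1$ a delicate rotation of the two summands is required.

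Two minor corrections: for case (vi), irreducibility of $F$ over $\kappa$ \emph{implies} $4a^3+27b^2\neq 0$ but is not equivalent to it; and for case (iv) with $k=3$ the shear $\y\mapsto\y+\lambda\x$ yields $F(\x,0)=\lambda(1-a\lambda^2)\x^3$, so one must also avoid $a\lambda^2=1$.
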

\begin{proof}
The general strategy to prove this result (except for case (v)) is to show that (maybe after a suitable change of coordinates when needed) there exists no positive semidefinite element $f+\z g\in\psd(A)$ such that $f(\x,0)$ has order $1$, whereas $F(\x,0)$ has order $3$ and $F$ satisfies the remaining hypotheses in the statement of Theorem \ref{et}. Once this is done, Theorem \ref{et} applies and we conclude $\psd(A)=\Sos{A}$ and $p(A)\leq4\tau(\kappa)$. 

We use Lemma \ref{trans} freely along the proof. We do not have to care about positive semidefinite units of $A$, because they can be written as $cU^2$ for some $c\in\kappa\setminus\{0\}$ and some unit $U\in A$ with $U(0,0,0)=1$. Thus, $c\in\psd(A)$, so $c>_\alpha0$ for each $\alpha\in\Sper(A)$ such that $\supp(\alpha)=\gtm$. Such prime cones are in bijection with the orderings of $\kappa$, so $c\in\Sos{\kappa}$.

(iv) This case is somehow special because with the current coordinates $F(\x,0)=0$, so a suitable change of coordinates is needed to apply Theorem \ref{et}. We prove first: \em if $f+\z g\in\psd(A)$, then $\omega(f)\geq2$\em.

Suppose that $f+\z g\in\psd(A)$ and $\omega(f)=1$. Observe that $f\in\psd(\{F\geq0\})$. If $\omega(f(\x,0))=1$, there exist $\zeta\in\kappa[[\y]]$ of order $\geq1$, $b\in\kappa\setminus\{0\}$ and a unit $U\in\kappa[[\x,\y]]$ such that $U(0,0)=1$ and $f=(\x-\zeta(\y))bU^2$. Consider the homomorphism
$$
\varphi:\kappa[[\x,\y]]\to\kappa[[\t]],\ h\mapsto h(\zeta(\t^3)-b\t^2,\t^3).
$$
We have $\varphi(F)=(\zeta(\t^3)-b\t^2)^2\t^3+(-1)^ka\t^{3k}$ and $\varphi(f)=-b^2\t^2\varphi(U)^2\in-\Sos{\kappa[[\t]]}$. As $b\neq0$ and $k\geq3$, we deduce $\omega(\varphi(F))=7$ and $\varphi(F)=b^2\t^7+\cdots$. Fix an ordering $\alpha\in\Sper(\kappa)$ and let $\beta_0\in\Sper(\kappa[[\t]])$ be a prime cone that extends $\alpha$ such that $\t>_\beta0$. Thus, $\varphi(F)>_{\beta_0}0$, whereas $\varphi(f)=-b^2\t^2\varphi(U)^2<_{\beta_0}0$, which is a contradiction because $f\in\psd(\{F\geq0\})$. Consequently, $\omega(f(\x,0))\geq 2$.

Assume next $\omega(f(\x,0))\geq2$ and $\omega(f(0,\y))=1$. There exist $\xi:=\sum_{\ell\geq2}\xi_\ell\x^\ell\in\kappa[[\x]]$ of order $\geq2$, $c\in\kappa\setminus\{0\}$ and a unit $V\in\kappa[[\x,\y]]$ such that $V(0,0)=1$ and $f=(\y-\xi(\x))cV^2$. Let $\alpha\in\Sper(\kappa)$ and $M:=\xi_2^2+1$. Consider the homomorphism
$$
\phi:\kappa[[\x,\y]]\to\kappa[[\t]],\ h\mapsto h(\t,M\t^2).
$$
We have $\phi(F)=M\t^4-a(-1)^k\t^{2k}>0$ because $k\geq3$, so $\phi(f)=c(M\t^2-\xi(\t))\phi(V)^2>0$. As $M>\xi_2$, we deduce $c>_\alpha0$. As this holds for each $\alpha\in\Sper(\kappa)$, we conclude $c\in\Sos{\kappa}\setminus\{0\}$. 

Thus, we may assume $f=\y-\xi(\x)$. As the coefficient $a\not\in-\Sos{\kappa}$, there exists $\alpha\in\Sper(\kappa)$ such that $a>_\alpha0$. Consider the homomorphism
$$
\psi:\kappa[[\x,\y]]\to \kappa[[\t]],\ h\mapsto h(\t^k,-\t^2).
$$
We have $\psi(F)=-\t^{2k+2}+a\t^{2k}>0$ and $\psi(f)=-\t^2-\xi(\t^k)<0$ because $k\geq3$. Thus, there exists a prime cone $\beta_1\in\Sper(\kappa[[\t]])$ extending $\alpha$ such that $\psi(F)>_{\beta_1}0$, whereas $\psi(f)<_{\beta_1}0$, which is a contradiction because $f\in\psd(\{F\geq0\})$.

Consequently, $\omega(f)\geq2$. By Example \ref{examples}(ii) we know that $F$ is $k$-determined. After a linear change of coordinates $\y\mapsto\lambda\x+\y$ (for some $\lambda\in\kappa$) the series $F$ becomes $F(\x,\y)=\x^2(\lambda\x+\y)+(-1)^ka(\lambda\x+\y)^k$ and satisfies $F(\x,0)=\lambda\x^3+(-1)^ka\lambda^k\x^k$ and $F(0,\y)=(-1)^ka\y^k$. In case $k\geq4$, we choose $\lambda=1$, whereas in case $k=3$, we choose $\lambda$ such that $\lambda(1-a\lambda^2)\neq0$ (in such a way that in both cases $\omega(F(\x,0))=3$). As we have already proved, there exist no elements $f+\z g\in\psd(A)$ such that $\omega(f)=1$.

(v) For this case we use a `limit argument'. Let $f+\z g\in\psd(A)$. We may assume $g\neq0$ (Lemma \ref{pd+}(iv)). Then by Corollary \ref{psd3}
\begin{align*}
&f\in\psd(\{\x^2\y\geq0\})\subset\psd(\{\y\geq0\}),\\
&f^2-\x^2\y g^2\in\psd(\kappa[[\x,\y]]).
\end{align*}
By Lemma \ref{pd+} if $n$ is large enough, then $f+(\x^2+\y^2)^n+\z g\in\psd^\oplus(A)$, that is, by \S\ref{pde}
\begin{align*}
&f+(\x^2+\y^2)^n\in\psd^+(\{\x^2\y\geq0\})\subset\psd(\{\y\geq0\}),\\
&(f+(\x^2+\y^2)^n)^2-\x^2\y g^2\in\psd^+(\kappa[[\x,\y]]).
\end{align*}
By Corollary \ref{psdkxy} we deduce that if $r\geq1$ is large enough, then
$$
(f+(\x^2+\y^2)^n)-(\x^2\y+\y^{2r})g^2\in\psd^+(\kappa[[\x,\y]]).
$$
Let us check: \em if $r\geq 2n+2$ is large enough, then $f+(\x^2+\y^2)^n\in\psd^+(\{\x^2\y+\y^{2r}\geq0\})$\em.

As $f+(\x^2+\y^2)^n\in\psd^+(\{\x^2\y\geq0\})$, the series $f+(\x^2+\y^2)^n$ is positive for each $\gamma\in\Sper(\kappa[[\x,\y]])$ such that $\supp(\gamma)=(\x)$. This means that there exist $1\leq s\leq n$, $c\in\Sos{\kappa}\setminus\{0\}$ and a unit $u\in\kappa[[\y]]$ such that $u(0)=1$ and $f(0,\y)+\y^{2n}=\y^{2s}cu^2$. Let $\beta\in\Sper(\kappa[[\x,\y]])$ be such that $\y(\x^2+\y^{2r-1})\geq_\beta0$. If $\y\geq_\beta0$, then $\x^2\y\geq_\beta0$, so $f\geq_\beta0$ and $f+(\x^2+\y^2)^n\geq_\beta0$. If $\y<_\beta0$, then $\x^2+\y^{2r-1}\leq_\beta0$ and $\y\not\in\supp(\beta)$. Thus, as $2r-1\geq 4n+3$,
$$
\x^2\leq_\beta-\y^{2r-1}\leq_\beta\y^{4n+2}\leq_\beta\y^{4s+2}.
$$
As $\y<_\beta0$, then $|\x|_\beta<_\beta-\y^{2s+1}$ where
$$
|h|_\beta:=\begin{cases}
h&\text{if $h\geq_\beta0$,}\\
-h&\text{if $h<_\beta0$}
\end{cases}
$$
for each $h\in\kappa[[\x,\y]]$. Write 
$$
f+(\x^2+\y^2)^n=f(0,\y)+\y^{2n}+\x\xi=\y^{2s}cu^2+\x\xi
$$ 
where $\xi\in\kappa[[\x,\y]]$. We have
$$
f+(\x^2+\y^2)^n=\y^{2s}cu^2+\x\xi\geq_\beta\y^{2s}cu^2-|\x|_\beta|\xi|_\beta\geq\y^{2s}cu^2+\y^{2s+1}(1+\xi^2(0,0))>_\beta0.
$$
Thus, $f+(\x^2+\y^2)^n\in\psd^+(\{\x^2\y+\y^{2r}\geq0\})$, so $f+(\x^2+\y^2)^n+\z g\in\psd^\oplus(A_r)$ where $A_r:=\kappa[[\x,\y,\z]]/(\z^2-\x^2\y-\y^{2r})$. As $\psd(A_r)=\Sosp{A_r}$, there exist $a_{in},b_{in},q_n\in\kappa[[\x,\y]]$ such that:
$$
f+(\x^{2n}+\y^{2n})+\z g=(a_{1n}+\z b_{1n})^2+\cdots+(a_{pn}+\z b_{pn})^2-(\z^2-\x^2\y-\y^{2r})q_n.
$$
Consequently,
$$
f+\z g\equiv (a_{1n}+\z b_{1n})^2+\cdots+(a_{pn}+\z b_{pn})^2-(\z^2-\x^2\y)q_n\ \mod\gtm_2^{2n}.
$$
By Strong Artin's approximation there exist $a_i,b_i,q\in\kappa[[\x,\y]]$ such that 
$$
f+\z g\equiv (a_1+\z b_1)^2+\cdots+(a_p+\z b_p)^2-(\z^2-\x^2\y)q, 
$$
that is, $f+\z g\in\Sosp{A}$.

(vi) In this case $F(\x,0)=\x^3$, $F(0,\y)=b\y^3$ and $F$ is $3$-determined (Example \ref{examples}(iii)). We have to prove: \em if $f+\z g\in\psd(A)$ and $f(\x,0)\neq0$, then $\omega(f(\x,0))\geq 2$\em.

Assume $\omega(f(\x,0))=1$. There exist $\zeta\in\kappa[[\y]]$ of order $\geq1$, $c\in\kappa\setminus\{0\}$ and a unit $U\in\kappa[[\x,\y]]$ such that $U(0,0)=1$ and $f=(\x-\zeta(\y))cU^2$. Consider the homomorphism
$$
\varphi:\kappa[[\x,\y]]\to\kappa[[\t]],\ h\mapsto h(\zeta(\t)-c\t^2,\t).
$$
We have $\varphi(F)=(\zeta(\t)-c\t^2)^3+a\t^2(\zeta(\t)-c\t^2)+b\t^3$ and $\varphi(f)=-c^2\t^2U^2(\zeta(\t)-c\t^2,\t)$. As $b\neq0$, we deduce
$$
\omega((\zeta(\t)-c\t^2)^3+a\t^2(\zeta(\t)-c\t^2)+b\t^3)=\begin{cases}
\omega(F(d,1)\t^3+\cdots)=3&\text{if $\omega(\zeta)=1$},\\
\omega(b\t^3+\cdots)=3&\text{if $\omega(\zeta)\geq2$},
\end{cases}
$$
for some $d\in\kappa\setminus\{0\}$. We only have to explain the first row. As $\zeta\in\kappa[[\t]]$ and $\omega(\zeta)=1$, then $\zeta=d\t+\cdots$, so 
$$
\varphi(F)=(\zeta(\t)-c\t^2)^3+a\t^2(\zeta(\t)-c\t^2)+b\t^3=(d^3+ad+b)\t^3+\cdots=F(d,1)\t^3+\cdots.
$$
As $F(\x,1)\in\kappa[\x]$ is an irreducible polynomial, $F(d,1)\neq0$ and $\varphi(F)$ has order $3$. 

Fix an ordering $\alpha$ of $\kappa$ and let $\beta\in\Sper(\kappa[[\t]])$ be a prime cone that extends $\alpha$ such that $F(d,1)\t>0$ in the first case and $b\t>0$ in the second case. Thus, $\varphi(F)>_\beta0$, whereas $\varphi(f)=-c^2\t^2U^2(\zeta(\t)-c\t^2,\t)<_\beta0$. This is a contradiction because $f\in\psd(\{F\geq0\})$. Consequently, $\omega(f(\x,0))\geq 2$.

(vii) In this case $F(\x,0)=\x^3$, $F(0,\y)=a\y^4$ and $F$ is $4$-determined (Example \ref{examples}(v)). We have to prove: \em if $f+\z g\in\psd(A)$ and $f(\x,0)\neq0$, then $\omega(f(\x,0))\geq 2$\em. 

Assume $\omega(f(\x,0))=1$. There exists $\zeta\in\kappa[[\y]]$ of order $\geq1$, $b\in\kappa\setminus\{0\}$ and a unit $U\in\kappa[[\x,\y]]$ such that $U(0,0)=0$ and $f=(\x-\zeta(\y))bU^2$. As $a\not\in-\Sos{\kappa}$, there exists an ordering $\alpha\in\Sper(\kappa)$ such that $a>_\alpha0$. Consider the homomorphism
$$
\varphi:\kappa[[\x,\y]]\to\kappa[[\t]],\ h\mapsto h(\zeta(\t)-b\t^2,\t).
$$
We have $\varphi(F)=(\zeta(\t)-b\t^2)^3+a\t^4$ and $\varphi(f)=-b^2\t^2\varphi(U)^2\in-\Sos{\kappa[[\t]]}$. In addition, 
$$
\omega((\zeta(\t)-b\t^2)^3+a\t^4)=\begin{cases}
3&\text{if $\omega(\zeta)=1$},\\
4&\text{if $\omega(\zeta)\geq2$.}
\end{cases}
$$ 
Let $\beta\in\Sper(\kappa[[\t]])$ be a prime cone that extends $\alpha$ and satisfies $(\zeta(\t)-b\t^2)^3+a\t^4>_\beta0$. Observe that $\varphi(f)=-b^2\t^2\varphi(U)^2<_\beta0$. This is a contradiction because $f\in\psd(\{F\geq0\})$. Consequently, $\omega(f(\x,0))\geq 2$.

(viii) We prove first: \em if $f+\z g\in\psd(A)$, then $\omega(f(\x,0))\geq2$\em.

Suppose that $f+\z g\in\psd(A)$ and $\omega(f(\x,0))=1$. There exist $\zeta\in\kappa[[\y]]$ of order $\geq1$, $a\in\kappa\setminus\{0\}$ and a unit $U\in\kappa[[\x,\y]]$ such that $U(0,0)=1$ and $f=(\x+\zeta(\y))aU^2$. Let $\alpha\in\Sper(\kappa)$ and consider the homomorphism
$$
\varphi:\kappa[[\x,\y]]\to\kappa[[\t]],\ h\mapsto h(\t^2,0).
$$
We have $\varphi(F)=\t^6>0$, so $\varphi(f)=a\t^2\phi(U)^2>0$. This means that $a>_\alpha0$ and as this holds for each $\alpha\in\Sper(\kappa)$, we conclude $a\in\Sos{\kappa}\setminus\{0\}$. Thus, we may assume $f=\x+\zeta(\y)$. Consider the homomorphism
$$
\varphi:\kappa[[\x,\y]]\to\kappa[[\t]],\ h\mapsto h(0,\t).
$$
We have $\varphi(F)=0$ and $\varphi(f)=\zeta(\t)$. As $f\in\psd(\{F\geq0\})$ and $\varphi(F)=0$, we deduce $\zeta(\t)\in\psd(\kappa[[\t]])=\Sos{\kappa[[\t]]}$. In particular, $\omega(\zeta(\t))\geq2$ and we choose $M:=\zeta_2^2+1$ where $\zeta(\t):=\sum_{k\geq2}\zeta_k\t^k$. Consider the homomorphism
$$
\varphi:\kappa[[\x,\y]]\to\kappa[[\t]],\ h\mapsto h(-M\t^2,-\t).
$$
Choose $\beta\in\Sper(\kappa[[\t]])$ such that $\t>_\beta0$. Then $\varphi(F)=-M^3\t^6+M\t^5>_\beta0$ and $\varphi(f)=-M\t^2+\zeta(\t)<_\beta0$ because $M>_\beta\zeta_2$. This is a contradiction because $f\in\psd(\{F\geq0\})$. Consequently, $\omega(f(\x,0))\geq2$.

By Example \ref{examples}(iv) $F$ is $5$-determined. After the change of coordinates $\x\mapsto\x+\y^2$ the series $F$ becomes $F(\x,\y)=(\x+\y^2)^3+(\x+\y^2)\y^3$ and satisfies $F(\x,0)=\x^3$ and $F(0,\y)=\y^5+\y^6$. As we have already proved, there exists no element $f+\z g\in\psd(A)$ such that $\omega(f(\x,0))=1$ (and this enough in our situation, because the performed change of coordinates was $\x\mapsto\x+\y^2$).

(ix) In this case $F(\x,0)=\x^3$, $F(0,\y)=\y^5$ and $F$ is $5$-determined (Example \ref{examples}(v)). We have to prove: \em if $f+\z g\in\psd(A)$ and $f(\x,0)\neq0$, then $\omega(f(\x,0))\geq 2$\em. 

Assume $\omega(f(\x,0))=1$. There exists $\zeta\in\kappa[[\y]]$ of order $\geq1$, $b\in\kappa\setminus\{0\}$ and a unit $U\in\kappa[[\x,\y]]$ such that $U(0,0)=1$ and $f=(\x-\zeta(\y))bU^2$. Consider the homomorphism
$$
\varphi:\kappa[[\x,\y]]\to\kappa[[\t]],\ h\mapsto h(\zeta(\t)-b\t^2,\t).
$$
We have $\varphi(F)=(\zeta(\t)-b\t^2)^3+\t^5$ and $\varphi(f)=-b^2\t^2\varphi(U)^2\in-\Sos{\kappa[[\t]]}$. In addition, 
$$
\omega((\zeta(\t)-b\t^2)^3+\t^5)=\begin{cases}
3&\text{if $\omega(\zeta)=1$},\\
5&\text{if $\omega(\zeta)\geq2$.}
\end{cases}
$$ 
Fix an ordering $\alpha\in\Sper(\kappa)$ and let $\beta\in\Sper(\kappa[[\t]])$ be a prime cone that extends $\alpha$ such that $(\zeta(\t)-b\t^2)^3+\t^5>0$. Thus, $\varphi(F)>_\beta0$, whereas $\varphi(f)=-b^2\t^2\varphi(U)^2<_\beta0$. This is a contradiction because $f\in\psd(\{F\geq0\})$. Consequently, $\omega(f(\x,0))\geq 2$, as required.
\end{proof}

\begin{remarks}
(i) Although Theorem \ref{et} has been thought as a main tool to prove Theorem \ref{order3}, it provides further information. If $A=\kappa[[\x,\y,\z]]/(\z^2-F)$ where 
\begin{itemize}
\item $\kappa$ is a (formally) real field such that $\tau(\kappa)<+\infty$, 
\item $F(0,\y))=b\y^{\rho}v$ where $v\in\kappa[[\y]]$ is a unit such that $v(0)=1$, $b\neq0$ and $\rho\geq3$ is either odd or $\rho$ is even and $b\not\in-\Sos{\kappa}$,
\item $\omega(F(\x,0))=3$ and $F$ is $\rho$-quasidetermined, 
\end{itemize}
then the possible elements belonging to the difference $\psd(A)\setminus\Sos{A}$ are series of order $1$. Thus, for this type of rings there exist either none or `few' series that are positive semidefinite but they are not sums of squares. 

(ii) In the statement of Theorem \ref{et}(ii) we have required that the series $F\in\kappa[[\x,\y]]$ is $\rho$-quasideter\-mined where $\rho=\omega(F(0,\y))$. In fact, it is enough that it is $k$-quasidetermined for some $k\geq1$. The reason for our assumption is that under such hypotheses the proof has a simpler presentation and all the cases to which we apply Theorem \ref{et}(ii) to prove Theorem \ref{order3} are included. However, the alternative formulation provides further singularities to which we can apply the previous remark, see Examples \ref{examples}(iii)-(vii).

(iii) In \cite{sch6} Scheiderer gives a negative answer (if $n\geq2$) to the following question raised by Sturmfels: \em Let $f\in\Q[\x_1,\ldots,\x_n]$ be a polynomial, which is a sum of squares in $\R[\x_1,\ldots,\x_n]$. Is $f$ necessarily a sum of squares in $\Q[\x_1,\ldots,\x_n]$?\em

We can formulate a similar question for the rings $A=\kappa[[\x,\y,\z]]/(\z^2-F)$ where $\kappa$ admits a unique ordering, $\tau(\kappa)<+\infty$ and $F\in\kappa[[\x,\y]]$ satisfies the hypotheses of Theorem \ref{et}(ii) (even the relaxed ones quoted in the previous remark). Let $R$ be the real closure of $\kappa$ (endowed with its unique ordering) and $B:=R[[\x,\y,\z]]/(\z^2-F)$. By Lemma \ref{trans}, an adapted version of Lemma \ref{dimen1} and \cite[Prop.VII.5.1]{abr} one deduces that $\psd(A)=\psd(B)\cap A$. Let $f+\z g\in A\cap\Sos{B}\subset A\cap\psd(B)=\psd(A)$. By Theorem \ref{et}(i) there exists $\ell\geq0$ and $f_1+\z g_1\in\psd(A)$ such that $f_1(\x,0)\neq0$ and $f+\z g=\y^{2\ell}(f_1+\z g_1)$. As $f+\z g\in\Sos{B}$, there exist $a_i,b_i,q\in R[[\x,\y]]$ such that
\begin{align}
\y^{2\ell}(f_1+\z g_1)&=\sum_{i=1}^p(a_i+\z b_i)^2-(\z^2-F)q,\label{514}\\
\y^{2\ell}f_1&=\sum_{i=1}^pa_i^2+F\sum_{i=1}^pb_i^2,\label{515}\\
\y^{2\ell}g_1&=2\sum_{i=1}^pa_ib_i.
\end{align}
Observe that $q=\sum_{i=1}^pb_i^2$. If $\ell\geq1$, we set $\y=0$ in \eqref{515} and deduce 
$$
0=\sum_{i=1}^pa_i^2(\x,0)+\x^3\sum_{i=1}^pb_i^2(\x,0),
$$
so $a_i(\x,0)=0$ and $b_i(\x,0)=0$ for each $i$. There exist $a_i',b_i',q'\in R[[\x,\y]]$ satisfying $a_i=\y a_i'$, $b_i=\y b_i'$ and $q=\sum_{i=1}^pb_i^2=\y^2q'$. This means that we can divide \eqref{514} by $\y^2$. Proceeding recursively we conclude $f_1+\z g_1\in A\cap\Sos{B}$. Thus, either $f_1+\z g_1$ is a unit in $A$ and $f_1+\z g_1\in\Sos{A}$ (see the beginning of the proof of Theorem \ref{order3}) or there exist series $c_i,d_i\in R[[\x]]$ such that $f_1(\x,0)=\sum_{i=1}^pc_i^2+\x^3\sum_{i=1}^pd_i^2$ (set $\y=0,\z=0$ in a representation of $f_1+\z g_1$ as a sum of squares in $B$). Consequently, $\omega(f_1(\x,0))\geq2$ and by Theorem \ref{et}(ii) $f_1+\z g_1\in\Sos{A}$, so $f+\z g\in\Sos{A}$. Thus, the answer to the corresponding Sturmfels question for this family of rings is positive!
\end{remarks}

\section{Applications: Principal saturated preorderings of low order}\label{s6}

In this section we generalize some results of \cite{sch4,sch5} concerning the characterization of the principal saturated preorderings $T:={\rm PO}(F):=\{s_1+s_2F:\ s_1,s_2\in\Sos{A}\}$ of low order in a $2$-dimensional excellent henselian regular local ring $(A,\gtm)$ whose residue field $\kappa:=A/\gtm$ is (formally) real and has $\tau(\kappa)<+\infty$. If we denote $X(T):=\{\alpha\in\Sper(A):\ f\geq_{\alpha}0\ \forall f\in T\}$, the \em saturation of $T$ \em is ${\rm Sat}(T):=\{f\in A:\ f\geq_{\alpha}0\ \forall\alpha\in X(T)\}$. As one can expect, $T$ is \em saturated \em if $T={\rm Sat}(T)$. If $F\in A\setminus\gtm^2$, then $T$ is always saturated \cite[Lem.3.1]{sch3}, so we focus our attention on elements $F\in\gtm^2$. A general strategy in \cite{sch4,sch5} to prove that a principal preordering $T:={\rm PO}(F)$ of $\kappa[[\x,\y]]$ is saturated is to consider the extended ring $B:=\kappa[[\x,\y,\z]]/(\z^2-F)$ and to observe that ${\rm Sat}(T)=\psd(\{F\geq0\})\subset\psd(B)$ (Corollary \ref{psd3}). If $f\in{\rm Sat}(T)$ and $\psd(B)=\Sos{B}$, then there exist $a_i,b_i,q\in\kappa[[\x,\y]]$ for $i=1,\ldots,p$ such that
$$
f=\sum_{i=1}^p(a_i+b_i\z)^2-(\z^2-F)q\quad\leadsto\quad f=\sum_{i=1}^pa_i^2+F\sum_{i=1}^pb_i^2\in T.
$$
The previous implication is used in the sketches of proofs of the following results.

\begin{cor}[Order two]\label{ord2p}
Let $(A,\gtm)$ be an excellent henselian regular local ring of dimension $2$ such that its residue field $\kappa:=A/\gtm$ admits a unique ordering and has $\tau(\kappa)<+\infty$. Let $F\in\gtm^2\setminus\gtm^3$ and $T$ be the principal preordering of $A$ generated by $F$. Then $T$ is saturated in $A$ if and only if there exists $c\in\kappa\setminus{0}$ such that $c^2F$ is right equivalent in $\widehat{A}\cong\kappa[[\x,\y]]$ to one of the following series:
\begin{itemize}
\item[(i)] $a\x^2+b\y^{2k}$ such that $a>0$, $b\neq0$ and $k\geq 1$,
\item[(ii)] $a\x^2+\y^{2k+1}$ where $a>0$ and $k\geq 1$,
\item[(iii)] $a\x^2$ where $a>0$.
\end{itemize}
\end{cor}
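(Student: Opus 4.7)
I plan to follow the strategy of Scheiderer \cite[\S4-5]{sch4}, using Theorems \ref{list2} and \ref{pyth} in place of the classical real-closed-coefficient results. The first step will be to reduce to the completion: since $\widehat A\cong\kappa[[\x,\y]]$ is free of rank $1$ over itself, Theorem \ref{pyth} combined with \eqref{p2} yields $p(A)\leq 4\tau(\kappa)<+\infty$, so Corollary \ref{scht} lets me replace $(A,T)$ by $(\widehat A,\widehat T)$. Moreover, rescaling $F$ by a nonzero square $c^2\in\Sos\kappa$ leaves the preordering unchanged (because $c^{-2}\in\Sos\kappa$), and any continuous $\kappa$-algebra automorphism of $\widehat A$ transports a saturated preordering to a saturated one, so from the start I may assume $A=\kappa[[\x,\y]]$ and work with $F$ up to right equivalence.

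The crucial identification comes next. Setting $B:=\kappa[[\x,\y,\z]]/(\z^2-F)$ and exploiting the free $\kappa[[\x,\y]]$-module structure of $B$ on the basis $\{1,\z\}$, the expansion $\sum(a_i+\z b_i)^2=\sum a_i^2+F\sum b_i^2+2\z\sum a_ib_i$ in $B$ forces the $\z$-part to vanish on series in $\kappa[[\x,\y]]$ and therefore produces
\[
T\;=\;\Sos{B}\cap\kappa[[\x,\y]],\qquad{\rm Sat}(T)\;=\;\psd(\{F\geq0\})\;=\;\psd(B)\cap\kappa[[\x,\y]],
\]
the last equality being a direct consequence of Lemma \ref{cluepsd}. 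Hence $T$ is saturated precisely when the restriction to $\kappa[[\x,\y]]$ of the inclusion $\Sos{B}\subset\psd(B)$ is an equality. The sufficiency direction drops out immediately from this: if $F$ is one of the series (i)--(iii), then $B$ is one of the rings in Theorem \ref{list2}(3.i)--(3.iii) and Theorem \ref{order2} gives $\psd(B)=\Sos{B}$, whence $T={\rm Sat}(T)$ upon intersecting with $\kappa[[\x,\y]]$.

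For the necessity direction I will re-run the normalization performed in the proof of Lemma \ref{ord2}: Weierstrass preparation and the Tschirnhaus trick bring $F$ to the shape $a\x^2+\psi(\y)$ with $a\neq 0$, and then either $\psi=0$ or $\psi(\y)=b\y^\ell$ with $b\neq 0$; in the odd case $\ell=2k+1$ the scaling $(\x,\y,\z)\mapsto(b^{k+1}\x,b\y,b^{k+1}\z)$ absorbs $b$ at the cost of multiplying $F$ by $b^{2k+2}=(b^{k+1})^2$, which is the origin of the $c^2$ in the statement. The outcome is that $F$, up to right equivalence and multiplication by a positive square in $\kappa$, is one of $a\x^2$, $a\x^2+\y^{2k+1}$, or $a\x^2+b\y^{2k}$, with $a\neq 0$ and $b\neq 0$ where it appears. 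The unique-ordering hypothesis forces $\kappa=\Sos\kappa\cup(-\Sos\kappa)$, so the condition ``$a\notin-\Sos\kappa$'' of Lemma \ref{ord2} becomes exactly $a>0$.

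The main obstacle will be to verify that the sign condition $a>0$ is actually forced by saturation: if $a<0$ in any of the three shapes, I must exhibit some $\varphi\in{\rm Sat}(T)\setminus T$. For this I will recycle the explicit witnesses constructed in the proof of Lemma \ref{ord2}; each of them already lies in $\kappa[[\x,\y]]$, hence in ${\rm Sat}(T)=\psd(\{F\geq0\})$, and only the non-membership in $T$ remains to be checked. For the order-$1$ witnesses ($\varphi=\y$ in cases (ii)--(iii) with $a<0$, and $\varphi=(b^2+1)\y^k+a_1\x$ for the even-$k$ subcase of (i) with $a<0$) this is immediate, because every non-unit element of $T$ has order $\geq\omega(F)=2$. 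For the order-$2$ witness $\varphi=(b^2+1)\y^{k+1}+a_1\x\y$ of the odd-$k$ subcase with $a<0$, the initial form of any candidate representation $s_1+s_2F\in T$ equals $q_1(\x,\y)+s_2(0,0)\,a\x^2$ with $q_1\in\Sos{\kappa[\x,\y]}$ and $s_2(0,0)\in\Sos\kappa$, so matching it to $a_1\x\y$ would require $q_1=a_1\x\y+|s_2(0,0)a|\x^2$ to be a sum of squares of linear forms in $\kappa[\x,\y]$; but the discriminant $-a_1^2<0$ of this quadratic form is negative, so it is not positive semidefinite and in particular not such a sum. This contradicts the assumed saturation of $T$ and closes the necessity direction.
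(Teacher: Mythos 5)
Your overall architecture matches the paper's sketch: reduce to the completion via Corollary~\ref{scht}, identify $T=\Sos{B}\cap\kappa[[\x,\y]]$ and ${\rm Sat}(T)=\psd(\{F\geq0\})=\psd(B)\cap\kappa[[\x,\y]]$, get sufficiency from Theorem~\ref{order2}, and get necessity from the normalization and witnesses in the proof of Lemma~\ref{ord2}, using the fact that those witnesses live in $\psd(\{F\geq0\})$. The paper's proof is a sketch pointing to exactly these ingredients, and you fill in the details consistently. However, there are two concrete issues in the necessity step.

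First, the proposed witness for case (iii) ($F=a\x^2$ with $a<0$) is wrong. You write that $\varphi=\y$ works, but $\y\notin{\rm Sat}(T)=\psd(\{a\x^2\geq0\})$: since $a<0$, the prime cones $\alpha$ with $F\geq_\alpha0$ are exactly those with $\x\in\supp(\alpha)$, and $\kappa[[\x,\y]]/(\x)\cong\kappa[[\y]]$ carries orderings with $\y<0$, so $\y$ is not non-negative on $X(T)$. (This also means you cannot ``recycle'' a witness from Lemma~\ref{ord2} here, because its Case~1 proceeds through the abstract real-reducedness argument of \cite[Lem.6.3]{sch1} and supplies no explicit element.) A correct witness is $\x$: it vanishes on $X(T)$, hence lies in ${\rm Sat}(T)$, while it has order $1$ and so cannot lie in $T$. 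Alternatively one can argue that $-F=(d\x)^2\in\Sos{\kappa[[\x,\y]]}\subset T$ (writing $a=-d^2$), so $T\cap-T\supset(F)$, and if $T$ were saturated this support would be the real radical $\sqrt[r]{(F)}=(\x)$, forcing $\x\in T$, impossible by order.

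Second, your discriminant computation for the odd-$k$ subcase of (i) silently assumes $k\geq2$: it treats the degree-$2$ part of $\varphi=(b^2+1)\y^{k+1}+a_1\x\y$ as $a_1\x\y$, but for $k=1$ the term $(b^2+1)\y^2$ also contributes and the argument no longer yields a contradiction. And indeed it should not: for $k=1$ and $a<0<b$, the form $a\x^2+b\y^2$ becomes $b\x^2+a\y^2$ (with leading coefficient $>0$) after swapping $\x\leftrightarrow\y$, so $T$ \emph{is} saturated. You need to first normalize as the paper does in Case~2 of Lemma~\ref{ord2} — swap $\x,\y$ to ensure the leading coefficient is $\notin-\Sos{\kappa}$ whenever possible — and only seek a witness in the residual case where both coefficients are negative (negative-definite initial form), in which case $X(T)=\{\alpha:\supp(\alpha)=\gtm_2\}$, so again $\x\in{\rm Sat}(T)\setminus T$. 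With these two repairs the necessity direction goes through as you intend.
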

\begin{proof}[Sketch of proof]
The result follows straightforwardly from Corollary \ref{scht}, the proof of Theorem \ref{ord2} and Theorem \ref{order2}. A key fact for our purposes is that in the proof of Theorem \ref{ord2} if the difference $\psd(A)\setminus\Sos{A}\neq\varnothing$, then $\psd(\{F\geq0\})\setminus\Sos{A}\neq\varnothing$. As a guideline, the reader can have a look at \cite[\S5.7]{sch4}.
\end{proof}

In the case of order $3$ the result is even more satisfactory, because we do not need to impose that $\kappa$ admits a unique ordering, but only that $\kappa$ is a (formally) real field.

\begin{cor}[Order three]\label{ord3p}
Let $(A,\gtm)$ be an excellent henselian regular local ring of dimension $2$ such that its residue field $\kappa:=A/\gtm$ is a (formally) real field and has $\tau(\kappa)<+\infty$. Let $F\in\gtm^3\setminus\gtm^4$ and $T$ be the principal preordering of $A$ generated by $F$. Then $T$ is saturated in $A$ if and only if there exists $c\in\kappa\setminus{0}$ such that $c^2F$ is right equivalent in $\widehat{A}\cong\kappa[[\x,\y]]$ to one of the following series:
\begin{itemize}
\item[(iv)] $\x^2\y+(-1)^ka\y^k$ where $a>0$ and $k\geq 3$,
\item[(v)] $\x^2\y$,
\item[(vi)] $\x^3+a\x\y^2+b\y^3$ irreducible,
\item[(vii)] $\x^3+a\y^4$ where $a>0$, 
\item[(viii)] $\x^3+\x\y^3$,
\item[(ix)] $\x^3+\y^5$. 
\end{itemize}
\end{cor}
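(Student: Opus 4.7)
My plan is to reduce to the complete case via Corollary \ref{scht} and then to exploit the dictionary between sums of squares in the double cover $B := \kappa[[\x,\y,\z]]/(\z^2 - F)$ and elements of ${\rm PO}(F)$ provided by Corollary \ref{psd3}. Since $\tau(\kappa) < +\infty$, Theorem \ref{pyth} combined with \eqref{p2} gives $p(A) < +\infty$, so Corollary \ref{scht} reduces saturation of $T$ in $A$ to saturation of $\widehat{T} := {\rm PO}_{\widehat{A}}(F)$ in $\widehat{A} \cong \kappa[[\x,\y]]$. Moreover, since $c^2 \in \Sos{\kappa}$ for every $c \in \kappa \setminus \{0\}$, the preorderings ${\rm PO}(F)$ and ${\rm PO}(c^2 F)$ coincide, so we may freely normalise $F$ by a non-zero square factor.

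For the sufficiency I assume $F$ has been brought into one of the normal forms (iv)--(ix) and set $B := \kappa[[\x,\y,\z]]/(\z^2 - F)$. Theorem \ref{order3} yields $\psd(B) = \Sos{B}$. Given $f \in {\rm Sat}(\widehat{T}) = \psd(\{F \geq 0\})$, Corollary \ref{psd3} applied with $g = 0$ places $f$ in $\psd(B) = \Sos{B}$. From a representation
\[
f = \sum_{i=1}^p (a_i + b_i \z)^2 - (\z^2 - F) q
\]
in $\kappa[[\x,\y,\z]]$, equating the parts independent of $\z$ gives $f = \sum_{i=1}^p a_i^2 + F \sum_{i=1}^p b_i^2 \in \widehat{T}$, so $\widehat{T}$ is saturated.

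For the necessity I would argue by contraposition, inspecting the proof of Lemma \ref{ord3}. At every step of that proof where a candidate normal form for $F$ is excluded, the argument produces an explicit witness $\varphi \in \psd(\{F \geq 0\}) \cap \kappa[[\x,\y]]$ (typically $\varphi = \y$, $\varphi = \x$, or $\varphi = \x + M^2 \y^2$ as in Lemma \ref{clue0}) that fails to lie in $\Sos{B}$. The crucial observation is that any $f = s_1 + s_2 F \in \widehat{T}$ satisfies $f \equiv s_1 + s_2 \z^2 \pmod{\z^2 - F}$, so $f \in \Sos{B}$; hence $\widehat{T} \subset \Sos{B} \cap \kappa[[\x,\y]]$. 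Consequently each such $\varphi$ lies in ${\rm Sat}(\widehat{T}) \setminus \widehat{T}$, contradicting the assumed saturation of $\widehat{T}$ and forcing $F$ (after the allowed $c^2$-normalisation) into the list (iv)--(ix).

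The principal technical point is to confirm that each witness $\varphi$ extracted from Lemma \ref{ord3} and Lemma \ref{clue0} really belongs to $\psd(\{F \geq 0\})$ rather than merely to $\psd(B)$; the constructions in those proofs were arranged precisely to give this stronger conclusion, so the verification is immediate from inspection of the orderings considered there. The $c^2$-scaling in the statement then absorbs the homogeneous rescalings $(\x,\y,\z) \mapsto (\lambda^i\x,\lambda^j\y,\lambda^k\z)$ used in the normalisation step of Lemma \ref{ord3}, and this explains the coefficient conventions appearing in items (iv) and (vii).
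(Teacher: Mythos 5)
Your argument is correct and follows essentially the same route as the paper's own sketch of proof: reduce to $\widehat{A}$ via Corollary \ref{scht} (valid because $\tau(\kappa)<+\infty$ forces $p(A)<+\infty$), translate saturation of $\widehat{T}$ into the equality $\psd(B)=\Sos{B}$ for the double cover $B$ using Corollary \ref{psd3} and Theorem \ref{order3}, and for necessity observe that the obstruction witnesses produced in the proof of Lemma \ref{ord3} and Lemma \ref{clue0} already lie in $\psd(\{F\geq 0\})\cap\kappa[[\x,\y]]$, hence in ${\rm Sat}(\widehat{T})\setminus\widehat{T}$. Your write-up is in fact more detailed than the paper's one-paragraph sketch, in particular in making explicit the inclusion $\widehat{T}\subset\Sos{B}\cap\kappa[[\x,\y]]$ and the role of the $c^2$-scaling.
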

\begin{proof}[Sketch of proof]
The result follows straightforwardly from Corollary \ref{scht}, the proof of Theorem \ref{ord3} and Theorem \ref{order3}. A key fact for our purposes is that in the proof of Theorem \ref{ord3} if the difference $\psd(A)\setminus\Sos{A}\neq\varnothing$, then $\psd(\{F\geq0\})\setminus\Sos{A}\neq\varnothing$. As a guideline the reader can have a look at \cite[\S6.5]{sch5}.
\end{proof}

In \cite[Rem.6.8]{sch5} Scheiderer explains that going beyond order three seems difficult because we do not even know whether the saturatedness of ${\rm PO}(F)$ depends only on the pair $(\widehat{A},F)$. In \cite[Thm.6.3 \& Thm.6.6]{sch5} Scheiderer extends the analogous results to Corollaries \ref{ord2p} and \ref{ord3p} when $\kappa$ is a real closed field to the setting of excellent regular local rings of dimension $2$, that is, he erases the `henselian condition'. The strategy proposed there uses strongly the fact that the residue field is real closed. It seems to us that the extension of our results (Corollaries \ref{ord2p} and \ref{ord3p}) to the setting of an excellent regular local ring of dimension $2$ when the residue field $\kappa$ is (formally) real and has $\tau(\kappa)<+\infty$ needs a new strategy, even if we impose that $\kappa$ admits a unique ordering. 

\appendix

\section{Examples}\label{a}

We revisit case (3.vi) of Theorem \ref{list2}. Recall that $\kappa$ is a (formally) real field such that $\tau(\kappa)<+\infty$. We follow similar ideas to those proposed in Lemma \ref{ext}.

\begin{example}\label{irredrevisted}
We prove alternatively the property $\psd(A)=\Sos{A}$ for the ring 
$$
A:=\kappa[[\x,\y,\z]]/(\z^2-F)
$$ 
where $F:=\x^3+a\x\y^2+b\y^3$ is an irreducible polynomial in $\kappa[\x,\y]$ under the additional assumption $-a,-4a^3-27b^2\in\Sos{\kappa}\setminus\{0\}$.

A Sturm's sequence for the polynomial $F(\x,1)$ is
$$
\begin{cases}
F(\x,1),3\x^2+a,-2a\x-3b,-4a^3-27b^2&\text{if $a\neq 0$,}\\
F(\x,1),\x^2,-b&\text{if $a=0$.}
\end{cases}
$$
As $F$ is irreducible, $-4a^3-27b^2,b\neq 0$. If $-a,-4a^3-27b^2\in\Sos{\kappa}\setminus\{0\}$, then $F$ has three roots in each real closure of $\kappa$ endowed with the corresponding ordering (use Sturm's Theorem \cite[Cor.1.2.10]{bcr}). Let $\zeta_1,\zeta_2,\zeta_3\in\ol{\kappa}$ be the roots of $F(\x,1)$ in the algebraic closure $\ol{\kappa}$ of $\kappa$. For each $\alpha\in\Sper(\kappa)$ let us denote the real closure of $(\kappa,\leq_\alpha)$ with $R(\alpha)$. By Sturm's Theorem \cite[Cor.1.2.10]{bcr} $\zeta_1,\zeta_2,\zeta_3\in R(\alpha)$ for each $\alpha\in\Sper(\kappa)$. Consider the extension $\kappa(\zeta_1)|\kappa$ and the polynomial
$$
P(\x,\y):=(\x-\zeta_2\y)(\x-\zeta_3\y)=\x^2-(\zeta_2+\zeta_3)\x\y+\zeta_2\zeta_3\y^2\in\kappa(\zeta_1)[\x,\y]\ \leadsto\ \zeta_2+\zeta_3,\zeta_2\zeta_3\in\kappa(\zeta_1).
$$
We claim: $(\zeta_2-\zeta_3)^2\in\Sos{\kappa(\zeta_1)}$.

Observe that $(\zeta_2-\zeta_3)^2=(\zeta_2+\zeta_3)^2-4\zeta_2\zeta_3\in\kappa(\zeta_1)\subset R(\alpha)$ for each $\alpha\in\Sper(\kappa)$. Thus, $(\zeta_2-\zeta_3)^2$ is positive in all the orderings of the field $\kappa(\zeta_1)$, so $(\zeta_2-\zeta_3)^2\in\Sos{\kappa(\zeta_1)}$.

Denote $\u:=\x-\zeta_1\y$, $c:=\zeta_1-\zeta_2$, $d:=\zeta_1-\zeta_3$, $\v:=\y+(\frac{1}{2c}+\frac{1}{2d})\u$. We have
\begin{align*}
cd&=(\zeta_1-\zeta_2)(\zeta_1-\zeta_3)=P(\zeta_1,1)\in\kappa(\zeta_1),\\
\frac{1}{2c}+\frac{1}{2d}=\frac{c+d}{2cd}&=\frac{2\zeta_1-\zeta_2-\zeta_3}{2P(\zeta_1,1)}=\frac{3\zeta_1}{2P(\zeta_1,1)}\in\kappa(\zeta_1),\\
\Big(\frac{1}{2c}-\frac{1}{2d}\Big)^2=\Big(\frac{d-c}{2cd}\Big)^2&=\frac{(\zeta_2-\zeta_3)^2}{4P^2(\zeta_1,1)}\in\Sos{\kappa(\zeta_1)}.
\end{align*}
Note that $\u,\v\in\kappa(\zeta_1)[\x,\y]$ and
\begin{equation*}
\begin{split}
F&=(\x-\zeta_1\y)(\x-\zeta_2\y)(\x-\zeta_3\y)=\u(\u+(\zeta_1-\zeta_2)\y)(\u+(\zeta_1-\zeta_3)\y)\\
&=\u(\u+c\y)(\u+d\y)=cd\u\Big(\y+\frac{1}{c}\u\Big)\Big(\y+\frac{1}{d}\u\Big)\\
&=cd\u\Big(\Big(\y+\Big(\frac{1}{2c}+\frac{1}{2d}\Big)\u\Big)+\Big(\frac{1}{2c}-\frac{1}{2d}\Big)\u\Big)\Big(\Big(\y+\Big(\frac{1}{2c}+\frac{1}{2d}\Big)\u\Big)-\Big(\frac{1}{2c}-\frac{1}{2d}\Big)\u\Big)\\
&=cd\u(\v^2-\theta^2\u^2)\in\kappa(\zeta_1)[\u,\v]
\end{split}
\end{equation*}
where $\theta:=\frac{1}{2c}-\frac{1}{2d}$ and $\theta^2\in\Sos{\kappa(\zeta_1)}$. Using the additional change of coordinates $(\u,\v,\z)\mapsto(cd\u,cd\v,(cd)^2\z)$, we deduce
$$
A\hookrightarrow B:=\kappa(\zeta_1)[[\x,\y,\z]]/(\z^2-F)\cong B':=\kappa(\zeta_1)[[\u,\v,\z]]/(\z^2-\u(\v^2-\theta^2\u^2)).
$$
By Theorem \ref{list2} $\psd(B')=\Sos{B'}$, so $\psd(A)\subset\psd(B)=\Sos{B}$. If $\psi\in\psd(A)$, there exist $a_i,b_i,c_i,d_j\in\kappa[[\x,\y,\z]]$ satisfying
\begin{equation*}
\begin{split}
\psi&=\sum_{i=1}^p(a_i+\zeta_1b_i+\zeta_1^2c_i)^2+(d_1+\zeta_1d_2+\zeta_1^2d_3)(\z^2-F)\\
&=\sum_{i=1}^p(a_i^2+\zeta_1^2b_i^2+\zeta_1^4c_i^2+2a_ib_i\zeta_1+2a_ic_i\zeta_1^2+2b_ic_i\zeta_1^3)+(d_1+\zeta_1d_2+\zeta_1^2d_3)(\z^2-F).
\end{split}
\end{equation*}
Let $\sigma_j:\kappa(\zeta_1)\rightarrow\kappa(\zeta_j)$ be the $\kappa$-isomorphism such that $\sigma_j(\zeta_1)=\zeta_j$. We apply $\sigma_j$ to the previous equality and obtain
$$
\psi=\sum_{i=1}^p(a_i^2+\zeta_j^2b_i^2+\zeta_j^4c_i^2+2a_ib_i\zeta_j+2a_ic_i\zeta_j^2+2b_ic_i\zeta_j^3)+(d_1+\zeta_jd_2+\zeta_j^2d_3)(\z^2-F)
$$
for $j=2,3$. We add the three equations:
\begin{multline}\label{add3}
3\psi=\sum_{i=1}^p(3a_i^2+(\zeta_1^2+\zeta_2^2+\zeta_3^2)b_i^2+(\zeta_1^4+\zeta_2^4+\zeta_3^4)c_i^2\\
+2a_ib_i(\zeta_1+\zeta_2+\zeta_3)+2a_ic_i(\zeta_1^2+\zeta_2^2+\zeta_3^2)+2b_ic_i(\zeta_1^3+\zeta_2^3+\zeta_3^3))\\
+(3d_1+(\zeta_1+\zeta_2+\zeta_3)d_2+(\zeta_1^2+\zeta_2^2+\zeta_3^2)d_3)(\z^2-F).
\end{multline}
After elementary computations, we obtain
\begin{align*}
\zeta_1+\zeta_2+\zeta_3&=0,\\
\zeta_1^2+\zeta_2^2+\zeta_3^2&=-2a,\\
\zeta_1^3+\zeta_2^3+\zeta_3^3&=-3b,\\
\zeta_1^4+\zeta_2^4+\zeta_3^4&=2a^2.
\end{align*}
We substitute these values in \eqref{add3} and deduce
\begin{equation*}
\begin{split}
3\psi=&\sum_{i=1}^p(3a_i^2-2ab_i^2+2a^2c_i^2-4aa_ic_i-6bb_ic_i)+(3d_1-2ad_3)(\z^2-F)\\
=&\sum_{i=1}^p\Big(3\Big(a_i^2-\frac{2a}{3}c_i\Big)^2-2a\Big(b_i+\frac{3b}{2a}c_i\Big)^2+\frac{6}{36a^2}(-a)(-4a^3-27b^2)c_i^2\Big)\\
&+(3d_1-2ad_3)(\z^2-F).
\end{split}
\end{equation*}
As $-a,-4a^3-27b^2\in\Sos{\kappa}$, we conclude (after multiplying the previous equation by $\frac{3}{9}$) that $\psi\in\Sos{A}$, so $\psd(A)=\Sos{A}$, as required.
\end{example}

In the following, we approach an example of a different nature. We prove $\psd(A)=\Sosd{A}$ for the case (3.iii) of the list of Theorem \ref{gp=s} in a more general situation than the one we are able to solve in Theorem \ref{order2} (we do not require that the coefficient $a\in\Sos{\kappa}$). The proof is inspired by some constructions of Ruiz developed in \cite[\S3, pages 6--7]{rz2} related to Whitney's umbrella singularity.

\begin{example}\label{newtrends}
Let $\kappa$ be a (formally) real field with $\tau(\kappa)=1$ and $A:=\kappa((\t))[[\x,\y,\z]]/(\z^2-\psi\x^2)$ where $\psi\in\kappa((\t))$ is a series of odd order. Then $\psd(A)=\Sos{A}$.
\end{example}
\begin{proof}
As $\psi$ has odd order, there exists a series $\zeta\in\kappa((\t))$ such that $\psi=\t\zeta^2$. After the change of coordinates $(\x,\y,\z)\mapsto(\frac{1}{\zeta}\x,\y,\z)$ we assume $A=\kappa((\t))[[\x,\y,\z]]/(\z^2-\t\x^2)$. Now, consider the ring homomorphism
$$
\Phi:\kappa((\t))[[\x,\y,\z]]\to\kappa((\s))[[\x,\y]],\ f(\t,\x,\y,\z)\mapsto f(\s^2,\x,\y,\s\x).
$$
The kernel of the previous homomorphism is the ideal generated by $\z^2-\t\x^2$. Thus, $\Phi$ induces an inclusion $\ol{\Phi}:A\hookrightarrow\kappa((\s))[[\x,\y]]$.

Let $\varphi:=f+\z g\in\psd(A)$ and $\psi:=\Phi(\varphi)=f(\s^2,\x,\y)+\s\x g(\s^2,\x,\y)\in\psd(\kappa((\s))[[\x,\y]])$. By \eqref{p2} and Theorem \ref{liso} $\psi$ is a sum of two squares in $\kappa((\s))[[\x,\y]]$, that is, there exist $a,b\in\kappa((\s))[[\x,\y]]$ such that $\psi=a^2+b^2$. To prove that $\varphi\in\Sigma A^2$, it is enough to show $\psi\in\Sigma_2{\ol\Phi}(A)^2$. We will find below $a',b'\in{\ol\Phi}(A)$ such that $2\psi=a'^2+b'^2$. Once this is done, 
$$
\psi=\frac{2}{4}(a'^2+b'^2)=\Big(\frac{a'+b'}{2}\Big)^2+\Big(\frac{a'-b'}{2}\Big)^2\in\Sigma_2{\ol\Phi}(A)^2.
$$

Each $h\in\kappa((\s))[[\x,\y]]$ can be written uniquely as $h=h_0+\s h_1$ where $h_0,h_1\in\kappa((\s^2))[[\x,\y]]$ and $h\in\ol{\Phi}(A)$ if and only if $\x$ divides $h_1$ or, equivalently, if $h(\s,0,\y)\in\kappa((\s^2))[[\y]]$. Thus, let us find $a',b'\in\kappa((\s))[[\x,\y]]$ such that $2\psi=a'^2+b'^2$ and $a'(\s,0,\y),b'(\s,0,\y)\in\kappa((\s^2))[[\y]]$. 

Write $a:=a_0+\s a_1$ and $b:=b_0+\s b_1$ where $a_i,b_i\in\kappa((\s^2))[[\x,\y]]$. Denote $A_i(\s^2,\y):=a_i(\s^2,0,\y)$ and $B_i(\s^2,\y):=b_i(\s^2,0,\y)$. If we set $\x=0$, we get 
\begin{multline*}
\varphi(\s^2,0,\y,0)=\psi(\s,0,\y)=(A_0+\s A_1)^2+(B_0+\s B_1)^2\\
=A_0^2+B_0^2+\s^2(A_1^2+B_1^2)+2\s (A_0A_1+B_0B_1).
\end{multline*}
We deduce
\begin{itemize}
\item $\varphi(\s^2,0,\y,0)=A_0^2(\s^2,\y)+B_0^2(\s^2,\y)+\s^2(A_1^2(\s^2,\y)+B_1^2(\s^2,\y))$. 
\item $A_0A_1+B_0B_1=0$.
\end{itemize}

If $u,v\in\kappa((\s))[[\x,\y]]$, we have
$$
(u^2+v^2)(a^2+b^2)=(ua-vb)^2+(va+ub)^2.
$$
Let us find $u(\s,\y),v(\s,\y)\in\kappa((\s))[[\y]]$ such that
\begin{multline*}
ua-vb,va+ub\in \ol{\Phi}(A),\quad u^2+v^2\in \kappa((\s^2))[[\y]]\subset\ol{\Phi}(A),\\
\quad (u^2+v^2)(\s^2,0)\in\kappa[[\s^2]],\quad (u^2+v^2)(0,0)=k
\end{multline*}
where $k$ is either $1$ or $2$. The latter condition implies that $u^2+v^2$ is a unit of $\kappa((\s^2))[[\y]]$ and 
$$
w:=\sqrt{\frac{u^2+v^2}{k}}\in\kappa((\s^2))[[\y]]. 
$$
Write $u=:u_0+\s u_1$ and $v:=v_0+\s v_1$, where $u_i,v_i\in\kappa((\s^2))[[\y]]$. It holds 
\begin{align*}
ua-vb&=(u_0a_0+\s^2u_1a_1-v_0b_0-\s^2v_1b_1)+\s(u_0a_1+u_1a_0-v_0b_1-v_1b_0),\\
va+ub&=(v_0a_0+\s^2v_1a_1+u_0b_0+\s^2u_1b_1)+\s(v_0a_1+v_1a_0+u_0b_1+u_1b_0).
\end{align*}
Recall that $ua-vb,va+ub\in A$ if and only if
$$
\begin{cases}
u_0A_1+u_1A_0-v_0B_1-v_1B_0=(u_0a_1+u_1a_0-v_0b_1-v_1b_0)(\s,0,\y)=0,\\
v_0A_1+v_1A_0+u_0B_1+u_1B_0=(v_0a_1+v_1a_0+u_0b_1+u_1b_0)(\s,0,\y)=0.
\end{cases}
$$
Let us write the equalities above as the system of linear equations
$$
\begin{cases}
A_0u_1-B_0v_1=-A_1u_0+B_1v_0,\\
B_0u_1+A_0v_1=-B_1u_0-A_1v_0.
\end{cases}
$$
Using that $A_0A_1+B_0B_1=0$
\begin{multline}\label{AB}
(A_0^2+B_0^2)
\left(\begin{array}{c}
u_1\\
v_1
\end{array}
\right)=
\left(\begin{array}{cc}
-(A_0A_1+B_0B_1)&A_0B_1-A_1B_0\\
-(A_0B_1-A_1B_0)&-(A_0A_1+B_0B_1)
\end{array}
\right)
\left(\begin{array}{c}
u_0\\
v_0
\end{array}
\right)\\
=
\left(\begin{array}{cc}
0&A_0B_1-A_1B_0\\
-(A_0B_1-A_1B_0)&0
\end{array}
\right)
\left(\begin{array}{c}
u_0\\
v_0
\end{array}
\right).
\end{multline}
As $\varphi\in\psd(A)$, also 
\begin{equation}\label{p00}
\varphi(\t,0,\y,0)=A_0^2(\t,\y)+B_0^2(\t,\y)+\t(A_1^2(\t,\y)+B_1^2(\t,\y))\in\psd(\kappa((\t))[\y]).
\end{equation}
If $A_0^2+B_0^2=0$, then $\varphi(\t,0,\y,0)=\t(A_1^2(\t,\y)+B_1^2(\t,\y))\in\psd(\kappa((\t))[[\y]])$, so $A_1,B_1=0$. If we take $u=1,v=0$, we choose $a':=a,b':=b$, which are both elements of $\ol{\Phi}(A)$, so $\psi=a^2+b^2\in\Sosd{\ol{\Phi}(A)}$ and $\varphi\in\Sosd{A}$. 

Suppose next $A_0^2+B_0^2\neq 0$. As $A_0A_1+B_0B_1=0$, we have
$$
A_0B_1-A_1B_0=\frac{B_1}{A_0}(A_0^2+B_0^2)=-\frac{A_1}{B_0}(A_0^2+B_0^2)
$$
(whenever the previous expressions make sense, that is, $A_0\neq0$ or $B_0\neq0$). Denote $\lambda:=\frac{B_1}{A_0}=-\frac{A_1}{B_0}\in\kappa((\s^2))((\y))$ and rewrite \eqref{AB} as
\begin{multline*}
(A_0^2+B_0^2)
\left(\begin{array}{c}
u_1\\
v_1
\end{array}
\right)=
\left(\begin{array}{cc}
0&\lambda(A_0^2+B_0^2)\\
-\lambda(A_0^2+B_0^2)&0
\end{array}
\right)
\left(\begin{array}{c}
u_0\\
v_0
\end{array}
\right)\\
=(A_0^2+B_0^2)\left(\begin{array}{cc}
0&\lambda\\
-\lambda&0
\end{array}
\right)
\left(\begin{array}{c}
u_0\\
v_0
\end{array}
\right).
\end{multline*}
Consequently, $u_1=\lambda v_0$, $v_1=-\lambda u_0$. We claim: \em $\lambda\in\kappa((\s^2))[[\y]]$ and $\lambda(\s^2,0)\in\kappa[[\s^2]]$\em. 

Denote the order of a series with respect to the variable $\y$ with $\omega_\y(.)$. By \eqref{p00} we deduce
$$
\omega_\y(A_0^2(\t,\y)+B_0^2(\t,\y))\leq\omega_\y(A_1^2(\t,\y)+B_1^2(\t,\y)).
$$
We distinguish two cases:

(1) If $\omega_\y(A_0^2(\t,\y)+B_0^2(\t,\y))< \omega_\y(A_1^2(\t,\y)+B_1^2(\t,\y))$, then $\omega_\y(A_0(\t,\y))< \omega_\y(B_1(\t,\y))$ or $\omega_\y(B_0(\t,\y))<\omega_\y(A_1(\t,\y))$. Consequently, 
$$
\omega_\y(\lambda)=\omega_\y\Big(\frac{B_1}{A_0}\Big)=\omega_\y\Big(-\frac{A_1}{B_0}\Big)>0,
$$ 
so $\lambda\in\kappa((\s^2))[[\y]]$ and $\lambda(\s^2,0)=0\in\kappa[[\s^2]]$.

(2) If $\omega_\y(A_0^2(\t,\y)+B_0^2(\t,\y))=\omega_\y(A_1^2(\t,\y)+B_1^2(\t,\y))=2r$, then $\omega_\y(\lambda)\geq 0$. If $\omega_\y(\lambda)>0$, we have $\lambda\in\kappa((\s^2))[[\y]]$ and $\lambda(\s^2,0)=0\in\kappa[[\s^2]]$, so we assume $\omega_\y(\lambda)=0$, hence $\lambda\in\kappa((\s^2))[[\y]]$. By \eqref{p00}
$$
\varphi(\t,0,0,0)=(A_0^2(\t,0)+B_0^2(\t,0))+\t(A_1^2(\t,0)+B_1^2(\t,0))\in\psd(\kappa((\t))),
$$
so $\omega(A_0^2(\t,0)+B_0^2(\t,0))\leq\omega(A_1^2(\t,0)+B_1^2(\t,0))$. Thus, $\omega(\lambda(\s^2,0))\geq 0$ and $\lambda(\s^2,0)\in\kappa[[\s^2]]$.

If we take $u_0=v_0=1$, we obtain $u_1=\lambda(\s^2,\y)$ and $v_1=-\lambda(\s^2,\y)$, so
$$
u:=1-\s\lambda(\s^2,\y)\quad\text{and}\quad v:=1+\s\lambda(\s^2,\y).
$$
Note that $u^2+v^2=2+2\s^2\lambda(\s^2,\y)^2\in\kappa((\s^2))[[\y]]$, $(u^2+v^2)(\s^2,0)=2+2\s^2\lambda(\s^2,0)\in\kappa[[\s^2]]$ and $(u^2+v^2)(0,0)=2$. Define
$$
a':=\frac{au-bv}{w}\in\ol{\Phi}(A)\quad\text{and}\quad b':=\frac{av+bu}{w}\in\ol{\Phi}(A)
$$
where $w:=\sqrt{\frac{u^2+v^2}{2}}\in\kappa((\s^2))[[\y]]$. We obtain
$$
a'^2+b'^2=\frac{(au-bv)^2+(av+bu)^2}{w^2}=\frac{2(a^2+b^2)(u^2+v^2)}{u^2+v^2}=2(a^2+b^2)=2\psi,
$$
as required. 
\end{proof}

\section{Finite determinacy}\label{b}

The purpose of this appendix is to present an elementary explicit proof of Theorem \ref{fdqd} (see also \cite{cs,fr2}). Beforehand, we need some preliminary results:

\begin{example}\label{comput}
Let $A$ be a ring containing $\Q$ and let $u\in A[[\t]]$. Then for each $a\in A$ the differential equation $\frac{d\y}{d\t}=u\y$ has a unique solution $y\in A[[\t]]$ such that $y(0)=a$.

We write $u:=\sum_{j\geq0}u_j\t^j$ and $\y:=\sum_{\ell\geq0}\y_\ell \t^{\ell}$. In order to obtain a solution of the differential equation $\frac{d\y}{dt}=u\y$ such that $\y(0)=a$, it is enough to find a solution $(y_k)_{k\geq0}$ of the recursive family of equations:
\begin{equation}\label{difeq}
\begin{cases}
\y_0=a,\\
\y_{k}=\frac{1}{k}\sum_{j+\ell=k-1}\u_j\y_\ell\in\Q[\u_0,\ldots,\u_{k-1},\y_0,\y_1,\ldots,\y_{k-1}]&\text{for $k\geq 1$}.
\end{cases}
\end{equation}
Proceeding by induction one can check that for each $k\geq 1$ there exists a polynomial $F_k\in\Q[\u_0,\ldots,\u_{k-1},\y_0]$ such that $(y_k)_{k\geq0}$ is a solution of the family of equations \eqref{difeq} if and only if $y_k=F_k(u_0,\ldots,u_{k-1},a)$ for each $k\geq 1$. Thus, for each $a\in A$ there exists a unique solution $y\in A[[\t]]$ of the previous differential equation such that $y(0)=y_0=a$.\qed 
\end{example}

In the same way we reach the following result.

\begin{lem}\label{comput1}
Let $A$ be a ring containing $\Q$ and denote $\x:=(\x_1,\ldots,\x_n)$. Let $a_{10},\ldots,a_{n0}\in A[[\x]]$ and $b_1,\ldots,b_n\in A[[\x,\t]]$ be such that $a_{i0}(0)=0$ and $b_i(0,0)=0$ for $i=1,\ldots,n$. Then there exist unique series $\phi_1,\ldots,\phi_n\in A[[\x_1,\ldots,\x_n,\t]]$ such that
\begin{equation}\label{diffeq}
\frac{\partial\phi_i}{\partial\t}=b_i(\phi_1,\ldots,\phi_n,\t)
\end{equation} 
and $\phi_i(\x,0)=a_{i0}$ for $i=1,\ldots,n$.
\end{lem}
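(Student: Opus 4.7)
The plan is to imitate the scalar case of Example \ref{comput} in the $\t$-adic direction, while treating the $\x$-variables as passive parameters. Write each unknown as a formal power series in $\t$ with coefficients in $A[[\x]]$:
$$
\phi_i(\x,\t)=\sum_{k\geq0}\phi_{i,k}(\x)\t^k,\qquad\phi_{i,k}\in A[[\x]],
$$
and impose the initial condition by declaring $\phi_{i,0}:=a_{i0}$. Since $a_{i0}(0)=0$, any series of this form satisfies $\phi_i(0,0)=0$, hence lies in the maximal ideal of $A[[\x,\t]]$, and consequently the substitution $b_i(\phi_1,\ldots,\phi_n,\t)$ is a well-defined element of $A[[\x,\t]]$.

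First I would expand both sides of \eqref{diffeq} in powers of $\t$: the left hand side is
$$
\frac{\partial\phi_i}{\partial\t}=\sum_{k\geq0}(k+1)\phi_{i,k+1}(\x)\t^k,
$$
and the right hand side admits an expansion $b_i(\phi_1,\ldots,\phi_n,\t)=\sum_{k\geq0}\Psi_{i,k}(\x)\t^k$ with $\Psi_{i,k}\in A[[\x]]$. The key observation, which is the heart of the argument, is that $\Psi_{i,k}$ depends only on $\phi_{j,0},\ldots,\phi_{j,k}$ for $j=1,\ldots,n$. Indeed, writing $b_i=\sum_{\nu\geq0}b_{i,\nu}(\x)\t^\nu$ with $b_{i,\nu}\in A[[\x]]$, the contribution of the $\t^\nu$-term is $b_{i,\nu}(\phi_1,\ldots,\phi_n)\t^\nu$, whose coefficient of $\t^{k-\nu}$ (for $\nu\leq k$) is determined by $\phi_1,\ldots,\phi_n$ modulo $\t^{k-\nu+1}\subset\t^{k+1}$; higher values of $\nu$ contribute zero to the coefficient of $\t^k$. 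This follows formally from the fact that if $\phi_j\equiv\psi_j\pmod{\t^{k+1}}$, then $b_{i,\nu}(\phi_1,\ldots,\phi_n)\equiv b_{i,\nu}(\psi_1,\ldots,\psi_n)\pmod{\t^{k+1}}$, since the difference is a finite sum of terms each containing a factor $\phi_j-\psi_j\in(\t^{k+1})$.

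Matching $\t^k$-coefficients in \eqref{diffeq} then yields the recursion
$$
\phi_{i,k+1}(\x)=\frac{1}{k+1}\Psi_{i,k}(\x),\qquad k\geq0,
$$
where the division by $k+1$ makes sense because $\Q\subset A$. By the key observation, the right hand side only involves $\phi_{j,0},\ldots,\phi_{j,k}$, which have already been constructed; conversely, any solution to \eqref{diffeq} must obey this identity, which determines $\phi_{i,k+1}$ uniquely from the earlier coefficients. Starting from the forced choice $\phi_{i,0}=a_{i0}$, an induction on $k$ produces unique sequences $(\phi_{i,k})_{k\geq0}$ in $A[[\x]]$, and the series $\phi_i:=\sum_{k\geq0}\phi_{i,k}\t^k\in A[[\x]][[\t]]=A[[\x,\t]]$ simultaneously satisfy $\phi_i(\x,0)=a_{i0}$ and \eqref{diffeq}, establishing both existence and uniqueness.

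The main conceptual point to verify carefully is the stability claim that $\Psi_{i,k}$ is a function of $\phi_{j,0},\ldots,\phi_{j,k}$ alone, so that the recursion is genuinely well-defined; once this is in place the argument reduces to the bookkeeping already done in Example \ref{comput}, simply executed in parallel over the $n$ components and with $A[[\x]]$ replacing $A$ as the coefficient ring.
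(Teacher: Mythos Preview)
Your proposal is correct and follows essentially the same approach as the paper: expand each $\phi_i$ as a power series in $\t$ with coefficients in $A[[\x]]$, show that equating coefficients of $\t^k$ in \eqref{diffeq} yields a recursion determining the $(k+1)$st coefficient from the earlier ones, and conclude existence and uniqueness by induction. The only cosmetic difference is that the paper phrases the recursion via iterated $\t$-derivatives and the chain rule (so that $a_{ik}=\tfrac{1}{k!}\tfrac{\partial^{k-1}}{\partial\t^{k-1}}(b_i(\phi,\t))|_{\t=0}$ is explicitly a polynomial in partial derivatives of $b_i$ and in $a_{j0},\ldots,a_{j,k-1}$), whereas you argue the same dependence by $\t$-adic congruences; both are valid bookkeeping for the same idea.
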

\begin{proof}
Write $\phi_i:=\sum_{k\geq0}a_{ik}\t^k$ and $\frac{\partial\phi_i}{\partial\t}=\sum_{k\geq1}ka_{ik}\t^{k-1}$ where the $a_{ik}$ are undetermined for $i=1,\ldots,n$ and $k\geq1$. Observe that $\phi_1,\dots,\phi_n$ satisfy the equalities \eqref{diffeq} if and only if
$$
a_{ik}=\frac{1}{k!}\frac{\partial^k\phi_i}{\partial\t^k}(0)=\frac{1}{k!}\frac{\partial^{k-1}}{\partial\t^{k-1}}\Big(\frac{\partial\phi_i}{\partial\t}\Big)(0)=\frac{1}{k!}\frac{\partial^{k-1}}{\partial\t^{k-1}}(b_i(\phi_1,\ldots,\phi_n,\t))(0).
$$
Denote $\nu:=(\nu_1,\ldots,\nu_n)\in\N^n$ and $\mu\in\N$. Using the chain rule recursively, we find polynomials 
$$
P_{i,k-1}\in\Q[\z_{\nu,\mu},\y_{j\ell}:\ 0\leq|\nu|+\mu\leq k-1,0\leq\ell\leq k-1,1\leq j\leq n]
$$ 
such that
$$
\frac{\partial^{k-1}}{\partial\t^{k-1}}b_i(\phi_1,\ldots,\phi_n,\t)=P_{i,k-1}\Big(\frac{\partial^{|\nu|+\mu}b_i}{\partial\x^\nu\partial\t^\mu}(\phi_1,\ldots,\phi_n,\t),\frac{\partial^\ell\phi_j}{\partial\t^\ell}\Big)
$$
where $0\leq|\nu|+\mu\leq k-1$, $0\leq\ell\leq k-1$, $1\leq j\leq n$ and $\z_{\nu,\mu},\y_{j\ell}$ are variables. Consequently,
$$
a_{ik}=\frac{1}{k!}P_{i,k-1}\Big(\frac{\partial^{|\nu|+\mu}b_i}{\partial\x^\nu\partial\t^\mu}(a_{10},\ldots,a_{n0},0),a_{j\ell}\Big).
$$ 
We find recursively polynomials 
$$
Q_{i,k}\in\Q[\z_{\nu,\mu},\y_j: 0\leq|\nu|+\mu\leq k-1,1\leq j\leq n]
$$
such that 
$$
a_{ik}=Q_{i,k}\Big(\frac{\partial^{|\nu|+\mu}b_i}{\partial\x^\nu\partial\t^\mu}(a_{10},\ldots,a_{n0},0),a_{j0}\Big)
$$
where $0\leq|\nu|+\mu\leq k-1$ and $1\leq j\leq n$. Thus, each $a_{ik}$ is completely determined by $b_1,\ldots,b_n$ and $a_{10},\ldots,a_{n0}\in A$. This means that equation \eqref{diffeq} has a unique solution $\phi_1,\ldots,\phi_n$ such that $\phi_i(\x,0)=a_{i0}$ for $i=1,\ldots,n$.
\end{proof}

\begin{lem}\label{var}
Let $F\in\kappa[[\x,\y]]$ where $\x:=(\x_1,\ldots,\x_n)$, $\y:=(\y_1,\ldots,\y_m)$ and let $c\geq0$ be an integer. The following conditions are equivalent:
\begin{enumerate}
\item[(1)] $\frac{\partial F}{\partial\y_j}\in(\x)^c(\frac{\partial
F}{\partial\x_1},\ldots,\frac{\partial F}{\partial\x_n})+(F)$ for each $j=1,\ldots,m$.
\item[(2)] There exist $\varphi_1,\ldots,\varphi_n,u\in\kappa[[\x,\y]]$ such that:
\begin{itemize}
\item $u(\x,0)=1$,
\item $\varphi_i(\x,0)=\x_i$,
\item $\varphi_i-\x_i\in(\x)^c\kappa[[\x,\y]]$,
\item $F(\x,\y)=uF(\varphi,0)$ where $\varphi:=(\varphi_1,\ldots,\varphi_n)$.
\end{itemize}
\end{enumerate}
If in addition $\frac{\partial F}{\partial\y_j}\in(\x)^c(\frac{\partial F}{\partial\x_1},\ldots,\frac{\partial F}{\partial\x_n})$ for $j=1,\ldots,m$, we can choose $u=1$.
\end{lem}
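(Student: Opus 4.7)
The implication $(2) \Rightarrow (1)$ follows by differentiating the identity $F(\x, \y) = u \cdot F(\varphi, 0)$ with respect to each $\x_\ell$ and each $\y_j$. Since $\varphi_k(\x, 0) = \x_k$, the Jacobian $N := (\partial \varphi_k/\partial \x_\ell)$ reduces to the identity at $\y = 0$ and is therefore invertible in $\kappa[[\x, \y]]$. Solving the $\x_\ell$-equations for $\frac{\partial F}{\partial \x_k}(\varphi, 0) = \sum_\ell (N^{-1})_{k\ell}\bigl(u^{-1}\frac{\partial F}{\partial \x_\ell} - u^{-2}\frac{\partial u}{\partial \x_\ell} F\bigr)$ and substituting into the $\y_j$-equations yields
$$\frac{\partial F}{\partial \y_j} = \sum_\ell B_{\ell j}\frac{\partial F}{\partial \x_\ell} + A_j F, \qquad B_{\ell j} := \sum_k (N^{-1})_{k\ell} \frac{\partial \varphi_k}{\partial \y_j}.$$
Because $\varphi_k - \x_k \in (\x)^c$ forces $\frac{\partial \varphi_k}{\partial \y_j} \in (\x)^c$, we conclude $B_{\ell j} \in (\x)^c$, which is condition~(1).

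For the main implication $(1) \Rightarrow (2)$, I would use the deformation method. Write the hypothesis as $\frac{\partial F}{\partial \y_j} = \sum_i b_{ij}\frac{\partial F}{\partial \x_i} + a_j F$ with $b_{ij} \in (\x)^c$. Introduce a time variable $\tau$ and set $G(\x, \y, \tau) := F(\x, (1-\tau)\y)$, so $G|_{\tau=0} = F(\x, \y)$ and $G|_{\tau=1} = F(\x, 0)$. Evaluating the hypothesis at $(\x, (1-\tau)\y)$ gives
$$\frac{\partial G}{\partial \tau} = -\sum_i \beta_i(\x, \y, \tau)\frac{\partial G}{\partial \x_i} - \alpha(\x, \y, \tau)\, G,$$
where $\beta_i := \sum_j \y_j b_{ij}(\x, (1-\tau)\y) \in (\x)^c \kappa[[\x, \y, \tau]]$ and $\alpha := \sum_j \y_j a_j(\x, (1-\tau)\y)$, both vanishing identically at $\y = 0$. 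By Lemma~\ref{comput1} (applied with $A := \kappa[[\y]]$, where the required vanishing of $\beta_i(0, \y, 0) = 0$ in $A$ is guaranteed by $\beta_i \in (\x)^c$ when $c \geq 1$) there exist unique $\varphi_i \in \kappa[[\x, \y, \tau]]$ with $\frac{\partial \varphi_i}{\partial \tau} = \beta_i(\varphi, \y, \tau)$ and $\varphi_i(\x, \y, 0) = \x_i$; by Example~\ref{comput} there is a unique unit $v \in \kappa[[\x, \y, \tau]]$ with $v(\x, \y, 0) = 1$ and $\frac{\partial v}{\partial \tau} = -\alpha(\varphi, \y, \tau)\, v$. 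A direct chain-rule computation for $H(\x, \y, \tau) := G(\varphi(\x, \y, \tau), \y, \tau)/v(\x, \y, \tau)$, combining the three relations above, shows that $\frac{\partial H}{\partial \tau} = 0$. Comparing $H|_{\tau=0} = F(\x, \y)$ and $H|_{\tau=1} = F(\varphi|_{\tau=1}, 0)/v|_{\tau=1}$ yields $F(\x, \y) = u \cdot F(\varphi, 0)$ upon setting $\varphi := \varphi|_{\tau=1}$ and $u := 1/v|_{\tau=1}$.

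The identities $\varphi_i(\x, 0) = \x_i$ and $u(\x, 0) = 1$ are immediate: at $\y = 0$ both $\beta_i$ and $\alpha$ vanish, forcing the $\tau$-independent solutions $\varphi_i(\x, 0, \tau) = \x_i$ and $v(\x, 0, \tau) = 1$. The main obstacle is showing $\varphi_i - \x_i \in (\x)^c \kappa[[\x, \y]]$, which I would resolve by bootstrapping: I claim $\varphi_i - \x_i \in (\x)^c + (\tau)^N \kappa[[\x, \y, \tau]]$ for every $N \geq 1$. Under this hypothesis, $\varphi^\alpha \equiv \x^\alpha \pmod{(\x)^c + (\tau)^N}$ for $|\alpha| \geq c$, so expanding $b_{ij}(\x, \y) = \sum_{|\alpha| \geq c} \x^\alpha c_{\alpha,ij}(\y)$ and substituting $\x \mapsto \varphi$ places $\beta_i(\varphi, \y, \tau) \in (\x)^c + (\tau)^N$; integrating in $\tau$ from $0$ preserves the $(\x)^c$-summand while raising the $\tau$-order by one, giving the step $N \to N+1$. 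Since $\bigcap_N \bigl((\x)^c + (\tau)^N\bigr) = (\x)^c$ in $\kappa[[\x, \y, \tau]] \cong \kappa[[\x, \y]][[\tau]]$, we obtain $\varphi_i - \x_i \in (\x)^c$, and specializing to $\tau = 1$ yields the stated conclusion. For the addendum: if $a_j = 0$ for all $j$, then $\alpha = 0$, so $\partial v/\partial \tau = 0$ with $v|_{\tau=0} = 1$ forces $v \equiv 1$, whence $u = 1$.
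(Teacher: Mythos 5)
Your proof of $(2)\Rightarrow(1)$ is essentially the paper's argument (you work with the forward substitution rather than its inverse, but it is the same calculation). For $(1)\Rightarrow(2)$ you take a genuinely different route: the paper contracts the variables $\y_1,\ldots,\y_m$ \emph{one at a time}, using the translation $\y_m\mapsto\y_m-\t$ and finally substituting $\t=\y_m$, whereas you contract all of them \emph{simultaneously} by the scaling $\y\mapsto(1-\tau)\y$. This is conceptually cleaner (one flow instead of an $m$-step iteration), and your bootstrap establishing $\varphi_i-\x_i\in(\x)^c+(\tau)^N$ for every $N$ is correct, as is the identification $\bigcap_N\bigl((\x)^c+(\tau)^N\bigr)=(\x)^c$.

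There is, however, a genuine gap at the last step, and it sits exactly where your method diverges from the paper's. You invoke ``specializing to $\tau=1$,'' but evaluation at $\tau=1$ is not a well-defined operation on $\kappa[[\x,\y,\tau]]$: it only makes sense on the subring $\kappa[\tau][[\x,\y]]$ of series whose $(\x,\y)$-homogeneous coefficients are polynomials in $\tau$. The paper's translation trick sidesteps this entirely, because $\t$ is specialized at $\y_m$, an element of the maximal ideal, which is always legal. Your $\tau=1$ is a unit, so the legitimacy must be earned: one has to check that $\varphi_i$ and $v$ do lie in $\kappa[\tau][[\x,\y]]$. This is in fact true — $G=F(\x,(1-\tau)\y)$ is visibly in $\kappa[\tau][[\x,\y]]$ with $\tau$-degree bounded by $\y$-degree, the same holds for $\beta_i$ and $\alpha$, and an induction on $(\x,\y)$-degree (using that $\beta_i\in(\y)$ forces the degree-$d$ part of $\beta_i(\varphi,\y,\tau)$ to depend only on the homogeneous parts of the $\varphi_k$ of degree $\le d-1$) propagates the property to $\varphi_i$ and $v$ — but none of this appears in your write-up, and without it the conclusion $\varphi_i|_{\tau=1}-\x_i\in(\x)^c\kappa[[\x,\y]]$ does not follow. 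A smaller remark: the proviso ``when $c\geq 1$'' you attach to the application of Lemma~\ref{comput1} would leave the allowed case $c=0$ uncovered; fortunately the hypothesis $b_i(0,0)=0$ of that lemma is never used in its proof (only $a_{i0}(0)=0$ is needed, to make the formal compositions legitimate), so the application is fine for all $c\ge 0$, but your remark suggests otherwise.
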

\begin{proof}[Proof of the implication $(2)$ $\Longrightarrow$ $(1)$]
As $\varphi_i(\x,0)=\x_i$, the homomorphism
$$
\Phi:\kappa[[\x,\y]]\to\kappa[[\x,\y]],\ f\mapsto f(\varphi,\y)
$$
is by the Inverse Function Theorem an automorphism of $\kappa[[\x,\y]]$. Let $\psi:=(\psi_1,\ldots,\psi_n)\in\kappa[[\x,\y]]^n$ be such that 
$$
\Psi:\kappa[[\x,\y]]\to\kappa[[\x,\y]],\ g\mapsto g(\psi,\y)
$$
is the inverse automorphism of $\Phi$. As $\x_i-\varphi_i\in(\x)^c\kappa[[\x,\y]]$, it holds $\psi_i-\x_i=\Psi(\x_i-\varphi_i)\in(\x)^c\kappa[[\x,\y]]$, so $\frac{\partial\psi_i}{\partial\y_j}\in(\x)^c\kappa[[\x,\y]]$. 

By hypothesis $uF(\varphi,0)=F(\x,\y)$ and applying $\Psi$, we obtain $F(\x,0)=u^{-1}(\psi,\y)F(\psi,\y)$ (resp. $F(\x,0)=F(\psi,\y)$ if $u=1$). The derivative of the series $u^{-1}(\psi,\y)F(\psi,\y)$ with respect to $\y_j$ is zero for $j=1,\ldots,m$, that is,
$$
u^{-1}(\psi,\y)\Big(\sum_{i=1}^n\frac{\partial F}{\partial\x_i}(\psi,\y)\frac{\partial\psi_i}{\partial\y_j}+\frac{\partial F}{\partial\y_j}(\psi,\y)\Big)+\frac{\partial}{\partial\y_j}(u^{-1}(\psi,\y))F(\psi,\y)=0.
$$
We apply the automorphism $\Phi$ to the previous equality and obtain
$$
u^{-1}\Big(\frac{\partial F}{\partial\y_j}+\sum_{i=1}^n\frac{\partial F}{\partial\x_i}\frac{\partial\psi_i}{\partial\y_j}(\varphi,\y)\Big)+\frac{\partial}{\partial\y_j}(u^{-1}(\psi,\y))(\varphi,\y)F=0.
$$
Thus, $\frac{\partial F}{\partial\y_j}\in(\x)^c(\frac{\partial F}{\partial\x_1},\ldots,\frac{\partial F}{\partial\x_n})+(F)$ (resp. $\frac{\partial F}{\partial\y_j}\in (\x)^c(\frac{\partial F}{\partial\x_1},\ldots,\frac{\partial F}{\partial\x_n})$ if $u=1$), as required.
\end{proof}

\begin{proof}[Proof of the implication $(1)$ $\Longrightarrow$ $(2)$]
Write $\y_{(k)}:=(\y_1,\ldots,\y_k)$. Assume that we have series $\psi_1,\ldots,\psi_n\in\kappa[[\x,\y]]$ such that 
\begin{itemize}
\item[(1)] $\psi_i(\x,\y_{(k)},0)=\x_i$ for $i=1,\ldots,n$,
\item[(2)] $\psi_i-\x_i\in(\x)^c\kappa[[\x,\y]]$ for $i=1,\ldots,n$.
\end{itemize}
We rewrite the previous conditions as follows
\begin{itemize}
\item[(1)] $\psi_i=\x_i+\sum_{\ell=k+1}^ma_{i\ell}\y_\ell$ where $a_{i\ell}\in\kappa[[\x,\y]]$ for $i=1,\ldots,n$,
\item[(2)] $\psi_i-\x_i=\sum_{\nu,|\nu|=c}\x^\nu b_{\nu,i}\in(\x)^c\kappa[[\x,\y]]$ where $b_{\nu,i}\in\kappa[[\x,\y]]$ for $i=1,\ldots,n$.
\end{itemize}

\paragraph{}\label{red1} Define $G:=F(\psi,\y_{(k)},0)$. We claim: \em $\frac{\partial G}{\partial\y_j}\in(\x)^c(\frac{\partial G}{\partial\x_1},\ldots,\frac{\partial G}{\partial\x_n})+(G)$ for each $j=1,\ldots,m$\em. In addition, \em if $\frac{\partial F}{\partial\y_j}\in(\x)^c(\frac{\partial F}{\partial\x_1},\ldots,\frac{\partial F}{\partial\x_n})$, then $\frac{\partial G}{\partial\y_j}\in(\x)^c(\frac{\partial G}{\partial\x_1},\ldots,\frac{\partial G}{\partial\x_n})$\em.

Observe that
\begin{align*}
\frac{\partial G}{\partial\x_p}&=\sum_{i=1}^n\frac{\partial F}{\partial\x_i}(\psi,\y_{(k)},0)\frac{\partial\psi_i}{\partial\x_p},\\
\frac{\partial G}{\partial\y_j}&=\sum_{i=1}^n\frac{\partial F}{\partial\x_i}(\psi,\y_{(k)},0)\frac{\partial\psi_i}{\partial\y_j}+\begin{cases}
\frac{\partial F}{\partial\y_j}(\psi,\y_{(k)},0)&\text{if $j=1,\ldots,k$,}\\
0&\text{if $j=k+1,\ldots,m$.}
\end{cases}
\end{align*}
In addition,
\begin{align*}
\frac{\partial\psi_i}{\partial\x_p}&=\delta_{ip}+\sum_{\ell=k+1}^m\frac{\partial a_{i\ell}}{\partial\x_p}\y_\ell,\\
\frac{\partial\psi_i}{\partial\y_j}&=\sum_{\nu,|\nu|=c}\x^\nu\frac{\partial b_{\nu,i}}{\partial\y_j}\in(\x)^c\kappa[[\x,\y]],\\ 
\end{align*}
where $\delta_{ip}$ is the Kronecker-$\delta$. As $G:=F(\psi,\y_{(k)},0)$, we get
$$
\frac{\partial G}{\partial\y_j}\in(\x)^c\Big(\frac{\partial F}{\partial\x_1}(\psi,\y_{(k)},0),\ldots,\frac{\partial F}{\partial\x_n}(\psi,\y_{(k)},0)\Big)+
(G).
$$
In the additional case we can erase the addend $(G)$ above. Thus, for our purposes it is enough to check
$$
\Big(\frac{\partial F}{\partial\x_1}(\psi,\y_{(k)},0),\ldots,\frac{\partial F}{\partial\x_n}(\psi,\y_{(k)},0)\Big)=\Big(\frac{\partial G}{\partial\x_1},\ldots,\frac{\partial G}{\partial\x_n}\Big).
$$
But this follows from the fact that 
$$
\det\Big(\frac{\partial\psi_i}{\partial\x_p}\Big)=\det\Big(\delta_{ip}+\sum_{\ell=k+1}^m\frac{\partial a_{i\ell}}{\partial\x_p}\y_\ell\Big)\in\kappa[[\x,\y]]
$$ 
is a unit (set $\y_{k+1}=0,\ldots,\y_m=0$ to check this). 

\paragraph{}\label{red2} Let $\phi_1,\ldots,\phi_n\in\kappa[[\x,\y]]$ be such that
\begin{itemize}
\item[(1)] $\phi_i(\x,\y_{(k-1)},0)=\x_i$ for $i=1,\ldots,n$,
\item[(2)] $\phi_i-\x_i\in(\x)^c\kappa[[\x,\y]]$ for $i=1,\ldots,n$.
\end{itemize}
Denote $\phi:=(\phi_1,\ldots,\phi_n)$ and $\psi_i':=\psi_i(\phi,\y)$. It is straightforward to check:
\begin{itemize}
\item[(1)] $\psi_i'(\x,\y_{(k-1)},0)=\psi_i(\phi(\x,\y_{(k-1)},0),\y_{(k-1)},0)=\x_i$ for $i=1,\ldots,n$,
\item[(2)] $\psi_i'-\x_i=\psi_i(\phi,\y)-\x_i\in(\x)^c\kappa[[\x,\y]]$ for $i=1,\ldots,n$.
\end{itemize}

\paragraph{}\label{red3}
Recall that $\y_{(m-1)}:=(\y_1,\ldots,\y_{m-1})$. We claim: \em There exist $u,\phi_1,\ldots,\phi_n\in\kappa[[\x,\y]]$ such that
\begin{itemize}
\item $u=1$ if $\frac{\partial F}{\partial\y_j}\in(\x)^c(\frac{\partial F}{\partial\x_1},\ldots,\frac{\partial F}{\partial\x_n})$ for $j=1,\ldots,m$,
\item $u(\x,\y_{(m-1)},0)=1$,
\item $\phi_i(\x,\y_{(m-1)},0)=\x_i$ for $i=1,\ldots,n$,
\item $\phi_i-\x_i\in(\x)^c\kappa[[\x,\y]]$ for $i=1,\ldots,n$,
\item $F(\phi,\y_{(m-1)},0)=uF(\x,\y)$ where $\phi:=(\phi_1,\ldots,\phi_m)$.
\end{itemize}
\em

Let $\xi_1,\ldots,\xi_n\in(\x)^c\kappa[[\x,\y]]$ and $\zeta\in\kappa[[\x,\y]]$ be such that \begin{equation}\label{F}
\frac{\partial F}{\partial\y_m}=-\zeta F+\sum_{i=1}^n\xi_i\frac{\partial F}{\partial\x_i}
\end{equation} 
(the case $\zeta=0$ is included). Consider the formal differential system of equations
$$
\frac{\partial\Phi_i}{\partial\t}=\xi_i(\Phi_1,\ldots,\Phi_n,\y_{(m-1)},\y_m-\t),
$$
and $\Phi_i(\x,\y,0)=\x_i$ for $i=1,\ldots,n$. By Lemma \ref{comput1} the previous formal differential system has a unique solution $\Phi_1,\ldots,\Phi_n\in\kappa[[\x,\y,\t]]$. As $\xi_i\in(\x)^c\kappa[[\x,\y]]$, we deduce
$$
\frac{\partial\Phi_i}{\partial\t}\in(\x)^c\kappa[[\x,\y,\t]].
$$
Write $\frac{\partial\Phi_i}{\partial\t}=\sum_{k\geq0}a_{ik}\t^k$ with $a_{ik}\in\kappa[[\x,\y]]$. We have $a_{i0}=\x_i$ and
$$
a_{ik}=\frac{1}{k!}\frac{\partial^{k+1}\Phi_i}{\partial\t^{k+1}}(0)\in(\x)^c\kappa[[\x,\y]]
$$
for each $k\geq1$. Consequently, $\Phi_i-\x_i\in(\x)^c\kappa[[\x,\y,\t]]$ for $i=1,\ldots,n$.

If $\Phi:=(\Phi_1,\ldots,\Phi_n)$, we deduce by the chain rule and \eqref{F}
\begin{equation}\label{both}
\begin{split}
\frac{\partial F(\Phi,\y_{(m-1)},\y_m-\t)}{\partial\t}&=\sum_{i=1}^n\frac{\partial F}{\partial\x_i}(\Phi,\y_{(m-1)},\y_m-\t)\frac{\partial\Phi_i}{\partial\t}-\frac{\partial F}{\partial\y_m}(\Phi,\y_{(m-1)},\y_m-\t)\\
&=\sum_{i=1}^n\Big(\frac{\partial F}{\partial\x_i}\xi_i\Big)(\Phi,\y_{(m-1)},\y_m-\t)-\frac{\partial F}{\partial\y_m}(\Phi,\y_{(m-1)},\y_m-\t)\\
&=(\zeta F)(\Phi,\y_{(m-1)},\y_m-\t).
\end{split}
\end{equation}
By Example \ref{comput} there exists a series $U\in\kappa[[\x,\y]][[\t]]$ such that
\begin{equation}\label{both2}
\frac{\partial U}{\partial\t}=\zeta(\Phi,\y_{(m-1)},\y_m-\t)U
\end{equation}
and $U(0)=1$ (note that $U=1$ if $\zeta=0$). 

By \eqref{both} and \eqref{both2} both $UF(\x,\y)$ and $F(\Phi,\y_{(m-1)},\y_m-\t)$ are solutions of the differential equation
$$
\frac{\partial\z}{\partial t}=\zeta(\Phi,\y_{(m-1)},\y_m-\t)\z\quad\text{and}\quad\z(0)=F(\x,\y).
$$
By Example \ref{comput}
\begin{equation}\label{F2}
F(\Phi,\y_{(m-1)},\y_m-\t)=UF(\x,\y).
\end{equation}
Substitute $\t=\y_m$ and define 
$$
\begin{cases}
\phi_i:=\Phi(\x,\y,\y_m),\\
u:=U(\x,\y,\y_m).
\end{cases}
$$ 
We have:
\begin{itemize}
\item $u=1$ if $\frac{\partial F}{\partial\y_j}\in(\x)^c(\frac{\partial F}{\partial\x_1},\ldots,\frac{\partial F}{\partial\x_n})$ for $j=m$ because in this case $\zeta=0$,
\item $u(\x,\y_{(m-1)},0)=1$ because $U(\x,\y,0)=1$, 
\item $\phi_i(\x,\y_{(m-1)},0)=\Phi_i(\x,\y,0)=\x_i$,
\item $\phi_i-\x_i\in(\x)^c\kappa[[\x,\y]]$ because $\Phi_i-\x_i\in(\x)^c\kappa[[\x,\y,\t]]$,
\item $F(\phi,\y_{(m-1)},0)=F(\Phi(\x,\y,\y_m),\y_{(m-1)},0)=U(\x,\y,\y_m)F(\x,\y)=uF(\x,\y)$.
\end{itemize}

\paragraph{} Let us show how we can construct the series $\phi_1,\ldots,\phi_n\in\kappa[[\x,\y]]$ recursively using \ref{red1}, \ref{red2} and \ref{red3}. For $k=m$ we take $u^{(m)}:=1$ and $\psi^{(m)}_i:=\x_i$ for $i=1,\ldots,n$. Assume there exist $u^{(k)},\psi^{(k)}:=(\psi_1^{(k)},\ldots,\psi_n^{(k)})\in\kappa[[\x,\y]]$ such that
\begin{itemize}
\item $u^{(k)}=1$ if $\frac{\partial F}{\partial\y_j}\in(\x)^c(\frac{\partial F}{\partial\x_1},\ldots,\frac{\partial F}{\partial\x_n})$ for $j=1,\ldots,m$,
\item $u^{(k)}(\x,\y_{(k)},0)=1$,
\item $\psi_i^{(k)}(\x,\y_{(k)},0)=\x_i$ for $i=1,\ldots,n$,
\item $\psi_i^{(k)}-\x_i\in(\x)^c\kappa[[\x,\y]]$ for $i=1,\ldots,n$,
\item $F(\psi^{(k)},\y_{(k)},0)=u^{(k)}F(\x,\y)$.
\end{itemize}

Denote $G:=F(\psi^{(k)},\y_{(k)},0)\in\kappa[[\x,\y]]$ and $\y':=(\y_{k+1},\ldots,\y_m)$. By \ref{red3} there exist $u,\phi:=(\phi_1,\ldots,\phi_n)\in\kappa[[\x,\y]]$ such that
\begin{itemize}
\item $u=1$ if $\frac{\partial G}{\partial\y_j}\in(\x)^c(\frac{\partial G}{\partial\x_1},\ldots,\frac{\partial G}{\partial\x_n})$ for $j=1,\ldots,m$,
\item $u(\x,\y_{(k-1)},0,\y')=1$,
\item $\phi_i(\x,\y_{(k-1)},0,\y')=\x_i$ for $i=1,\ldots,n$,
\item $\phi_i-\x_i\in(\x)^c\kappa[[\x,\y]]$ for $i=1,\ldots,n$,
\item $G(\phi,\y_{(k-1)},0,\y')=uG(\x,\y)$.
\end{itemize}
Observe that
\begin{multline*}
F(\psi^{(k)}(\phi,\y_{(k-1)},0,\y')),\y_{(k-1)},0)=G(\phi,\y_{(k-1)},0,\y')\\
=uG(\x,\y)=uF(\psi^{(k)},\y_{(k)},0)=uu^{(k)}F(\x,\y).
\end{multline*}
If we take $u^{(k-1)}:=uu^{(k)}$ and $\varphi^{(k-1)}:=(\varphi^{(k-1)}_1,\ldots,\varphi^{(k-1)}_n):=\psi^{(k)}(\phi,\y_{(k-1)},0,\y'))$, the reader can check using \ref{red1} and \ref{red2} that $u^{(k-1)},\varphi^{(k-1)}_1,\ldots,\varphi^{(k-1)}_n$ satisfy the desired properties in this step. The series in the statement $u,\varphi_1,\ldots,\varphi_n\in\kappa[[\x_1,\ldots,\x_n]]$ correspond to the final step $k=0$ when we have got rid of all the variables $\y_1,\ldots,\y_m$, as required.
\end{proof}

Now we are ready to prove Theorem \ref{fdqd}.

\begin{proof}[Proof of the Theorem \em \ref{fdqd}]
We approach the case $\gtm^{k}\subset\gtm J(f)+(f)$ (and we indicate in the precise points the differences to the case $\gtm^{k}\subset\gtm J(f)$). Let us prove that $f$ is $k$-quasidetermined (resp. $k$-determined). Let $g\in\kappa[[\x]]$ be such that $h:=g-f\in\gtm^{k+1}$. Then there exist series $h_1,\ldots,h_n\in\gtm^k$ such that $h=\x_1h_1+\cdots+\x_nh_n$. We introduce new variables $\y:=(\y_1,\ldots,\y_n)$ and consider the series
\begin{equation}\label{defF}
F(\x,\y):=f+\sum_{i=1}^n(\x_i+\y_i)h_i\in\kappa[[\x,\y]],
\end{equation}
which satisfies $F(\x,0)=f+h=g$. Denote the maximal ideal of $\kappa[[\x,\y]]$ with $\gtn$.

We claim: $\frac{\partial F}{\partial\y_j}\in\gtm(\frac{\partial F}{\partial\x_1},\ldots,\frac{\partial F}{\partial\x_n})+(F)$ (resp. $\frac{\partial F}{\partial\y_j}\in\gtm(\frac{\partial F}{\partial\x_1},\ldots,\frac{\partial F}{\partial\x_n})$).

We differentiate $F$ with respect to $\x_j$ in \eqref{defF} and obtain
\begin{equation}\label{diffF}
\frac{\partial F}{\partial\x_j}-\frac{\partial f}{\partial\x_j}=h_j+\sum_{i=1}^n(\x_i+\y_i)\frac{\partial h_i}{\partial\x_j}\in\gtn\gtm^{k-1}.
\end{equation}
Consequently, by \eqref{defF} and \eqref{diffF} we deduce
\begin{multline*}
\gtm J(f)\kappa[[\x,\y]]+(f)\kappa[[\x,\y]]\subset\gtm J(F)+(F)+\gtn\gtm^k\\
\subset\gtm J(F)+(F)+\gtn(\gtm J(f)\kappa[[\x,\y]]+(f)\kappa[[\x,\y]])
\end{multline*}
(resp. $\gtm J(f)\kappa[[\x,\y]]\subset\gtm J(F)+\gtn\gtm^k\subset\gtm J(F)+\gtn(\gtm J(f)\kappa[[\x,\y]])$ if $\gtm^{k}\subset\gtm J(f)$). By Naka\-yama's Lemma (for inclusions) \cite[Lem.6.13, p.25]{b} we conclude
$$
\gtm J(f)\kappa[[\x,\y]]+(f)\kappa[[\x,\y]]\subset\gtm J(F)+(F)
$$
(resp. $\gtm J(f)\kappa[[\x,\y]]\subset\gtm J(F)$). Thus,
$$
\frac{\partial F}{\partial\y_j}=h_j\in\gtm^k\subset\gtm J(f)+(f)\subset\gtm J(f)\kappa[[\x,\y]]+(f)\kappa[[\x,\y]]\subset\gtm J(F)+(F)
$$
(resp. $\frac{\partial F}{\partial\y_j}=h_j\in\gtm J(F)$) for $j=1,\ldots,n$.

By Lemma \ref{var} there exist $\varphi_1,\ldots,\varphi_n,u\in\kappa[[\x,\y]]$ such that
\begin{itemize}
\item $u(\x,0)=1$,
\item $\varphi_i(\x,0)=\x_i$,
\item $\varphi_i-\x_i\in \gtm\kappa[[\x,\y]]$,
\item $F(\x,\y)=uF(\varphi,0)$, where $\varphi=(\varphi_1,\ldots,\varphi_n)$.
\end{itemize}
In addition, if $\frac{\partial F}{\partial\y_j}\in\gtm(\frac{\partial F}{\partial\x_1},\ldots,\frac{\partial F}{\partial\x_n})$ for $j=1,\ldots,n$ (that is, if $\gtm^k\subset\gtm J(f)$), we can choose $u=1$. As $\varphi_i(\x,0)=\x_i$ and $\varphi_i-\x_i\in\gtm\kappa[[\x,\y]]$, we write
$$
\varphi_i=\x_i+\sum_{j,\ell=1}^n\x_j\y_\ell\zeta_{j\ell}(\x,\y)
$$
where $\zeta_{j\ell}\in\kappa[[\x,\y]]$. Thus, the map 
$$
\Phi:\kappa[[\x]]\to\kappa[[\x]], h\mapsto h(\varphi(\x,-\x))
$$
is an automorphism of $\kappa[\x]$. We obtain
\begin{multline*}
f=F(\x,-\x)=u(\x,-\x)F(\varphi(\x,-\x),0)\\
=u(\x,-\x)(f+h)(\varphi(\x,-\x))=u(\x,-\x)\Phi(f+h)=u(\x,-\x)\Phi(g),
\end{multline*}
where $u=1$ if $\gtm^k\subset\gtm J(f)$. Thus, $f$ is $k$-quasidetermined (resp. $k$-determined), as required.
\end{proof}

\section{Declarations}

\subsection{Funding}

Author is supported by Spanish STRANO MTM2017-82105-P and Grupos UCM 910444.

\subsection{Conflicts of interest/Competing interests declaration}

The corresponding author states that there is no conflict of interest. Competing interests: The author declares none.

\end{document}